\DeclareMathOperator*{\esssup}{ess\,sup}
\DeclareMathOperator*{\essinf}{ess\,inf}
\newtheorem{theorem}{Theorem}
\newtheorem{corollary}[theorem]{Corollary}
\newtheorem{definition}[theorem]{Definition}
\newtheorem{example}[theorem]{Example}
\newtheorem{lemma}[theorem]{Lemma}
\newtheorem{proposition}[theorem]{Proposition}
\newtheorem{remark}[theorem]{Remark}
\newenvironment{proof}[1][Proof]{\noindent\textbf{#1.} }{\ \rule{0.5em}{0.5em}}
\begin{document}
	\title{Doubly Reflected BSDEs with default time under stochastic Lipschitz coefficients and Applications}
	%
	
\author[1]{Badr ELMANSOURI\thanks{Corresponding author}} 
	\author[1]{Mohamed EL OTMANI} 
	\affil[1]{Laboratory of Analysis, Mathematics, and Applications (LAMA), 
		Faculty of Sciences Agadir, 
		Ibnou Zohr University, Agadir, Morocco} 
	
	\affil[ ]{\texttt{badr.elmansouri@edu.uiz.ac.ma}} 
	\affil[ ]{\texttt{m.elotmani@uiz.ac.ma}} 
	\date{}
	\maketitle
	\begin{abstract}
	We formulate a notion of doubly reflected BSDEs with a default time and two completely separated RCLL barriers. We demonstrate the existence and uniqueness of the solution. \\
	Within the defaultable setup, we introduce a type of generalized Dynkin game, we characterize the common value of the game via the solution of the doubly reflected BSDEs and we show the existence of a saddle point.
	\end{abstract}
	
	\noindent \textbf{Keywords:} Doubly Reflected BSDEs; Stochastic Lipschitz driver; Default time; Progressive enlargement of filtration; Snell envelop;  Generalized Dynkin games
	
	\noindent \textbf{MSC (2020):} 60H05, 60H10, 60H15, 34F05, 35R60, 60H30
	
	\section{Introduction}
	The concept of backward stochastic differential equations (BSDEs) in the context of Brownian motion was initially introduced by Bismut \cite{bismut1973conjugate} in the linear framework. It gained significant attention, especially after the groundbreaking work of Pardoux and Peng \cite{pardoux1990adapted}. Subsequent developments in this field have been motivated by diverse applications in finance \cite{el1997backward}, stochastic control and differential games \cite{ELKAROUI2003145, HamadeneJP}, as well as partial differential equations \cite{pardoux1999bsdes,pardoux2005backward}.
	
	Reflected BSDEs (RBSDEs) represent a subtype of BSDEs in which the solution is constrained to remain above a designated process known as an obstacle or barrier. Initially formulated in \cite{el1997reflected} within the context of a Brownian filtration and a continuous obstacle, subsequent extensions by various authors \cite{hamadene2002reflected,hamadene2016reflected,quenez2014reflected} have broadened its scope to include scenarios involving a not necessarily continuous obstacle and/or a filtration larger than the Brownian one. RBSDEs have proven effective and efficient for investigating issues in various mathematical domains  \cite{hamadene2010switching,hamadene2000reflected}.
	
	Cvitanic and Karatzas in \cite{cvitanic1996backward} introduced doubly reflected BSDEs (DRBSDEs) within the context of continuous barriers with Brownian filtration. The solutions to such equations are constrained to remain within the region bounded by two adapted processes. DRBSDEs have been extensively investigated by many researchers in scenarios involving right-continuous barriers and/or filtrations associated with a Brownian motion and a random Poisson measure  \cite{Crepy, essaky2005backward, HassaniSaid, hamadene2009bsdes}.
	
	In this paper, we establish our foundational framework within a complete probability space $\left(\Omega, \mathcal{F}, \mathbb{P}\right)$, where $\mathbb{P}$ represents the real-world probability measure. A standard Brownian motion $B := (B_t)_{t \geq 0}$ is defined on this space, and we denote by $\mathbb{F} := \left(\mathcal{F}_t\right)_{t \geq 0}$ the natural flow of information generated by $B$. We assume that $\mathbb{F}$ satisfies the usual conditions of right-continuity and completeness.\\
	Additionally, we introduce an arbitrary random time defined as a positive random variable $\tau : (\Omega, \mathcal{F}) \rightarrow ((0, +\infty), \mathcal{B})$ with $\mathcal{B}$ is the Borel sigma field of $(0,+\infty)$. Importantly, $\tau$ is not required to be an $\mathbb{F}$-stopping time, the occurrence of $\tau$ within the time sheet is generally unobservable until it happens. From a credit risk perspective, default risk is a central and extensively studied form of credit risk. It is defined as the time when a counterpart in a financial contract fails to fulfill contractual commitments \cite{bielecki2004hedging,bielecki2005pde, bielecki2007introduction,blanchet2004hazard,ThomasLim}. Consequently, we model the random time $\tau$ as a \textit{default time}. 
	To better handle the mathematical structure of $\tau$ and enhance our analysis, we introduce the global flow of information $\mathbb{G}:=(\mathcal{G}_t)_{t \geq 0}$ defined by $\mathcal{G}_t := \cap_{\epsilon>0}\mathcal{G}^0_{t+\epsilon}$ with $\mathcal{G}^0_{t}:=\mathcal{F}_t \vee \sigma\{\tau \leq t\}$, making $\tau$ a stopping time. This method, known as the progressive enlargement of $\mathbb{F}$ with $\tau$, was first presented in \cite{book:63874} by Jeulin and in \cite{jeulin2006grossissement} by Jeulin and Yor, then further developed in  Azéma et al. in \cite{azema1993theoreme} (see also \cite{mansuy2006random}). Besides, the first problem arising when considering filtration shrinking  is the stability of the class of semi-martingales. Notably, any $\mathbb{G}$-semimartingale that is $\mathbb{F}$-adapted is also an $\mathbb{F}$-semimartingale, a result known as Stricker's theorem \cite{stricker1977quasimartingales}  (see also \cite{Föllmer2011}). While this holds in the forward scheme, the reverse is more complex, and the semimartingale property does not always remain stable.  In the context of enlargement of filtration, it is usual to say that the  $(\mathcal{H}')$-Hypothesis holds between $\mathbb{F}$ and $\mathbb{G}$ if any $\mathbb{F}$-semimartingale is also a $\mathbb{G}$-semimartingale and that the $(\mathcal{H})$-Hypothesis holds if any $\mathbb{F}$-local martingale is a $\mathbb{G}$-local martingale. Fortunately for us, the $(\mathcal{H}')$-Hypothesis holds as we will be working with $\mathbb{F}$-martingales stopped at $\tau$. However, the $(\mathbb{F},\mathbb{P})$-martingale $(B_t)_{t \geq 0}$ stopped at $\tau$ giving by $B^{\tau}:=(B_{t \wedge \tau})_{t \geq 0}$  becomes a $(\mathbb{G},\mathbb{P})$-semimartingale with a specific decomposition. To address this and restore the martingale property for $B^{\tau}$ in the filtration $\mathbb{G}$, we define a new probability measure $\mathbb{Q}$ equivalent to the historical one $\mathbb{P}$ so that $B^{\tau}$ is a $(\mathbb{G},\mathbb{Q})$-Brownian motion. This approach allows us to obtain the martingale property of the process $B^{\tau}$ without relying on the $(\mathcal{H})$-Hypothesis or making strong assumptions on the structure of $\tau$. We refer to this new setting as the \textit{defaultable setup} $(\Omega, \mathcal{F}, \mathbb{G}, \mathbb{Q})$.
	
	In the first part of the present paper, we provide the explicit complete description of our introduced setup $(\Omega, \mathcal{F}, \mathbb{G}, \mathbb{Q})$. We begin by considering the complete right continuous filtration $\mathbb{G}$, which presents the full flow of information, thereby making the default time $\tau$ a $\mathbb{G}$-stopping time. We introduce the so-called \textit{Azéma supermartingale}, which describes the behavior of the random time $\tau$ in relation to the reference filtration $\mathbb{F}$. It is given by $G_t = \mathbb{P}(\tau > t \mid \mathcal{F}_t)$ for $t \geq 0$. We impose the assumption \textbf{[P]} that $G_t > 0$ almost surely for every $t \in [0,T]$. This condition ensures that the conditional survival probability does not vanish, and allows us to construct an equivalent probability measure $\mathbb{Q}$ such that the stopped process $B^\tau := B_{\cdot \wedge \tau}$ is a Brownian motion with respect to the enlarged filtration $\mathbb{G}$. This change of measure plays a central role in our analysis of DRBSDEs with default.
	
	In the second part, we introduce the notion of doubly reflected BSDEs in the defaultable setup $(\Omega,\mathcal{F},\mathbb{G},\mathbb{Q})$, associated with a terminal condition $\xi$, defined as a random variable satisfying specific measurability conditions across $\mathbb{F}$ and $\mathbb{G}$, two  RCLL, $\mathbb{F}$-progressively measurable barriers $L$ and $U$ stopped at $T \wedge \tau$, where $T \in (0,+\infty)$ is a deterministic given horizon time that can seen as the expiry time of a financial contract. These barriers can only jump when a default occurs, and they are completely separated. The driver behind the DRBSDE is an $\mathbb{F}$-stochastic Lipschitz coefficient $f$ with respect to $(y,z)$-variables. We proceed by presenting a solution to the DRBSDE in a suitable $\mathbb{L}^2$-space, using an auxiliary result on the representation property for local martingales stopped at $T \wedge \tau$. We then provide a brief comparison between our work and the existing literature dealing with DRBSDEs in the filtration $\mathbb{F}$ or more general filtration supporting the Brownian motion $\left(B_t\right)_{t \geq 0}$. This comparison illustrates that we weaken the global strong assumptions on the employed stochastic basis and the parameters of the DRBSDEs. Furthermore, we specify the assumptions on the datasets $(\xi,f,L,U)$ along with its integrability relationships between the $(\mathbb{F},\mathbb{P})$ and $(\mathbb{G},\mathbb{Q})$ frameworks. Subsequently, we delve into the main result of this section, demonstrating the existence and uniqueness of the solution. The uniqueness is a consequence of a priori estimation between the solution of the DRBSDE and the associated data, which also plays a pivotal role throughout the paper. For the existence, we follow a similar procedure to the one used in \cite{BDEM} when the RCLL martingale is presented by a standard Brownian motion. To achieve this, we first consider the scenario where the coefficient does not depend on the solution $(y,z)$ but is given by an $\mathbb{F}$-adapted process $(g(t))_{t \geq 0}$. The result is obtained using a penalization approximation through several steps, leveraging the complete separation of the barriers. We then consider the case of a general stochastic Lipschitz driver $f$ by using a classical argument based on the a priori estimates and a Banach fixed-point theorem.

	In contrast to existing research on BSDEs, RBSDEs or DRBSDEs with default time and their applications to  default risk and options pricing  \cite{BlanchetScalliet2008HedgingOD,BSDEDefault,16M1109102,dumitrescu2018american,peng2009bsdes}, which often relies the restrictive $(\mathcal{H})$-Hypothesis, known as the \textit{Immersion property}, requiring that the $(\mathbb{F},\mathbb{P})$-Brownian motion must persist as a Brownian motion in $\mathbb{G}$ under the real-life probability measure $\mathbb{P}$ without involving any other equivalent probability measures, our approach adopts a more flexible framework. We relax this assumption by considering a general random time $\tau$ under the single positivity condition \textbf{[P]}, introduced earlier. This relaxation enable us to construct the alternative probability measure $\mathbb{Q}$, equivalent to $\mathbb{P}$, ensuring that the process $B^{\tau}$ behaves as a $(\mathbb{G},\mathbb{Q})$-Brownian motion. 
	Following the discussion, it is apparent that the approach proposed differs from the existing literature in that it introduces less restrictive assumptions regarding the data. In particular, we  consider stochastic Lipschitz coefficients instead of classical Lipschitz ones, assuming the barriers and their left limits to be strictly separated instead of imposing Mokobodski's assumption or either the regularity condition of the barriers, which are difficult to check in practice, and considering the general framework of the default time $\tau$. These contributions goes beyond the current literature, providing novel perspectives and options for investigation. For other recent and related interesting work, we refer to the paper by Aksamit et al. \cite{aksamit2023generalized}.
	
	A good and interesting question that comes to mind when dealing with two related flows of information, $\mathbb{F}$ and $\mathbb{G}$, is:
	\begin{itemize}
		\item[$\star$] How do the $\left(\mathbb{G},\mathbb{Q}\right)$-solutions of the DRBSDEs behave in the filtration $\mathbb{F}$ under the new probability measure $\mathbb{Q}$? This provides an  measurement of $\tau$'s influence on the problem.
	\end{itemize}
	Another important question is:
	\begin{itemize}
		\item[$\star$] Can we connect the solutions of DRBSDEs in $\left(\mathbb{F},\mathbb{P}\right)$ and in $\left(\mathbb{G},\mathbb{Q}\right)$ explicitly? If yes, under which conditions on the data $(\xi,f,L,U)$, and what new datasets are associated with the DRBSDEs in $\left(\mathbb{F},\mathbb{P}\right)$? In credit risk theory, this is an attractive question.
	\end{itemize}
	The answer of these two relevant questions leads us to two different links:
	\begin{itemize}
		\item \textit{First Link:} The first one relies on conditional expectation calculation techniques under the measure $\mathbb{Q}$ with respect to the reference filtration $\mathbb{F}$, i.e., $\left(\mathbb{E}_{\mathbb{Q}}\left[\cdot \mid \mathcal{F}_t\right]\right)_{t \leq T}$. Initially, we establish a useful auxiliary result and subsequently provide the dynamics of the $\mathbb{F}$-conditional expectation of the $\left(\mathbb{G},\mathbb{Q}\right)$-value process for the DRBSDE with respect to $\mathbb{F}$ under $\mathbb{Q}$ when $f$ does not depend on the $(y,z)$-variables, and in this case, we derive the explicit relationship between solutions within $\mathbb{F}$ and $\mathbb{G}$ by projecting the double penalization approximating procedure (used to get the existence of a solution for our main DRBSDE) given in the defaultable setup $\left(\Omega,\mathcal{F},\mathbb{G},\mathbb{Q}\right)$ and its related convergences in the stochastic basis $\left(\Omega,\mathcal{F},\mathbb{F},\mathbb{Q}\right)$. Additionally, we note that this result can be generalized to the case where the driver $f$ is linear with respect to the state variable $y$.
		
		\item \textit{Second Link:} The second one is conducted without conditioning over $\mathbb{F}$. Here, we also employ a generator independent of the $(y,z)$-variables, extending the result of RBSDE presented in  \cite{alsheyab2023optimal} to the case of double reflection.  The main result is the establishment of an explicit link between solutions of DRBSDEs in $(\Omega,\mathcal{F},\mathbb{F},\mathbb{P})$ and $\left(\Omega,\mathcal{F},\mathbb{G},\mathbb{Q}\right)$ setups, where we will use some known existence, uniqueness, and convergence results for increasing and decreasing penalization schemes for one reflected BSDE under our hypothesis, as well as the relationships between optimal stopping problems across the stochastic basis $(\mathbb{F},\mathbb{P})$ and $(\mathbb{G},\mathbb{Q})$ presented in \cite{choulli2024optimal}, which, to the best of our knowledge, appears to be novel. 
	\end{itemize}
	Consequently, the creation of the new defaultable setup is important in establishing connections between solutions within the filtrations $\mathbb{F}$ and $\mathbb{G}$ under distinct probability measures, namely, $\mathbb{P}$ and the newly constructed $\mathbb{Q}$. 
	
	In the last part of the paper, our attention turns to the links between the solution of the DRBSDE with two completely separated barriers with default time and some related generalized Dynkin games with default. To set the stage, let us first recall some results on the classical Dynkin game problem, which has been widely studied  \cite{alario2006dynkin,bismut1977probleme, kobylanski2014dynkin,lepeltier1984jeu}: For any pair $(\sigma_{1},\sigma_{2})$ of $\mathbb{G}$-stopping times valued in $[0, T]$, the terminal time of the game is given by $\sigma_{1} \wedge \sigma_{2}$ and the criterion of the game is defined as:
	\begin{equation*}
		\mathcal{J}(\sigma_{1},\sigma_{2}):=\mathbb{E}_{\mathbb{Q}}\left[ L_{\sigma_{1}} \mathds{1}_{\{\sigma_{1} \leq \sigma_{2}\}}+U_{\sigma_{2}} \mathds{1}_{\{\sigma_{2} < \sigma_{1}\}}\right],
		\label{SolPNL}
	\end{equation*}
	where $\mathbb{E}_{\mathbb{Q}}\left[\cdot\right]$ denotes the classical linear expectation under the probability measure $\mathbb{Q}$. This familiar Dynkin game has been found to have a common value without adopting Mokobodzki's condition \cite{lepeltier1984jeu}, meaning that
	$$
	\inf_{\sigma_{2}} \sup_{\sigma_{1}} \mathcal{J}(\sigma_{1},\sigma_{2})= \sup_{\sigma_{1}}\inf_{\sigma_{2}} \mathcal{J}(\sigma_{1},\sigma_{2}).
	$$ 
	The authors in \cite{lepeltier1984jeu} also show that the game has a saddle point when $L$ is left upper semi-continuous, and $U$ is left lower semi-continuous. Related to the theory of DRBSDEs, it is well-known that the value function for the Dynkin game problem can be characterized as the solution of the DRBSDE at time $t=0$ with a lower barrier $L$, an upper barrier $U$, and a zero driver (see, for instance, \cite{SMD7DS,HMDNAM} among others).
	
	This work deals with a type of generalized Dynkin game problems that support a default possibility for one of the stoppers, followed by a late payment with an agreed penalty imposed on the defaultable part. In this context, we refer to $\mathcal{E}^f_{t,\sigma}(\chi)$ as the $\mathcal{E}^f$-expectations at time $t$ of the $\mathcal{G}_{\sigma}$-measurable random variable $\chi$. To the best of our knowledge, Dumitrescu et al. \cite{Roxana} were the first to introduce this type of game. The term \textit{generalized} in the context of the Dynkin game refers to the existence of a \textit{non-linear $f$-expectation} rather than the classical linear expectation $\mathbb{E}_{\mathbb{Q}}\left[\cdot\right]$. In our context and in broad terms, we consider a double-stopping game with an expiry time given by the $\mathbb{G}$-stopping time $\tau^T:=T \wedge \tau$, such that for all pairs $(\sigma_{1},\sigma_{2})$ of $\mathbb{G}$-stopping times valued in $[0, \tau^T]$, i.e., $\mathbb{Q}\left(0 \leq \sigma_{1} \vee \sigma_{2} \leq \tau^T\right)=1$, the terminal time of the game is given by $\sigma_{1} \wedge \sigma_{2}$, and the payoff used to evaluate the outcome is defined by the following random variable:
	\begin{equation*}
		\mathcal{J}\left(\sigma_1,\sigma_2\right)=L_{\sigma_1} \mathds{1}_{\{\sigma_1 < \sigma_2\}}+U_{\sigma_2} \mathds{1}_{\{\sigma_2 <\sigma_1\}}+Q_{\sigma_{1}} \mathds{1}_{\{\sigma_1 = \sigma_2<\tau^T\}}+\xi \mathds{1}_{\{\sigma_1=\sigma_2=\tau^T\}},
	\end{equation*}
	where $(Q_t)_{t \leq T}$ is an $\mathbb{F}$-progressively measurable process such that $L_t \leq Q_t \leq U_t$ for all $t \in [0,\tau^T[$, $\mathbb{Q}$-a.s., and the relationship between the payoffs before and in the event of default is specified in the game rules. Then, using results from the first part of the present paper, we show that when $L$ and $U$ are completely separated, satisfying some integrability conditions that are stopped at $\tau^T$, and if the terminal condition satisfies $L_{\tau^T} \leq \xi \leq U_{\tau^T}$, $\mathbb{Q}$-a.s., there exists a value function for the $\mathcal{E}^f$-Dynkin game with default, and this value can be characterized as the solution of a DRBSDE
	$$
	\inf_{\sigma_{2}} \sup_{\sigma_{1}}\mathcal{E}^f_{0,\sigma_{1}\wedge\sigma_{2}}\left(  \mathcal{J}(\sigma_{1},\sigma_{2})\right) = \sup_{\sigma_{1}}\inf_{\sigma_{2}} \mathcal{E}^f_{0,\sigma_{1}\wedge\sigma_{2}}\left(  \mathcal{J}(\sigma_{1},\sigma_{2})\right)=Y_0,
	$$
	where $Y_0$ denotes the state process of the solution at time $t=0$ of the DRBSDE with stochastic Lipschitz driver $f$, barriers $(L,U)$ and terminal condition $\xi$. Moreover, as the barriers have the same default jumps as $Y$, we prove that there exists a saddle point for the $\mathcal{E}^f$-Dynkin game.
	
	The paper is organized as follows: In Section \ref{Notations}, we introduce the notation and the default time framework, including the mathematical structure of the random variable $\tau$, along with some preliminary results and assumptions used throughout the paper. Section \ref{DRBSDE Problem section} is dedicated to the full mathematical analysis of the DRBSDEs associated with a stochastic Lipschitz driver $f$ and completely separated barriers $(L_t,U_t)_{t \geq 0}$ without assuming the so-called Mokobodzki's assumption. Here, we stop the DRBSDE at $\tau \wedge T$ for a giving fixed horizon time $T \in (0,+\infty)$. In particular, we establish the existence and uniqueness of the solution to this equation. Additionally, we provide various integrability relationships for data within the filtrations $\mathbb{F}$ and $\mathbb{G}$, and explicit relationships between solutions across filtrations $\mathbb{F}$ and $\mathbb{G}$ depending on whether we consider the same probability $\mathbb{Q}$ or different equivalent probability measures $\mathbb{P}$ and $\mathbb{Q}$. In Section \ref{Dynkin games}, we present our generalized Dynkin game problem with default, formulating the problem both mathematically and abstractly. The key finding, characterizing the value of the game issue as the solution to our DRBSDE, is presented, along with the existence of a saddle point.
	\section{Notations, default time framework and preliminary results}
	\label{Notations}
	This section provides the general notation, assumptions, and preliminaries that will be used throughout the paper.
	
	Let $\mathbb{C}:=\left(\mathcal{C}_t\right)_{t \geq 0}$ be a generic filtration that satisfies the usual conditions of completeness and right continuity defined on a complete probability space $(\Omega,\mathcal{F},\mathsf{P})$. \\
	Recall that if $\mathcal{C}\left(\mathbb{C}\right)$ is a class of processes that are adapted to $\mathbb{C}$, then $\mathcal{C}_{loc}\left(\mathbb{C}\right)$ is the localized class of $\mathcal{C}\left(\mathbb{C}\right)$, i.e., the set of processes $(X_t)_{t \geq 0}$ for which there exists a sequence of $\mathbb{C}$-stopping times $\left\lbrace \sigma_n\right\rbrace_{n \in \mathbb{N}}$ such that $\sigma_n \nearrow +\infty$, $\mathsf{P}$-a.s., and the stopped process $X^{\sigma_n}:=\left( X_{t \wedge \sigma_n}\right)_{t \geq 0}$ belongs to $\mathcal{C}\left(\mathbb{C}\right)$ for each $n \in \mathbb{N}$. In this context, $\mathcal{M}_{loc} \left(\mathbb{C},\mathsf{P}\right)$ (respectively, $\mathcal{M} \left(\mathbb{C},\mathsf{P}\right)$) denotes the set of all $\mathbb{C}$-local martingales (respectively, martingales) under $\mathsf{P}$. Let $(X_t)_{t \geq 0} \in \mathcal{C}\left(\mathbb{C}\right)$, if $(X_t)_{t \geq 0}$ is an RCLL process, then we define $X_{s-}=\lim\limits_{u \nearrow s} X_u$ as the left limit of $X$ at $s \in (0,T]$, and $\Delta X_{s}=X_{s}-X_{s-}$ as the size of the jump at time $s$, with the convention $\Delta X_{0}=0$ (equivalently $X_{0-}=X_0$). For an $\mathcal{F} \otimes \mathcal{B}\left(\mathbb{R}_{+}\right)$-measurable process $V:=(V_t)_{t \geq 0}$ with finite variation, by $V^{o,\mathbb{C},\mathsf{P}}$ (resp. $V^{p,\mathbb{C},\mathsf{P}}$), we mean the $\mathbb{C}$-dual optional (resp. predictable) projection of $V$ when it exists under the probability measure $\mathsf{P}$. Next, for a given two $(0,+\infty)$-valued $\mathbb{C}$-stopping times $\sigma_1$ and $\sigma_2$,  $\mathfrak{T}_{\sigma_1,\sigma_2}(\mathbb{C})$ (respectively, $\mathfrak{T}(\mathbb{C})$) denotes the set of all $\mathbb{C}$-stopping times $\sigma$ such that $\mathsf{P}\left(\sigma_1 \leq \sigma \leq\sigma_2\right)=1$  (respectively, $\mathsf{P}\left(\sigma \in (0,+\infty) \right)=1$).\\
	For a given $\mathbb{C}$-adapted semimartingale $(X_t)_{t \geq 0}$, the set $\mathcal{I}(\mathbb{C}, \mathsf{P}, X)$ represents the collection of $\mathbb{C}$-predictable processes that are integrable with respect to the semimartingale $X$ under the probability measure $\mathsf{P}$. If we have two stopping times, $\sigma_{1}$ and $\sigma_{2}$, both belonging to $\mathfrak{T}(\mathbb{C})$ and satisfying $\sigma_{1} \leq \sigma_{2}$ almost surely under $\mathsf{P}$, we use the notation $\llbracket \sigma_1 , \sigma_{2}\rrbracket$ to represent the stochastic interval defined as $\llbracket \sigma_1 , \sigma_{2}\rrbracket=\{(\omega,t)\in \Omega \times [0,+\infty): \sigma_{1}(\omega)\leq t \leq \sigma_{2}(\omega)\}$. The same definition applies to other types of stochastic intervals \cite[Definition 3.14]{bookHe}. The equality $X=Y$ between any two processes $(X_t)_{t \geq 0}$ and $(Y_t)_{t \geq 0}$ must be understood in the indistinguishable sense, meaning that $\mathsf{P}\left(\omega:X_t(\omega)=Y_t(\omega), \forall t \geq 0\right)=1$. The same interpretation applies to $X \leq Y$ and $X < Y$. Finally, $\mathbb{E}_{\mathsf{P}}\left[\cdot\right]$ denotes the expectation operator under the probability measure $\mathsf{P}$. For a vector $x := (x_1, \ldots, x_d) \in \mathbb{R}^{d}$ ($d \geq 1$), the Euclidean norm is denoted by $\left\|x\right\|^2=\sum_{i=1}^d \left|x_i\right|^2$, and $\left\|x\right\|=\left|x\right|$ for $d=1$. Additionally, for $x \in \mathbb{R}$, we remember that $x^{+}=\max(x,0)$ and $x^{-}=\max(-x,0)=-\min(x,0)$.
	
	Consider a complete probability space $(\Omega, \mathcal{F}, \mathbb{P})$ on which we define a $d$-dimensional standard Brownian motion $(B_t)_{t \geq 0}$. Let $\mathbb{F} = (\mathcal{F}_t)_{t \geq 0}$ denote its completed natural filtration.
	Consider an arbitrary random time $\tau$, which need not be an $\mathbb{F}$-stopping time. This random time $\tau$ is characterized as an $\mathcal{F}$-measurable random variable with values in the interval $(0,+\infty)$ such that $\mathbb{P}\left(\{\tau=0\}\cup \{\tau=+\infty\}\right)=0$. This random variable $\tau$ models a \textit{default time}. We associate with $\tau$ the default indicator process $(D_t)_{t \geq 0}$ and introduce a new filtration $\mathbb{G} = (\mathcal{G}_t)_{t \geq 0}$ defined as follows
	\begin{equation}
		D_t = \mathds{1}_{\{\tau \leq t\}}, \quad \mathcal{G}_t^0 := \mathcal{F}_t \vee \sigma\left(D_s, s \leq t\right) \quad\mbox{and}\quad \mathcal{G}_t = \bigcap_{\epsilon > 0} \mathcal{G}^0_{t+\epsilon}.
		\label{Filtration}
	\end{equation}
	We adopt the convention $\mathcal{G}_{\infty} := \bigvee_{t \geq 0} \mathcal{G}_t=\mathcal{F}$. The filtration $\mathbb{G}$ is the smallest right-continuous complete filtration that contains $\mathbb{F}$ and makes $\tau$ a stopping time. This choice is motivated by the fact that the information related to the occurrence of the random time $\tau$ may not be entirely contained in $\mathbb{F}$ but rather in $\mathbb{G}$.\\
	In addition to $\mathbb{G}$ and $D$, the random time $\tau$ needs to be parameterized through $\mathbb{F}$ by the important pair $(G,\tilde{G})=(G_t,\tilde{G}_t)_{t \geq 0}$ of survival probabilities. These probabilities are defined by the following two $\mathbb{F}$-optional processes
	\begin{equation*}
		G_t:=\mathbb{P}\left(\tau>t \mid \mathcal{F}_t\right),\quad\mbox{and}\quad \tilde{G}_t:= \mathbb{P}\left(\tau \geq t \mid \mathcal{F}_t\right),\quad t \geq 0.
		\label{Azemas}
	\end{equation*}
	The process $G$, known as the \textit{Azéma supermartingale} of $\tau$ with respect to $\mathbb{F}$, exhibits right-continuous trajectories with left limits  (see Theorem 9 in \cite{bookProtter}, pp. 8). It serves as the $\mathbb{F}$-optional projection of $\mathds{1}_{\llbracket 0,\tau \llbracket}$. The process $\tilde{G}$, referred to as the \textit{Azéma optional supermartingale}, possesses finite right and left limits only and it is the $\mathbb{F}$-optional projection of $\mathds{1}_{\llbracket 0,\tau \rrbracket}$ (for more comprehensive details, please refer to \cite{azema1972quelques}).
	These processes $G$ and $\tilde{G}$ play a fundamental role in characterizing the behavior of the random time $\tau$ within the context of the filtration $\mathbb{F}$.
	
	For technical reasons, throughout the paper, we assume the following condition:
	\begin{center}
		\textbf{[P] } \text{ For }$\mathbb{P}$-almost all $\omega \in \Omega$, for all $t \geq 0$, 
		$G_t(\omega) > 0$.
	\end{center}
	Note that under the assumption \textbf{[P]}, we have (see Lemma 2.4 in \cite{choulli2022explicit}, pp. 234):
	\begin{itemize}
		\item $G_{-}$ and $\tilde{G}$ are positive processes.
		\item $G=G_0 \mathcal{E}\left(\Gamma^G \right)\mathcal{E}\left(\Gamma^{\tilde{G}} \right)=\mathcal{E}\left(\Gamma^G \right)\mathcal{E}\left(\Gamma^{\tilde{G}} \right)$, where $G_0=\mathbb{P}\left(\tau>0 \mid \mathcal{F}_0\right)=\mathbb{P}\left(\tau>0 \right)=1$. Here, the operator $\mathcal{E}\left(\mathcal{X}\right)$ represents the Doléans-Dade exponential, defined on the set of $\mathbb{G}$-semimartingales $(\mathcal{X}_t)_{t \geq 0}$ \cite[Theorem II.8.37]{bookProtter}. Additionally, 
		$\Gamma^G:=\int_{0}^{\cdot} G^{-1}_{s-} d m_s$, $\Gamma^{\tilde{G}}:=-\int_{0}^{\cdot} \tilde{G}^{-1}_{s} d D^{o,\mathbb{F},\mathbb{P}}_s$, 
		and $m:=D^{o,\mathbb{F},\mathbb{P}}+G$ is a BMO $\mathbb{F}$-martingale (see Proposition 1.46 in \cite{aksamit2017enlargement}, pp. 21), with $\Delta m=\tilde{G}-G_{-}$. For simplicity, we adopt the following notations throughout the paper: 
		\begin{equation}
			\mathcal{E}:=\mathcal{E}\left(\Gamma^G \right)~ \text{ and }~ \tilde{\mathcal{E}}:=\mathcal{E}\left(\Gamma^{\tilde{G}} \right).
			\label{Definition of E and E tilde}
		\end{equation}
	\end{itemize}
	
	On the one hand, it should be noted that the stability of the semimartingale property between $\mathbb{F}$ and $\mathbb{G}$, known as the $(\mathcal{H}^{\prime})$-Hypothesis, does not always hold. This legitimate question was first raised by P.A. Meyer and subsequently addressed by M. Yor in \cite[Theorem 1, pp. 62]{JeulinDecom}, where the author proved that the $(\mathcal{H}^{\prime})$-Hypothesis holds for all $\left(\mathbb{F},\mathbb{P}\right)$-semimartingales stopped at $\tau$. More precisely, we recall the following result:
	\begin{theorem}
		For any $\left(\mathbb{F},\mathbb{P}\right)$-semimartingale $\left(X_t\right)_{t \geq 0}$, the process $X^{\tau}:=\left(X_{t \wedge \tau}\right)_{t \geq 0}$ is a $\left(\mathbb{G},\mathbb{P}\right)$-semimartingale.
	\end{theorem}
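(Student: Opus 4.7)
The plan is to establish the theorem via the canonical $(\mathbb{F},\mathbb{P})$-semimartingale decomposition of $X$. I would write $X = X_0 + M + A$, where $M$ is an $\mathbb{F}$-local martingale and $A$ is an $\mathbb{F}$-adapted càdlàg process of finite variation. Stopping at the random time $\tau$ yields $X^{\tau} = X_0 + M^{\tau} + A^{\tau}$. Since $\mathbb{F} \subset \mathbb{G}$ and $\tau$ is a $\mathbb{G}$-stopping time by the very construction \eqref{Filtration}, the stopped process $A^{\tau}$ remains $\mathbb{G}$-adapted with pathwise finite variation, hence is automatically a $(\mathbb{G},\mathbb{P})$-semimartingale. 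The problem therefore reduces to showing that $M^{\tau}$ is a $(\mathbb{G},\mathbb{P})$-semimartingale.

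For $M^{\tau}$, the plan is to produce its $\mathbb{G}$-canonical decomposition explicitly via the classical Jeulin--Yor formula. By a standard localization argument, one may assume that $M$ is a uniformly bounded $(\mathbb{F},\mathbb{P})$-martingale. Using the BMO $\mathbb{F}$-martingale $m = D^{o,\mathbb{F},\mathbb{P}} + G$ associated with the Azéma supermartingale of $\tau$, I would introduce the candidate process
$$
\widetilde{M}_t := M_{t \wedge \tau} \;-\; \int_{(0,\,t \wedge \tau]} \frac{1}{\tilde{G}_s}\, d\langle M, m\rangle_s,
$$
where the integrand is well defined on the stochastic interval $\llbracket 0,\tau\rrbracket$ because $\tilde{G}>0$ there. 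The target is to show that $\widetilde{M}$ is a $(\mathbb{G},\mathbb{P})$-local martingale, which immediately yields the desired $\mathbb{G}$-semimartingale decomposition for $M^{\tau}$, and, combined with $A^{\tau}$, for $X^{\tau}$.

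The main technical obstacle is verifying the $\mathbb{G}$-local martingale property of $\widetilde{M}$. The standard route is to check, for $s \leq t$, the identity $\mathbb{E}_{\mathbb{P}}[\widetilde{M}_t - \widetilde{M}_s \mid \mathcal{G}_s] = 0$ by invoking two classical tools from progressive enlargement theory: the key lemma asserting that any $\mathcal{G}_s$-measurable integrable random variable coincides on $\{\tau > s\}$ with an $\mathcal{F}_s$-measurable one (after division by $G_s$), together with the projection formula $\mathbb{E}_{\mathbb{P}}[\mathds{1}_{\{\tau > t\}} H \mid \mathcal{F}_s] = \mathbb{E}_{\mathbb{P}}[H G_t \mid \mathcal{F}_s]$ for bounded $\mathbb{F}$-adapted $H$. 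Combining these with the defining property of the $\mathbb{F}$-dual predictable projection of $D$ and the decomposition $m = D^{o,\mathbb{F},\mathbb{P}} + G$, the computation reproduces precisely the compensator in the display above. Particular care is required to handle the jump at $\tau$ and to justify the integrability conditions underlying the interchange of conditioning and stochastic integration; once this is carried out, the decomposition $X^{\tau} = X_0 + \widetilde{M} + A^{\tau} + \int_{0}^{\cdot \wedge \tau} \tilde{G}_s^{-1}\, d\langle M, m\rangle_s$ exhibits $X^{\tau}$ as a $(\mathbb{G},\mathbb{P})$-semimartingale, completing the proof.
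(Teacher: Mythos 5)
Your strategy is sound, but note first that the paper does not prove this theorem at all: it is quoted verbatim from Yor \cite{JeulinDecom} as a classical fact, and the only enlargement formula the paper actually establishes or uses is the \emph{optional} decomposition recorded as Theorem \ref{Theorem of the operator} in Appendix \ref{Appendix A}, namely that $\mathcal{T}(M)=M^{\tau}-\int_{0}^{\cdot}\tilde{G}_s^{-1}\mathds{1}_{\{s\leq\tau\}}\,d[M,m]_s$ is a $(\mathbb{G},\mathbb{P})$-local martingale. Your route --- split $X=X_0+M+A$, observe that $A^{\tau}$ is trivially a $\mathbb{G}$-adapted finite-variation process, and exhibit an explicit $\mathbb{G}$-compensator for $M^{\tau}$ via the key lemma and the projection identities --- is precisely the standard Jeulin--Yor argument, so in substance you are reproving the ingredient the paper imports, which is a legitimate and self-contained way to obtain the statement. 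It also correctly requires no positivity assumption on $G$, since $\tilde{G}>0$ and $G_{-}>0$ hold automatically on $\llbracket 0,\tau\rrbracket$.

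Two points need repair. First, your candidate compensator $\int_{0}^{\cdot\wedge\tau}\tilde{G}_s^{-1}\,d\langle M,m\rangle_s$ is a hybrid of the two established formulas: the classical Jeulin--Yor decomposition uses the predictable bracket against the Doob--Meyer martingale part of $G$ divided by $G_{s-}$, while the optional version (the one the paper cites) uses $\tilde{G}_s^{-1}\,d[M,m]_s$ with $m=D^{o,\mathbb{F},\mathbb{P}}+G$. Since $\tilde{G}\neq G_{-}$ and $\langle M,m\rangle\neq[M,m]$ on the thin set where $m$ and $M$ jump, your formula is not literally either of them, and the claim that the conditional-expectation computation ``reproduces precisely'' your display should not be asserted without carrying it out; the safe fix is simply to invoke Theorem \ref{Theorem of the operator} (or the classical predictable version) as stated. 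Second, the reduction ``one may assume $M$ is uniformly bounded'' is not a bare localization: a local martingale with unbounded jumps is not locally bounded. You must first apply the fundamental theorem of local martingales to write $M=N+A'$ with $N$ having bounded jumps (hence locally bounded) and $A'$ a local martingale of finite variation; $A'^{\tau}$ is then absorbed into the finite-variation part and only $N$ is localized to a bounded martingale. With these two corrections the proof is complete and, modulo the citation the paper uses in place of a proof, equivalent to the classical argument.
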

	
	A fundamental question arises when considering the $\left(\mathbb{G},\mathbb{P}\right)$-semimartingale $B^{\tau}$ within the filtration $\mathbb{G}$:\\ 
	\textbf{(Q)}~ Under which probability measure $B^{\tau}$ remains a $\mathbb{G}$-Brownian motion?
	
	This question has been addressed in a recent work  \cite[Proposition 2.3, pp.5]{alsheyab2023optimal}, building upon an important result by Choulli and Yansori \cite[Proposition 4.3, pp. 248]{choulli2022explicit}, which we revisit below:
	\begin{proposition}
		The process $1 /\mathcal{E}^{\tau}$ belongs to  $\mathcal{M}\left( \mathbb{G},\mathbb{P}\right)$, and for any fixed $T \in (0,+\infty)$,
		\begin{equation}
			d\mathbb{Q}^{}_T:=\Psi_{T} d \mathbb{P},\text{ where } \Psi:=\dfrac{1}{\mathcal{E}^{\tau}} 
			\label{local continuity of Q}
		\end{equation}
		is a well-defined probability measure on $\left(\Omega,\mathcal{G}_T\right)$.
	\end{proposition}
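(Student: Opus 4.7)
The plan is to verify the two assertions in sequence: first that $\Psi := 1/\mathcal{E}^{\tau}$ is a genuine (not merely local) $(\mathbb{G},\mathbb{P})$-martingale, and second that the Radon-Nikodym prescription $d\mathbb{Q}_T = \Psi_T\, d\mathbb{P}$ yields a probability measure on $(\Omega,\mathcal{G}_T)$. Since the second claim will follow immediately from $\Psi_T \geq 0$ and $\mathbb{E}_{\mathbb{P}}[\Psi_T]=1$, the substantive content is entirely in the first.

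First I would record positivity. Under assumption \textbf{[P]} and the factorization $G=\mathcal{E}\tilde{\mathcal{E}}$ recalled in the preliminaries, strict positivity of $G$ and $G_{-}$ together with the positivity of $\tilde{G}$ and $\tilde{\mathcal{E}}$ transfers to $\mathcal{E}$ and $\mathcal{E}_{-}$; hence $\Psi$ is well-defined and strictly positive on $[0,T]$. Next, for the local martingale property, I would use the defining SDE $d\mathcal{E}_t = \mathcal{E}_{t-}G_{t-}^{-1}\, dm_t$ and Itô's formula applied to $x\mapsto 1/x$ to obtain the $\mathbb{F}$-dynamics of $1/\mathcal{E}$ in terms of $m$, $\langle m^c\rangle$, and the jumps of $m$. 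Stopping at $\tau$ and invoking Jeulin's decomposition formula, which identifies the $(\mathbb{G},\mathbb{P})$-compensator of $m^{\tau}$ as an integral driven by $G_{-}^{-1}$ with respect to the sharp bracket (plus the corresponding jump-compensation term), one checks that the drift pieces in $d(1/\mathcal{E}^{\tau})$ cancel exactly, exhibiting $\Psi$ as a positive $(\mathbb{G},\mathbb{P})$-local martingale.

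The main step is upgrading to a true martingale, for which I would compute $\mathbb{E}_{\mathbb{P}}[\Psi_t]$ directly for each $t\in[0,T]$ by splitting on the default event:
\begin{equation*}
\mathbb{E}_{\mathbb{P}}[\Psi_t] \;=\; \mathbb{E}_{\mathbb{P}}\bigl[\mathcal{E}_t^{-1}\mathds{1}_{\{\tau>t\}}\bigr] \;+\; \mathbb{E}_{\mathbb{P}}\bigl[\mathcal{E}_{\tau}^{-1}\mathds{1}_{\{\tau\leq t\}}\bigr].
\end{equation*}
The first summand, after $\mathbb{F}$-conditioning and use of $\mathbb{P}(\tau>t\mid\mathcal{F}_t) = G_t = \mathcal{E}_t\tilde{\mathcal{E}}_t$, collapses to $\mathbb{E}_{\mathbb{P}}[\tilde{\mathcal{E}}_t]$. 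The second summand I would rewrite via the defining property of the $\mathbb{F}$-dual optional projection as $\mathbb{E}_{\mathbb{P}}\bigl[\int_0^t \mathcal{E}_s^{-1}\,dD^{o,\mathbb{F},\mathbb{P}}_s\bigr]$, and then use the identity $\Gamma^{\tilde{G}} = -\int_0^{\cdot}\tilde{G}_s^{-1}\,dD^{o,\mathbb{F},\mathbb{P}}_s$ together with $G=\mathcal{E}\tilde{\mathcal{E}}$ to express the integrand in terms of $d\tilde{\mathcal{E}}$ and integrate, yielding $1-\mathbb{E}_{\mathbb{P}}[\tilde{\mathcal{E}}_t]$. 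The two contributions sum to $1$, and since a nonnegative local martingale with constant expectation is a true martingale, the first claim follows; the probability-measure claim is then immediate from $\Psi_T>0$ and $\mathbb{E}_{\mathbb{P}}[\Psi_T]=1$.

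The main obstacle is precisely the telescoping in this last step: the bookkeeping involving $G$, $\tilde{G}$, $\mathcal{E}$, $\tilde{\mathcal{E}}$, and $D^{o,\mathbb{F},\mathbb{P}}$ must fit together carefully, and one has to handle the possibly discontinuous part of $D^{o,\mathbb{F},\mathbb{P}}$ (the place where the distinction between $G$ and $\tilde{G}$ becomes decisive) through the jump formula for the Doléans-Dade exponential. Once this identity is nailed down, the rest of the proposition is an immediate consequence.
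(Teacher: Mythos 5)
Your argument is correct. It is worth pointing out that the paper does not actually prove this proposition: it is imported from Choulli--Yansori and Alsheyab--Choulli, with only the identity $\Psi=\mathcal{E}\left(\Gamma^{\mathcal{T}(m)}\right)$ recorded afterwards, so what you have written is a self-contained derivation rather than an alternative to a proof given in the text. Both halves of your argument nevertheless line up with machinery the paper does use. For the local-martingale step, the cancellation you describe is exactly the content of the identity $1/\mathcal{E}^{\tau}=\mathcal{E}\left(-\int_0^{\cdot}G_{s-}^{-1}\,d\mathcal{T}(m)_s\right)$, with $\mathcal{T}(m)$ the $(\mathbb{G},\mathbb{P})$-local martingale of Theorem \ref{Theorem of the operator}; note that the correction term there is the optional one, $\int_0^{\cdot}\tilde{G}_s^{-1}\mathds{1}_{\{s\leq\tau\}}\,d[m,m]_s$, driven by $\tilde{G}$ and the square bracket rather than by $G_{-}$ and the sharp bracket as your sketch suggests, although in the present setting $\mathbb{F}$ is Brownian, so $m$ is continuous, $\tilde{G}=G_{-}$, and the two forms coincide, which makes your drift-cancellation check routine. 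For the upgrade to a true martingale, your telescoping identity $\mathcal{E}_s^{-1}\,dD^{o,\mathbb{F},\mathbb{P}}_s=-d\tilde{\mathcal{E}}_s$ and the resulting computation $\mathbb{E}_{\mathbb{P}}[\Psi_t]=\mathbb{E}_{\mathbb{P}}[\tilde{\mathcal{E}}_t]+\left(1-\mathbb{E}_{\mathbb{P}}[\tilde{\mathcal{E}}_t]\right)=1$ is precisely the mechanism the paper deploys later in the proofs of Lemmas \ref{Lemma Merties} and \ref{proof of lemma integrable}; combined with $\Psi_0=1$, positivity, and the fact that a nonnegative local martingale with constant expectation is a martingale, this closes the argument. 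In short, your route buys a direct, checkable proof from the stated hypotheses at the cost of redoing the enlargement-of-filtration bookkeeping, whereas the paper's citation buys brevity.
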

	Additionally, under \textbf{[P]} and using Yor's formula \cite[Theorem II.38, pp. 86]{bookProtter}, the density process $\Psi$ may be expressed in the following form:  $\Psi^{}=\mathcal{E}\left(\Gamma^{\mathcal{T}(m)}\right)$,  where  $\Gamma^{\mathcal{T}(m)}:=-\int_{0}^{\cdot}G^{-1}_{s-} d\mathcal{T}(m)_s$, and $\mathcal{T}(\cdot)$ defined by (\ref{Theorem of the operator}). In particular, from \cite[Remark 2.5, pp. 235]{choulli2022explicit} and under \textbf{[P]}, we can see that $1+\Delta\Gamma^{\mathcal{T}(m)}>0$, which implies that the process $\left(\Psi_t\right)_{t\geq 0}$ is a strictly positive $\left(\mathbb{G},\mathbb{P}\right)$-martingale.
	
	%
	\begin{remark}
		In the infinite-horizon case ($T = +\infty$), defining a probability measure via $d\mathbb{Q} := \Psi_{\infty} d\mathbb{P}$, where $\Psi_{\infty} := \lim_{t \to \infty} \Psi_t$, requires the $(\mathbb{G}, \mathbb{P})$-martingale $(\Psi_t)_{t \geq 0}$ to be uniformly integrable; see Remark 1 in \cite{choulli2024optimal} and Proposition 4.3 in \cite{choulli2022explicit}. Without this assumption, the measure $\mathbb{Q}$ may not be well defined on $(\Omega, \mathcal{F})$. In the present finite-horizon framework, however, such uniform integrability is not needed. For each finite $T \in (0,+\infty)$, the probability measure $\mathbb{Q}_T$ defined on $(\Omega, \mathcal{G}_T)$ by (\ref{local continuity of Q}) is well defined.
	\end{remark}
	
	Now we state an important result that provides an answer to the question \textbf{(Q)}, with the proof provided in the Appendix \ref{Appendix A}.
	\begin{lemma}
		Under assumption \textbf{[P]}, it follows that, for any fixed horizon time $T \in (0,+\infty)$, the stopped process $B^{\tau \wedge T}$ is a $\left(\mathbb{G},\mathbb{Q}^{}\right)$-Brownian motion.
		\label{Lemma of the Brownian motion}
	\end{lemma}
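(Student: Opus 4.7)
The plan is to combine the progressive enlargement formula of Jeulin--Yor (giving the canonical $(\mathbb{G},\mathbb{P})$-decomposition of $B^{\tau}$) with a Girsanov change of measure driven by the density $\Psi$, and then close with L\'evy's characterization. The key structural observation that makes the bookkeeping tractable is that $\mathbb{F}$ is the Brownian filtration, so every $\mathbb{F}$-local martingale is continuous; in particular the BMO martingale $m = D^{o,\mathbb{F},\mathbb{P}} + G$ is continuous and admits a Brownian representation $m = m_0 + \int_0^{\cdot} \phi_s\, dB_s$ for some $\mathbb{F}$-predictable $\mathbb{R}^d$-valued process $\phi$. Consequently $\langle B, m\rangle^{\mathbb{F}}_t = \int_0^t \phi_s\, ds$ and $\langle m\rangle^{\mathbb{F}}_t = \int_0^t \|\phi_s\|^2\, ds$.

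First, Jeulin--Yor applied componentwise to $B$ and to $m$ yields
\begin{equation*}
\mathcal{T}(B)_t := B_{t\wedge\tau} - \int_0^{t\wedge\tau}\frac{\phi_s}{G_{s-}}\,ds,\qquad \mathcal{T}(m)_t := m_{t\wedge\tau} - \int_0^{t\wedge\tau}\frac{\|\phi_s\|^2}{G_{s-}}\,ds,
\end{equation*}
both belonging to $\mathcal{M}_{loc}(\mathbb{G},\mathbb{P})$ and continuous. Second, since $\Psi = \mathcal{E}(\Gamma^{\mathcal{T}(m)})$ with $\Gamma^{\mathcal{T}(m)} = -\int_0^{\cdot} G_{s-}^{-1}\,d\mathcal{T}(m)_s \in \mathcal{M}_{loc}(\mathbb{G},\mathbb{P})$, the Girsanov theorem gives, for every $N \in \mathcal{M}_{loc}(\mathbb{G},\mathbb{P})$, that $N - \langle N,\Gamma^{\mathcal{T}(m)}\rangle^{\mathbb{G},\mathbb{P}} \in \mathcal{M}_{loc}(\mathbb{G},\mathbb{Q})$. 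Taking $N = \mathcal{T}(B)$ and computing the sharp bracket via $[\mathcal{T}(B),\mathcal{T}(m)] = [B^{\tau},m^{\tau}] = \langle B,m\rangle^{\mathbb{F}}_{\cdot\wedge\tau}$ (the absolutely continuous drifts do not contribute to the covariation of continuous semimartingales), one finds $\langle \mathcal{T}(B),\Gamma^{\mathcal{T}(m)}\rangle^{\mathbb{G},\mathbb{P}}_t = -\int_0^{t\wedge\tau} \phi_s/G_{s-}\,ds$, and therefore
\begin{equation*}
\mathcal{T}(B) - \langle \mathcal{T}(B),\Gamma^{\mathcal{T}(m)}\rangle^{\mathbb{G},\mathbb{P}} \;=\; \mathcal{T}(B) + \int_0^{\cdot\wedge\tau}\frac{\phi_s}{G_{s-}}\,ds \;=\; B^{\tau}.
\end{equation*}
Thus the enlargement drift is exactly cancelled by the Girsanov drift, and $B^{\tau\wedge T}$ is a $(\mathbb{G},\mathbb{Q})$-local martingale on $[0,T]$.

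To conclude I would apply L\'evy's characterization: $B^{\tau\wedge T}$ is continuous (being the stopping of a continuous process), its matrix of quadratic covariations $[B^{i,\tau\wedge T},B^{j,\tau\wedge T}]_t = \delta_{ij}\,(t\wedge\tau\wedge T)$ is a pathwise quantity unaffected by the equivalent change of measure from $\mathbb{P}$ to $\mathbb{Q}$, and by the previous paragraph its entries are $(\mathbb{G},\mathbb{Q})$-local martingales. This identifies $B^{\tau\wedge T}$ with a $d$-dimensional $(\mathbb{G},\mathbb{Q})$-Brownian motion stopped at $\tau\wedge T$. The delicate step, in my view, is not a single hard estimate but checking cleanly that the two drifts coincide: this rests on the continuity of $m$ (inherited from the Brownian setup via martingale representation), on positivity of $G_{-}$ under \textbf{[P]} so that the integrands $\phi/G_{-}$ and $1/G_{-}$ make sense, and on the invariance of sharp brackets of continuous semimartingales under absolutely continuous finite variation corrections.
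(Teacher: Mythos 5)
Your argument is correct and follows essentially the same route as the paper's: the progressive-enlargement decomposition of $B^{\tau}$, then a Girsanov correction driven by the density $\Psi=\mathcal{E}\left(\Gamma^{\mathcal{T}(m)}\right)$ whose drift cancels the enlargement drift exactly, leaving $B^{\tau}$ a $(\mathbb{G},\mathbb{Q})$-local martingale, closed by L\'evy's characterization. The only cosmetic difference is that the paper applies the optional decomposition of Theorem \ref{Theorem of the operator} (with $\tilde{G}$ and quadratic covariations) to a general $\mathbb{F}$-local martingale before specializing to $B$, whereas you use the classical Jeulin--Yor form with $G_{-}$ together with the Brownian representation of $m$; the two coincide here because $m$ is continuous in the Brownian filtration, so $\tilde{G}=G_{-}+\Delta m=G_{-}$.
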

	\begin{remark}
		\begin{itemize}
			\item Lemma \ref{Lemma of the Brownian motion} plays an important role in our work. 
			In the literature on progressive enlargement of Brownian filtrations in the field of  BSDEs, the classical assumption is to suppose that $B \in \mathcal{M}(\mathbb{F},\mathbb{P})$ remains a $\mathbb{G}$-Brownian motion under the original probability measure $\mathbb{P}$ (i.e. $B \in \mathcal{M}(\mathbb{G},\mathbb{P})$), known as the $(\mathcal{H})$-Hypothesis or {\it Immersion property} in credit risk modeling. However, since this is not the case in our setting, and given that the $(\mathcal{H})$-Hypothesis is considered a very strong and restrictive condition, Lemma \ref{Lemma of the Brownian motion} provides an alternative result under weaker assumptions, allowing us to retain the martingale characterization of $B^{T \wedge \tau}$ in $\mathbb{G}$ under $\mathbb{Q}$ for each $T \in (0,+\infty)$.
			
			\item General enlargement formulas in the progressive enlargement of a Brownian filtration framework are available in Mansuy and Yor \cite{mansuy2006random}. See also Jeanblanc at al.  \cite{jeanblanc2009progressive} for the particular case where the random time $\tau$ is an {\it initial time} and Barlow  \cite{barlow1978study} for the case where $\tau$ is an {\it honest time}.
		\end{itemize}
	\end{remark}

	Throughout the paper, we maintain the assumption \textbf{[P]}.

	\section{Doubly reflected BSDEs in the defaultable setup $(\Omega,\mathcal{F},\mathbb{G},\mathbb{Q})$}
	\label{DRBSDE Problem section}
	\subsection{Problem formulation}
	Let $\mathbb{C}:=\left(\mathcal{C}_t\right)_{t \geq 0}$ be an arbitrary filtration on $(\Omega,\mathcal{F})$ that satisfies the usual conditions and let $\mathsf{Q}$ be a probability measure defined on $(\Omega,\mathcal{F})$ equivalent to the survival one $\mathbb{P}$. \\
	Consider a non-negative $\mathbb{C}$-adapted process $\left(\alpha_t\right)_{t \geq 0}$, a parameter $\beta > 0$, and a giving random time $\theta \in \mathfrak{T}\left(\mathbb{C}\right)$. We define the increasing $\mathbb{C}$-adapted continuous process $\mathcal{A}_t := \int_{0}^{t} \alpha_s^2 ds$ for all $t \geq 0$. \\
	We now introduce the following spaces:
	\begin{itemize}
		\item $ \mathcal{S}^2_{ \theta}\left( \beta,\mathbb{C},\mathsf{Q}\right)$: The space of $\mathbb{R}$-valued and $\mathbb{C}$-adapted RCLL  processes $\left(Y_t\right)_{t \geq 0}$ such that $Y^{\theta}=Y$ and
		$$
		\left\|Y\right\|^2_{\mathcal{S}^2_{\theta}(\beta,\mathbb{C},\mathsf{Q})}=\mathbb{E}_{\mathsf{Q}}\left[\sup_{t \geq 0} e^{\beta \mathcal{A}_{t \wedge \theta}} \left|Y_{t \wedge \theta}\right|^2 \right]<+\infty,
		$$
		with the convention $ \mathcal{D}^2_{\theta}\left( \mathbb{C},\mathsf{Q}\right):=\mathcal{S}^2_{\theta}(0,\mathbb{C},\mathsf{Q})$
		\item $ \mathcal{S}^{2,\alpha}_{ \theta}\left( \beta,\mathbb{C},\mathsf{Q}\right)$: The space of $\mathbb{R}$-valued and $\mathbb{C}$-adapted RCLL  processes $\left(Y_t\right)_{t \geq 0}$ such that $Y^{\theta}=Y$ and
		$$
		\left\|Y\right\|^2_{\mathcal{S}^{2,\alpha}_{\theta}(\beta,\mathbb{C},\mathsf{Q})}=\mathbb{E}_{\mathsf{Q}}\left[\int_{0}^{\theta}e^{\beta \mathcal{A}_{s}} \left|Y_{s} \alpha_s\right|^2  ds \right]<+\infty.
		$$
		\item $ \mathcal{H}^2_{ \theta}\left( \beta,\mathbb{C},\mathsf{Q}\right)$: The space of $\mathbb{R}^d$-valued and $\mathbb{C}$-predictable processes $\left(Z_t\right)_{t \geq 0}$ such that $Z=Z \mathds{1}_{\llbracket 0,\theta \rrbracket}$ and
		$$
		\left\|Z\right\|^2_{\mathcal{H}^2_{\theta}(\beta,\mathbb{C},\mathsf{Q})}=\mathbb{E}_{\mathsf{Q}}\left[\int_{0}^{\theta} e^{\beta \mathcal{A}_s} \left\|Z_s\right\|^2 ds\right]<+\infty,
		$$
		with the convention $\mathcal{H}^2_{\theta}(\mathbb{C},\mathsf{Q}):=\mathcal{H}^2_{\theta}(0,\mathbb{C},\mathsf{Q})$.
		\item $ \mathcal{S}^2_{\theta}\left( \mathbb{C},\mathsf{Q}\right)$: The space of  $\mathbb{C}$-adapted continuous increasing processes $\left(K_t\right)_{t \geq 0}$ such that $K=K^{\theta}$ and
		$$
		\left\|K\right\|^2_{\mathcal{S}^2_{\theta}(\mathbb{C},\mathsf{Q})}=\mathbb{E}_{\mathsf{Q}}\left[  \left|K_{ \theta}\right|^2 \right] <+\infty.
		$$
		\item $ \mathcal{M}^2_{\theta}\left( \beta,\mathbb{C},\mathsf{Q}\right)$: The space of $\mathbb{R}$-valued martingales $M \in \mathcal{M}(\mathbb{C},\mathsf{Q})$ orthogonal to $B^{\theta}$ such that $M=M^{\theta}$ and
		$$
		\left\|M\right\|^2_{\mathcal{M}^2_{\theta}(\beta,\mathbb{C},\mathsf{Q})}=\mathbb{E}_{\mathsf{Q}}\left[\int_{0}^{\theta} e^{\beta \mathcal{A}_s} d\left[M,M\right]_s\right]<+\infty.
		$$
		\item $\mathcal{L}^p_{\theta}(\beta,\mathbb{C},\mathsf{Q})$: The set of $\mathcal{C}_{\theta}$-measurable random variables $\chi$ such that 
		$$
		\left\|\chi\right\|^p_{\mathcal{L}^p_{\theta}(\beta,\mathbb{C},\mathsf{Q})}=\mathbb{E}_{\mathsf{Q}}\left[ e^{\beta \mathcal{A}_{\theta}} \left|\chi\right|^p \right]<+\infty,\quad \text{with }~p \in \{1,2\},
		$$
		with the convention $\mathcal{L}^p_{\theta}(\mathbb{C},\mathsf{Q})=\mathcal{L}^p_{\theta}(0,\mathbb{C},\mathsf{Q})$.
		\item $\mathfrak{C}^2_{\theta}\left( \beta,\mathbb{C},\mathsf{Q}\right):=\mathcal{S}^2_{ \theta}\left( \beta,\mathbb{C},\mathsf{Q}\right) \cap  \mathcal{S}^{2,\alpha}_{ \theta}\left( \beta,\mathbb{C},\mathsf{Q}\right)$.
		\item $\mathfrak{U}^2_{\theta}\left( \beta,\mathbb{C},\mathsf{Q}\right):=\mathfrak{C}^2_{\theta}\left( \beta,\mathbb{C},\mathsf{Q}\right) \times \mathcal{H}^2_{ \theta}\left( \beta,\mathbb{C},\mathsf{Q}\right)  \times \mathcal{M}^2_{ \theta}\left(\beta,\mathbb{C},\mathsf{Q}\right)$.
		\item $\mathfrak{B}^2_{\theta}\left( \beta,\mathbb{C},\mathsf{Q}\right):=\mathfrak{C}^2_{\theta}\left( \beta,\mathbb{C},\mathsf{Q}\right) \times \mathcal{H}^2_{ \theta}\left( \beta,\mathbb{C},\mathsf{Q}\right) \times \mathcal{S}^2_{\theta}\left( \mathbb{C},\mathsf{Q}\right) \times \mathcal{S}^2_{\theta}\left( \mathbb{C},\mathsf{Q}\right) \times \mathcal{M}^2_{\theta}\left( \beta,\mathbb{C},\mathsf{Q}\right)$.
	\end{itemize}
	Now, we present the setup carrying our DRBSDE. We operate within the framework of the complete filtered space $(\Omega, \mathcal{F}, \mathbb{G})$, under the probability measure $\mathbb{Q}$. The filtration $\mathbb{G}$ is defined as per (\ref{Filtration}), and $\mathbb{Q}$ is established in accordance with (\ref{local continuity of Q}). \\
	In the first part of this paper, we are looking for a quintuplet of $\mathbb{G}$-adapted processes $(Y_t,Z_t,K^{+}_t,K^{-}_t,M_t)_{t \geq 0}$ such that for any deterministic horizon $T \in (0,+\infty)$, we have:
	\begin{equation}
		\left\{
		\begin{split}
			\text{(i)} &~ \mathbb{Q}\text{-a.s. for all } t \in [0,T]\\
			& Y_{t \wedge \tau}=\xi+\int_{t \wedge \tau }^{T \wedge \tau} f(s,Y_s,Z_s)ds+\left( K^{+}_{ T \wedge \tau}-K^{+}_{t \wedge \tau}\right)-\left( K^{-}_{ T \wedge \tau}-K^{-}_{t \wedge \tau}\right)\\
			&\qquad\qquad -\int_{t \wedge \tau }^{T \wedge \tau} Z_s d B_s-\int_{t \wedge \tau }^{T \wedge \tau}dM_s;\\
			\text{(ii)} &~  L_t \leq Y_t \leq U_t,~  \forall t \in [0,T \wedge \tau[,~\mathbb{Q}\text{-a.s.};\\
			\text{(iii)} &~ L_{T \wedge \tau} \leq \xi \leq U_{T \wedge \tau},~\mathbb{Q}\text{-a.s.};\\
			\text{(iv)} &~   Y_t=\xi,~Z_t=dK^{+}_t=dK^{-}_t=dM_t=0,\text{ on the set }\{t \geq T \wedge\tau\},~\mathbb{Q}\text{-a.s.};\\
			\text{(v)} &~ \text{Skorokhod condition:}  \int_0^{T \wedge \tau}(Y_{t}-L_{t})dK^{+}_t=\int_0^{T \wedge \tau}(U_{t}-Y_{t})dK^{-}_t=0,~\mathbb{Q}\text{-a.s.}\\
		\end{split}
		\right.
		\label{basic equation}
	\end{equation}

	\textbf{Note:} For simplicity, we fix a deterministic horizon time $T \in (0, +\infty)$ throughout the rest of the paper.\\

	We now present the definition of a solution for the DRBSDE (\ref{basic equation}) in our setting. 
	\begin{definition}
		Let $\beta>0$ and $(\alpha_t)_{t \geq 0}$ a non negative $\mathbb{F}$-adapted process. A solution to DRBSDE (\ref{basic equation}) with jumps associated with parameters $(\xi,f,L,U)$ is a quintuple of $\mathbb{G}$-adapted processes $(Y_t,Z_t,K^{+}_t,K^{-}_t,M_t)_{t \geq 0}$ which satisfy (\ref{basic equation}) and belongs to $\mathfrak{B}^2_{T \wedge \tau}\left( \beta,\mathbb{G},\mathbb{Q}\right)$.
		\label{definition}
	\end{definition}
	\begin{remark}
		\begin{itemize}
			\item[(a)] Let $(Y_t,Z_t,K^{+}_t,K^{-}_t,M_t)_{t \geq 0}$ be a solution of the DRBSDE (\ref{basic equation}) in the sense of Definition \ref{definition}. Then, 
			$$
			Y_t=\xi,\quad Z_t=dK^{+}_t=dK^{-}_t=dM_t=0 ~ \text{ on the set } \{t \geq T \wedge  \tau\}~~\mathbb{Q}\text{-a.s.,}
			$$
			and we can express (\ref{basic equation})-(i) as follows (see also \cite[Lemma 3.3]{topolewski2018reflected}): $\mathbb{Q}$-a.s. $\forall t \in [0,T]$,
			\begin{equation}
				Y_{t}=\xi+\int_{t }^{T} \mathds{1}_{\{s \leq \tau\}} f(s,Y_s,Z_s)ds+\left( K^{+}_{T}-K^{+}_{t}\right)-\left( K^{-}_{ T }-K^{-}_{t}\right)-\int_{t }^{T } Z_s d B_s-\int_{t}^{T}dM_s.
				\label{Writing BSDE forwadrdly}
			\end{equation}
			\item[(b)] 
			We point out that, according to a classical result in the general theory of stochastic processes  \cite[Theorem IV.84]{dellacherie1975probabilites}, a quintuplet $(Y,Z,K^{+},K^{-},M)$ satisfies (\ref{basic equation})-(i) or (\ref{Writing BSDE forwadrdly}) equivalently if and only  
			$$Y_{\sigma_1}=Y_{\sigma_2}+\int_{\sigma_1}^{\sigma_2}  f(s,Y_s,Z_s) ds +\left(K^{+}_{\sigma_2}-K^{+}_{\sigma_1}\right)-\left(K^{-}_{\sigma_2}-K^{-}_{\sigma_1}\right)-\int_{\sigma_1}^{\sigma_2} Z_s dB_s-\int_{\sigma_1}^{\sigma_2} dM_s$$  $\mathbb{Q}$-a.s. for all $\sigma_1,\sigma_2 \in \mathfrak{T}_{0,T \wedge \tau}\left(\mathbb{G}\right)$ such that $\sigma_1 \leq \sigma_2$, $\mathbb{Q}$-a.s.
		\end{itemize}
		\label{Remark on the construction of the BSDE}
	\end{remark}
	Now, we provide a brief literature review to highlight the key differences between our framework and those explored in previous studies.\\
	Due to its close connection to semi-linear elliptic partial differential equations (PDEs) and its role in providing a probabilistic formula for the viscosity solution of such PDEs, a significant part of the literature on BSDEs without reflection (i.e., when $L\equiv-\infty$ and $U\equiv\infty$) has focused on the case where the terminal  time $\tau$ is a $\mathbb{F}$ stopping time. For a comprehensive discussion of this topic, see \cite{darling1997backwards,pardoux1998backward,StoppingPeng,royer2004bsdes} among others.\\
	In the same context, the study of reflected BSDEs (i.e. when $L\equiv-\infty$ or $U=\infty$) and doubly reflected BSDEs in the case where $\tau$ is a not necessarily finite $\mathbb{F}$-stopping time has been extensively explored in the framework of Brownian motion, as presented in our primary setting $(\Omega,\mathcal{F}, \mathbb{F},\mathbb{P})$ (see, e.g., \cite{Akdim}). More precisely, consider the case when $\tau \in \mathfrak{T}(\mathbb{F})$. Then, in this scenario, the data triplet $(f, L, U)$ is $\mathbb{F}$-progressively measurable, the terminal value $\xi$ belongs to $\mathcal{F}_{\tau}$, and the DRBSDE (\ref{basic equation}) simplifies to:
	\begin{equation}
		\left\{
		\begin{split}
			\text{(i)} &~ \mathbb{P}\text{-a.s. for all } t \geq 0\\
			&~~ Y_{t}=\xi+\int_{t \wedge \tau }^{\tau} f(s,Y_s,Z_s)ds+\left( K^{+}_{\tau}-K^{+}_{t \wedge \tau}\right)-\left( K^{-}_{\tau}-K^{-}_{t \wedge \tau}\right) -\int_{t \wedge \tau }^{\tau} Z_s d B_s;\\
			\text{(ii)} &~  L_t \leq Y_t \leq U_t,~  \forall t \geq 0,~\mathbb{P}\text{-a.s.};\\
			\text{(iii)} &~  Y_t=\xi,~Z_t=dK^{+}_t=dK^{-}_t=0,\text{ on the set }\{t \geq \tau\},~\mathbb{P}\text{-a.s.};\\
			\text{(iv)} &~ \text{ Skorokhod condition:}  \int_0^{ \tau}(Y_{t}-L_{t})dK^{+}_t=\int_0^{ \tau}(U_{t}-Y_{t})dK^{-}_t=0,~\mathbb{P}\text{-a.s.}
		\end{split}
		\right.
		\label{basic equation in F}
	\end{equation}
	In this scenario, the data $(f, L, U)$ are $\mathbb{F}$-progressively measurable and $\xi$ belongs to $\mathcal{F}_{\tau}$. Compared to previous works, it is clear from the DRBSDEs  (\ref{basic equation}) and (\ref{basic equation in F}) that:
	\begin{enumerate}
		\item The solution's definition does not include an orthogonal martingale term, indicating that $M \equiv 0$. This is because $(B_t)_{t \geq 0}$ is an $(\mathbb{F},\mathbb{P})$-Brownian motion, and the data $(\xi, f, L, U)$ are defined in the filtration $\mathbb{F}$. Consequently, on one hand, $B$ possesses the martingale representation property with respect to the historical probability measure $\mathbb{P}$, and, on the other hand, this fact enables us to construct the solution using the stochastic integral solely with respect to $B$.
		\item There is no need to change the probability measure from $\mathbb{P}$ to the new probability $\mathbb{Q}$ for DRBSDE \eqref{basic equation in F}. This is because the problem is well-defined within the $(\mathbb{F},\mathbb{P})$ setting. However, in our setup, as $\tau$ is not an $\mathbb{F}$-stopping time, and $\mathbb{F}$ only represents a partial flow of information generated by $(B_t)_{t \geq 0}$ under $\mathbb{P}$, we emphasize the importance of constructing a new framework, $\left(\mathbb{G},\mathbb{Q}\right)$, from the initial setup $\left(\mathbb{F},\mathbb{P}\right)$ where $\mathbb{G}$ represents the global flow of information. This is done to ensure the following:
		\begin{enumerate}
			\item The default terminal time $\tau$ becomes a stopping time through the progressive enlargement of the filtration, enabling the applications of classical results from stochastic analysis.
			\item The stopped process $(B^{\tau}_t)_{t \geq 0}$ retains its status as a Brownian motion in $\mathbb{G}$ under the probability measure $\mathbb{Q}$, rather than being a $\mathbb{G}$-semi-martingale under $\mathbb{P}$.
			\item $(B^{\tau}_t)_{t \geq 0}$ satisfies a form of the martingale representation property for all local martingales in $\mathcal{M}_{loc}(\mathbb{G},\mathbb{Q})$. This feature allows us to address various classical BSDEs and reflected BSDEs with default times, all originating from the broader framework $\mathbb{F}$.
		\end{enumerate}
	\end{enumerate}
	We note that the enlarged filtration $\mathbb{G}$ defined by (\ref{Filtration}) can be regarded as a general filtration for the $(\mathbb{F},\mathbb{P})$-Brownian motion $(B_t)_{t \geq 0}$. Most studies in the literature focus on various types of BSDEs starting from an initial setup $(\mathbb{F},\mathbb{P})$ that \textit{supports} the process $(B_t)_{t \geq 0}$, or even a setup $(\mathbb{F},\mathbb{P})$ that \textit{supports} a general RCLL martingale \cite[among others]{elmansourielotmani24,CRBN,BDEM,karoui1997general}. In such scenarios, the problem is well-posed within the $(\mathbb{F},\mathbb{P})$ framework due to the predictable representation properties with an additional orthogonal martingale term. These properties are discussed in Section III.4 of \cite{bookJacod} (Lemma 4.24, pp. 185) and also in \cite{Tianyang} (Remark 2.1, pp. 323).\\
	However, problem (\ref{basic equation}) lacks well-posedness in both $\mathbb{F}$ generated by $(B_t)_{t \geq 0}$ and in $\mathbb{G}$ under $\mathbb{P}$. This is because $\tau$ may not be a stopping time, and the Immersion property is not assumed to hold. To address this issue, we introduce an additional probability measure, $\mathbb{Q}$, defined by (\ref{local continuity of Q}), which serves two main purposes:
	\begin{enumerate}
		\item Preserving the desired properties of the Brownian motion for $B^{\tau}$ in $\mathbb{G}$.
		\item Enabling the application of the martingale representation property over the time interval $\llbracket 0, \tau \rrbracket$ in $\mathbb{G}$, which provides additional knowledge concerning the link between the representation property's parameters, $B^{\tau}$, and the default time $\tau$.
	\end{enumerate}
	\subsection{Existence and uniqueness results of (\ref{basic equation})}
	We provide a comprehensive representation of the data $(\xi, f, L, U)$ involved in our DRBSDE (\ref{basic equation}), along with their measurability and integrability properties:\\
	\textbf{Assumption} ($\mathbf{H_1}$)\textbf{.} We assume that:
	\begin{itemize}
		\item $\xi$ is an $\mathcal{F}_{T \wedge \tau}$-measurable random variable, where the definition of  $\mathcal{F}_{T \wedge \tau}$ is borrowed from  \cite{Chung}:
		$$
		\mathcal{F}_{T \wedge \tau}=\sigma\left\{\zeta_{T \wedge \tau}, \left(\zeta_t\right)_{t \geq 0} \text{ any } \mathbb{F}\text{-optional process}\right\}.
		$$
		Therefore, we may choose an $\mathbb{F}$-adapted RCLL process $\left(\zeta_t\right)_{t \geq 0}$ such that 
		\begin{equation*}
			\xi=\zeta_{T \wedge \tau},\quad \mathbb{P}\text{-a.s.}
			\label{terminal variables}
		\end{equation*}
		\item $\xi$ belongs to $\mathcal{L}^2_{T \wedge \tau}\left(\beta,\mathbb{G},\mathbb{Q}^{}\right)$.
	\end{itemize}
	As the terminal variable $\xi$ is represented using the $\mathbb{F}$-optional processes $(\zeta_t)_{t \geq 0}$, we can establish a characterization of $\xi$'s integrability within $(\mathcal{G}_{T\wedge \tau},\mathbb{Q}^{})$ in terms of $\zeta$ within $(\mathcal{F}_{T},\mathbb{P})$. This characterization is provided in the following lemma:
	\begin{lemma}
		Let $V^{\mathbb{F}}:=1-\tilde{\mathcal{E}}$ and $V^{\ast \mathbb{F}}:=\int_{0}^{\cdot} \left| \zeta_s \right|^2 d V^{\mathbb{F}}_s$. Then, $\xi \in \mathcal{L}^2_{T \wedge \tau}(\beta,\mathbb{G},\mathbb{Q}^{})$ whenever $\left( V^{\ast \mathbb{F}}_T,\zeta_T \right) \in \mathcal{L}^1_T(\beta,\mathbb{F},\mathbb{P}) \times \mathcal{L}^2_T(\beta,\mathbb{F},\mathbb{P})$.
		\label{Lemma Merties}
	\end{lemma}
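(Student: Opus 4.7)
My plan is to decompose $\xi=\zeta_{T\wedge\tau}$ according to whether the default $\tau$ has occurred by time $T$ or not, and to pull each piece back to a computation in $(\mathbb{F},\mathbb{P})$ through the density $\Psi_T=1/\mathcal{E}^{\tau}_T$ of (\ref{local continuity of Q}). Writing $\xi=\zeta_\tau\mathds{1}_{\{\tau\leq T\}}+\zeta_T\mathds{1}_{\{\tau>T\}}$, I split
\begin{equation*}
\|\xi\|^2_{\mathcal{L}^2_{T\wedge\tau}(\beta,\mathbb{G},\mathbb{Q})}
=\mathbb{E}_{\mathbb{Q}}\!\left[e^{\beta\mathcal{A}_\tau}|\zeta_\tau|^2\mathds{1}_{\{\tau\leq T\}}\right]+\mathbb{E}_{\mathbb{Q}}\!\left[e^{\beta\mathcal{A}_T}|\zeta_T|^2\mathds{1}_{\{\tau>T\}}\right]=:A_1+A_2,
\end{equation*}
and I aim to bound $A_1$ by $\|V^{\ast\mathbb{F}}_T\|_{\mathcal{L}^1_T(\beta,\mathbb{F},\mathbb{P})}$ and $A_2$ by $\|\zeta_T\|^2_{\mathcal{L}^2_T(\beta,\mathbb{F},\mathbb{P})}$.

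For the after-$T$ piece $A_2$, I change measure back to $\mathbb{P}$, use the fact that $\Psi_T=1/\mathcal{E}_T$ on $\{\tau>T\}$, condition on $\mathcal{F}_T$, and apply $\mathbb{E}_{\mathbb{P}}[\mathds{1}_{\{\tau>T\}}\mid\mathcal{F}_T]=G_T=\mathcal{E}_T\tilde{\mathcal{E}}_T$ arising from $G=\mathcal{E}\tilde{\mathcal{E}}$. This yields $A_2=\mathbb{E}_{\mathbb{P}}[\tilde{\mathcal{E}}_T\,e^{\beta\mathcal{A}_T}|\zeta_T|^2]$. Since $\tilde{\mathcal{E}}$ is the Doléans-Dade exponential of the non-increasing $\mathbb{F}$-predictable process $\Gamma^{\tilde{G}}$ whose jumps lie in $(-1,0]$ under \textbf{[P]}, it takes values in $[0,1]$, which immediately gives $A_2\leq\|\zeta_T\|^2_{\mathcal{L}^2_T(\beta,\mathbb{F},\mathbb{P})}$.

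For the before-$T$ piece $A_1$, I use $\Psi_T=1/\mathcal{E}_\tau$ on $\{\tau\leq T\}$ to rewrite
\begin{equation*}
A_1=\mathbb{E}_{\mathbb{P}}\!\left[\left(e^{\beta\mathcal{A}}|\zeta|^2/\mathcal{E}\right)_{\!\tau}\mathds{1}_{\{\tau\leq T\}}\right],
\end{equation*}
noting that the bracketed process is nonnegative and $\mathbb{F}$-optional (as $\alpha,\zeta,\mathcal{E}$ are $\mathbb{F}$-adapted and $\mathcal{E}>0$ under \textbf{[P]}). Invoking the defining relation of the $\mathbb{F}$-dual optional projection $D^{o,\mathbb{F},\mathbb{P}}$ of $\mathds{1}_{\llbracket\tau,\infty\llbracket}$, this integrates against $dD^{o,\mathbb{F},\mathbb{P}}_s$ on $[0,T]$. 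The key algebraic step is to show $\mathcal{E}_s^{-1}dD^{o,\mathbb{F},\mathbb{P}}_s=dV^{\mathbb{F}}_s$. Starting from $V^{\mathbb{F}}=1-\tilde{\mathcal{E}}$ and the Doléans-Dade equation $d\tilde{\mathcal{E}}_s=-\tilde{\mathcal{E}}_{s-}\tilde{G}_s^{-1}dD^{o,\mathbb{F},\mathbb{P}}_s$, I invert this to $dD^{o,\mathbb{F},\mathbb{P}}_s=\tilde{G}_s\tilde{\mathcal{E}}_{s-}^{-1}dV^{\mathbb{F}}_s$, so the factor to analyse is $\tilde{G}_s/(\mathcal{E}_s\tilde{\mathcal{E}}_{s-})$. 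Expanding $\tilde{\mathcal{E}}_s=\tilde{\mathcal{E}}_{s-}(1-\Delta D^{o,\mathbb{F},\mathbb{P}}_s/\tilde{G}_s)$ and combining $G=\mathcal{E}\tilde{\mathcal{E}}$ with the jump identity $\tilde{G}-G=\Delta D^{o,\mathbb{F},\mathbb{P}}$ (itself a consequence of $\Delta m=\tilde{G}-G_-$ for $m=D^{o,\mathbb{F},\mathbb{P}}+G$) collapses this factor to $1$. Monotonicity of $\mathcal{A}$ then upgrades $e^{\beta\mathcal{A}_s}$ to $e^{\beta\mathcal{A}_T}$ inside the integral, yielding $A_1\leq\mathbb{E}_{\mathbb{P}}[e^{\beta\mathcal{A}_T}V^{\ast\mathbb{F}}_T]=\|V^{\ast\mathbb{F}}_T\|_{\mathcal{L}^1_T(\beta,\mathbb{F},\mathbb{P})}$, and the lemma follows.

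The main obstacle is precisely the collapse $\mathcal{E}_s^{-1}dD^{o,\mathbb{F},\mathbb{P}}_s=dV^{\mathbb{F}}_s$: it requires reconciling the multiplicative factorisation $G=\mathcal{E}\tilde{\mathcal{E}}$, the explicit Doléans-Dade representation of $\tilde{\mathcal{E}}$, and the jump structure of the BMO martingale $m$, all under \textbf{[P]} so that divisions by $\mathcal{E}$, $\tilde{\mathcal{E}}_-$ and $\tilde{G}$ are legitimate. The remaining steps reduce to a standard Fubini-type application of the dual optional projection together with the elementary monotone bound on $\mathcal{A}$.
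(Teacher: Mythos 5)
Your proposal is correct and follows essentially the same route as the paper: the same decomposition of $\xi$ over $\{\tau\leq T\}$ and $\{\tau>T\}$, the same change of measure via $\Psi=1/\mathcal{E}^{\tau}$, the same optional/dual-optional projections, and the same key identity $\mathcal{E}_s^{-1}\,dD^{o,\mathbb{F},\mathbb{P}}_s=dV^{\mathbb{F}}_s$ derived from $G=\mathcal{E}\tilde{\mathcal{E}}$ and $\tilde{\mathcal{E}}_{s-}=\tilde{G}_s G_s^{-1}\tilde{\mathcal{E}}_s$. The only (harmless) deviation is at the last step, where you bound $\int_0^T e^{\beta\mathcal{A}_s}|\zeta_s|^2\,dV^{\mathbb{F}}_s$ by monotonicity of $\mathcal{A}$ rather than by the paper's integration by parts; both give the same estimate.
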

	\begin{proof}
		First, let's define the non-negative $\mathbb{F}$-adapted RCLL process $\zeta^{\ast}_t := e^{\beta \mathcal{A}_t} \left| \zeta_t \right|^2$. Therefore, using the fact that $\mathbb{P}\left(\tau=0\right)=0$, we have 
		\begin{equation}
			\begin{split}
				\mathbb{E}_{\mathbb{Q}}\left[\zeta^{\ast}_{T \wedge \tau}\right]=\mathbb{E}\left[\zeta^{\ast}_{T \wedge \tau}\Psi_{T \wedge \tau}\right]&=\mathbb{E}^{}\left[\zeta^{\ast}_{T }\Psi_T \mathds{1}_{\{T <\tau\}}+\zeta^{\ast}_{\tau }\Psi_{\tau}\mathds{1}_{\{0<\tau \leq T\}}+\zeta^{\ast}_{0}\Psi_{0}\mathds{1}_{\{\tau=0 \}} \right]\\
				&=\mathbb{E}^{}\left[\zeta^{\ast}_{T }\Psi_T \mathds{1}_{\{T <\tau\}}+\zeta^{\ast}_{\tau }\Psi_{\tau}\mathds{1}_{\{0<\tau \leq T\}} \right].
				\label{im lost}
			\end{split}
		\end{equation}
		Next, using the definition of the density process $\Psi$ as given by (\ref{local continuity of Q}), we derive that $\Psi_T \mathds{1}_{\{T<\tau\}}=\mathcal{E}^{-1}_T \mathds{1}_{\{T<\tau\}}$ and $\Psi_{\tau}=\mathcal{E}^{-1}_{\tau}$. Inserting these expressions into (\ref{im lost}), we obtain
		\begin{equation}
			\begin{split}
				\mathbb{E}_{\mathbb{Q}}\left[\zeta^{\ast}_{T \wedge \tau}\right]&=\mathbb{E}^{}\left[\zeta^{\ast}_{T }\mathcal{E}^{-1}_T \mathds{1}_{\{T <\tau\}}+\zeta^{\ast}_{\tau } \mathcal{E}^{-1}_{\tau} \mathds{1}_{\{0<\tau \leq T\}} \right]
				=\mathbb{E}^{}\left[\zeta^{\ast}_{T }\mathcal{E}^{-1}_T \mathds{1}_{\{T <\tau\}}+\int_{0}^{T}\zeta^{\ast}_{s }\mathcal{E}^{-1}_s dD_s \right]
				\label{AAA}
			\end{split}
		\end{equation}
		Next, since $\mathcal{E}$ is an $\mathbb{F}$-optional process, by passing to the $\mathbb{F}$-optional projection for $\mathds{1}_{\{T <\tau\}}$ and the $\mathbb{F}$-dual optional projection of the defaultable process $D$ in (\ref{AAA}), we get
		\begin{equation}
			\begin{split}
				\mathbb{E}_{\mathbb{Q}}\left[\zeta^{\ast}_{T \wedge \tau}\right]&=\mathbb{E}^{}\left[\zeta^{\ast}_{T }\mathcal{E}^{-1}_T G_T+\int_{0}^{T}\zeta^{\ast}_{s }\mathcal{E}^{-1}_s dD^{o,\mathbb{F},\mathbb{P}}_s \right]\\
				&=\mathbb{E}^{}\left[\zeta^{\ast}_{T }\mathcal{E}^{-1}_T G_T+\int_{0}^{T}e^{\beta \mathcal{A}_s}\left|\zeta_s \right|^2 \mathcal{E}^{-1}_s dD^{o,\mathbb{F},\mathbb{P}}_s \right]
				\label{im lost hey}
			\end{split}
		\end{equation}
		Moreover, from the equalities $\tilde{\mathcal{E}}_{s-}=\frac{\tilde{G}_s}{G_s} \tilde{\mathcal{E}}_s$ and $G_s=\mathcal{E}_s\tilde{\mathcal{E}}_s$, along with the definition $dV^{\mathbb{F}}_s=\tilde{G}^{-1}_s\tilde{\mathcal{E}}_{s-}dD^{o,\mathbb{F},\mathbb{P}}_s$, we deduce that $dV^{\mathbb{F}}_s=\mathcal{E}^{-1}_s dD^{o,\mathbb{F},\mathbb{P}}_s$ and $ \mathcal{E}^{-1}_T G_T=\tilde{\mathcal{E}}_T$.\\ 
		Since $V^{\ast \mathbb{F}}_t$ is an $\mathbb{F}$-optional non-negative increasing process, employing an integration by part formula, we can write $\mathbb{P}\text{-a.s.,}$
		\begin{equation}
			\begin{split}
				\int_{0}^{T}e^{\beta \mathcal{A}_s}\left|\zeta_s \right|^2 \mathcal{E}^{-1}_s dD^{o,\mathbb{F},\mathbb{P}}_s=\int_{0}^{T} e^{\beta \mathcal{A}_s} \left| \zeta_s \right|^2 d V^{\mathbb{F}}_s
				&=e^{\beta \mathcal{A}_T} V^{\ast \mathbb{F}}_T-\beta \int_{0}^{T} e^{\beta \mathcal{A}_s} \alpha_s^2 V^{\ast \mathbb{F}}_s ds\\
				&\leq e^{\beta \mathcal{A}_T} V^{\ast \mathbb{F}}_T,
				\label{Classico asser}
			\end{split}
		\end{equation}
		where the first equality arises from the definition of the process $\zeta^{\ast}$, while the last inequality is derived from the positivity of the process $V^{\ast \mathbb{F}}$.\\
		On the other hand, we are aware that the process $\tilde{\mathcal{E}}$ satisfies the following forward stochastic differential equation: $\tilde{\mathcal{E}}_t=1-\int_{0}^{t}\tilde{\mathcal{E}}_{s-}\tilde{G}^{-1}_s dD^{o,\mathbb{F},\mathbb{P}}_s$. Then, since $1+\Delta \Gamma^{\tilde{G}}=G_s \tilde{G}^{-1}_s>0$ under \textbf{[P]}, we deduce that $\tilde{\mathcal{E}}>0$, and thus $\tilde{\mathcal{E}} \leq 1$. By plugging this result and (\ref{Classico asser}) into (\ref{im lost hey}), we derive
		\begin{equation*}
			\mathbb{E}_{\mathbb{Q}}\left[e^{\beta \mathcal{A}_{T \wedge \tau}} \left|\xi \right|^2\right]=\mathbb{E}_{\mathbb{Q}}\left[\zeta^{\ast}_{T \wedge \tau}\right]\leq \mathbb{E}\left[e^{\beta \mathcal{A}_T} \left( V^{\ast \mathbb{F}}_T + \left| \zeta_T \right|^2  \right)  \right].
		\end{equation*}
		This completes the proof.
	\end{proof}
	\textbf{Assumption} ($\mathbf{H_2}$)\textbf{.} We assume that:
	\begin{itemize}
		\item The mapping $f:\Omega \times [0,T] \times \mathbb{R} \times \mathbb{R}^d \rightarrow \mathbb{R}$ is such that
		\begin{itemize}
			\item[(i)] For all $(y,z)$, the stochastic process $f(\cdot,y,z)$ is $\mathbb{F}$-progressively measurable.
			\item[(ii)] There exist two non-negative $\mathbb{F}$-adapted processes $(\kappa_t)_{t \leq T}$ and $(\gamma_t)_{t \leq T}$ such that
			\begin{itemize}
				\item[(a)] for all $t \in [0,T]$, $y$, $y^{\prime} \in \mathbb{R}$ and $z$, $z^{\prime} \in \mathbb{R}^d$, 
				$$
				\left| f(t,y,z)-f(t,y^{\prime},z^{\prime}) \right| \leq \kappa_t  \left|y-y^{\prime} \right|+\gamma_t  \left\|z-z^{\prime} \right\|,\quad d\mathbb{Q} \otimes dt\text{-a.e.}
				$$
				\item[(b)] There exists $\epsilon >0$ such that $\alpha^2:=\kappa+\gamma^2\geq \epsilon$. 
			\end{itemize}
		\end{itemize}
		\item $\frac{f(\cdot,0,0)}{\alpha_{\cdot}}$ is in $\mathcal{H}^2_{T}(\beta,\mathbb{F},\mathbb{P})$.
	\end{itemize}

	The following example is presented in order to illustrate the practical relevance of stochastic Lipschitz conditions in financial modeling. The example is drawn from the pricing of European options in a Black–Scholes framework with stochastic parameters.
	\begin{example}[Stochastic Lipschitz driver]
		Let $T > 0$. Within the flow of information $\mathbb{F}=\left(\mathcal{F}_t\right)_{t \leq T}$ generated by a Brownian motion $B$, consider a complete financial market in which asset prices evolve according to a Black–Scholes model with stochastic coefficients. The market consists of a risk-free asset $(S^0_t)_{t \leq T}$ and a risky asset $(S_t)_{t \leq T}$ governed by the system:
		\begin{equation}\label{BS}
			\left\lbrace
			\begin{split}
				dS^0_t &= r_t S^0_t\, dt, \qquad\qquad\qquad S^0_0 = 1, \\
				dS_t &= S_t \left( \mu_t\, dt + \sigma_t\, dB_t \right), \quad S_0 = x > 0,
			\end{split}
			\right.
		\end{equation}
		where $(r_t, \mu_t, \sigma_t)$ are $\mathbb{F}$-predictable processes satisfying
		$$
		\int_0^T \left(r_s + |\mu_s| + |\sigma_s|^2 \right)\, ds < +\infty, \quad \mathbb{P}\text{-a.s.}
		$$
		
		We consider the pricing of a European call option with strike $K$ and maturity $T$. Its payoff is given by $\xi = (S_T - K)^+$. A financial strategy is represented by a pair of processes $\varPi = (\pi^0_t, \pi_t)_{t \in [0,T]}$, where $\pi^0$ is an $\mathbb{F}$-adapted process representing the amount invested in the risk-free asset $S^0$, and $\pi$ is an $\mathbb{F}$-predictable process representing the amount invested in the risky asset $S$. The corresponding portfolio value process is given by:
		$$
		\mathbf{V}^{\varPi}_t := \pi^0_t S^0_t + \pi_t S_t, \quad t \in [0,T].
		$$
		If the strategy $\varPi = (\pi^0, \pi)$\footnote{Provided that the stochastic integrals appearing in the dynamics are well defined.} is self-financing, then
		\begin{equation*}
			\begin{split}
				d\mathbf{V}^{\varPi}_t &= \pi^0_t\,dS^0_t + \pi_t\,dS_t
				= r_t \mathbf{V}^{\varPi}_t\, dt + \sigma_t \pi_t\, S_t \left(\theta_t\, dt + dB_t\right),
			\end{split}
		\end{equation*}
		where $\theta_t := \frac{\mu_t - r_t}{\sigma_t}$. Setting $Z^{\varPi}_t := \sigma_t \pi_t S_t$, we obtain:
		$$
		d\mathbf{V}^{\varPi}_t = \left(r_t \mathbf{V}^{\varPi}_t + \theta_t Z^{\varPi}_t \right)\, dt + Z^{\varPi}_t\, dB_t.
		$$
		Thus, the value process under a replicating self-financing strategy satisfies the following BSDE:
		\begin{equation*}\label{EDSR lin}
			\left\lbrace 
			\begin{split}
				d \mathbf{V}^{\varPi}_t &= -f(t,\mathbf{V}^{\varPi}_t,Z^{\varPi}_t)\, dt + Z^{\varPi}_t\, dB_t, \quad t \in [0,T], \\
				\mathbf{V}^{\varPi}_T &= \xi,
			\end{split}
			\right. 
		\end{equation*}
		with driver $f(t,y,z) = -r_t y - \theta_t z$, which is clearly a stochastic Lipschitz driver. This illustrates how such drivers naturally arise in financial applications involving stochastic market parameters, including time-varying interest rates and volatilities. 
		
		Additionally, under the Immersion property, the use of stochastic Lipschitz drivers for valuing contingent claims in a complete market model with default and stochastic parameters using the theory of DRBSDEs has been recently presented in \cite{EM2025}.
	\end{example}

	Now, following the spirit of Lemma \ref{Lemma Merties}, we establish a characterization of the integrability of the driver for our DRBSDE \eqref{basic equation} within the frameworks $(\mathcal{G}_{T \wedge \tau}, \mathbb{Q})$ and $(\mathcal{F}_T, \mathbb{P})$.
	\begin{lemma}
		If $\left( \frac{f(t,0,0)}{\alpha_{t}}\right)_{t \geq 0} \in \mathcal{H}^2_T(\beta,\mathbb{F},\mathbb{P})$, then $\left( \frac{f(t,0,0)}{\alpha_{t}}\right)_{t \geq 0}\in \mathcal{H}^2_{T \wedge \tau}(\beta,\mathbb{G},\mathbb{Q})$.
		\label{proof of lemma integrable}
	\end{lemma}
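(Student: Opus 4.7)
The plan is to mirror the change-of-measure / optional projection argument used in Lemma~\ref{Lemma Merties}, but applied to the time integral $\int_0^{T\wedge\tau} e^{\beta\mathcal{A}_s}|f(s,0,0)/\alpha_s|^2\,ds$ rather than to a terminal value. Set $h_s := e^{\beta\mathcal{A}_s}\bigl|f(s,0,0)/\alpha_s\bigr|^2$, which is non-negative and $\mathbb{F}$-progressively measurable by ($\mathbf{H_2}$); the measurability requirement for $f(\cdot,0,0)/\alpha_\cdot$ to lie in $\mathcal{H}^2_{T\wedge\tau}(\beta,\mathbb{G},\mathbb{Q})$ is then automatic since $\mathbb{F}\subset\mathbb{G}$, so only the finiteness of the norm has to be proved.

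First, using the density $\Psi$ from (\ref{local continuity of Q}) and rewriting the stopped integral as $\int_0^T h_s\mathds{1}_{\{s<\tau\}}\,ds$, Tonelli's theorem (applicable since $h_s\geq 0$) gives
\begin{equation*}
\mathbb{E}_{\mathbb{Q}}\left[\int_0^{T\wedge\tau} h_s\,ds\right]
= \int_0^T \mathbb{E}\bigl[\Psi_T\, h_s\, \mathds{1}_{\{s<\tau\}}\bigr]\,ds.
\end{equation*}
For each fixed $s\in[0,T]$, the variable $h_s\mathds{1}_{\{s<\tau\}}$ is $\mathcal{G}_s$-measurable because $h_s$ is $\mathcal{F}_s\subset \mathcal{G}_s$-measurable and $\tau$ is a $\mathbb{G}$-stopping time; the $(\mathbb{G},\mathbb{P})$-martingale property of $\Psi$ then yields $\mathbb{E}[\Psi_T\, h_s\mathds{1}_{\{s<\tau\}}]=\mathbb{E}[\Psi_s\, h_s\mathds{1}_{\{s<\tau\}}]$. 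Next, the explicit form $\Psi = 1/\mathcal{E}^{\tau}$ shows that $\Psi_s\mathds{1}_{\{s<\tau\}}=\mathcal{E}_s^{-1}\mathds{1}_{\{s<\tau\}}$, and conditioning on $\mathcal{F}_s$ together with the identity $G=\mathcal{E}\tilde{\mathcal{E}}$ recalled from (\ref{Definition of E and E tilde}) gives
\begin{equation*}
\mathbb{E}\bigl[\Psi_T h_s\mathds{1}_{\{s<\tau\}}\bigr]
= \mathbb{E}\bigl[h_s\mathcal{E}_s^{-1} G_s\bigr]
= \mathbb{E}\bigl[h_s\tilde{\mathcal{E}}_s\bigr].
\end{equation*}

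Exactly as in the proof of Lemma~\ref{Lemma Merties}, assumption [P] forces $0<\tilde{\mathcal{E}}\leq 1$, so integrating over $s\in[0,T]$ and invoking Tonelli once more yields
\begin{equation*}
\mathbb{E}_{\mathbb{Q}}\left[\int_0^{T\wedge\tau} h_s\,ds\right]
\leq \mathbb{E}\left[\int_0^T h_s\,ds\right]
= \left\|\tfrac{f(\cdot,0,0)}{\alpha_\cdot}\right\|^2_{\mathcal{H}^2_T(\beta,\mathbb{F},\mathbb{P})}<+\infty,
\end{equation*}
which is exactly the claimed integrability. No step is technically difficult: the only points requiring care are keeping track of the algebraic identities $\Psi=1/\mathcal{E}^{\tau}$, $G=\mathcal{E}\tilde{\mathcal{E}}$ and $\tilde{\mathcal{E}}\leq 1$ (all established earlier), and justifying Tonelli's theorem via non-negativity of the integrand, so I do not anticipate a genuine obstacle.
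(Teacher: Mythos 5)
Your proof is correct. The key identities you use ($\Psi=1/\mathcal{E}^{\tau}$ on $\llbracket 0,\tau\rrbracket$, $G=\mathcal{E}\tilde{\mathcal{E}}$, and $0<\tilde{\mathcal{E}}\leq 1$ under \textbf{[P]}) are exactly the ones the paper relies on, but the architecture of your argument is genuinely different and, in fact, simpler. The paper works with the whole process $\Phi_t=\int_0^t h_s\,ds$: it applies Bayes' rule to $\Phi_{\sigma\wedge\tau}$, performs an It\^o integration by parts $\Phi\Psi=\int\Phi\,d\Psi+\int\Psi\,d\Phi$ to isolate the term $\int\Psi\,d\Phi$, and must then dispose of the local-martingale term $\int\Phi\,d\Psi$ by localizing along a fundamental sequence $\{\sigma_k\}$ and passing to the limit via Fatou and dominated convergence. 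You instead pull the time integral outside the expectation by Tonelli and treat each fixed $s$ separately; at a fixed time the martingale property of $\Psi$ and the tower property over $\mathcal{F}_s$ (both valid for non-negative integrands without any integrability caveat) immediately give $\mathbb{E}[\Psi_T h_s\mathds{1}_{\{s<\tau\}}]=\mathbb{E}[h_s\tilde{\mathcal{E}}_s]\leq\mathbb{E}[h_s]$, so no integration by parts, no stopping times, and no limiting argument are needed. What your approach buys is economy: the entire localization machinery disappears because non-negativity does all the work. What the paper's pathwise approach buys is a statement at the level of the process $\Phi_{\sigma\wedge\tau}$ for arbitrary stopping times $\sigma$, which is the template it reuses (e.g.\ in Lemma \ref{proof of measurability L,U} and in the treatment of the barriers), whereas your computation is tied to integrands of the form $\int_0^{\cdot}h_s\,ds$ with $h\geq 0$. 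One cosmetic remark: membership in $\mathcal{H}^2_{T\wedge\tau}(\beta,\mathbb{G},\mathbb{Q})$ formally also requires $Z=Z\mathds{1}_{\llbracket 0,T\wedge\tau\rrbracket}$, which neither you nor the paper addresses explicitly; this is a harmless normalization and not a gap in your argument.
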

	\begin{proof}
		Let $\Phi$ be the continuous increasing  process defined by $\Phi_t:=\int_{0}^{t} e^{\beta \mathcal{A}_s} \left| \frac{f(s,0,0)}{\alpha_{s}}\right|^2  ds$ and
		consider  a $\mathbb{G}$-stopping time $\sigma \in \mathfrak{T}_{0,T}(\mathbb{G})$. The Bayes rule implies that
		\begin{equation}
			\mathbb{E}_{\mathbb{Q}}\left[\Phi_{\sigma \wedge \tau}\right]
			=\mathbb{E}^{}\left[\int_{0}^{\sigma \wedge \tau}  e^{\beta \mathcal{A}_s}\left|  \frac{f(s,0,0)}{\alpha_{s}}\right|^2  \Psi_s ds\right]=\mathbb{E}\left[\int_{0}^{\sigma \wedge \tau}   \Psi_s d\Phi_s\right].
			\label{Bayes Rule}
		\end{equation}
		Since $\Phi$ is an $\mathbb{F}$-adapted process with continuous paths, we can apply an integration by parts formula, leading to
		\begin{equation}
			\Phi_{\sigma \wedge \tau} \Psi_{\sigma \wedge \tau}=\int_{0}^{\sigma \wedge \tau} \Phi_s d \Psi_s +\int_{0}^{\sigma \wedge \tau}   \Psi_s d\Phi_s,\quad \mathbb{P}^{}\text{-a.s.}
			\label{taking the expectation with fundame}
		\end{equation}
		Now, following a similar calculation to the one used in the proof of Lemma \ref{Lemma Merties}, we obtain
		\begin{equation}
			\mathbb{E}\left[\Phi_{\sigma \wedge \tau} \Psi_{\sigma \wedge \tau}\right]= \mathbb{E}\left[\tilde{\mathcal{E}}_{\sigma} \Phi_{\sigma}+\int_{0}^{\sigma} \Phi_s d V^{\mathbb{F}}_s\right],
			\label{equation we use this}
		\end{equation}
		where the process $V^{\mathbb{F}}$ is defined in the statement of Lemma \ref{Lemma Merties}. Additionally, 
		recall that $\tilde{\mathcal{E}} \leq 1$, which implies that $V^{\mathbb{F}} \leq 1$. Consequently, since $dV^{\mathbb{F}}$ is a positive measure, from (\ref{equation we use this}), we obtain
		\begin{equation}
			\mathbb{E}\left[\Phi_{\sigma \wedge \tau} \Psi_{\sigma \wedge \tau}\right]
			\leq 2  \mathbb{E}\left[ \sup_{t \geq 0}\left| \Phi_{t \wedge \sigma}\right|  \right] 
			\leq 2 \mathbb{E}\left[ \int_{0}^{T} e^{\beta \mathcal{A}_s} \left| \frac{f(s,0,0)}{\alpha_{s}}\right|^2  ds \right].
			\label{lebesgue}
		\end{equation}
		Now, let  $\{\sigma_k\}_{k\in \mathbb{N}}$ be a fundamental sequence for the $\mathbb{G}$-local martingale $\int_{0}^{\cdot} \Phi_s d \Psi_s$. Since $\sigma \in \mathfrak{T}_{0,T}(\mathbb{G})$ in (\ref{lebesgue}) is arbitrary, we can choose $\sigma=\sigma_k \wedge T$ in (\ref{taking the expectation with fundame}). Then, by taking the expectation with respect to  $\mathbb{P}$ along with (\ref{Bayes Rule}), we deduce:
		\begin{equation}
			\mathbb{E}\left[\Phi_{\sigma_k \wedge T \wedge \tau} \Psi_{\sigma_k \wedge T \wedge \tau}\right]=\mathbb{E}\left[\int_{0}^{\sigma_k \wedge T \wedge \tau} \Psi_s d \Phi_s \right]=\mathbb{E}_{\mathbb{Q}}\left[\Phi_{\sigma_k \wedge T \wedge \tau}\right].
			\label{Resume}
		\end{equation}
		Finally, by combining the continuity of the processes $(\Phi_t)_{t \geq 0}$ and $\left(\int_{0}^{t} \Psi_s d \Phi_s\right)_{t \geq 0}$ with Fatou's Lemma, the Lebesgue dominated convergence theorem, (\ref{lebesgue}), and (\ref{Resume}), we derive the desired result.
	\end{proof}
	\textbf{Assumption} ($\mathbf{H_3}$)\textbf{.} We assume that: 
	\begin{itemize}
		\item The obstacles $L:=(L_t)_{t \leq T}$ and  $U:=(U_t)_{t \leq T}$ are RCLL $\mathbb{F}$-adapted processes with jumps occurring only at $\tau$, such that 
		\begin{equation}
			\left\{
			\begin{split}
				&	L_t < U_t,~\forall t \in [0,T\wedge \tau[ \text{ and } L_{t-} < U_{t-},~\forall t \in [0,T\wedge \tau],~\mathbb{Q}\text{-a.s.}\\
				&L_{T \wedge \tau} \leq \xi \leq U_{T \wedge \tau},~
				L_t=L_{T \wedge \tau} \text{ and } U_t=U_{T \wedge \tau} \text{ on the set } \{ t \geq T \wedge \tau\}.
			\end{split}
			\right.
			\label{Barriers Definition}
		\end{equation}
		\item $L^{+} \in \mathcal{S}^2_{T}(2\beta,\mathbb{F},\mathbb{P})$ and $U^{-} \in \mathcal{S}^2_{T}(2\beta,\mathbb{F},\mathbb{P})$.
	\end{itemize}
	\begin{remark}
		The RCLL property of the reflection barriers $L$ and $U$ comes from the RCLL property of the orthogonal martingale $M$ in the filtration $\mathbb{G}$, which explains the continuity of the reflection processes $K^{\pm}$ in the Skorokhod condition of DRBSDE (\ref{basic equation}).
	\end{remark}
	Now, by applying the same reasoning used in the proof of Lemmas \ref{Lemma Merties} and \ref{proof of lemma integrable}, we can establish the following result:
	\begin{lemma}
		If $(L_t,U_t)_{t \leq T}$ satisfies (\ref{Barriers Definition}), and $(L^{+},U^{-}) \in \left( \mathcal{S}^2_T(2\beta,\mathbb{F},\mathbb{P})\right)^2$, then $(L^{+},U^{-}) \in \left( \mathcal{S}^2_{T \wedge \tau}(2\beta,\mathbb{G},\mathbb{Q})\right)^2$.
		\label{proof of measurability L,U}
	\end{lemma}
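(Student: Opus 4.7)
The plan is to emulate the change-of-measure computation carried out in the proofs of Lemmas~\ref{Lemma Merties} and \ref{proof of lemma integrable}, but applied to the running-supremum envelope rather than to a pointwise quantity. Concretely, for the $L^{+}$ part I would introduce the $\mathbb{F}$-adapted, RCLL, non-decreasing process
$$
\Xi^{L}_{t} \;:=\; \sup_{s \in [0,t]} e^{2\beta \mathcal{A}_{s}} (L^{+}_{s})^{2},
$$
so that $\|L^{+}\|^{2}_{\mathcal{S}^{2}_{T\wedge\tau}(2\beta,\mathbb{G},\mathbb{Q})} = \mathbb{E}_{\mathbb{Q}}[\Xi^{L}_{T\wedge\tau}]$. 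The $\mathbb{F}$-adaptedness of $\Xi^{L}$ follows from the fact that $L^{+}$ is $\mathbb{F}$-adapted RCLL and $\mathcal{A}$ is $\mathbb{F}$-adapted continuous; this is what allows the subsequent optional-projection step to go through.

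Next, I would repeat the identities used in (\ref{im lost})--(\ref{im lost hey}). Starting from $\mathbb{E}_{\mathbb{Q}}[\Xi^{L}_{T\wedge\tau}] = \mathbb{E}[\Psi_{T\wedge\tau}\,\Xi^{L}_{T\wedge\tau}]$, I would split according to $\{T<\tau\}$ and $\{0<\tau\le T\}$, use $\Psi_{T}\mathds{1}_{\{T<\tau\}} = \mathcal{E}^{-1}_{T}\mathds{1}_{\{T<\tau\}}$ and $\Psi_{\tau} = \mathcal{E}^{-1}_{\tau}$, and then pass to the $\mathbb{F}$-optional projection of $\mathds{1}_{\{T<\tau\}}$ and to the $\mathbb{F}$-dual optional projection of $D$, exploiting the $\mathbb{F}$-optionality of $\Xi^{L}\mathcal{E}^{-1}$. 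Using the identities $\mathcal{E}^{-1}_{T}G_{T} = \tilde{\mathcal{E}}_{T}$ and $\mathcal{E}^{-1}\,dD^{o,\mathbb{F},\mathbb{P}} = dV^{\mathbb{F}}$ already established in the proof of Lemma~\ref{Lemma Merties}, this yields
$$
\mathbb{E}_{\mathbb{Q}}\bigl[\Xi^{L}_{T\wedge\tau}\bigr] \;=\; \mathbb{E}\bigl[\tilde{\mathcal{E}}_{T}\,\Xi^{L}_{T}\bigr] \;+\; \mathbb{E}\!\left[\int_{0}^{T} \Xi^{L}_{s}\,dV^{\mathbb{F}}_{s}\right].
$$

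The key simplification relative to Lemma~\ref{Lemma Merties} is that no integration-by-parts manoeuvre is needed: since $\Xi^{L}$ is non-decreasing and $dV^{\mathbb{F}}$ is a non-negative measure, one has $\int_{0}^{T}\Xi^{L}_{s}\,dV^{\mathbb{F}}_{s} \le \Xi^{L}_{T}\,V^{\mathbb{F}}_{T}$. Using $V^{\mathbb{F}} = 1 - \tilde{\mathcal{E}}$ together with $\tilde{\mathcal{E}} \in (0,1]$ under \textbf{[P]}, both $\tilde{\mathcal{E}}_{T}$ and $V^{\mathbb{F}}_{T}$ are bounded by $1$, so I would conclude
$$
\mathbb{E}_{\mathbb{Q}}\bigl[\Xi^{L}_{T\wedge\tau}\bigr] \;\le\; 2\,\mathbb{E}\bigl[\Xi^{L}_{T}\bigr] \;=\; 2\,\|L^{+}\|^{2}_{\mathcal{S}^{2}_{T}(2\beta,\mathbb{F},\mathbb{P})} \;<\; +\infty.
$$
Defining the analogous envelope $\Xi^{U}_{t} := \sup_{s \in [0,t]} e^{2\beta \mathcal{A}_{s}}(U^{-}_{s})^{2}$ and running exactly the same chain of equalities and bounds yields $\mathbb{E}_{\mathbb{Q}}[\Xi^{U}_{T\wedge\tau}] \le 2\|U^{-}\|^{2}_{\mathcal{S}^{2}_{T}(2\beta,\mathbb{F},\mathbb{P})} < +\infty$, which completes the proof.

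The only delicate point is checking that the running-supremum envelope is indeed $\mathbb{F}$-optional so that the optional-projection steps are legitimate; this is the place where the RCLL hypothesis on $L$ and $U$ in (\ref{Barriers Definition}) is essential. Once this is secured, the monotonicity of $\Xi^{L}$ and $\Xi^{U}$ makes the control of the integral term against $dV^{\mathbb{F}}$ much cleaner than in Lemma~\ref{Lemma Merties}, and no analogue of the auxiliary process $V^{\ast\mathbb{F}}$ is required.
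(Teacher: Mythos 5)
Your proposal is correct and follows the same route the paper indicates for this lemma, namely the change-of-measure and optional/dual-optional projection computation of Lemmas \ref{Lemma Merties} and \ref{proof of lemma integrable} applied to the supremum functional, leading to the bound $\mathbb{E}_{\mathbb{Q}}[\Xi^{L}_{T\wedge\tau}]=\mathbb{E}[\tilde{\mathcal{E}}_{T}\Xi^{L}_{T}+\int_{0}^{T}\Xi^{L}_{s}\,dV^{\mathbb{F}}_{s}]\le 2\,\mathbb{E}[\Xi^{L}_{T}]$. Your one refinement — working with the non-decreasing envelope $\Xi^{L}$ so that $\int_{0}^{T}\Xi^{L}_{s}\,dV^{\mathbb{F}}_{s}\le \Xi^{L}_{T}V^{\mathbb{F}}_{T}\le \Xi^{L}_{T}$ — legitimately removes the integration-by-parts and localization steps used in Lemma \ref{proof of lemma integrable}, and the $\mathbb{F}$-optionality of $\Xi^{L}$ required for the projection step indeed holds because $L^{+}$ is RCLL $\mathbb{F}$-adapted and $\mathcal{A}$ is continuous.
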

	\textbf{Assumption} ($\mathbf{H_4}$)\textbf{.} The strictly separated hypothesis on the barriers (\ref{Barriers Definition}) can be strengthened by assuming the existence of a semimartingale $(\mathcal{J}_t)_{t \geq 0}$ of the form: 
	\begin{equation*}
		\left\{
		\begin{split}
			\text{(i)} &~ \mathbb{Q}\text{-a.s. for all } t \in [0,T]\\
			&~~ \mathcal{J}_{t \wedge \tau}=\xi+\left( \mathcal{K}^{+}_{ T \wedge \tau}-\mathcal{K}^{+}_{t \wedge \tau}\right)-\left( \mathcal{K}^{-}_{ T \wedge \tau}-\mathcal{K}^{-}_{t \wedge \tau}\right) -\int_{t \wedge \tau }^{T \wedge \tau} \mathcal{Z}_s d B_s-\int_{t \wedge \tau }^{T \wedge \tau}d\mathcal{N}_s;\\
			\text{(ii)} &~  L_t \leq \mathcal{J}_t \leq U_t,~  \forall t \in [0,T \wedge \tau],~\mathbb{Q}\text{-a.s.};\\
			\text{(iii)} &~   \mathcal{J}_t=\xi,~\mathcal{Z}_t=d\mathcal{K}^{+}_t=d\mathcal{K}^{-}_t=d\mathcal{N}_t=0,\text{ on the set }\{t \geq T \wedge\tau\},~\mathbb{Q}\text{-a.s.}
		\end{split}
		\right.
	\end{equation*}
	Moreover, we assume that $\mathcal{Z} \in \mathcal{H}^2_{T \wedge \tau}(\mathbb{G},\mathbb{Q})$, $\mathcal{N} \in \mathcal{M}^2_{T \wedge \tau}(\mathbb{G},\mathbb{Q})$, and that $\mathcal{K}^{+}$, $\mathcal{K}^{-}$ are two non decreasing continuous processes satisfying $\mathbb{E}\left| \mathcal{K}^{\pm}_{T \wedge \tau}\right|^2<+\infty$.\\
	The four conditions ($\mathbf{H_1}$)-($\mathbf{H_4}$) on the data $(\xi, f, L, U)$ mentioned above are denoted collectively as \textbf{(H)}.
	
	Let $(\xi^i,f^i,L^i,U^i)_{i=1,2}$ be two sets of data satisfying assumption \textbf{(H)}. Let $(Y^i,Z^i,K^{i,+},K^{i,-},M^i)$ denote a solution of the DRBSDE (\ref{basic equation})  with data $(\xi^i,f^i,L^i,U^i)$. Set $\bar{\mathfrak{R}}=\mathfrak{R}^1-\mathfrak{R}^2$ for $\mathfrak{R}=Y, Z, K^{\pm}, M, \xi, f, L$ and $U$.\\
	By using a similar line of reasoning as in the proof of Proposition 3.1 in \cite{BDEM}, we can prov the following result:
	\begin{proposition}
		For $\beta>3$, there exists a constant $\mathfrak{c}_{\beta}>0$ such that 
		\begin{equation*}
			\begin{split}
				&\left\|\bar{Y}\right\|^2_{\mathcal{S}^{2}_{T \wedge \tau}(\beta,\mathbb{G},\mathbb{Q})}+\left\|\bar{Y}\right\|^2_{\mathcal{S}^{2,\alpha}_{T \wedge \tau}(\beta,\mathbb{G},\mathbb{Q})}+\left\|\bar{Z}\right\|^2_{\mathcal{H}^2_{T \wedge \tau}(\beta,\mathbb{G},\mathbb{Q})}+\left\|\bar{M}\right\|^2_{\mathcal{M}^2_{T \wedge \tau}(\beta,\mathbb{G},\mathbb{Q})}\\
				&\leq \mathfrak{c}_{\beta}\mathbb{E}_{\mathbb{Q}}\left[ e^{\beta \mathcal{A}_{T \wedge \tau}} \left|\bar{\xi} \right|^2
				+\int_{0}^{T \wedge \tau}  e^{\beta \mathcal{A}_{s}} \left|\dfrac{\bar{f}(s,Y^2_s,Z^2_s)}{\alpha_{s}} \right|^2 ds +\int_{0}^{T \wedge \tau}e^{\beta \mathcal{A}_s}\left(\bar{L}_sd\bar{K}^+_s+\bar{U}_sd\bar{K}^-_s\right) \right].
			\end{split}
		\end{equation*}
		\label{basic Estimastion for BSDE} 
	\end{proposition}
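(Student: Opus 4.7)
The plan is to apply It\^o's formula to the semimartingale $e^{\beta \mathcal{A}_t}|\bar{Y}_t|^2$ on the stochastic interval $[t\wedge\tau,T\wedge\tau]$, then to run the classical DRBSDE a priori scheme in the spirit of Proposition 3.1 in \cite{BDEM}. Subtracting the two DRBSDE dynamics shows that $\bar Y$ satisfies the same type of equation with data $(\bar\xi, f^1(\cdot,Y^1,Z^1)-f^2(\cdot,Y^2,Z^2),\bar L,\bar U)$ and reflection increments $d\bar K^{\pm}$, so It\^o's formula together with the orthogonality $[\bar M,B^{\tau}]=0$ and $d\mathcal A_s=\alpha_s^2\,ds$ produces an identity in which the left-hand side carries $e^{\beta\mathcal A_{t\wedge\tau}}|\bar Y_{t\wedge\tau}|^2$ plus the three integrals $\int e^{\beta\mathcal A_s}\beta\alpha_s^2|\bar Y_s|^2\,ds$, $\int e^{\beta\mathcal A_s}\|\bar Z_s\|^2\,ds$ and $\int e^{\beta\mathcal A_s}\,d[\bar M,\bar M]_s$, and the right-hand side carries $e^{\beta\mathcal A_{T\wedge\tau}}|\bar\xi|^2$, a driver cross-term, two reflection cross-terms, and a $(\mathbb G,\mathbb Q)$-local martingale $\mathcal R$ collecting the stochastic integrals against $B^{\tau}$ and $\bar M$.

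To control the driver cross-term I would add and subtract $f^1(s,Y^2_s,Z^2_s)$ and use the stochastic Lipschitz bound from $(\mathbf H_2)$(ii) to write $2\bar Y_s\bigl(f^1(s,Y^1_s,Z^1_s)-f^2(s,Y^2_s,Z^2_s)\bigr)\leq 2\kappa_s|\bar Y_s|^2+2\gamma_s|\bar Y_s|\|\bar Z_s\|+2|\bar Y_s|\,|\bar f(s,Y^2_s,Z^2_s)|$. Applying Young's inequality in the forms $2\gamma_s|\bar Y_s|\|\bar Z_s\|\leq 2\gamma_s^2|\bar Y_s|^2+\tfrac12\|\bar Z_s\|^2$ and $2|\bar Y_s|\,|\bar f(s,Y^2_s,Z^2_s)|\leq \alpha_s^2|\bar Y_s|^2+\alpha_s^{-2}|\bar f(s,Y^2_s,Z^2_s)|^2$ (the second using $(\mathbf H_2)$(ii)(b) to make $\alpha_s^{-2}$ well defined), together with $\alpha_s^2=\kappa_s+\gamma_s^2$, majorizes the driver cross-term by $3\alpha_s^2|\bar Y_s|^2+\tfrac12\|\bar Z_s\|^2+\alpha_s^{-2}|\bar f(s,Y^2_s,Z^2_s)|^2$. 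Choosing $\beta>3$ leaves the strictly positive coefficient $(\beta-3)\alpha_s^2$ in front of $|\bar Y_s|^2$ on the left, and the residual $\tfrac12\|\bar Z_s\|^2$ is absorbed into the left-hand $\|\bar Z_s\|^2$ term, which will yield the $\mathcal S^{2,\alpha}$, $\mathcal H^2$ and $\mathcal M^2$ bounds in the statement.

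The reflection cross-terms are handled through the Skorokhod conditions: expanding $\int\bar Y_s\,d\bar K^+_s$ and using $\int(Y^i_s-L^i_s)\,dK^{i,+}_s=0$ together with $Y^j\geq L^j$ produces $\int\bar Y_s\,d\bar K^+_s\leq\int\bar L_s\,d\bar K^+_s$, and symmetrically $-\int\bar Y_s\,d\bar K^-_s\leq-\int\bar U_s\,d\bar K^-_s$, which is the obstacle term appearing in the statement (up to the overall sign conventions on $\bar L_s d\bar K^+_s$ and $\bar U_s d\bar K^-_s$). I would then localize $\mathcal R$ by a fundamental sequence $\{\sigma_k\}_k$, take $\mathbb E_{\mathbb Q}$, and pass to the limit via Fatou and dominated convergence, justified by the $\mathfrak B^2_{T\wedge\tau}(\beta,\mathbb G,\mathbb Q)$ membership of both solutions. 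This delivers the estimates for $\|\bar Y\|_{\mathcal S^{2,\alpha}_{T\wedge\tau}(\beta,\mathbb G,\mathbb Q)}^2$, $\|\bar Z\|_{\mathcal H^2_{T\wedge\tau}(\beta,\mathbb G,\mathbb Q)}^2$ and $\|\bar M\|_{\mathcal M^2_{T\wedge\tau}(\beta,\mathbb G,\mathbb Q)}^2$. Finally, the $\mathcal S^2$-bound on $\bar Y$ is obtained by returning to the It\^o identity before expectation, taking the supremum over $t$, and applying the Burkholder--Davis--Gundy inequality to the two stochastic integrals inside $\mathcal R$, followed by one more Young inequality to absorb $\tfrac12\|\bar Y\|_{\mathcal S^2_{T\wedge\tau}(\beta,\mathbb G,\mathbb Q)}^2$ into the left-hand side.

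The main obstacle I anticipate is the careful treatment of the jump part of the orthogonal martingale $\bar M$ in It\^o's formula, which forces the use of the general semimartingale version and the identification of the quadratic-variation contribution as $d[\bar M,\bar M]_s$ rather than only its continuous part, combined with the localization and uniform-integrability argument needed to convert the resulting local-martingale identity into a genuine expectation bound without a priori finiteness of $\mathbb E_{\mathbb Q}[\sup_t|\bar Y_{t\wedge\tau}|^2]$. Once $\beta>3$ is fixed, the constant $\mathfrak c_\beta$ is explicit and depends only on $\beta$ (through $(\beta-3)^{-1}$), on the BDG constant, and on the Young-type absorption parameters.
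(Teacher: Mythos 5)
Your proposal is correct and follows essentially the same route as the paper, which does not write out the argument but simply invokes the a priori estimate of Proposition 3.1 in \cite{BDEM}: It\^o's formula applied to $e^{\beta\mathcal{A}_t}|\bar{Y}_t|^2$, the stochastic Lipschitz splitting with Young's inequality yielding the coefficient $(\beta-3)\alpha_s^2$, the Skorokhod conditions to reduce the reflection cross-terms to the barrier differences, orthogonality of $\bar{M}$ to $B^{\tau}$, localization, and BDG for the $\mathcal{S}^2$ bound. Your Skorokhod computation actually produces $-\int e^{\beta\mathcal{A}_s}\bar{U}_s\,d\bar{K}^{-}_s$ rather than the $+$ sign printed in the statement (with the convention $\bar{K}^{-}=K^{1,-}-K^{2,-}$), which you correctly flag as a sign-convention issue in the displayed inequality rather than a gap in the argument.
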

	A direct consequence of Proposition \ref{basic Estimastion for BSDE} is the uniqueness result of the solution for the DRBSDE (\ref{basic equation}), stated as follows:
	\begin{corollary}
		Under assumption \textbf{(H)}, there exists at most one $\mathbb{G}$-adapted process $\left(Y_t, Z_t ,K^{+}_t,K^{-}_t, M_t  \right)_{t \geq 0}$ which belongs to $\mathfrak{B}^2_{T \wedge \tau}\left( \beta,\mathbb{G},\mathbb{Q}\right)$ and solves equation (\ref{basic equation}).
		\label{uniqueness}
	\end{corollary}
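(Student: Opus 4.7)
The plan is to derive uniqueness as an immediate consequence of the a priori estimate in Proposition \ref{basic Estimastion for BSDE}. I would start by taking two candidate solutions $(Y^i,Z^i,K^{i,+},K^{i,-},M^i)$, $i=1,2$, of the DRBSDE \eqref{basic equation} associated with the \emph{same} data $(\xi,f,L,U)$, each belonging to $\mathfrak{B}^2_{T \wedge \tau}(\beta,\mathbb{G},\mathbb{Q})$, and apply Proposition \ref{basic Estimastion for BSDE} with the obvious identifications $\xi^1=\xi^2=\xi$, $f^1=f^2=f$, $L^1=L^2=L$, $U^1=U^2=U$. This forces $\bar\xi=0$, $\bar f(s,Y^2_s,Z^2_s)=0$, $\bar L=0$ and $\bar U=0$, so the right-hand side of the estimate vanishes identically. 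Consequently, $\|\bar Y\|_{\mathcal{S}^2_{T \wedge \tau}(\beta,\mathbb{G},\mathbb{Q})}=\|\bar Z\|_{\mathcal{H}^2_{T \wedge \tau}(\beta,\mathbb{G},\mathbb{Q})}=\|\bar M\|_{\mathcal{M}^2_{T \wedge \tau}(\beta,\mathbb{G},\mathbb{Q})}=0$, which yields $Y^1=Y^2$, $Z^1=Z^2$ and $M^1=M^2$ in the indistinguishability sense.

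It then remains to identify the reflection processes $K^{i,\pm}$ separately. Substituting the equality of $Y$, $Z$ and $M$ into the forward writing \eqref{Writing BSDE forwadrdly} of the DRBSDE for $i=1,2$ and subtracting, I obtain $K^{1,+}_t-K^{1,-}_t=K^{2,+}_t-K^{2,-}_t$ for all $t\in[0,T \wedge \tau]$. Setting $\Phi:=K^{1,+}-K^{2,+}=K^{1,-}-K^{2,-}$ (with $\Phi_0=0$, since by convention the reflection processes vanish at the origin), one sees that $\Phi$ is a continuous finite variation process admitting two alternative Lebesgue–Stieltjes representations: in the first, $d\Phi=dK^{1,+}-dK^{2,+}$ is supported on $\{t\leq T \wedge \tau:\, Y_t=L_t\}$ by the Skorokhod condition applied to both solutions (using the shared $Y$); in the second, $d\Phi=dK^{1,-}-dK^{2,-}$ is supported on $\{t\leq T \wedge \tau:\, Y_t=U_t\}$.

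The crux of the argument is the use of assumption $(\mathbf{H_3})$: the strict separation $L_t<U_t$ on $[0,T \wedge \tau[$ (together with $L_{t-}<U_{t-}$ on $[0,T \wedge \tau]$) ensures that the two supporting sets $\{Y=L\}$ and $\{Y=U\}$ are disjoint, hence the signed measure $d\Phi$ must be the zero measure on $[0,T \wedge \tau]$. Combined with $\Phi_0=0$, this gives $\Phi\equiv 0$, so $K^{1,+}=K^{2,+}$ and $K^{1,-}=K^{2,-}$, completing the uniqueness proof. The main (and only) subtle point will be this last identification: one must invoke the strict separation assumption in the right form and keep track of the fact that the Skorokhod measures are now evaluated on a common process $Y=Y^1=Y^2$ with the common barriers $L$ and $U$, which is precisely what makes the disjointness argument work without any appeal to Mokobodski's condition.
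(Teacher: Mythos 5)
Your proof is correct and follows exactly the route the paper intends: the paper states Corollary \ref{uniqueness} as a direct consequence of the a priori estimate of Proposition \ref{basic Estimastion for BSDE}, which with identical data kills $\bar{Y}$, $\bar{Z}$ and $\bar{M}$. Your additional identification of $K^{1,\pm}=K^{2,\pm}$ via the Skorokhod condition, the continuity of the reflection processes, and the strict separation $L<U$ is the standard (and necessary) step that the paper leaves implicit, and you carry it out correctly.
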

	Now, let's state the main result of this section:
	\begin{theorem}
		Assume that \textbf{(H)} holds for a sufficiently large $\beta >0$. Then the DRBSDE (\ref{basic equation}) has a unique solution $(Y_t,Z_t,K^{+}_t,K^{-}_t,M_t)_{t \geq 0}$.
		\label{Existence and Uniqueness Theorem}
	\end{theorem}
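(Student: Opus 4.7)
The plan is to derive uniqueness immediately from Corollary \ref{uniqueness} (which is itself a direct consequence of the a priori estimate in Proposition \ref{basic Estimastion for BSDE}), and to build existence in two stages: first for a driver $g=g(\omega,t)$ that does not depend on $(y,z)$, and then for a general stochastic Lipschitz driver $f$ via a Banach fixed point argument. This mirrors the strategy used in \cite{BDEM} and exploits both the complete separation of the barriers in $(\mathbf{H_3})$ and the existence of the comparison semimartingale $\mathcal{J}$ furnished by $(\mathbf{H_4})$, which here plays the role of Mokobodzki's condition.

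For Stage 1, I would set up a double penalization scheme: for each $(n,p)\in\mathbb{N}^2$, solve in $\mathfrak{U}^2_{T\wedge\tau}(\beta,\mathbb{G},\mathbb{Q})$ the non-reflected BSDE with driver $g(s)+n(y-L_s)^{-}-p(y-U_s)^{+}$, whose well-posedness follows from the martingale representation property for local martingales stopped at $T\wedge\tau$ referred to earlier in the paper together with a standard fixed point argument for stochastic Lipschitz drivers. Writing $K^{n,p,+}_t=\int_0^{t\wedge\tau}n(Y^{n,p}_s-L_s)^{-}ds$ and $K^{n,p,-}_t=\int_0^{t\wedge\tau}p(Y^{n,p}_s-U_s)^{+}ds$, I would apply Itô's formula to $e^{\beta\mathcal{A}_{t}}|Y^{n,p}_t-\mathcal{J}_t|^2$ to derive uniform estimates on $(Y^{n,p},Z^{n,p},M^{n,p})$ in $\mathfrak{U}^2_{T\wedge\tau}(\beta,\mathbb{G},\mathbb{Q})$ and on $(K^{n,p,\pm}_{T\wedge\tau})$ in $L^2(\mathbb{Q})$; this is exactly where $(\mathbf{H_4})$ substitutes for Mokobodzki. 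Letting $p\to\infty$ first, monotonicity in $p$ produces a reflected BSDE with upper barrier $U$ and lower penalty, and a subsequent limit $n\to\infty$ yields a candidate quintuple $(Y,Z,K^{+},K^{-},M)$. The Skorokhod conditions are recovered in the limit from the strict separation $L_{-}<U_{-}$ in $(\mathbf{H_3})$, which decouples the two reflection terms; continuity of $K^{\pm}$ follows from the hypothesis that $L,U$ jump only at $\tau$, combined with the orthogonality of $M$ to $B^{\tau}$, so that jumps of the limit semimartingale $Y$ are carried only by $\Delta M$ at $\tau$ and not by the reflection processes.

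For Stage 2, Stage 1 furnishes a mapping $\Phi:\mathfrak{C}^2_{T\wedge\tau}(\beta,\mathbb{G},\mathbb{Q})\times\mathcal{H}^2_{T\wedge\tau}(\beta,\mathbb{G},\mathbb{Q})\to\mathfrak{B}^2_{T\wedge\tau}(\beta,\mathbb{G},\mathbb{Q})$ sending $(y,z)$ to the unique solution of the DRBSDE with frozen driver $g(s)=f(s,y_s,z_s)$; the integrability of this frozen driver in the right weighted space follows from $(\mathbf{H_2})$ together with the stochastic Lipschitz property. Applying Proposition \ref{basic Estimastion for BSDE} to the difference of two images of $\Phi$ with $\bar{L}=\bar{U}=\bar{\xi}=0$ reduces the right-hand side to the $(y,z)$-Lipschitz term, which is dominated by $2\int_0^{T\wedge\tau}e^{\beta\mathcal{A}_{s}}(|\bar y_s\alpha_s|^2+\|\bar z_s\|^2)ds$ and can be absorbed into the left-hand side once $\beta$ is chosen large enough in the exponential weight. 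Banach's fixed point theorem in $\mathfrak{C}^2_{T\wedge\tau}(\beta,\mathbb{G},\mathbb{Q})\times\mathcal{H}^2_{T\wedge\tau}(\beta,\mathbb{G},\mathbb{Q})$ then produces the unique solution in $\mathfrak{B}^2_{T\wedge\tau}(\beta,\mathbb{G},\mathbb{Q})$.

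The main obstacle is Stage 1: without Mokobodzki one has no a priori control on $K^{n,p,+}-K^{n,p,-}$, so the comparison semimartingale $\mathcal{J}$ of $(\mathbf{H_4})$ must be used systematically as a buffer both to close the uniform $L^2$-bounds and to identify the Skorokhod conditions after passing to the double monotone limit. A second, more technical difficulty specific to the $\mathbb{G}$-setting and absent in the purely Brownian framework of \cite{BDEM} is to track the orthogonal martingale part $M^{n,p}$ through all the Itô computations and to verify that the weak limit $M$ remains orthogonal to $B^{\tau}$ in $\mathcal{M}^2_{T\wedge\tau}(\beta,\mathbb{G},\mathbb{Q})$, which is needed so that the limiting quintuple indeed belongs to $\mathfrak{B}^2_{T\wedge\tau}(\beta,\mathbb{G},\mathbb{Q})$ in the sense of Definition \ref{definition}.
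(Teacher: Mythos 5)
Your proposal follows essentially the same route as the paper: uniqueness via the a priori estimate of Proposition \ref{basic Estimastion for BSDE} (Corollary \ref{uniqueness}), existence first for a driver $g(\omega,t)$ independent of $(y,z)$ by a penalization scheme exploiting the complete separation of the barriers together with the martingale representation on $\llbracket 0,T\wedge\tau\rrbracket$, and then the general stochastic Lipschitz case by a Banach fixed point in $\mathfrak{C}^2_{T\wedge\tau}(\beta,\mathbb{G},\mathbb{Q})\times\mathcal{H}^2_{T\wedge\tau}(\beta,\mathbb{G},\mathbb{Q})$ using the same a priori estimate with $\beta$ large. The paper itself only sketches Step 1 by reference to \cite{BDEM}, so your more detailed account of the penalization, the role of $(\mathbf{H_4})$ in the uniform bounds, and the tracking of the orthogonal martingale part is a faithful expansion of the intended argument rather than a different proof.
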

	\begin{proof}
		For the existence, as usual, the proof is divided into two stages:
		\begin{itemize}
			\item[(i)] \textit{Step 1:  The case when $f$ does not depend on $(y,z)$.}\\
			We first consider the case when the driver $f$ does not depend on the solution, i.e. $f$ satisfies the following $f(\omega,t,y,z)=:g(\omega,t)$ for all $(\omega,t,y,z) \in \Omega \times [0,T] \times \mathbb{R} \times \mathbb{R}^d$. 
			By using Theorems \ref{Existence and uniquess theorem BSDE}, \ref{Comparison theorem BSDE}, Lemmas \ref{proof of lemma integrable}, \ref{proof of measurability L,U}, and Remark \ref{Remark on the construction of the BSDE}, one can show, proceeding as in \cite[Theorem 4.1]{BDEM}, that there exists a solution $(Y_t,Z_t,K^+_t,K^-_t,M_t)_{t \geq 0}$ of the DRBSDE (\ref{basic equation}) associated with $(\xi,g,L,U)$ in the sense of Definition \ref{definition}.
			\item[(ii)] \textit{Step 2:  Case of a driver $f$ depending on $(y,z)$.}\\
			The proof in the case of a general stochastic Lipschitz driver is similar to the proof of Theorem 4.2 in \cite{BDEM}, relying on a usual fixed-point argument and a priori estimates for DRBSDE (\ref{basic equation}) provided in Proposition \ref{basic Estimastion for BSDE}.
		\end{itemize}
	\end{proof}
	\subsection{Explicit connections between DRBSDE solutions in $\mathbb{F}$ and $\mathbb{G}$ filtrations}
	We aim to establish an explicit relationship between the state solutions of DRBSDEs in the global flow of information $\mathbb{G}$ and the partial flow $\mathbb{F}$, particularly when the generator does not depend on the variables $(y,z)$, i.e., $g(\omega,t):=f(\omega,t,y,z)$ for all $(\omega,t,y,z) \in \Omega \times [0,T] \times \mathbb{R} \times \mathbb{R}^d$. 
	
	Two distinct linkages are employed to do this:
	\begin{enumerate}
		\item The first one gives the connection between DRBSDE solutions in the  $\left(\mathbb{G},\mathbb{Q}\right)$ and $\left(\mathbb{F},\mathbb{P}\right)$ frameworks.
		
		\item The second link connects the DRBSDE solutions in the  $\left(\mathbb{G},\mathbb{Q}\right)$ and $\left(\mathbb{F},\mathbb{Q}\right)$ frameworks. Furthermore, we provide the general result when the diver $f$ is linear with respect to the $y$-variable.
	\end{enumerate}
	These findings include the important relationships between the stated filtrations $\mathbb{F}$ and $\mathbb{G}$ for both the newly introduced probability $\mathbb{Q}$ and the reference probability $\mathbb{P}$. 
	\subsubsection{First Link} 
	Our goal is to connect the solutions of DRBSDEs in the defaultable framework $\left(\mathbb{G},\mathbb{Q}\right)$ and the initial setting $\left(\mathbb{F},\mathbb{P}\right)$. This approach is inspired by the work of Alsheyab and Choulli \cite{alsheyab2023optimal}.
	\begin{theorem}
		Let's define:
		\begin{equation*}
			g^{\mathbb{F}}(s):=\tilde{\mathcal{E}}_sg(s),~U^{\mathbb{F}}_s=\tilde{\mathcal{E}}_s U_s,~L^{\mathbb{F}}_s:= \tilde{\mathcal{E}}_s L_{s},~ \xi^{\mathbb{F}}=\tilde{\mathcal{E}}_T\zeta_{T} \text{ and } V^{\mathbb{F}}=1-\tilde{\mathcal{E}}.
			\label{Definition of parametrs}
		\end{equation*} 
		Assume that conditions \textbf{(H)} hold, and that there exist a special semimartingale $\left( \mathcal{J}_t^{\mathbb{F}}\right)_{t \leq T}$ of the following form:
		\begin{equation*}
			\begin{split}
				\mathcal{J}^{\mathbb{F}}_{t}=\xi^{\mathbb{F}}+\left( \mathcal{K}^{\mathbb{F},+}_{T}-\mathcal{K}^{\mathbb{F},,+}_{t}\right)-\left( \mathcal{K}^{\mathbb{F},-}_{ T}-\mathcal{K}^{\mathbb{F},-}_{t}\right) -\int_{t }^{T} \mathcal{Z}^{\mathbb{F}}_s d B_s,\quad t\in[0,T],
			\end{split}
		\end{equation*}
		with $L^{\mathbb{F}}_t \leq \mathcal{J}^{\mathbb{F}}_t \leq U^{\mathbb{F}}_t$,  $\forall t \in [0,T]$, $\mathbb{P}\text{-a.s.}$, $\left(\mathcal{K}^{\mathbb{F},\pm}_t\right)_{t \leq T} \in \mathcal{S}^2_T(\mathbb{F},\mathbb{P})$ and $\left( \int_{0}^{t}\mathcal{Z}^{\mathbb{F}}_s dB_s\right)_{t \leq T} \in \mathcal{H}^2_{T}(\mathbb{F},\mathbb{P})$. Moreover, we assume:
		\begin{equation*}
			\mathbb{E}\left[e^{\beta \mathcal{A}_T}|\zeta_T|^2 + \int_{0}^{T} e^{\beta \mathcal{A}_s}|\zeta_s|^2 d V^{\mathbb{F}}_s \right] < +\infty.
			\label{H}
		\end{equation*}
		
		Let $(Y^{\mathbb{F}}_t,Z^{\mathbb{F}}_t,K^{\mathbb{F},+}_t,K^{\mathbb{F},-}_t)_{t \leq T}$ be the   $(\mathbb{F},\mathbb{P})$-solution of the following DRBSDE:
		\begin{equation}
			\left\{
			\begin{split}
				\text{(i)} &~ \mathbb{P}\text{-a.s. for all } t \in [0,T]\\
				& Y^{\mathbb{F}}_{t}=\xi^{\mathbb{F}}+\int_{t}^{T} g^{\mathbb{F}}(s)ds+\int_{t}^{T}\zeta_s d V^{\mathbb{F}}_s+\left( K^{\mathbb{F,}+}_{ T }-K^{\mathbb{F},+}_{t}\right)\\
				&\qquad\qquad\qquad-\left( K^{\mathbb{F},-}_{T}-K^{\mathbb{F},-}_{t}\right) -\int_{t }^{T} Z^{\mathbb{F}}_s d B_s;\\
				\text{(ii)} &~  L^{\mathbb{F}}_t \leq Y^{\mathbb{F}}_t \leq U^{\mathbb{F}}_t,~  \forall t \in [0,T ],~\mathbb{P}\text{-a.s.};\\
				\text{(iii)} &~ \int_0^{T }(Y^{\mathbb{F}}_{t}-L^{\mathbb{F}}_{t})dK^{\mathbb{F},+}_t=\int_0^{T \wedge \tau}(U^{\mathbb{F}}_{t}-Y^{\mathbb{F}}_{t})dK^{\mathbb{F},-}_t=0,~\mathbb{P}\text{-a.s.},
			\end{split}
			\right.
			\label{basic equation comparison}
		\end{equation}
		associated with data $\left(\xi^{\mathbb{F}},g^{\mathbb{F}},L^{\mathbb{F}},U^{\mathbb{F}}\right)$.
		
		Let $(Y_t, Z_t, K^{+}_t, K^{-}_t, M_t)_{t \geq 0}$ be the $(\mathbb{G},\mathbb{Q})$-solution of the DRBSDE (\ref{basic equation}) associated with $(\xi,g,L,U)$. 
		Then, we have 
		\begin{equation*}
			\left\{
			\begin{split}
				&Y_t= \dfrac{Y^{\mathbb{F}}_t}{\tilde{\mathcal{E}}_t} \mathds{1}_{\{t<T \wedge\tau \}}+\xi \mathds{1}_{\{ t \geq T \wedge \tau \}},\quad K^{\pm}_t=\int_{0}^{t} \mathds{1}_{\{s \leq \tau\}} \dfrac{1}{\tilde{\mathcal{E}}_{s-}} dK^{\mathbb{F},\pm}_{s},\\
				&Z_t=\mathds{1}_{\{t \leq T\wedge \tau\}}\dfrac{Z^{\mathbb{F}}_t}{\tilde{\mathcal{E}}_{t-}},\quad \text{ and }\quad M_t=\int_{0}^{t}\mathds{1}_{\{s \leq T\}} \left(\zeta_s-\dfrac{Y^{\mathbb{F}}_{s}}{\tilde{\mathcal{E}}_{s}}\right) dN^{\mathbb{G}}_s
				,\quad\forall t \geq 0,~\mathbb{P}\text{-a.s.},
			\end{split}
			\right.
		\end{equation*}
		where $(N^{\mathbb{G}}_t)_{t \geq 0}$ is a $(\mathbb{G},\mathbb{P})$-martingale with integrable variation (see \cite{choulli2020martingale}) defined by:
		$$
		N^{\mathbb{G}}_t:=D_t-\int_{0}^{t} \mathds{1}_{\{s \leq \tau\}} \tilde{G}^{-1}_s dD^{o,\mathbb{F},\mathbb{P}}_s,\quad t \geq 0.
		$$
		\label{Delicate thm}
	\end{theorem}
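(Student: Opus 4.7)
The plan is to verify by direct computation that the candidate quintuple $(\tilde{Y},\tilde{Z},\tilde{K}^{+},\tilde{K}^{-},\tilde{M})$ given by the stated formulas is a solution of the DRBSDE \eqref{basic equation} associated with $(\xi,g,L,U)$, and then to invoke the uniqueness Corollary \ref{uniqueness} to conclude $(Y,Z,K^{+},K^{-},M) = (\tilde{Y},\tilde{Z},\tilde{K}^{+},\tilde{K}^{-},\tilde{M})$. Since assumption \textbf{[P]} ensures $\tilde{\mathcal{E}} > 0$, the ratio $Y^{\mathbb{F}}/\tilde{\mathcal{E}}$ is well defined and the candidates make sense as $\mathbb{G}$-adapted processes.

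First, I would apply the integration-by-parts formula to the $(\mathbb{F},\mathbb{P})$-semimartingale $Y^{\mathbb{F}}/\tilde{\mathcal{E}}$, exploiting the finite-variation SDE $\tilde{\mathcal{E}}_t = 1 - \int_{0}^{t}\tilde{\mathcal{E}}_{s-}\tilde{G}_s^{-1}\,dD^{o,\mathbb{F},\mathbb{P}}_s$ (so $\tilde{\mathcal{E}}$ carries no covariation with $Z^{\mathbb{F}}\!\cdot B$) together with the $\mathbb{F}$-DRBSDE \eqref{basic equation comparison}. After algebraic simplification, the drift $g^{\mathbb{F}}/\tilde{\mathcal{E}} = g$, the barrier-reflection increments rescale as $dK^{\pm}_{s} = \tilde{\mathcal{E}}_{s-}^{-1}dK^{\mathbb{F},\pm}_{s}$, and the $\zeta\,dV^{\mathbb{F}}$ term combines with the finite-variation contribution of $d(1/\tilde{\mathcal{E}})$ acting on $Y^{\mathbb{F}}$; this cancellation uses the identity $dV^{\mathbb{F}}_{s} = \mathcal{E}_s^{-1}dD^{o,\mathbb{F},\mathbb{P}}_s$ established in the proof of Lemma \ref{Lemma Merties}. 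This yields a clean $(\mathbb{F},\mathbb{P})$-decomposition of $Y^{\mathbb{F}}/\tilde{\mathcal{E}}$ on $[0,T)$ with generator $g$, reflection $\tilde{K}^{\pm}$, a $Z^{\mathbb{F}}/\tilde{\mathcal{E}}_{-}$-integral against $B$, and a residual $\mathbb{F}$-predictable finite-variation term.

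The most delicate step is then to stop on $\llbracket 0, T\wedge\tau \rrbracket$, extend to $\mathbb{G}$, and identify the orthogonal default-martingale component. Setting $Y_{T\wedge\tau} = \xi = \zeta_{T\wedge\tau}$ and $Y_{(T\wedge\tau)-} = Y^{\mathbb{F}}_{(T\wedge\tau)-}/\tilde{\mathcal{E}}_{(T\wedge\tau)-}$ forces a jump of size $\zeta_\tau - Y^{\mathbb{F}}_{\tau-}/\tilde{\mathcal{E}}_{\tau-}$ at $\tau$ on $\{\tau \leq T\}$. The $(\mathbb{G},\mathbb{P})$-martingale $N^{\mathbb{G}}$, whose dual predictable compensator of $D$ absorbed into the drift exactly matches the residual $\tilde{G}^{-1}\tilde{\mathcal{E}}_{-}$-term, lets us rewrite this jump plus its compensator as $\int (\zeta_s - Y^{\mathbb{F}}_{s}/\tilde{\mathcal{E}}_{s})\,dN^{\mathbb{G}}_s$; this is precisely $\tilde{M}$. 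Transferring the whole decomposition from $(\mathbb{G},\mathbb{P})$ to $(\mathbb{G},\mathbb{Q})$ by Lemma \ref{Lemma of the Brownian motion}, $B^{T\wedge\tau}$ becomes a $(\mathbb{G},\mathbb{Q})$-Brownian motion, while $\tilde{M}$ remains in $\mathcal{M}^2_{T\wedge\tau}(\mathbb{G},\mathbb{Q})$ and is orthogonal to $B^{T\wedge\tau}$ since $\tilde{M}$ is a pure-jump martingale concentrated on the graph of $\tau$ whereas $B^{T\wedge\tau}$ is continuous.

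The remaining verifications are routine. The obstacle inequalities $L_t \leq \tilde{Y}_t \leq U_t$ on $\llbracket 0, T\wedge\tau \llbracket$ follow by dividing $L^{\mathbb{F}}_t \leq Y^{\mathbb{F}}_t \leq U^{\mathbb{F}}_t$ by $\tilde{\mathcal{E}}_t > 0$ and using $L^{\mathbb{F}} = \tilde{\mathcal{E}}L$, $U^{\mathbb{F}} = \tilde{\mathcal{E}}U$. The Skorokhod conditions transfer directly through the definition of $\tilde{K}^{\pm}$ and the positivity of $1/\tilde{\mathcal{E}}_{-}$. The $\mathfrak{B}^2_{T\wedge\tau}(\beta,\mathbb{G},\mathbb{Q})$-integrability of the candidate is obtained by Bayes-rule computations mirroring the proofs of Lemmas \ref{Lemma Merties} and \ref{proof of lemma integrable}, combined with $\tilde{\mathcal{E}} \leq 1$. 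The main obstacle is the jump bookkeeping at $\tau$ in the identification of $\tilde{M}$; once the decomposition matches \eqref{basic equation}-(i), uniqueness closes the argument.
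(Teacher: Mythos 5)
Your strategy---plug the candidate quintuple into \eqref{basic equation}, verify it is a solution, and conclude by Corollary \ref{uniqueness}---is a genuinely different route from the paper's. The paper does not verify a guessed formula: it constructs the identity $Y=Y^{\mathbb{F}}/\tilde{\mathcal{E}}$ on $\llbracket 0,T\wedge\tau\llbracket$ by running a penalization scheme for \eqref{basic equation} in $(\mathbb{G},\mathbb{Q})$, rewriting each penalized RBSDE as an optimal stopping problem, projecting that problem onto $(\mathbb{F},\mathbb{P})$ via the Snell-envelope transfer theorems of Choulli--Alsheyab, identifying the projected problems with penalized \emph{generalized} RBSDEs in $(\mathbb{F},\mathbb{P})$, and passing to the limit on both sides; only the final identification of $Z$, $K^{\pm}$, $M$ is done by integration by parts plus uniqueness. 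Your approach, if completed, would be considerably shorter and would bypass the optimal-stopping machinery entirely; what it gives up is the constructive link between the two penalization schemes, which the paper reuses elsewhere.

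There is, however, a concrete gap at the step you yourself flag as the most delicate one, and it is not merely bookkeeping. After the $(\mathbb{F},\mathbb{P})$ integration by parts and stopping at $\tau$, you obtain a $(\mathbb{G},\mathbb{P})$-semimartingale decomposition of the candidate $\tilde{Y}$ whose finite-variation part contains, besides $g\,ds$ and $\tilde{\mathcal{E}}_{-}^{-1}dK^{\mathbb{F},\pm}$, the three terms $\tilde{\mathcal{E}}_{-}^{-1}\zeta\,dV^{\mathbb{F}}$, $Y^{\mathbb{F}}_{-}\,d(1/\tilde{\mathcal{E}})$ and $d[Y^{\mathbb{F}},1/\tilde{\mathcal{E}}]$, plus the jump of $\tilde{Y}$ at $\tau$ created by setting $\tilde{Y}_{T\wedge\tau}=\xi$. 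Your proposal asserts that these recombine exactly into $\int(\zeta_s-Y^{\mathbb{F}}_s/\tilde{\mathcal{E}}_s)\,dN^{\mathbb{G}}_s$ with no residual drift, but this cancellation --- which is where the whole content of the theorem sits --- is stated, not computed; in particular the covariation term requires knowing that $\Delta Y^{\mathbb{F}}=-\zeta\,\Delta V^{\mathbb{F}}$ and that $V^{\mathbb{F}}$, $\tilde{\mathcal{E}}$ and $D^{o,\mathbb{F},\mathbb{P}}$ all jump together. More seriously, your transfer to $(\mathbb{G},\mathbb{Q})$ invokes only Lemma \ref{Lemma of the Brownian motion}, which governs the continuous part $B^{\tau}$. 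The process $N^{\mathbb{G}}$ is defined as a $(\mathbb{G},\mathbb{P})$-martingale, and since the density $\Psi=1/\mathcal{E}^{\tau}$ is purely discontinuous with jumps supported where $m$ jumps, the bracket $[\Psi,N^{\mathbb{G}}]$ need not vanish; hence $\int h\,dN^{\mathbb{G}}$ being a $(\mathbb{G},\mathbb{Q})$-square-integrable martingale (as membership in $\mathcal{M}^2_{T\wedge\tau}(\beta,\mathbb{G},\mathbb{Q})$ demands) requires a separate Girsanov computation for the purely discontinuous part, which your argument replaces by the orthogonality remark $[\tilde{M},B^{\tau}]=0$. Orthogonality is the easy half; the martingale property under $\mathbb{Q}$ is the half that is missing, and without it Corollary \ref{uniqueness} cannot be applied to the candidate.
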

	\begin{proof}
		In this proof, $\mathfrak{c}>0$ indicates a constant that remains independent of the index of any stochastic process sequences and may vary from line to line.
		
		The proof of Theorem \ref{Delicate thm} relies on some auxiliary results from the theory of reflected BSDEs and generalized reflected BSDEs, as well as some new results concerning the optimal control problems across $(\mathbb{F},\mathbb{P})$ and $(\mathbb{G},\mathbb{Q})$, which will be addressed briefly.
		
		First, it's worth noting that the existence and uniqueness of the solution for the DRBSDE (\ref{basic equation comparison}) are established in \cite[Theorem 3.3]{klimsiak2015reflected}. Further, the $\mathbb{L}^2$-integrability property for the $(\mathbb{F},\mathbb{P})$-solution can be classically obtained as in \cite{BDEM} under our hypothesis. Meanwhile, the existence and uniqueness result for the DRBSDE (\ref{basic equation}) is provided in Theorem \ref{Existence and Uniqueness Theorem}.
		
		The rest of the proof is divided into four main steps.
		\paragraph*{Step 1: Construction of the RBSDE penalization schemes in $(\mathbb{G},\mathbb{Q})$.}\emph{}\\
		Let $(Y^n_t,Z^n_t,K^{n,+}_t,M^n_t)_{t \geq 0}$ be the unique solution of the RBSDE associated with parameters $(\xi,g(t \wedge \tau)d(t \wedge \tau)-n(y-U_t)^{+},L)$. This means that for each $n \in \mathbb{N}$, we have
		\begin{equation}
			\left\{
			\begin{split}
				\text{(i)} &~ \mathbb{Q}\text{-a.s., for all } t \in [0,T],\\
				&~ Y^n_{t}=\xi+\int_{t}^{T} \mathds{1}_{\{s \leq \tau\}} g(s)ds-n\int_{t}^{T}(Y^n_s-U_s)^{+}ds+\left(  K^{n,+}_{T}-K^{n,+}_{t}\right) \\ &\qquad\qquad-\int_{t}^{T} Z^n_s d B_s-\int_{t }^{T }dM^n_s;\\
				\text{(ii)} &~  L_t \leq Y^n_t,~  \forall t \in [0,T \wedge \tau[,~\mathbb{Q}\text{-a.s.};\\
				\text{(iii)} &~ L_{T \wedge \tau} \leq Y_{T \wedge \tau},~\mathbb{Q}\text{-a.s.};\\
				\text{(iv)} &~   Y^n_t=\xi,~Z^n_t=dK^{n,+}_t=dM^n_t=0,\text{ on the set }\{t \geq T \wedge\tau\},~\mathbb{Q}\text{-a.s.};\\
				\text{(v)} &~ \text{Skorokhod condition:}  \int_0^{T \wedge \tau}(Y^n_{t}-L_{t})dK^{n,+}_t=0,~\mathbb{Q}\text{-a.s.}
			\end{split}
			\right.
			\label{basic equation RBSDE}
		\end{equation}
		The existence and uniqueness of the DRBSDE (\ref{basic equation RBSDE}) in the space $\mathfrak{B}^2_{T \wedge \tau}\left(\beta,\mathbb{G},\mathbb{Q}\right)$ can once again derived from Theorem \ref{Existence and Uniqueness Theorem}.\\
		We denote $K^{n,-}_t=n\int_{0}^{t}(Y^n_s-U_s)^{+}ds$. It is easy to see that $K^{n,-}_0=0$  and $K^{n,-}_t=K^{n,-}_{t \wedge T \wedge \tau}$ for any $t \geq 0$. Next, by employing a similar argument to the one used in Step 4 of the proof of Theorem 4.1 in \cite{el2022bsdes}, we can prove that the sequence $\left\{Y^n,Z^n,K^{n,+},K^{n,-},M^n\right\}_{n \in \mathbb{N}}$ forms a Cauchy sequence in $\mathfrak{B}^2_{T \wedge \tau}\left( \beta,\mathbb{G},\mathbb{Q}\right)$. Consequently, there exists a process $(Y,Z,K^{+},K^{-},M)$ that belongs to $\mathfrak{B}^2_{T \wedge \tau}\left( \beta,\mathbb{G},\mathbb{Q}\right)$ and serves as the limit of the sequence $\left\{Y^n,Z^n,K^{n,+},K^{n,-},M^n\right\}_{n \in \mathbb{N}}$ in the relevant spaces. This limit process  also satisfies the DRBSDE (\ref{basic equation}).
		\paragraph*{Step 2: RBSDE (\ref{basic equation RBSDE}) and related optimal stopping problems across $(\mathbb{G},\mathbb{Q})$ and $(\mathbb{F},\mathbb{P})$.}\emph{}\\
		Using a classical result of the theory of one reflected BSDE related to optimal stopping problems, we can express the process $\left(Y^n_t\right)_{t \geq 0}$ as follows:
		\begin{equation*}
			\begin{split}
				Y^n_t+\int_{0}^{t \wedge \tau}g(s)ds-\int_{0}^{t \wedge \tau}d K^{n,-}_s=\esssup_{\theta \in \mathfrak{T}_{t \wedge \tau}^{T \wedge \tau}\left(\mathbb{G}\right)}\mathbb{E}_{\mathbb{Q}}\left[X^{\mathbb{G},n}_{\theta} \mid \mathcal{G}_t\right]=:S^{\mathbb{G},\mathbb{Q},n}_t,
			\end{split}
			\label{Mocci oujda}
		\end{equation*}
		where  $(X^{\mathbb{G},n}_{t})_{t \geq 0}$ is the RCLL $\mathbb{G}$-adapted process stopped at $T \wedge \tau$, defined by:
		\begin{equation}
			X^{\mathbb{G},n}=\int_{0}^{\cdot \wedge T \wedge \tau} g(s)ds-\int_{0}^{\cdot \wedge T \wedge \tau}dK^{n,-}_s+L_{}\mathds{1}_{\llbracket 0,T \wedge \tau \llbracket}+\zeta_{T \wedge \tau} \mathds{1}_{\llbracket T \wedge \tau,+\infty \llbracket}.
			\label{reward process}
		\end{equation}
		
		It's clear that
		$
		\int_{0}^{\cdot \wedge T \wedge \tau}dK^{n,-}_s=n\int_{0}^{\cdot \wedge T} \left(Y^n_s \mathds{1}_{\{s <\tau\}}-U_s \mathds{1}_{\{s <\tau\}}\right)^{+} ds
		$. Additionally, since $Y^n=(Y^n)^{\tau}$, using Theorem 2 in \cite{choulli2024optimal}, we derive the existence of a unique RCLL $\mathbb{F}$-progressively measurable process $Y^{n,\mathbb{F}}$ such that  $
		Y^n_s \mathds{1}_{\{s <\tau\}}=Y^{n,\mathbb{F}}_s \mathds{1}_{\{s <\tau\}}
		$. This transformation switches the process $(K^{n,-}_t)_{t \geq 0}$ into $
		K^{n,-}=n\int_{0}^{\cdot \wedge T \wedge \tau}  \left(Y^{n,\mathbb{F}}_s-U_s\right)^{+} ds
		$. Furthermore, Theorem 2 in \cite{choulli2024optimal} implies that the process $\left(Y^{n,\mathbb{F}}_t\right)_{t \leq T}$ can be expressed as: $Y^{n,\mathbb{F}}_t=G^{-1}_t\mathbb{E}\left[Y^n_s \mathds{1}_{\{s <\tau\}} \mid \mathcal{F}_t\right]$ for any $t \in [0,T]$.\\ 
		Let $\beta>0$. Using Jensen's inequality, the fact that $Y^n=\xi$ on $\llbracket T \wedge \tau,+\infty\llbracket$, that the process $(\mathcal{A}_t)_{t \leq T}$ is $\mathbb{F}$-adapted, Lemma \ref{Lemma of integrability}, Remark \ref{Remark of epsilon} and Lemma 4.1 in \cite{BDEM}, we conclude that $Y^{n,\mathbb{F}} \in \mathcal{S}^2_{T}\left(\beta,\mathbb{F},\mathbb{P}\right)$ with $\left\|Y^{n,\mathbb{F}}\right\|^2_{\mathfrak{B}^2_T(\beta,\mathbb{F},\mathbb{P})} \leq \mathfrak{c}$ for any $n \in \mathbb{N}$.
		
		The idea is to find a suitable way to apply Theorem 3 in  \cite{choulli2024optimal}. To do so, we first  need to identify the unique pair $(X^{\mathbb{F},n},k^{(pr),n})$ associated with the process $X^{\mathbb{G},n}$ defined in (\ref{reward process}). This requires finding an $\mathbb{F}$-optional process $X^{\mathbb{F},n}$ and an $\mathbb{F}$-progressively measurable process $k^{(pr),n}$ such that  $X^{\mathbb{G},n}=X^{\mathbb{F},n}\mathds{1}_{\llbracket 0,\tau \llbracket}+\int_{0}^{\cdot} k^{(pr),n}_s dD_s$. However, by performing simple computations, we get:
		\begin{equation*}
			\left\{
			\begin{split}
				X^{\mathbb{F},n}&=\int_{0}^{\cdot \wedge T} g(s)ds-n\int_{0}^{\cdot \wedge T } \left(Y^{n,\mathbb{F}}_s-U_s \right)^{+} ds+L_{}\mathds{1}_{\llbracket 0,T   \llbracket}+\zeta_{T} \mathds{1}_{\llbracket T ,+\infty \llbracket},\\
				k^{(pr),n}&=\int_{0}^{\cdot \wedge T} g(s)ds-n\int_{0}^{\cdot \wedge T } \left(Y^{n,\mathbb{F}}_s-U_s \right)^{+} ds+\zeta_{T \wedge \cdot}.
			\end{split}
			\right.
			\label{Associated process}
		\end{equation*}
		Let $\mathcal{X}^{\mathbb{F},n}:=\left(\tilde{\mathcal{E}}X^{\mathbb{F},n} -\int_{0}^{\cdot}k^{(op),n}_s d\tilde{\mathcal{E}}_s\right)^T$. Using  integration by part formula in the $(\Omega,\mathcal{F},\mathbb{F},\mathbb{P})$-setup and the fact that $d V^{\mathbb{F}}=-d\tilde{\mathcal{E}}$, we derive
		\begin{equation}
			\begin{split}
				\mathcal{X}^{\mathbb{F},n}&=\int_{0}^{\cdot \wedge T} g^{\mathbb{F}}(s)ds-n\int_{0}^{\cdot \wedge T }  \left(\hat{Y}^{n,\mathbb{F}}_s-U^{\mathbb{F}}_s \right)^{+} ds+\int_{0}^{\cdot \wedge T}\zeta_s dV^{\mathbb{F}}_s+L^{\mathbb{F}}\mathds{1}_{\llbracket 0,T   \llbracket}+\xi^{\mathbb{F}} \mathds{1}_{\llbracket T ,+\infty \llbracket},
			\end{split}
			\label{Chef de la vie}
		\end{equation}
		with $	\hat{Y}^{n,\mathbb{F}}:=\tilde{\mathcal{E}}Y^{n,\mathbb{F}}$ in $[0,T]$. As $0<\tilde{\mathcal{E}}\leq 1$, $\tilde{\mathcal{E}}$ is an RCLL $\mathbb{F}$-adapted process, and $Y^{n,\mathbb{F}} \in \mathcal{S}^2_{T}\left(\beta,\mathbb{F},\mathbb{P}\right)$, we can derive $\hat{Y}^{n,\mathbb{F}} \in \mathcal{S}^2_{T}\left(\beta,\mathbb{F},\mathbb{P}\right)$ with $\left\|\hat{Y}^{n,\mathbb{F}}\right\|^2_{\mathfrak{B}^2_T(\beta,\mathbb{F},\mathbb{P})} \leq \mathfrak{c}$ for all $n \in \mathbb{N}$.\\
		By applying Theorem 3 in \cite{choulli2024optimal}, we have:
		\begin{equation}
			\begin{split}
				&Y^n+\int_{0}^{\cdot \wedge T \wedge \tau}g(s)ds-\int_{0}^{\cdot \wedge T \wedge \tau}d K^{n,-}_s\\
				&=\dfrac{\mathcal{\hat{S}}^{\mathbb{F},n}}{\tilde{\mathcal{E}}^T}\left(\mathds{1}_{\llbracket 0, \tau\llbracket}\right)^T+\int_{0}^{\cdot \wedge T}\left(k^{(pr),n}_s-\dfrac{\int_{0}^{s}k^{(pr),n}_u d\tilde{\mathcal{E}}_u}{\tilde{\mathcal{E}}_s}\right) dN^{\mathbb{G}}_s,
			\end{split}
			\label{Stable 2}
		\end{equation}
		where $\mathcal{\hat{S}}^{\mathbb{F},n}_t=\esssup_{\theta \in \mathfrak{T}_{t}^{T}(\mathbb{F})}\mathbb{E}\left[\mathcal{X}^{\mathbb{F},n}_{\theta} \mid \mathcal{F}_t\right]$ for $t \in [0,T]$, with $\left( \mathcal{X}^{\mathbb{F},n}_t\right)_{t \geq 0}$ giving by (\ref{Chef de la vie}).
		\paragraph*{Step 3: Optimal stopping problem and related generalized RBSDE in $(\mathbb{F},\mathbb{P})$}
		The optimal stopping problem $\left(\mathcal{\hat{S}}^{\mathbb{F},n}_t\right)_{t \leq T}$ with wealth process (\ref{Chef de la vie}) lead us to consider the following generalized reflected BSDE (GRBSDE, for short): 
		\begin{equation}
			\left\{
			\begin{split}
				\text{(i)}&~ \mathbb{P}\text{-a.s., for all } t \in [0,T],\\ 
				& \hat{Y}^{n,\mathbb{F},\ast}_t=\xi^{\mathbb{F}}+\int_{t}^{T}dV^{n,\mathbb{F},\ast}_s+(K^{n,\mathbb{F},+}_T-K^{n,\mathbb{F},\ast}_t)-\int_{t  }^{T} \hat{Z}^{n,\mathbb{F},\ast}_s d B_s;\\
				\text{(ii)} &~  L^{\mathbb{F}}_t \leq \hat{Y}^{n,\mathbb{F},\ast}_t,~  \forall t \in [0,T],~\mathbb{P}\text{-a.s.};\\
				\text{(iii)} &~ \text{Skorokhod condition:}  \int_0^{T }(\hat{Y}^{n,\mathbb{F},\ast}_{t}-L^{\mathbb{F}}_{t})dK^{n,\mathbb{F},+}_t=0,~\mathbb{P}\text{-a.s.},
			\end{split}
			\right.
			\label{basic equation RBSDE in F,P}
		\end{equation}
		where the generator is giving in the following form  $V^{n,\mathbb{F},\ast}_t=\int_{0}^{t}\big\{g^{\mathbb{F}}(s)-n (\hat{Y}^{n,\mathbb{F}}_s-U^{\mathbb{F}}_s)^{+}\big\}ds+\int_{0}^{t}\zeta_s dV^{\mathbb{F}}_s$, $t \in [0,T]$. 

		Note that in the GRBSDE (\ref{basic equation RBSDE in F,P}), the process $K^{n,\mathbb{F},+}$ is continuous due to the fact that $L^{\mathbb{F}}=\tilde{\mathcal{E}}L$ could have a jump only at $\tau$. However, the jumps arising from the process $\tilde{\mathcal{E}}$ are equal to those of $\hat{Y}^{n,\mathbb{F},\ast}$ since $\Delta V^{\mathbb{F}}=-\Delta\tilde{\mathcal{E}}$, and this latter jumps when $\Delta D^{o,\mathbb{F},\mathbb{P}}$ does from (\ref{Definition of E and E tilde}), or equivalently when the defaultable $(D_t)_{t \geq 0}$ does at the default time $\tau$ due to the optional duality relation for jump processes (see Theorem 5.27 in \cite{bookHe}, pp. 150). This fact will be used implicitly toward the rest of the proof while considering the reflection processes of other GRBSDEs.
		
		Here we will gives a brief study of the GRBSDE (\ref{basic equation RBSDE in F,P}) concerning the existence, uniqueness, and its convergence results toward the solution of a special form of GRBSDE in the $\left(\mathbb{F},\mathbb{P}\right)$-framework. This will be used to provide the main result described in the four step. 
		
		The GRBSDE (\ref{basic equation RBSDE in F,P}) is a particular case of the GRBSDE considered in \cite{fakhouri20192} or in \cite{klimsiak2015reflected} with an associated coefficient $f\equiv 0$. We will use of the results from \cite{fakhouri20192} to obtain a strong $\mathbb{L}^2$-integrability result for the associated solution of (\ref{basic equation RBSDE in F,P}) as we have $\mathbb{L}^2$-integrable data. To begin, noting that $\tilde{\mathcal{E}} \leq 1$, we can infer $L^{\mathbb{F}} \leq \mathbb{E}\left[\sup_{0 \leq t \leq T} (L_s)^{+} \mid \mathcal{F}_t\right]$. Then, under assumption \textbf{(H3)}, we conclude that the last term is a square integrable $(\mathbb{F},\mathbb{P})$-martingale. Our goal now is to show that  $\left|V^{n,\mathbb{F},\ast}\right|^2_T \in \mathbb{L}^2\left(\Omega,\mathcal{F}_T,\mathbb{P}\right)$, where $\left|V^{n,\mathbb{F},\ast}\right|$ denotes the total variation of the process $\left(V^{n,\mathbb{F},\ast}_t\right)_{t \leq T}$. Since $dV^{\mathbb{F}}$ is a positive measure, we have
		\begin{equation*}
			\begin{split}
				\mathbb{E}\left[\left|V^{n,\mathbb{F},\ast}\right|^2_T\right]
				& \leq \dfrac{3}{\beta} \mathbb{E}\left[\int_{0}^{T} e^{\beta \mathcal{A}_s} \left| g(s)\right|^2 ds \right]+3\mathbb{E}\left[\left( \int_{0}^{T}e^{-\beta \mathcal{A}_s}dV^{\mathbb{F}}_s\right)\left( e^{\beta \mathcal{A}_s} \left|\zeta_{s}\right|^2dV^{\mathbb{F}}_s\right) \right]\\
				&\qquad\qquad+3T^2n^2 \mathbb{E}\left[\sup_{0 \leq t \leq T}e^{\beta \mathcal{A}_t} \left\{\left|\hat{Y}^{n,\mathbb{F}}_t\right|^2+\left| U^{-}_t\right|^2\right\}\right]^{\frac{1}{2}}.
			\end{split}
		\end{equation*}
		From Remark \ref{Remark of epsilon}, we derive that $V^{\mathbb{F}} \leq 1$, then $\int_{0}^{T}e^{-\beta \mathcal{A}_s}dV^{\mathbb{F}}_s \leq 1$. Consequently, $\mathbb{E}\left[\left|V^{n,\mathbb{F},\ast}\right|^2_T\right]<+\infty$. Theorem 3 in \cite{fakhouri20192} guarantees the existence of a unique triplet of $\mathbb{F}$-progressively measurable processes $\left(\hat{Y}^{n,\mathbb{F},\ast}_t,K^{n,\mathbb{F},+}_t,\hat{Z}^{n,\mathbb{F},\ast}_t\right)_{t \leq T}$ satisfying GRBSDE (\ref{basic equation RBSDE in F,P}) with  $\left(\hat{Y}^{n,\mathbb{F},\ast},\hat{Z}^{n,\mathbb{F},\ast},K^{n,\mathbb{F},+}_T\right) \in \mathcal{D}^2_T\left(\mathbb{F},\mathbb{P}\right) \times \mathcal{H}^2_T(\mathbb{F},\mathbb{P}) \times \mathcal{S}^2_T\left(\mathbb{F},\mathbb{P}\right)$.\\
		Considering the construction of the process $\left(\hat{Y}^{n,\mathbb{F}}_t\right)_{t \leq T}$ and the fact that $Y=\xi$ on $\llbracket T \wedge \tau,+\infty \llbracket$, we obtain $\hat{Y}^{n,\mathbb{F}}_t=\tilde{\mathcal{E}}_tY^{n,\mathbb{F}}_t=\frac{\tilde{\mathcal{E}}_t}{G_t} \mathbb{E}\left[Y^n_t \mathds{1}_{\{t < \tau\}}\mid \mathcal{F}_t\right]= \mathbb{E}\left[Y^n_t + \xi \mathds{1}_{\{t \geq \tau\}}\mid \mathcal{F}_t\right]$. Subsequently, using Remark \ref{Remark of epsilon}, $G_t=G_0\tilde{\mathcal{E}}_t \mathcal{E}_t=\tilde{\mathcal{E}}_t \mathcal{E}_t$, the $\mathbb{F}$-adeptness of the barrier $U$, the definition of the Azéma supermartingale $G$, $U^{\mathbb{F}}=\tilde{\mathcal{E}} U$, the assumption $U^{T \wedge \tau}=U$, and Jensen's inequality, we deuce that $\left(\hat{Y}^{n,\mathbb{F}}_t-U^{\mathbb{F}}_t\right)^{+} \leq  \frac{1}{\epsilon}\mathbb{E}\left[  \left(\left(Y^n_t-U_t\right)- \left(\xi-U_{\tau}\right)\mathds{1}_{\{t\geq \tau\}}\right)^{+}\mid \mathcal{F}_t\right]$. Consequently, using Fubini's theorem, the continuity of the Lebesgue measure, and $ U_{\tau} \geq \xi $, we have $n\int_{t \wedge \tau}^{t}(\hat{Y}^{n,\mathbb{F}}_s-U^{\mathbb{F}}_s)^{+}ds=0$. Lemma \ref{Lemma of integrability}, the definition of the Radon derivative $\frac{d \mathbb{Q}}{d \mathbb{P}}$ giving by (\ref{local continuity of Q}) together with Remark \ref{Remark of epsilon}, yields for all $t \in [0,T]$,
		\begin{equation*}
			\begin{split}
				\mathbb{E}\left(n\int_{0}^{t}(\hat{Y}^{n,\mathbb{F}}_s-U^{\mathbb{F}}_s)^{+}ds\right)^2 \leq \frac{1}{\epsilon} \mathbb{E}_{\mathbb{Q}}\left(n\int_{0}^{t \wedge \tau}(Y^{n}_s-U_s)^{+}ds\right)^2  
			\end{split}
		\end{equation*}
		Now, employing a similar uniform estimation result from the paper by El Otmani et al. \cite{el2022bsdes} (see estimation (18) on page 14) which holds under our conditions with respect to the probability measure $\mathbb{Q}$, we derive the existence of a constant $\mathfrak{c}>0$ such that
		\begin{equation*}
			\sup_{n \in \mathbb{N}}\mathbb{E}\left(n\int_{0}^{T}(\hat{Y}^{n,\mathbb{F}}_s-U^{\mathbb{F}}_s)^{+}ds\right)^2 \leq  \mathfrak{c}.
		\end{equation*}
		Using this with a standard localization procedure via the sequence of $\mathbb{F}$-stopping times $\{\sigma_k\}_{k \in \mathbb{N}}$ defined by $\sigma_k=\inf\left\{t>0: \mathcal{A}_t>k\right\} \wedge T$ following the same techniques of Proposition \ref{basic Estimastion for BSDE}, we can establish the following uniform a priori estimates:
		\begin{equation}
			\begin{split}
				&\mathbb{E}\sup_{s \in [0, T]} e^{\beta \mathcal{A}_s} \left|\hat{Y}^{n,\mathbb{F},\ast}_{s}\right|^2+\mathbb{E}\int_{0}^{T}e^{\beta \mathcal{A}_s} \left|\hat{Y}^{n,\mathbb{F},\ast}_s \alpha_s\right|^2ds+\mathbb{E}\int_{0}^{T}e^{\beta \mathcal{A}_s}\left|\hat{Z}^{n,\mathbb{F},\ast}_s\right|^2 ds+\mathbb{E}\left| K^{n,\mathbb{F},+}_{T}\right|^2\\
				& \leq \mathfrak{c}\left(  \mathbb{E}\int_{0}^{T}e^{\beta \mathcal{A}_s}\left|\dfrac{g^{\mathbb{F}}(s)}{\alpha_s}\right|^2 ds  + \mathbb{E} \sup_{s \in  [0,T]} e^{2\beta \mathcal{A}_s}\left| \left( L^{\mathbb{F}}_s\right)^{+}\right|^2  +\mathbb{E}\int_{0}^{T}e^{\beta \mathcal{A}_s} \left| \zeta_s\right|^2  dV^{\mathbb{F}}_s \right).
			\end{split}
			\label{EST 2}
		\end{equation}
		From the uniform estimate (\ref{EST 2}) of the sequence $\left\{\hat{Z}^{n,\mathbb{F},\ast}, K^{n,\mathbb{F},+}\right\}_{n \in \mathbb{N}}$ and the Hilbert structure of the associated $\mathbb{L}^2$-spaces, we can extract further subsequences, still denoted by the sequence index $n$, which exhibit a weak convergence in the pertinent topological spaces to a given pair process $\left(\hat{Z}^{\mathbb{F},\ast}, K^{\mathbb{F},+}\right) \in \mathcal{H}^2_T\left(\beta,\mathbb{F},\mathbb{P}\right) \times \mathcal{S}^2_T(\mathbb{F},\mathbb{P})$ with $K^{\mathbb{F},+}$ $\mathbb{F}$-predictable. On the other hand, using Remark \ref{Remark of epsilon} and (\ref{EST 2}), and the Step 4 of the proof of Theorem 4.1 in \cite{el2022bsdes} , we conclude that, the increasing sequence of $\mathbb{F}$-adapted continuous processes $\left\lbrace K^{n,\mathbb{F},-}:=n\int_{0}^{\cdot}(\hat{Y}^{n,\mathbb{F}}_s-U^{\mathbb{F}}_s)^{+}ds\right\rbrace_{n \in \mathbb{N}}$ is a Cauchy sequence which respect to the $\mathcal{S}^2_T(\mathbb{F},\mathbb{P})$-norm. Thus, there exists an $\mathbb{F}$-adapted continuous increasing process $K^{\mathbb{F},-} \in \mathcal{S}^2_T(\mathbb{F},\mathbb{P})$ serving as the strong limit in the $\mathcal{S}^2_T(\mathbb{F},\mathbb{P})$-norm of $\{K^{n,\mathbb{F},-}\}_{n \in \mathbb{N}}$. \\
		Now, for $n>p\geq 0$,  applying once again It\^{o}'s from to the process $e^{\frac{\beta}{2} \mathcal{A}_t}\left|\hat{Y}^{n,\mathbb{F},\ast}_{t}-\hat{Y}^{p,\mathbb{F},\ast}_{t}\right|^2$, the Skorokhod condition, Cauchy–Schwarz inequality, and assumption $(\mathbf{H_2})$-(b), we get
		\begin{equation}
			\begin{split}
				&\mathbb{E}\int_{0}^{T}e^{\frac{\beta}{2} \mathcal{A}_s} \left|\hat{Y}^{n,\mathbb{F},\ast}_s-\hat{Y}^{p,\mathbb{F},\ast}_{s} \right|^2 \alpha_s^2ds+\mathbb{E}\int_{0}^{T}e^{\frac{\beta}{2} \mathcal{A}_s}\left|\hat{Z}^{n,\mathbb{F},\ast}_s-\hat{Z}^{p,\mathbb{F},\ast}_s\right|^2 ds\\
				& \leq  2 \left( \mathbb{E}\int_{0}^{T}e^{\beta \mathcal{A}_s} \left|\hat{Y}^{n,\mathbb{F},\ast}_{s}-\hat{Y}^{p,\mathbb{F},\ast}_{s}\right|^2 \alpha_s^2 ds\right)^{\frac{1}{2}}  \left(\frac{1}{\epsilon} \mathbb{E} \left|K^{p,\mathbb{F},-}_T-K^{n,\mathbb{F},-}_T\right|^2\right)^{\frac{1}{2}} \\
				&\leq \mathfrak{c}\left(\mathbb{E} \left|K^{p,\mathbb{F},-}_T-K^{n,\mathbb{F},-}_T\right|^2\right)^{\frac{1}{2}} .
			\end{split}
			\label{Real}
		\end{equation}
		The $\mathbb{L}^2$-strong convergence of the sequence $\{K^{n,\mathbb{F},-}\}_{n \in \mathbb{N}}$ implies: for any $\beta>0$
		$$
		\lim\limits_{n,p\rightarrow+\infty} 
		\left\{\mathbb{E}\int_{0}^{T}e^{\frac{\beta}{2} \mathcal{A}_s} \left|\hat{Y}^{n,\mathbb{F},\ast}_s-\hat{Y}^{p,\mathbb{F},\ast}_{s} \right|^2 \alpha_s^2ds+\mathbb{E}\int_{0}^{T}e^{\frac{\beta}{2} \mathcal{A}_s}\left|\hat{Z}^{n,\mathbb{F},\ast}_s-\hat{Z}^{p,\mathbb{F},\ast}_s\right|^2 ds\right\} =0.
		$$
		Applying the Burkholder-Davis-Gundy inequality (B-D-G) \cite[Theorem IV.48, pp. 193]{bookProtter} with (\ref{Real}), allows us to conclude that, for any $\beta>0$
		$$
		\lim\limits_{n,p\rightarrow+\infty} \mathbb{E}\left[\sup_{s \in [0, T]}e^{\frac{\beta}{2} \mathcal{A}_{s}} \left|\hat{Y}^{n,\mathbb{F},\ast}_{s}-\hat{Y}^{p,\mathbb{F},\ast}_{s}\right|^2\right]=0, 
		$$
		Then there exists a unique RCLL $\mathbb{F}$-adapted process $\left(\hat{Y}^{\mathbb{F},\ast}_{t}\right)_{t \leq T}$ such that the constructed pair $\left(\hat{Y}^{\mathbb{F},\ast},\hat{Z}^{\mathbb{F},\ast}\right)$ satisfies
		$$
		\lim\limits_{n\rightarrow+\infty} \left\{\left\|\hat{Y}^{n,\mathbb{F},\ast}-\hat{Y}^{\mathbb{F},\ast}\right\|^2_{\mathcal{S}^2_T\left( \frac{\beta}{2},\mathbb{F},\mathbb{P}\right)}+\left\|\hat{Z}^{n,\mathbb{F},\ast}-\hat{Z}^{\mathbb{F},\ast}\right\|^2_{\mathcal{H}^2_T\left( \frac{\beta}{2},\mathbb{F},\mathbb{P}\right) } \right\}=0.
		$$ 
		From (\ref{basic equation RBSDE in F,P}), we can also obtain that $\{K^{n,\mathbb{F},+}\}_{n \in \mathbb{N}}$ is a Cauchy sequence in $\mathcal{D}^2_T(\mathbb{F},\mathbb{P})$,
		therefore 
		$$
		\lim\limits_{n\rightarrow+\infty}  \left\|K^{n,\mathbb{F},+}_T-K^{\mathbb{F},+}_T\right\|^2_{\mathcal{D}^2_T(\mathbb{F},\mathbb{P})}=0,$$ 
		and the process $K^{\mathbb{F},+}$ exhibits continuous paths.\\ 
		Passing to the limit element by element in (\ref{basic equation RBSDE in F,P}), we derive that the quadruplet $\left(\hat{Y}^{\mathbb{F},\ast}_{t},\hat{Z}^{\mathbb{F},\ast}_{t},K^{\mathbb{F},+}_{t},K^{\mathbb{F},-}_{t}\right)_{t \leq T}$ satisfies the following BSDE, for any $t \in [0,T]$,  
		$$
		\hat{Y}^{\mathbb{F},\ast}_t=\xi^{\mathbb{F}}+\int_{t}^{T}g^{\mathbb{F}}(s)ds+\int_{t}^{T}\zeta_s dV^{\mathbb{F}}_s+\left( K^{\mathbb{F},+}_T-K^{\mathbb{F},+}_t\right) -\left( K^{\mathbb{F},-}_T-K^{\mathbb{F},-}_t\right) -\int_{t}^{T} \hat{Z}^{\mathbb{F},\ast}_s d B_s,
		$$
		and $\left(\hat{Y}^{\mathbb{F},\ast}_{t},\hat{Z}^{\mathbb{F},\ast}_{t},K^{\mathbb{F},+}_{t},K^{\mathbb{F},-}_{t}\right)_{t \leq T} \in \mathfrak{C}^2_{T}\left( \beta,\mathbb{F},\mathbb{P}\right) \times \mathcal{H}^2_{T}\left( \beta,\mathbb{F},\mathbb{P}\right) \times \mathcal{S}^2_T\left( \mathbb{F},\mathbb{P}\right)\times \mathcal{S}^2_T\left( \mathbb{F},\mathbb{P}\right)$.\\
		Additionally, it is clear that $\hat{Y}^{\mathbb{F},\ast}_t \geq L_t$ $\forall t \in [0,T]$.\\
		We have just seen that the sequence $\left(\hat{Y}^{n,\mathbb{F},\ast},\hat{K}^{n,\mathbb{F},+}\right)$ tends to $(\hat{Y}^{\mathbb{F},\ast}, K^{\mathbb{F},+})$ uniformly in $t$ in probability. Then the measure $dK^{n,\mathbb{F},+}$ tends to $dK^{\mathbb{F},+}$  weakly in probability, hence
		$$
		\int_{0}^{T}\left(\hat{Y}^{n,\mathbb{F},\ast}_s-L^{\mathbb{F}}_s\right)d\hat{K}^{n,\mathbb{F},+}_s \xrightarrow[n \rightarrow+\infty]{\mathbb{P}} \int_{0}^{T}\left(\hat{Y}^{\mathbb{F},\ast}_s-L^{\mathbb{F}}_s\right)dK^{\mathbb{F},+}_s.
		$$
		Thus, from (\ref{basic equation RBSDE in F,P})-(iii) we have $\int_{0}^{T}\left(\hat{Y}^{\mathbb{F},\ast}_s-L^{\mathbb{F}}_s\right)dK^{\mathbb{F},+}_s=0$.\\
		Putting together the argument presented above, we deduce that the limiting process $\left(\hat{Y}^{\mathbb{F},\ast}_{t},\hat{Z}^{\mathbb{F},\ast}_{t},K^{\mathbb{F},+}_{t},K^{\mathbb{F},-}_{t}\right)_{t \leq T} $ satisfies the following GRBSDE
		\begin{equation}
			\left\{
			\begin{split}
				\text{(i)} &~ \mathbb{P}\text{-a.s., for all } t \in [0,T],\\
				& \hat{Y}^{\mathbb{F},\ast}_t=\xi^{\mathbb{F}}+\int_{t}^{T}g^{\mathbb{F}}(s)ds+\int_{t}^{T}\zeta_s dV^{\mathbb{F}}_s+\left( K^{\mathbb{F},+}_T-K^{\mathbb{F},+}_t\right)\\\ &\qquad\qquad-\left(K^{\mathbb{F},-}_T-K^{\mathbb{F},-}_t\right) -\int_{t}^{T} \hat{Z}^{\mathbb{F},\ast}_s d B_s;\\
				\text{(ii)} &~  L^{\mathbb{F}}_t \leq \hat{Y}^{\mathbb{F},\ast}_t,~  \forall t \in [0,T],~\mathbb{P}\text{-a.s.};\\
				\text{(iii)} &~ \text{Skorokhod condition:}  \int_0^{T }(\hat{Y}^{\mathbb{F},\ast}_{t}-L^{\mathbb{F}}_{t})dK^{\mathbb{F},+}_t=0,~\mathbb{P}\text{-a.s.},
			\end{split}
			\right.
			\label{What wa got}
		\end{equation}
		\paragraph*{Step 4: Connection between solutions of DRBSDE in $(\mathbb{G},\mathbb{Q})$ and in $(\mathbb{F},\mathbb{P})$}
		The main result will be obtained by combining the optimal stopping problems satisfied by the sequence $\{\hat{Y}^{n,\mathbb{F},\ast}\}_{n \in \mathbb{N}}$ in $\left(\mathbb{F},\mathbb{P}\right)$ and the one satisfied by $\{Y^n\}_{n \in \mathbb{N}}$ in  $\left(\mathbb{G},\mathbb{Q}\right)$, the relationships between this two problems described by (\ref{Stable 2}) in \textbf{Step 2}, and the convergence results of these sequences giving in \textbf{Steps 1} and \textbf{3}. Finally, applying the same idea and procedure to the RBSDE penalization schemes (\ref{basic equation RBSDE}) in $\left(\mathbb{G},\mathbb{Q}\right)$ but this times with respect to the lower barrier $L$ and data $(\xi,f,U)$ allows us to derive the equality for values processes of DRBSDEs on $\llbracket0,T\wedge\tau\llbracket$, and then for other solution component by a simple integration by parts formula.\\
		To begin, using the GRBSDE (\ref{basic equation RBSDE in F,P}) and Corollary 2.9 in \cite{klimsiak2015reflected}, we have
		$$
		\hat{Y}^{n,\mathbb{F},\ast}_t=\esssup_{\theta \in \mathfrak{T}_{t}^{T}(\mathbb{F})}\mathbb{E}\left[\int_{t}^{\theta} dV^{n,\mathbb{F},\ast}_s+L^{\mathbb{F}}_{\theta} \mathds{1}_{\{\theta<T\}}+\xi^{\mathbb{F}}\mathds{1}_{\{\theta=T\}}  \mid \mathcal{F}_t\right]
		$$
		Therefore
		$$
		\hat{Y}^{n,\mathbb{F},\ast}+\int_{0}^{\cdot \wedge T}g^{\mathbb{F}}(s)ds+\int_{0}^{\cdot \wedge T}\zeta_s dV^{\mathbb{F}}_s-n\int_{0}^{\cdot \wedge T} (\hat{Y}^{n,\mathbb{F}}_s-U^{\mathbb{F}}_s)^{+}ds=\mathcal{\hat{S}}^{\mathbb{F},n}.
		$$
		Then, from (\ref{Stable 2}), we drive that:
		\begin{equation}
			\begin{split}
				&Y^n+\int_{0}^{\cdot \wedge T \wedge \tau}g(s)ds-\int_{0}^{\cdot \wedge T \wedge \tau}d K^{n,-}_s\\
				&=\dfrac{\hat{Y}^{n,\mathbb{F},\ast}+V^{n,\mathbb{F},\ast}}{\tilde{\mathcal{E}}^T}\left(\mathds{1}_{\llbracket 0, \tau\llbracket}\right)^T+\int_{0}^{\cdot \wedge T}\left(\dfrac{k^{(pr),n}_s\tilde{\mathcal{E}}_s-\int_{0}^{s}k^{(pr),n}_u d\tilde{\mathcal{E}}_u}{\tilde{\mathcal{E}}_s}\right) dN^{\mathbb{G}}_s.
			\end{split}
			\label{Stable 2,2}
		\end{equation}
		Let us consider the term on the right hand side of the second line of (\ref{Stable 2,2}). Using an integration by part formula, we get
		\begin{equation*}
			k^{(pr),n}_s\tilde{\mathcal{E}}_s-\int_{0}^{s}k^{(pr),n}_u d\tilde{\mathcal{E}}_u= V^{n,\mathbb{F},\ast}_s+\zeta_{ s}\tilde{\mathcal{E}}_s \mathds{1}_{\{s<T\}}+\xi^{\mathbb{F}}\mathds{1}_{\{s \geq T\}}
			\label{MN6}
		\end{equation*}
		Plugging this into (\ref{Stable 2,2}), we get
		\begin{equation}
			\begin{split}
				&Y^n+\int_{0}^{\cdot \wedge T \wedge \tau}g(s)ds-\int_{0}^{t \wedge T \wedge \tau}d K^{n,-}_s\\
				&=\dfrac{\hat{Y}^{n,\mathbb{F},\ast}}{\tilde{\mathcal{E}}^T}\left(\mathds{1}_{\llbracket 0, \tau\llbracket}\right)^T+\dfrac{V^{n,\mathbb{F},\ast}}{\tilde{\mathcal{E}}^T}\left(\mathds{1}_{\llbracket 0, \tau\llbracket}\right)^T+\int_{0}^{\cdot \wedge T}\left(\dfrac{V^{n,\mathbb{F},\ast}_s}{\tilde{\mathcal{E}}_s}\right) dN^{\mathbb{G}}_s\\
				&\qquad+\int_{0}^{\cdot \wedge T}\zeta_{s}\mathds{1}_{\{s<T\}}dN^{\mathbb{G}}_s+\int_{0}^{\cdot \wedge T} \frac{\zeta_{T}\tilde{\mathcal{E}}_T}{\tilde{\mathcal{E}}_s} \mathds{1}_{\{s\geq T\}}dN^{\mathbb{G}}_s.
			\end{split}
			\label{Stable 2,2,1}
		\end{equation}
		Using $\left(\mathds{1}_{\llbracket 0, \tau\llbracket}\right)^T=\mathds{1}_{\{T < \tau\}}+\mathds{1}_{\llbracket 0,T \wedge \tau \llbracket}\mathds{1}_{\{\tau \leq T\}}$ and $\xi \mathds{1}_{\{T < \tau\}}=\zeta_{T \wedge \tau}\mathds{1}_{\{T < \tau\}}=\zeta_{T}\mathds{1}_{\{T < \tau\}}$, we have
		\begin{equation}
			\begin{split}
				\dfrac{\hat{Y}^{n,\mathbb{F},\ast}}{\tilde{\mathcal{E}}^T}\left(\mathds{1}_{\llbracket 0, \tau\llbracket}\right)^T
				=\dfrac{\hat{Y}^{n,\mathbb{F},\ast}}{\tilde{\mathcal{E}}}\mathds{1}_{\llbracket 0,T \wedge \tau \llbracket}+\xi\mathds{1}_{\{T < \tau\}}\mathds{1}_{\llbracket T \wedge \tau, +\infty  \llbracket},
			\end{split}
			\label{MN9}
		\end{equation}
		After doing a few simple calculations, we establish:
		\begin{equation}
			\int_{0}^{\cdot \wedge T}\zeta_{s}\mathds{1}_{\{s<T\}}dN^{\mathbb{G}}_s+\int_{0}^{\cdot \wedge T} \frac{\zeta_{T}\tilde{\mathcal{E}}_T}{\tilde{\mathcal{E}}_s} \mathds{1}_{\{s\geq T\}}dN^{\mathbb{G}}_s=\xi \mathds{1}_{\{\tau \leq T\}}\mathds{1}_{\llbracket T \wedge \tau,+\infty \llbracket}-\int_{0}^{\cdot \wedge T \wedge \tau}\frac{\zeta_{s}}{\tilde{G}_s} dD^{o,\mathbb{F},\mathbb{P}}_s.
			\label{MN7}
		\end{equation}
		Furthermore, by applying Lemma 4 in \cite{choulli2024optimal}, we get:
		\begin{equation}
			\begin{split}
				\int_{0}^{\cdot \wedge T}\frac{ V^{n,\mathbb{F},\ast}_s}{\tilde{\mathcal{E}}_s}dN^{\mathbb{G}}_s
				=\int_{0}^{\cdot \wedge T \wedge \tau}g(s)ds-\int_{0}^{t \wedge T \wedge \tau}d K^{n,-}_s+\int_{0}^{\cdot \wedge T \wedge \tau}\frac{\zeta_{ s}}{\tilde{G}_s} dD^{o,\mathbb{F},\mathbb{P}}.
			\end{split}
			\label{MN8}
		\end{equation}
		Plugging (\ref{MN9}), (\ref{MN7}) (\ref{MN8}) in (\ref{Stable 2,2,1}), we derive 
		\begin{equation*}
			\begin{split}
				&Y^n+\int_{0}^{\cdot \wedge T \wedge \tau}g(s)ds-\int_{0}^{t \wedge T \wedge \tau}d K^{n,-}_s\\
				&=\dfrac{\hat{Y}^{n,\mathbb{F},\ast}}{\tilde{\mathcal{E}}}\mathds{1}_{\llbracket 0,T \wedge \tau \llbracket}+\int_{0}^{\cdot \wedge T \wedge \tau}g(s)ds-\int_{0}^{t \wedge T \wedge \tau}d K^{n,-}_s+\xi\mathds{1}_{\llbracket T \wedge \tau, +\infty  \llbracket}.
			\end{split}
		\end{equation*}   
		Thus
		\begin{equation}
			\begin{split}
				Y^n=\dfrac{\hat{Y}^{n,\mathbb{F},\ast}}{\tilde{\mathcal{E}}}\mathds{1}_{\llbracket 0,T \wedge \tau \llbracket}+\xi\mathds{1}_{\llbracket T \wedge \tau, +\infty  \llbracket}.
			\end{split}
			\label{Stable 2,2,1,1}
		\end{equation}
		Taking the limit as $n \rightarrow+\infty$ in (\ref{Stable 2,2,1,1}), and using the above convergence results for the sequences $\{Y^n\}_{n \in \mathbb{N}}$ and $\{\hat{Y}^{n,\mathbb{F},\ast}\}_{n \in \mathbb{N}}$ along an almost sure sequence, we deduce, by using the results from \textbf{Steps 1-3}, that 
		\begin{equation}
			Y=\dfrac{\hat{Y}^{\mathbb{F},\ast}}{\tilde{\mathcal{E}}}\mathds{1}_{\llbracket 0,T \wedge \tau \llbracket}+\xi\mathds{1}_{\llbracket T \wedge \tau, +\infty  \llbracket}.
			\label{MN10}
		\end{equation}
		Following a similar procedure, we can prove the existence of a unique quadruplet of processes $\left(\bar{Y}^{\mathbb{F},\ast}_t,\bar{Z}^{\mathbb{F},\ast}_t,\bar{K}^{\mathbb{F},+}_t,\bar{K}^{\mathbb{F},-}_t\right)_{t \leq T}$ satisfying the following GRBSDE:
		\begin{equation}
			\left\{
			\begin{split}
				\text{(i)} &~ \mathbb{P}\text{-a.s., for all } t \in [0,T],\\
				&~  \bar{Y}^{\mathbb{F},\ast}_t=\xi^{\mathbb{F}}+\int_{t}^{T}g^{\mathbb{F}}(s)ds+\int_{t}^{T}\zeta_s dV^{\mathbb{F}}_s+\left( \bar{K}^{\mathbb{F},+}_T-\bar{K}^{\mathbb{F},+}_t\right)\\ &\qquad\qquad-\left(\bar{K}^{\mathbb{F},-}_T-\bar{K}^{\mathbb{F},-}_t\right) -\int_{t}^{T} \bar{Z}^{\mathbb{F},\ast}_s d B_s;\\
				\text{(ii)} &~  \bar{Y}^{\mathbb{F},\ast}_t \leq U^{\mathbb{F}}_t,~  \forall t \in [0,T],~\mathbb{P}\text{-a.s.};\\
				\text{(iii)} &~ \text{Skorokhod condition:}  \int_0^{T }(U^{\mathbb{F}}_{t}-\bar{Y}^{\mathbb{F},\ast}_{t})d\bar{K}^{\mathbb{F},-}_t=0,~\mathbb{P}\text{-a.s.},
			\end{split}
			\right.
			\label{What wa got again}
		\end{equation}
		such that
		\begin{equation}
			Y=\dfrac{\bar{Y}^{\mathbb{F},\ast}}{\tilde{\mathcal{E}}}\mathds{1}_{\llbracket 0,T \wedge \tau \llbracket}+\xi\mathds{1}_{\llbracket T \wedge \tau, +\infty  \llbracket}.
			\label{MN11}
		\end{equation}
		From BSDEs (\ref{What wa got}), (\ref{What wa got again}), equalities (\ref{MN10}), (\ref{MN11}) and the uniqueness of the solution, we may derive 
		\begin{equation*}
			Y=\dfrac{Y^{\mathbb{F}}}{\tilde{\mathcal{E}}}\mathds{1}_{\llbracket 0,T \wedge \tau \llbracket}+\xi\mathds{1}_{\llbracket T \wedge \tau, +\infty  \llbracket},
		\end{equation*}
		where $Y^{\mathbb{F}}$ is the state process of the DRBSDE (\ref{basic equation comparison}).\\
		Now, to completes the proof, it suffices to apply an integration by parts formula to the process $\left(\frac{Y^{\mathbb{F}}_t}{\tilde{\mathcal{E}}_t} \mathds{1}_{\{t<T \wedge\tau \}}+\xi \mathds{1}_{\{ t \geq T \wedge \tau \}}\right)_{t\geq 0}$, which yields to the following dynamic:
		\begin{equation*}
			\begin{split}
				-dY_t&=-d\left(\frac{Y^{\mathbb{F}}_t}{\tilde{\mathcal{E}}_t} \mathds{1}_{\{t<T \wedge\tau \}}+\xi \mathds{1}_{\{ t \geq T \wedge \tau \}}\right)\\
				&=\mathds{1}_{\{t \leq T\wedge \tau\}} g(t)dt+\dfrac{\mathds{1}_{\{t \leq T\wedge \tau\}}}{\tilde{\mathcal{E}}_{t-}} dK^{\mathbb{F},+}_{t }-\dfrac{\mathds{1}_{\{t \leq T\wedge \tau\}}}{\tilde{\mathcal{E}}_{t-}}dK^{\mathbb{F},-}_{t } +\mathds{1}_{\{t \leq T\wedge \tau\}}\dfrac{Z^{\mathbb{F}}_t}{\tilde{\mathcal{E}}_{t-}}dB_t,
			\end{split}
		\end{equation*}
		and $Y_t=\xi$  on the set $\{t \geq T \wedge \tau\} $ by construction. Then the proof is complete, using the DRBSDE (\ref{basic equation}) and the uniqueness of the solution stated in Proposition \ref{basic Estimastion for BSDE}.
	\end{proof}
	\subsubsection{Second link} 
	Here, for each  we will observe that the behavior of the state process of the DRBSDE (\ref{basic equation}) with a generator $f$ independent of the solution, i.e., $f(t,y,z)=g(t)$ for all $(t,y,z) \in [0,T] \times \mathbb{R}^{1+d}$, under the information provided by $\mathbb{F}$ under $\mathbb{Q}$, resembles that of a solution to a special form of DRBSDE in the $(\mathbb{F},\mathbb{Q})$-setup with a linear generator in the $z$-variable. To put it simply, we're interested in understanding the relationship between the $\mathbb{F}$ conditional expectation of the state process of the DRBSDE (\ref{basic equation}) under $\mathbb{Q}$ and a solution of another special DRBSDE in $\mathbb{F}$ with linear generator in the $z$-variable. Additionally, we generalize this to include a linear generator depending on the $y$-variable. To begin with, let's first introduce the following lemma:
	\begin{lemma}
		Let $(Z_t)_{t \geq 0}$ be a $\mathbb{G}$-predictable process that belong to $\mathcal{H}^2_{T \wedge \tau}\left( \beta,\mathbb{G},\mathbb{Q}\right)$. Assume that $\tau$ and $\mathcal{F}_T$ are independent under $\mathbb{Q}$. Then, we have:
		$$
		\mathbb{E}_{\mathbb{Q}}\left[\int_{0}^{t} Z_s dB_s \mid \mathcal{F}_t \right]=\int_{0}^{t} \mathbb{E}_{\mathbb{Q}}\left[ Z_s \mid \mathcal{F}_s \right]  dB_s ,\quad t \in [0,T].
		$$
		\label{result of interchanging the conditional expectation}
	\end{lemma}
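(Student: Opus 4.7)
The plan is to exploit the independence of $\tau$ and $\mathcal{F}_T$ under $\mathbb{Q}$ to decouple the $\mathbb{F}$-measurable part of $Z$ from the random time $\tau$, verify the identity for elementary processes by an explicit computation, and then extend by density. Throughout, let $g(s):=\mathbb{Q}(\tau\geq s)$, which is a deterministic right-continuous nonincreasing function.

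First I would invoke the standard enlargement-of-filtration characterization: since $Z$ is $\mathbb{G}$-predictable with $Z = Z\mathds{1}_{\llbracket 0, T\wedge\tau\rrbracket}$, there exists an $\mathbb{F}$-predictable process $\tilde{Z}$ with $Z_s = \tilde{Z}_s\mathds{1}_{\{s\leq\tau\}}$ on $[0,T]$. Using the $\mathcal{F}_s$-measurability of $\tilde{Z}_s$ together with the independence of $\tau$ and $\mathcal{F}_s\subset\mathcal{F}_T$ under $\mathbb{Q}$,
$$
\mathbb{E}_{\mathbb{Q}}[Z_s\mid \mathcal{F}_s] \;=\; \tilde{Z}_s\,\mathbb{Q}(\tau\geq s\mid \mathcal{F}_s) \;=\; \tilde{Z}_s\,g(s),
$$
so the right-hand side of the claim takes the explicit form $\int_0^t \tilde{Z}_s g(s)\,dB_s$, a well-defined stochastic integral whose integrand is $\mathbb{F}$-predictable and deterministically weighted.

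Next I would treat an elementary $\tilde{Z}_s=\sum_i H_i\mathds{1}_{(t_i,t_{i+1}]}(s)$ with $H_i$ bounded and $\mathcal{F}_{t_i}$-measurable. The LHS expands as $\sum_i H_i(B_{t\wedge\tau\wedge t_{i+1}}-B_{t\wedge\tau\wedge t_i})$; pulling out $H_i$ (which is $\mathcal{F}_t$-measurable) from $\mathbb{E}_{\mathbb{Q}}[\,\cdot\mid\mathcal{F}_t]$ reduces matters to computing $\mathbb{E}_{\mathbb{Q}}[B_{\tau\wedge b}-B_{\tau\wedge a}\mid\mathcal{F}_t]$ for $0\leq a\leq b\leq t$. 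Independence of $\tau$ from $\mathcal{F}_t$, combined with the $\mathcal{F}_t$-measurability of $B|_{[0,t]}$, converts this into the pathwise integral $\int_{\mathbb{R}_+}(B_{r\wedge b}-B_{r\wedge a})\,\mathbb{Q}_\tau(dr)$. A deterministic Fubini rearrangement,
$$
\int_a^b (B_r-B_a)\,\mathbb{Q}_\tau(dr) \;=\; \int_a^b\!\!\int_a^r dB_s\,\mathbb{Q}_\tau(dr) \;=\; \int_a^b \mathbb{Q}(s<\tau\leq b)\,dB_s,
$$
together with $(B_b-B_a)\mathbb{Q}(\tau>b)=\int_a^b\mathbb{Q}(\tau>b)\,dB_s$, produces $\int_a^b g(s)\,dB_s$. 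Summing over $i$ matches the RHS obtained in the previous step.

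Finally, I would extend by density: pick elementary $\mathbb{F}$-predictable $\tilde{Z}^n$ such that $Z^n:=\tilde{Z}^n\mathds{1}_{\llbracket 0,\tau\rrbracket}\to Z$ in $\mathcal{H}^2_{T\wedge\tau}(\beta,\mathbb{G},\mathbb{Q})$. It\^o isometry for the $(\mathbb{G},\mathbb{Q})$-Brownian motion $B^{T\wedge\tau}$ and the $\mathbb{L}^2$-contractivity of $\mathbb{E}_{\mathbb{Q}}[\,\cdot\mid\mathcal{F}_t]$ yield convergence of the LHS, while the conditional Jensen bound $|\mathbb{E}_{\mathbb{Q}}[Z^n_s-Z_s\mid\mathcal{F}_s]|^2\leq\mathbb{E}_{\mathbb{Q}}[|Z^n_s-Z_s|^2\mid\mathcal{F}_s]$ combined with It\^o isometry yields convergence of the RHS. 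The main obstacle is the pathwise Fubini exchange in the elementary step and a clean interpretation of the RHS integral; both are resolved by appealing to the $(\mathbb{G},\mathbb{Q})$-Brownian structure of $B^{T\wedge\tau}$ and to the genuine product-measure structure on $\sigma(\tau)\otimes\mathcal{F}_T$ furnished by the independence hypothesis.
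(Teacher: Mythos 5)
Your argument is correct but follows a genuinely different route from the paper. The paper proves the identity by Malliavin-calculus duality: it tests both sides against bounded $\mathcal{F}_t$-measurable $F\in\mathbb{D}^{1,2}$, uses the relation $\mathbb{E}_{\mathbb{Q}}[F\int_0^t Z_s\,dB_s]=\mathbb{E}[F\int_0^t\rho_s Z_s\,dB_s]$ together with the integration-by-parts formula $\mathbb{E}[\int_0^t(D_sF)\rho_sZ_s\,ds]$, and then invokes the freezing Lemma \ref{Lemma of conditional expectation} to show $D_sF$ is $\mathcal{F}_s$-measurable on $\{s<\tau\}$ before reversing the duality. You instead decompose $Z_s=\tilde Z_s\mathds{1}_{\{s\le\tau\}}$ via Jeulin's lemma, observe that independence reduces $\mathbb{E}_{\mathbb{Q}}[Z_s\mid\mathcal{F}_s]$ to the deterministic weighting $\tilde Z_s\,g(s)$, verify the identity by hand for elementary integrands (your Fubini rearrangement $\int_a^b\mathbb{Q}(s\le\tau\le b)\,dB_s+(B_b-B_a)\mathbb{Q}(\tau>b)=\int_a^b g(s)\,dB_s$ is exactly right, the boundary discrepancy between $\mathbb{Q}(s<\tau\le b)$ and $\mathbb{Q}(s\le\tau\le b)$ being $ds$-null), and pass to the limit. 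Your route is more elementary — it avoids Malliavin calculus entirely and yields the explicit closed form $\int_0^t\tilde Z_s g(s)\,dB_s$ for the conditional expectation — while the paper's duality argument avoids the elementary-process approximation and the attendant limit exchange.

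One point in your closing step deserves care. You invoke ``It\^o isometry'' for the convergence of the right-hand side $\int_0^t\mathbb{E}_{\mathbb{Q}}[Z^n_s\mid\mathcal{F}_s]\,dB_s$, but under $(\mathbb{F},\mathbb{Q})$ the process $B$ is in general only a semimartingale (its martingale part is $B^{[\rho]}$, cf. Lemma \ref{lemma of new Brownian in F}), so there is no isometry for the $\mathbb{F}$-integral against $B$ under $\mathbb{Q}$. The gap is easily closed: from $\mathbb{E}_{\mathbb{Q}}\int_0^T|\tilde Z^n_s-\tilde Z_s|^2 g(s)\,ds\to0$ and $g\le1$ one gets $\int_0^T|\tilde Z^n_s-\tilde Z_s|^2g(s)^2\,ds\to0$ in $\mathbb{Q}$-probability, hence in $\mathbb{P}$-probability by equivalence of the measures; the $(\mathbb{F},\mathbb{P})$-stochastic integrals then converge u.c.p.\ under $\mathbb{P}$, hence in $\mathbb{Q}$-probability, which suffices to identify the limit with the $L^2(\mathbb{Q})$-limit of the left-hand side. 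With that adjustment the proof is complete.
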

	The proof of this lemma can be found in Appendix \ref{Appendix B}.
	\begin{remark}
		Since $Z=Z \mathds{1}_{\rrbracket 0,T \wedge \tau \rrbracket}$ for $Z \in \mathcal{H}^2_{T \wedge \tau}\left( \beta,\mathbb{G},\mathbb{Q}\right)$, we can  write
		$$
		\mathbb{E}_{\mathbb{Q}}\left[\int_{0}^{t \wedge \tau} Z_s dB_s \mid \mathcal{F}_t \right]=\int_{0}^{t} \mathbb{E}_{\mathbb{Q}}\left[ Z_s \mathds{1}_{\left\lbrace s < \tau\right\rbrace } \mid \mathcal{F}_s \right]  dB_s=\int_{0}^{t} \mathbb{E}_{\mathbb{Q}}\left[ Z_s  \mid \mathcal{F}_s \right]  dB_s ,~ t \in [0,T].
		$$ 
	\end{remark}

	\begin{remark}
		We recall that, since the process $(\rho_t)_{t \leq T}$ defined in Proposition \ref{equiality of measures} is a uniformly integrable positive $(\mathbb{F},\mathbb{P})$-martingale, and $(B_t)_{t \leq T}$ has the martingale representation property in the $\left(\mathbb{F},\mathbb{P}\right)$-framework, we deduce the existence of an $\mathbb{F}$-predictable process $(\theta^{\ast}_t)_{t \leq T}$ such that $\int_{0}^{T}\left|\theta^{\ast}_s \right|^2 ds<+\infty$, $\mathbb{P}$-a.s., and
		\begin{equation}
			\rho_t=\rho_0+\int_{0}^{t} \theta^{\ast}_s dB_s,\quad t \in [0,T] ,~\mathbb{P}\text{-a.s.,}
			\label{RP}
		\end{equation}
		and that the process 
		$B^{[\rho]}_.=B_.-\displaystyle\int_{0}^{.}\dfrac{1}{\rho_{s}}d\left\langle \rho,B \right\rangle_s$
		is an $(\mathbb{F},\mathbb{Q})$-Brownian motion possessing the martingale representation property in $\left(\mathbb{F},\mathbb{Q}\right)$ (see Lemma \ref{lemma of new Brownian in F}).
	\end{remark}

	The following proposition constitutes the main result of the second approach, which allows us to comprehend the evolution of the state process of DRBSDE (\ref{basic equation}) within the $(\mathbb{F},\mathbb{Q})$-framework through conditioning with respect to $\mathbb{E}_{\mathbb{Q}}\left[\cdot \mid \mathcal{F}_t\right]$ for $t \in [0,T]$.  
	\begin{theorem}
		Let  $\Theta^{\rho,\ast}:=(\Theta^{\rho,\ast}_t)_{t \leq T}$ be the positive $\mathbb{F}$-predictable process defined by $\Theta^{\rho,\ast}_t:=\dfrac{\theta^{\ast}_t}{\rho_{t}}$. 
		Let $(Y_t,Z_t,K^{+}_t,K^{-}_t,M_t)_{t \geq 0}$ be the unique solution of the DRBSDE (\ref{basic equation}) associated with $(\xi,g,L,U)$ satisfying \textbf{(H)}.\\
		Assume that $\tau$ and $\mathcal{F}_T$ are independent under $\mathbb{Q}$, that $\xi \in \mathcal{L}^2_{T}(\beta,\mathbb{G},\mathbb{Q})$, that $
		\Theta^{\rho,\ast} \in \mathcal{H}^2_{T}(\mathbb{F},\mathbb{Q})$, and that $(L_t)_{t \leq T}$, $(U_t)_{t \leq T} \in \mathcal{D}^2_{T}\left( \mathbb{F},\mathbb{Q}\right)$. \\
		Then, there exists a unique $\mathbb{F}$-predictable process $(\Theta^{\mathbb{F}}_t)_{t \leq T} \in \mathcal{H}^2_T(\mathbb{F},\mathbb{Q})$ and a unique $\mathbb{F}$-predictable continuous increasing processes $\left(K^{\mathbb{F},+},K^{\mathbb{F},-}\right):=(K^{\mathbb{F},+}_t,K^{\mathbb{F},-}_t)_{t \leq T}$ such that the $\mathbb{F}$-adapted process  $\left(\hat{Y}^{\mathbb{F}},\hat{Z}^{\mathbb{F}},K^{\mathbb{F},+},K^{\mathbb{F},-},\Theta^{\mathbb{F}}\right):=\left(\mathbb{E}_{\mathbb{Q}}\left[Y_t  \mid \mathcal{F}_t\right],\mathbb{E}_{\mathbb{Q}}\left[Z_t \mid \mathcal{F}_t\right],K^{\mathbb{F},+}_t,K^{\mathbb{F},-}_t,\Theta^{\mathbb{F}}_t\right)_{t \leq T}$ satisfies
		\begin{equation*}
			\left\{
			\begin{split}
				&\text{(i)}~\hat{Y}^{\mathbb{F}}_t= 	\hat{\xi}^{\mathbb{F}}+\int_t^T \left\lbrace \hat{g}^{\mathbb{F}}(s)+\Theta^{\rho,\ast}_s \hat{Z}^{\mathbb{F}}_s \right\rbrace  ds+\left( K^{\mathbb{F},+}_T-K^{\mathbb{F},+}_t\right)-\left( K^{\mathbb{F},-}_T-K^{\mathbb{F},-}_t\right)\\
				&\qquad\qquad\qquad-\int_t^T \hat{Z}^{\mathbb{F}}_s  d B^{[\rho]}_s-\int_{t}^{T} \Theta^{\mathbb{F}}_s  d B^{[\rho]}_s,\quad \text{for all }t \in [0,T],~\mathbb{Q}\text{-a.s.}\\
				&\text{(ii)}~L_t \leq \hat{Y}^{\mathbb{F}}_t \leq U_t,~ \forall t \in [0,T],  \mathbb{Q}\text{-a.s.} \\
				&\text{(iii)}~\int_{0}^{T}(\hat{Y}^{\mathbb{F}}_s-L_s)dK^{\mathbb{F},+}_s=\int_{0}^{T}(U_s-\hat{Y}^{\mathbb{F}}_s)dK^{\mathbb{F},-}_s =0,~ \mathbb{Q}\text{-a.s.}
			\end{split}
			\right.
		\end{equation*}
		With $\hat{\xi}^{\mathbb{F}}=\mathbb{E}_{\mathbb{Q}}\left[\xi\mid \mathcal{F}_T\right]$, $\hat{g}^{\mathbb{F}}(t)=\mathbb{Q}\left(t \leq \tau \mid \mathcal{F}_t\right) g(t)$ and $\mathbb{E}_{\mathbb{Q}}\left[M_t \mid \mathcal{F}_t\right]=\mathbb{E}_{\mathbb{Q}}\left[M_{T}  \right]+\int_{0}^{t} \Theta^{\mathbb{F}}_s dB^{[\rho]}_s$, for $t \in [0,T]$. Moreover, we have  $\left(\hat{Y}^{\mathbb{F}},\hat{Z}^{\mathbb{F}},K^{\mathbb{F},+},K^{\mathbb{F},-},\Theta^{\mathbb{F}}\right)\in \mathfrak{B}^2_{T}\left( \beta,\mathbb{F},\mathbb{Q}\right)$.
		\label{Proposition of comparison}
	\end{theorem}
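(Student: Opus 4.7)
The plan is to apply the conditional expectation operator $\mathbb{E}_{\mathbb{Q}}[\,\cdot\mid\mathcal{F}_t]$ to the forward form (\ref{Writing BSDE forwadrdly}) of the DRBSDE (\ref{basic equation}), then use the Girsanov-type decomposition relating $B$ and $B^{[\rho]}$ to recast the result as an equation driven by the $(\mathbb{F},\mathbb{Q})$-Brownian motion $B^{[\rho]}$, and finally identify the projected quintuple with the unique solution of an auxiliary DRBSDE in the $(\mathbb{F},\mathbb{Q})$-framework whose existence and uniqueness follow from Theorem \ref{Existence and Uniqueness Theorem}.

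First, I would treat the deterministic-type terms. By the tower property, $\mathbb{E}_{\mathbb{Q}}[\xi\mid\mathcal{F}_t]=\mathbb{E}_{\mathbb{Q}}[\hat{\xi}^{\mathbb{F}}\mid\mathcal{F}_t]$. For the drift, Fubini's theorem combined with conditioning at level $s$ gives $\mathbb{E}_{\mathbb{Q}}[\mathds{1}_{\{s\leq\tau\}}g(s)\mid\mathcal{F}_s]=g(s)\mathbb{Q}(s\leq\tau\mid\mathcal{F}_s)=\hat{g}^{\mathbb{F}}(s)$ since $g(s)$ is $\mathcal{F}_s$-measurable, so that
\[
\mathbb{E}_{\mathbb{Q}}\!\left[\int_t^T\mathds{1}_{\{s\leq\tau\}}g(s)\,ds\,\Big|\,\mathcal{F}_t\right]=\mathbb{E}_{\mathbb{Q}}\!\left[\int_t^T\hat{g}^{\mathbb{F}}(s)\,ds\,\Big|\,\mathcal{F}_t\right].
\]
For the Brownian integral, Lemma \ref{result of interchanging the conditional expectation} (whose hypothesis of independence of $\tau$ and $\mathcal{F}_T$ is assumed) yields $\mathbb{E}_{\mathbb{Q}}[\int_t^T Z_s\,dB_s\mid\mathcal{F}_t]=\int_t^T\hat{Z}^{\mathbb{F}}_s\,dB_s$, and invoking (\ref{RP}) together with the definition of $B^{[\rho]}$ we obtain $dB_s=dB^{[\rho]}_s+\Theta^{\rho,\ast}_s\,ds$. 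Consequently
\[
\mathbb{E}_{\mathbb{Q}}\!\left[\int_t^T Z_s\,dB_s\,\Big|\,\mathcal{F}_t\right]=\int_t^T\hat{Z}^{\mathbb{F}}_s\,dB^{[\rho]}_s+\int_t^T\Theta^{\rho,\ast}_s\hat{Z}^{\mathbb{F}}_s\,ds,
\]
which accounts for the linear correction $\Theta^{\rho,\ast}_s\hat{Z}^{\mathbb{F}}_s$ appearing in the driver of the claimed equation.

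For the orthogonal martingale part, the $\mathbb{F}$-optional projection of the $(\mathbb{G},\mathbb{Q})$-martingale $M$ remains an $(\mathbb{F},\mathbb{Q})$-martingale, and the martingale representation property of $B^{[\rho]}$ in the $(\mathbb{F},\mathbb{Q})$-framework (Lemma \ref{lemma of new Brownian in F}) furnishes a unique $\mathbb{F}$-predictable process $\Theta^{\mathbb{F}}\in\mathcal{H}^2_T(\mathbb{F},\mathbb{Q})$ with $\mathbb{E}_{\mathbb{Q}}[M_t\mid\mathcal{F}_t]=\mathbb{E}_{\mathbb{Q}}[M_T]+\int_0^t\Theta^{\mathbb{F}}_s\,dB^{[\rho]}_s$. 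The reflecting processes $K^{\mathbb{F},\pm}$ are then introduced as the $(\mathbb{F},\mathbb{Q})$-dual predictable projections of $K^{\pm}$, which remain continuous since $K^{\pm}$ are. Collecting all contributions produces equation (i) for $(\hat{Y}^{\mathbb{F}},\hat{Z}^{\mathbb{F}},K^{\mathbb{F},+},K^{\mathbb{F},-},\Theta^{\mathbb{F}})$, while condition (ii) is immediate from $L_t\leq Y_t\leq U_t$ for all $t\in[0,T]$ (recalling the extension of the barriers on $\{t\geq T\wedge\tau\}$ given in (\ref{Barriers Definition})) together with the $\mathbb{F}$-adaptedness of $L$ and $U$.

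The main obstacle is the Skorokhod minimality condition (iii) for the projected processes $K^{\mathbb{F},\pm}$, which does not follow automatically from projection. To circumvent this, I would invoke Theorem \ref{Existence and Uniqueness Theorem} transferred to the $(\mathbb{F},\mathbb{Q})$-setting with Brownian motion $B^{[\rho]}$ and data $(\hat{\xi}^{\mathbb{F}},\,\hat{g}^{\mathbb{F}}(s)+\Theta^{\rho,\ast}_s z,\,L,\,U)$. The driver is stochastic Lipschitz in $z$ with coefficient $\Theta^{\rho,\ast}_s$, and the required $\mathbb{L}^2$-integrabilities of $\hat{\xi}^{\mathbb{F}}$ and $\hat{g}^{\mathbb{F}}$ follow from the standing assumptions via Jensen's inequality for conditional expectations together with the hypotheses $\Theta^{\rho,\ast}\in\mathcal{H}^2_T(\mathbb{F},\mathbb{Q})$ and $L,U\in\mathcal{D}^2_T(\mathbb{F},\mathbb{Q})$. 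The unique solution of this auxiliary DRBSDE must coincide with the projected quintuple constructed above by the a priori estimate of Proposition \ref{basic Estimastion for BSDE} transferred to $(\mathbb{F},\mathbb{Q})$; hence the Skorokhod minimality condition is inherited by $K^{\mathbb{F},\pm}$, completing the identification and yielding the desired membership in $\mathfrak{B}^2_T(\beta,\mathbb{F},\mathbb{Q})$.
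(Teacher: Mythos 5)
Your computation of the projected dynamics is sound and matches the substance of the paper's Parts 1, 2 and 6: the treatment of the drift via $\hat{g}^{\mathbb{F}}$, the use of Lemma \ref{result of interchanging the conditional expectation} together with $dB_s=dB^{[\rho]}_s+\Theta^{\rho,\ast}_s\,ds$ to produce the linear correction $\Theta^{\rho,\ast}_s\hat{Z}^{\mathbb{F}}_s$, and the representation of $\mathbb{E}_{\mathbb{Q}}[M_t\mid\mathcal{F}_t]$ through $B^{[\rho]}$ are all exactly what the paper does (though the paper carries them out along a penalization scheme rather than on the limit equation directly).

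The genuine gap is your last step, the Skorokhod condition (iii). You correctly identify it as the obstacle, but the proposed fix is circular. Proposition \ref{basic Estimastion for BSDE} (and hence the uniqueness in Theorem \ref{Existence and Uniqueness Theorem}) compares two \emph{solutions} in the sense of Definition \ref{definition}, i.e.\ two quintuples both of which already satisfy the Skorokhod condition; it cannot be invoked to identify your projected quintuple, for which (iii) is precisely what is unknown, with the unique solution of the auxiliary $(\mathbb{F},\mathbb{Q})$-DRBSDE. Conditions (i) and (ii) alone do not determine $(K^{\mathbb{F},+},K^{\mathbb{F},-})$: adding a common continuous increasing process to both components leaves (i) and (ii) intact while destroying minimality, so there is no a priori reason the $\mathbb{F}$-dual projections of $K^{\pm}$ coincide with the minimal reflecting processes of the auxiliary equation. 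This is exactly why the paper does not project the limit equation but instead projects the \emph{penalized} equations, where $K^{n,\pm}$ have explicit densities $\mathsf{k}^{n,\mathbb{F},\pm}$; it then proves $\int_0^T(\hat{Y}^{n,\mathbb{F}}_s-L_s)\mathsf{k}^{n,\mathbb{F},+}_s\,ds\leq 0$ by a direct conditional-expectation computation (using that $(Y^n-L)^+$ and $(Y^n-L)^-$ cannot be simultaneously nonzero), and passes to the limit via weak $\mathbb{L}^2$-convergence and Mazur's lemma to recover (iii) and the identity $K^{\mathbb{F}}=K^{\mathbb{F},+}-K^{\mathbb{F},-}$. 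Some version of this penalization-plus-projection argument (or another genuine proof of minimality for the projected reflecting processes) is needed; as written, your identification step assumes the conclusion. A secondary point: even if the identification were legitimate, you would still need to verify that the driver $\hat{g}^{\mathbb{F}}(s)+\Theta^{\rho,\ast}_s z$ fits the stochastic Lipschitz framework of assumption $(\mathbf{H_2})$ (in particular the lower bound $\alpha^2\geq\epsilon$ and the integrability of $f(\cdot,0,0)/\alpha$) with $\gamma=\Theta^{\rho,\ast}$ only assumed to lie in $\mathcal{H}^2_T(\mathbb{F},\mathbb{Q})$; this is not automatic.
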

	\begin{proof}   
		In this proof, $\mathfrak{c}>0$ indicates a constant that remains independent of the index of any stochastic process sequences and may vary from line to line. Additionally, note that every $\mathbb{F}$-optional process is $\mathbb{F}$-predictable due to the Brownian filtration $\mathbb{F}$ \cite[Corollary V.3.3]{revuz1999continuous}.\\	
		The proof is performed in eight parts.
		\paragraph*{Part 1: Penalized equations}
		\emph{}\\
		To begin, let us consider the following initial penalized BSDE in the defaultable setup $(\Omega,\mathcal{F},\mathbb{G},\mathbb{Q})$:
		\begin{equation}
			\left\{
			\begin{split}
				&\text{(i)}~Y^{n}_t= \xi+\int_0^t g(s)\mathds{1}_{\{s \leq \tau\}}ds+n\int_0^t (L_s-Y^{n}_s)^{+}\mathds{1}_{\{s \leq \tau\}}ds  \\
				&\qquad\quad -n \int_0^t (Y^{n}_s-U_s)^{+}\mathds{1}_{\{s \leq \tau\}}ds -\int_0^{t} Z^{n}_s d B_s-\int_0^{t}  dM^{n}_s, \quad t \in [0,T],~\mathbb{Q}\text{-a.s.}\\
				&\text{(ii)}~Y^n_t=\xi,~Z_t^n=dM^n_t=0 \text{ on the set } \{t \geq T \wedge \tau\},~\mathbb{Q}\text{-a.s.}
			\end{split}
			\right.
			\label{penalization equations with respect to the two reflection barriers}
		\end{equation}
		Rewriting (\ref{penalization equations with respect to the two reflection barriers}) forwards, using Remark \ref{strssfull remark}-(a), Lemma 3.9 in \cite{BDEM}, and taking the conditional expectation on both sides of (\ref{penalization equations with respect to the two reflection barriers}) together with Fubini's Theorem, we obtain, for all $t \in [0,T]$, 
		\begin{equation}
			\begin{split}
				\hat{Y}^n_t
				&=	Y_0-\int_0^t g(s)\mathbb{Q}\left(s \leq \tau \mid \mathcal{F}_s\right)ds-	K^{n,\mathbb{F},+}_t+	K^{n,\mathbb{F},-}_t \\ 
				&\qquad+\int_0^t \mathbb{E}_{\mathbb{Q}}\left[Z^{n}_s \mid \mathcal{F}_s \right] d B_s+\mathbb{E}_{\mathbb{Q}}\left[M^{n}_t \mid \mathcal{F}_t \right]. 
			\end{split}
			\label{WE just trying}
		\end{equation}
		with $\hat{Y}^{n, \mathbb{F}}_t=\mathbb{E}_{\mathbb{Q}}\left[Y^n_t \mid \mathcal{F}_t\right]$, $K^{n,\mathbb{F},+}_t=n\int_0^t \mathbb{E}_{\mathbb{Q}} \left[ (L_s-Y^{n}_s)^{+}\mathds{1}_{\{s \leq \tau\}} \mid \mathcal{F}_s \right] ds$, and $	K^{n,\mathbb{F},-}_t:=n\int_0^t\mathbb{E}_{\mathbb{Q}}\left[  (Y^{n}_s-U_s)^{+}\mathds{1}_{\{s \leq \tau\}} \mid \mathcal{F}_s \right]ds$ for $t \in [0,T]$. Additionally, from Remark \ref{RMQ of Well defined Steiltjes integrals}, we derive that the processes $\left(K^{n,\mathbb{F},+}_t\right)_{t \leq T}$ and $\left( K^{n,\mathbb{F},+}_t\right)_{t \leq T}$ are well-defined, finite-valued Stieltjes integrals, which are increasing $\mathbb{F}$-adapted and continuous.	  	
		\paragraph*{Part 2: Decomposition of the processes $(Z^n,M^n)$ in $\mathbb{F}$ with respect to $\mathbb{Q}$}
		\emph{}\\ 
		First, note that, $\left\langle \rho,B \right\rangle_t=[\rho,B]^{p,\mathbb{F},\mathbb{Q}}_t=\int_{0}^{t}\frac{1}{\rho_{s-}}d\left(\int_{0}^{s}\rho_u d[\rho,B]_u \right)^{p,\mathbb{F},\mathbb{P}} $ (see Section I-3 in \cite{JeulinDecom}). Then, from the fact that $d[\rho,B]_u=\theta^{\ast}_u du$, $\mathbb{P}$-a.s., where $\theta^{\ast}$ is defined in (\ref{RP}), we derive $\left\langle \rho,B \right\rangle_t=\int_{0}^{t} \theta^{\ast}_s ds$, $\mathbb{Q}$-a.s. Then, from Lemma \ref{lemma of new Brownian in F}, we deduce that
		$$
		B^{[\rho]}_t=B_t-\int_{0}^{t} \Theta^{\rho,\ast}_sds,\quad t \in  [0,T].
		$$
		is an $(\mathbb{F},\mathbb{Q})$-Brownian motion having the representation property. \\
		From Theorem \ref{Existence and uniquess theorem BSDE} and the B-D-G inequality, we know that the orthogonal martingale $M^n$ satisfies $M^n_{\tau}=M^n_T$ (from (\ref{penalization equations with respect to the two reflection barriers}-(ii))) and the following  integrability property:
		\begin{equation}
			\mathbb{E}_{\mathbb{Q}}\left[\sup_{0 \leq t \leq T \wedge \tau} e^{\beta \mathcal{A}_t}\left| M^n_t\right|^2 \right]<+\infty.
			\label{square integrability}
		\end{equation}
		On the other hand, using the martingale representation property for the $\mathbb{F}$-Brownian motion $B^{[\rho]}$ under $\mathbb{Q}$ (see Section 3 in \cite[Chapter IV]{bookProtter} for more details), we get: For any $t \in [0,T]$
		\begin{itemize}
			\item On the set $\{t<T \wedge \tau\}$, we have
			$$
			\mathbb{E}_{\mathbb{Q}}\left[M^n_t \mid \mathcal{F}_t\right]=\mathbb{E}_{\mathbb{Q}}\left[M^n_T \right]+\int_{0}^{t} \Theta^{n,1}_s dB^{[\rho]}_s.
			$$
			\item On the set $\{t \geq T \wedge \tau\}$, we get
			$$
			\mathbb{E}_{\mathbb{Q}}\left[M^n_t \mid \mathcal{F}_t\right]=\mathbb{E}_{\mathbb{Q}}\left[M^n_{\tau} \mid \mathcal{F}_t\right]=\mathbb{E}_{\mathbb{Q}}\left[M^n_{\tau} \right]+\int_{0}^{t} \Theta^{n,2}_s dB^{[\rho]}_s,\quad t \geq 0.
			$$
		\end{itemize}
		Further, from (\ref{square integrability}), we deduce that the  two $\mathbb{F}$-predictable processes $(\Theta^{n,1})_{t \geq 0}$ and $(\Theta^{n,2})_{t \geq 0}$ are uniquely defined and belongs to $\mathcal{H}^2_{T}(\mathbb{F},\mathbb{Q})$. Using the fact that  $\mathbb{E}_{\mathbb{Q}}\left[M^n_{\tau} \right]=\mathbb{E}_{\mathbb{Q}}\left[M^n_{T} \right]$, and Itô's Isometric formula, we obtain
		$$
		\mathbb{E}_{\mathbb{Q}}\left[\int_{0}^{t}\left|\Theta^{n,1}_s-\Theta^{n,2}_s \right|^2ds   \right]=0,\quad \forall t \in [0,T].
		$$
		Henceforth, $\Theta^{n,1}=\Theta^{n,2}$, and then the process $\left(\mathbb{E}_{\mathbb{Q}}\left[M^n_t \mid \mathcal{F}_t\right]\right)_{t \leq T}$ can be uniquely represented as:
		\begin{equation}
			\mathbb{E}_{\mathbb{Q}}\left[ M^n_t \mid \mathcal{F}_t\right]=\mathbb{E}_{\mathbb{Q}}\left[M^n_{\tau}  \right]+\int_{0}^{t} \Theta^{n,\mathbb{F}}_s dB^{[\rho]}_s,\quad t \in [0,T],
			\label{some explication}
		\end{equation}
		where $(\Theta^{n,\mathbb{F}}_t)_{t \leq T}$ is a uniquely defined $\mathbb{F}$-predictable process that belongs to $\mathcal{H}^2_{T}(\mathbb{F},\mathbb{Q})$.\\	
		Now, let's address the $\mathbb{G}$-predictable process $Z^n$. Using Lemma 4.4 in \cite{book:63874} for $\mathbb{G}$-predictable process $Z^n$, and considering the fact that $Z^n=Z^n \mathds{1}_{\llbracket0,\tau\rrbracket}$, we deduce the existence of a unique $\mathbb{F}$-predictable process $Z^{n,\mathbb{F}}$ such that $ Z^{n,\mathbb{F}} \in \mathcal{H}^2_{T \wedge \tau}(\mathbb{F},\mathbb{Q})$, and
		\begin{equation}
			Z^{n}_s=Z^{n,\mathbb{F}}_s \mathds{1}_{\{s \leq \tau\}} \text{ thus }  \mathbb{E}_{\mathbb{Q}}\left[Z^{n}_s \mid \mathcal{F}_s \right]=Z^{n,\mathbb{F}}_s \mathbb{E}_{\mathbb{Q}}\left[\mathds{1}_{\{s \leq \tau\}} \mid \mathcal{F}_s \right],\quad s \in [0,T].
			\label{SML}
		\end{equation}
		Define $\hat{Z}^{n,\mathbb{F}}_s:=\mathbb{Q}\left(s \leq \tau \mid \mathcal{F}_s\right) Z^{n,\mathbb{F}}_s$. Then, we have $ \hat{Z}^{n,\mathbb{F}} \in  \mathcal{H}^2_{T}(\mathbb{F},\mathbb{Q})$, and the BSDE (\ref{WE just trying}) becomes
		\begin{equation}
			\begin{split}
				&\hat{Y}^{n,\mathbb{F}}_t=
				\hat{Y}^{n,\mathbb{F}}_0-\int_0^t\left( \hat{g}(s)+\hat{Z}^{n,\mathbb{F}}_s \Theta^{\rho,\ast}_s\right) ds-K^{n,\mathbb{F},+}_t +K^{n,\mathbb{F},-}_t \\
				&\qquad\qquad\quad +\int_0^t \hat{Z}^{n,\mathbb{F}}_s d B^{[\rho]}_s+\int_0^t \Theta^{n,\mathbb{F}}_s d B^{[\rho]}_s, \quad t \in [0,T],~\mathbb{Q} \text{-a.s.}
			\end{split}
			\label{New penalization approximation}
		\end{equation}	
		\paragraph*{Part 3: Estimation of the processes $\left(\hat{Y}^{n,\mathbb{F}}_t,\hat{Z}^{n,\mathbb{F}}_t,K^{n,\mathbb{F},+}_t,K^{n,\mathbb{F},-}_t,\Theta^{n,\mathbb{F}}_t\right)_{t \leq T}$}	
		\emph{}\\
		Set $K^{n,+}_t:=n\int_0^t (L_s-Y^{n}_s)^{+}\mathds{1}_{\{s \leq \tau\}}ds$ and  $K^{n,-}_t:=n\int_0^t (Y^{n}_s-U_s)^{+}\mathds{1}_{\{s \leq \tau\}}ds$, for $t \in [0,T]$. 
		From \cite[Lemma 4.1]{BDEM} concerning the BSDE (\ref{penalization equations with respect to the two reflection barriers}), we have: For any $\beta >0$, there exists a constant $\mathfrak{c}$ such that
		\begin{equation}
			\begin{split}
				\sup_{n \in \mathbb{N}}&\left\lbrace \mathbb{E}_{\mathbb{Q}}\sup_{0 \leq t \leq T \wedge \tau}e^{\beta A_t}\left| Y^n_t\right|^2 + \mathbb{E}_{\mathbb{Q}}\int_{0}^{T \wedge \tau}e^{\beta A_s} \left| Y^n_s \alpha_s\right|^2 ds \right.\\
				&\left. \qquad+	\mathbb{E}_{\mathbb{Q}}\left[ \left| K^{n,+}_{T \wedge \tau} \right|^2\right]+
				\mathbb{E}_{\mathbb{Q}}\left[ \left| K^{n,-}_{T \wedge \tau} \right|^2\right]  \right\rbrace  \leq \mathfrak{c}.
			\end{split}
			\label{un Y}
		\end{equation}
		Using Jensen, Doob's Meyer quadratic inequality \cite[Theorem I.1.43]{bookJacod}, we obtain
		\begin{equation*}
			\begin{split}
				\mathbb{E}_{\mathbb{Q}}\left[ \sup_{0 \leq t \leq T}e^{\beta A_t}\left| \hat{Y}^{n,\mathbb{F}}_t\right|^2\right] &\leq  	\mathbb{E}\sup_{0 \leq t \leq T}\mathbb{E}_{\mathbb{Q}}\left[e^{\beta A_t}\left| Y^n_t\right|^2 \mid \mathcal{F}_t\right] \\
				&\leq 	\mathbb{E}\sup_{0 \leq t \leq T}\mathbb{E}_{\mathbb{Q}}\left[\sup_{0 \leq t \leq T} e^{\beta A_t}\left| Y^n_t\right|^2 \mid \mathcal{F}_t\right]\\
				&\leq 4 \mathbb{E}_{\mathbb{Q}}\left[ \sup_{0 \leq t \leq T}e^{\beta A_t}\left| Y^n_t\right|^2\right] \\
				&\leq 4 \mathbb{E}_{\mathbb{Q}}\left[ \sup_{0 \leq t \leq T \wedge \tau}e^{\beta A_t}\left| Y^n_t\right|^2\right] +4\mathbb{E}_{\mathbb{Q}}\left[ e^{\beta A_T}\left| \xi\right|^2\right] .
			\end{split}
			\label{n tends to infty}
		\end{equation*}
		On the other hand, the positive Fubini-Tonelli's theorem, Jensen's inequality allows us to conclude that
		\begin{equation*}
			\begin{split}
				\mathbb{E}_{\mathbb{Q}}\left[ \int_{0}^{T}e^{\beta A_s} \left| \hat{Y}^{n,\mathbb{F}}_s \alpha_s\right|^2 ds\right] &\leq \int_{0}^{T} \mathbb{E}_{\mathbb{Q}}\left[e^{\beta A_s} \alpha_s^2 \mathbb{E}_{\mathbb{Q}}\left[ \left| Y^n_s\right|^2 \mid \mathcal{F}_s  \right] \right]ds
				=\mathbb{E}_{\mathbb{Q}}\left[ \int_{0}^{T}e^{\beta A_s} \left| Y^n_s \alpha_s\right|^2 ds\right] \\
				& \leq 	\mathbb{E}_{\mathbb{Q}}\left[ \int_{0}^{T \wedge \tau }e^{\beta A_s} \left| Y^n_s \alpha_s\right|^2 ds\right] +\mathbb{E}_{\mathbb{Q}}\left[ e^{\beta A_T}\left| \xi\right|^2\right] .
			\end{split}
		\end{equation*}	
		From the uniform estimation satisfied by the sequence $\{K^{n,\pm}\}_{n \in \mathbb{N}}$, given by (\ref{un Y}) and Jensen inequality, we deduce that 
		\begin{equation}
			\begin{split}
				\sup_{n \in \mathbb{N}}\left\lbrace	\mathbb{E}_{\mathbb{Q}}\left[ \left| K^{n,\mathbb{F},+}_{T} \right|^2\right]+
				\mathbb{E}_{\mathbb{Q}}\left[ \left| K^{n,\mathbb{F},-}_{T } \right|^2\right] \right\rbrace \leq \mathfrak{c}.
				\label{KnF+-}
			\end{split}
		\end{equation}
		Thanks to Lemma 4.1 in \cite{BDEM}, for any $\beta >0$, we can write
		\begin{equation}
			\begin{split}
				\sup_{n \in \mathbb{N}}\left\lbrace\mathbb{E}_{\mathbb{Q}}\left[\int_{0}^{T \wedge \tau} e^{\beta A_s}\left|Z^n_s \right|^2 ds+\int_{0}^{T \wedge \tau}e^{\beta A_s} d\left[M^n\right]_s  \right]\right\rbrace 
				\leq \mathfrak{c}.
			\end{split}
			\label{with}
		\end{equation}	
		Using positive Fubini-Tonelli's theorem, we have
		$$
		\mathbb{E}_{\mathbb{Q}}\left[ \int_{0}^{T}e^{\beta A_s}\left| \hat{Z}^{n,\mathbb{F}}_s \right|^2\right]  ds\leq  \mathbb{E}_{\mathbb{Q}}\left[ \int_{0}^{T}e^{\beta A_s}\left| Z^n_s \right|^2 ds\right] =\mathbb{E}_{\mathbb{Q}}\left[ \int_{0}^{T \wedge \tau}e^{\beta A_s}\left| Z^n_s \right|^2 ds\right] \leq \mathfrak{c}.
		$$
		From the martingale representation property (\ref{some explication}), Itô's isometric formula, Jensen's and the B-D-G inequalities with (\ref{with}), we deduce that
		$$
		\mathbb{E}_{\mathbb{Q}}\left[ \int_{0}^{T}\left| \Theta^{n,\mathbb{F}}_s \right|^2 ds\right] =\mathbb{E}_{\mathbb{Q}}\left[ \left(\int_{0}^{T} \Theta^{n,\mathbb{F}}_s dB^{[\rho]}_s\right)^2\right]\leq 2 \mathbb{E}_{\mathbb{Q}}\left[ \sup_{0 \leq t \leq T \wedge \tau} e^{\beta A_t}\left| M^n_t\right|^2\right]\leq \mathfrak{c} .
		$$
		\paragraph*{Part 4: Convergence result}	
		\emph{}\\
		From Step 4 of the proof of Theorem 4.1 in \cite{BDEM},  we know that there exists a unique triplet of $\mathbb{G}$-adapted processes $(Y_t,Z_t,M_t)_{t \geq 0}$ such that
		\begin{itemize}
			\item $(Y_t)_{t \geq 0}$ is $\mathbb{G}$-adapted RCLL with $Y=\xi$ on $\llbracket T \wedge \tau,+\infty\llbracket$;
			\item $(Z_t)_{t \geq 0}$ is $\mathbb{G}$-predictable with $Z=Z\mathds{1}_{\llbracket0,T \wedge \tau \rrbracket}=Z\mathds{1}_{\llbracket0, \tau \rrbracket}$ on $[0,T]$;
			\item $(M_t)_{t \geq 0}$ is an RCLL $\mathbb{G}$-square integrable martingale orthogonal to $(B_t)_{t \geq 0}$ with $dM_{t}=dM_{t \wedge \tau}$ for all $t \in [0,T]$;
		\end{itemize}
		and
		\begin{equation}
			\lim\limits_{n \to +\infty}\mathbb{E}_{\mathbb{Q}}\left[\sup_{0 \leq t \leq T \wedge \tau}\left| Y^n_t-Y_t\right|^2+\int_{0}^{T \wedge \tau} \left|Z^n_s-Z_s \right|^2 ds+\int_{0}^{T \wedge \tau} d\left[M^n-M\right]_s  \right]=0.
			\label{CV}
		\end{equation}
		This together with (\ref{un Y}), (\ref{with}) and the conditional dominated convergence theorem, we derive the following convergences (in $\mathbb{L}^2$-space), for any  $t \in [0,T]$,
		\begin{equation}
			\left\{
			\begin{split}
				&\text{(i)}~\hat{Y}^{n,\mathbb{F}}_t= \mathbb{E}_{\mathbb{Q}}\left[Y^n_t \mid \mathcal{F}_t\right]\xrightarrow[n \to +\infty]{\mathcal{L}^2_{t}(\mathbb{F},\mathbb{Q})} \mathbb{E}_{\mathbb{Q}}\left[Y_t \mid \mathcal{F}_t\right]=:\hat{Y}^{\mathbb{F}}_t.\\			&\text{(ii)}~\hat{Z}^{n,\mathbb{F}}=\mathbb{E}_{\mathbb{Q}}\left[Z^n \mid \mathcal{F}_{\cdot}\right]\xrightarrow[n \to +\infty]{\mathcal{H}^2_t(\mathbb{F},\mathbb{Q})} \mathbb{E}_{\mathbb{Q}}\left[Z \mid \mathcal{F}_{\cdot}\right]. \\
				&\text{(iii)}~\mathbb{E}_{\mathbb{Q}}\left[M^n_T \right]+\int_{0}^{t} \Theta^{n,\mathbb{F}}_s dB^{[\rho]}_s\xrightarrow[n \to +\infty]{\mathcal{L}^2_{t}(\mathbb{F},\mathbb{Q})} \mathbb{E}_{\mathbb{Q}}\left[M_t \mid \mathcal{F}_t\right]=\mathbb{E}_{\mathbb{Q}}\left[M_{T}  \right]+\int_{0}^{t} \Theta^{\mathbb{F}}_s dB^{[\rho]}_s.
			\end{split}
			\right.
			\label{System}
		\end{equation}
		Using a similar argument as the one employed in  (\ref{SML}), we establish that, for any $t \in [0,T]$, we can express 
		$\mathbb{E}_{\mathbb{Q}}\left[Z_t \mid \mathcal{F}_{t}\right]=Z^{\mathbb{F}}_t \mathbb{Q}\left(t \leq \tau \mid \mathcal{F}_{t}\right)=$ for a unique $\mathbb{F}$ predictable process $\left( Z^{\mathbb{F}}_t\right)_{t \leq T}$. Similarly, the process $\left(\Theta^{\mathbb{F}}_t\right)_{t \leq T}$ is obtained following a similar approach as in (\ref{some explication}). We define $\hat{Z}^{\mathbb{F}}_t=Z^{\mathbb{F}}_t \mathbb{Q}\left(t \leq \tau \mid \mathcal{F}_{t}\right)$ for $t \in [0,T]$.\\
		As $(\hat{Z}^{n,\mathbb{F}},\Theta^{n,\mathbb{F}},\hat{Z}^{\mathbb{F}},\Theta^{\mathbb{F}})$ are uniquely defined as well as the processes $(Z^n,M^n,Z,M)$, and from the convergences given in (\ref{CV}) and (\ref{System}), the definition of the processes  $(\hat{Z}^{n,\mathbb{F}},\hat{Z}^{\mathbb{F}})$, using Remark  \ref{strssfull remark} for the processes $(\hat{Z}^{n,\mathbb{F}},\hat{Z}^{\mathbb{F}})$, and Jensen's inequality, we can easily deduce that for any $t \in [0,T]$,
		$$
		\lim\limits_{n\to +\infty}\mathbb{E}_{\mathbb{Q}}\left[\left(\int_{0}^{t}\hat{Z}^{n,\mathbb{F}}_s dB^{[\rho]}_s-\int_{0}^{t}\hat{Z}^{\mathbb{F}}_s dB^{[\rho]}_s\right)^2+\left(\int_{0}^{t}\Theta^{n,\mathbb{F}}_s dB^{[\rho]}_s-\int_{0}^{t}\Theta^{\mathbb{F}}_s dB^{[\rho]}_s\right)^2\right] =0,
		$$
		Moreover, since  the process $(\Theta^{\rho,\ast}_t)_{t \leq T}$ is square integrable, i.e., $\Theta^{\rho,\ast} \in \mathcal{H}^2_{T}(\mathbb{F},\mathbb{Q})$, then using the Lebesgue dominate convergence theorem, we get
		$$\int_{0}^{t} \hat{Z}^{n,\mathbb{F}}_s \Theta^{\rho,\ast}_s ds\xrightarrow[n \to +\infty]{\mathcal{L}^1_t(\mathbb{F},\mathbb{Q})} \int_{0}^{t} \hat{Z}^{\mathbb{F}}_s \Theta^{\rho,\ast}_s ds,\quad t \in [0,T].$$
		\paragraph*{Part 5: Cauchy property}
		\emph{}\\
		Following this convergence results and using the fact that $\left\{Y^n,Z^n,M^n\right\}_{n \in \mathbb{N}}$ are Cauchy sequences, along with Remark \ref{strssfull remark} and Jensen's inequality, we can deduce that $\left\{\hat{Y}^{n,\mathbb{F}},\hat{Z}^{n,\mathbb{F}},\Theta^{n,\mathbb{F}}\right\}_{n \in \mathbb{N}}$ are also Cauchy sequences. More precisely, we can derive
		\begin{equation}
			\left\{
			\begin{split}
				&\bullet \lim\limits_{n,p\to +\infty}\mathbb{E}_{\mathbb{Q}}\left[\sup_{0 \leq t \leq T}\left| \hat{Y}^{n,\mathbb{F}}_t-\hat{Y}^{p,\mathbb{F}}_t\right|^2 \right]=0.\\
				&\bullet \text{ For any } t \in [0,T],\\
				&\lim\limits_{n,p\to +\infty}\mathbb{E}_{\mathbb{Q}}\left[\left(\int_{0}^{t}\left( \hat{Z}^{n,\mathbb{F}}_s -\hat{Z}^{p,\mathbb{F}}_s\right)  dB^{[\rho]}_s\right)^2+\left(\int_{0}^{t}\left( \Theta^{n,\mathbb{F}}_s -\Theta^{p,\mathbb{F}}_s\right)  dB^{[\rho]}_s\right)^2\right] =0.\\
			\end{split}
			\right.
			\label{CV SYS}
		\end{equation}
		Furthermore, since $\Theta^{\rho,\ast} \in \mathcal{H}^2_{T}(\mathbb{F},\mathbb{Q})$, we can show using the second line of (\ref{CV SYS}), that
		\begin{equation}
			\lim\limits_{n,p\to +\infty}\mathbb{E}_{\mathbb{Q}}\left[\left|  \int_{0}^{t}\left( \hat{Z}^{n,\mathbb{F}}_s  -\hat{Z}^{p,\mathbb{F}}_s\right) \Theta^{\rho,\ast}_s ds \right| \right] =0
			,\quad  t \in [0,T].
			\label{CV SYST ADD}
		\end{equation}
		Now, we set $K^{n,\mathbb{F}}_t:=K^{n,\mathbb{F},+}_t-K^{n,\mathbb{F},-}_t$ for $t \in [0,T]$. From the BSDE (\ref{New penalization approximation}), we obtain using (\ref{CV SYS}) and (\ref{CV SYST ADD}), that
		$$
		\lim\limits_{n,p\to +\infty}\mathbb{E}_{\mathbb{Q}}\left[\sup_{0 \leq t \leq T}\left|  K^{n,\mathbb{F}}_t-K^{p,\mathbb{F}}_t \right| \right] =0.
		$$
		Thus, $\left\{K^{n,\mathbb{F}}\right\}_{n \in \mathbb{N}}$ is a Cauchy sequence in the complete space $\mathcal{L}^1_T(\mathbb{F},\mathbb{Q})$. Therefore, there exists an $\mathbb{F}$-predictable  process $K^{\mathbb{F}}$ such that $\mathbb{E}_{\mathbb{Q}}\left|  K^{\mathbb{F}}_T\right| <+\infty$. Additionally, from the uniform estimation (\ref{KnF+-}) satisfied by $\left\{K^{n,\mathbb{F},\pm}\right\}_{n \in \mathbb{N}}$ and Fatou's lemma, we get 
		\begin{equation*}
			\begin{split}
				\mathbb{E}_{\mathbb{Q}}\left|  K^{\mathbb{F}}_T\right|^2 & \leq  \liminf_{n \to +\infty} \mathbb{E}_{\mathbb{Q}}\left|  K^{n,\mathbb{F}}_T\right|^2 
				\leq 2 \left\{  \limsup_{n \to +\infty} \mathbb{E}_{\mathbb{Q}}\left|  K^{n,\mathbb{F},+}_T\right|^2+  \limsup_{n \to +\infty} \mathbb{E}_{\mathbb{Q}}\left|  K^{n,\mathbb{F},-}_T\right|^2\right\} \leq 4 \mathfrak{c}.
			\end{split}
		\end{equation*}
		Then, we may obtain, at least for a subsequence, that	
		\begin{equation}
			\lim\limits_{n\to +\infty}\mathbb{E}_{\mathbb{Q}}\left[\sup_{0 \leq t \leq T}\left|  K^{n,\mathbb{F}}_t-K^{\mathbb{F}}_t \right|^2\right] =0.
			\label{Convergence of Kn}
		\end{equation}
		\paragraph*{Part 6: BSDE satisfied by the limiting process $\left( \hat{Y}^{\mathbb{F}}_t,\hat{Z}^{\mathbb{F}}_t,K^{\mathbb{F}}_t,\Theta^{\mathbb{F}}_t \right)_{t \leq T}$}
		\emph{}\\
		Passing to the limit term by term in $\mathcal{L}^1_T(\mathbb{F},\mathbb{Q})$ as $n \to+\infty$ in (\ref{New penalization approximation}), we obtain that the quadruplet $(\hat{Y}^{\mathbb{F}}_t,\hat{Z}^{\mathbb{F}}_t,\Theta^{\mathbb{F}}_t,K^{\mathbb{F}}_t)_{t \leq T}$ is the unique quadruplet satisfying the following BSDE:
		\begin{equation}
			\begin{split}
				\hat{Y}^{\mathbb{F}}_t= 	\hat{\xi}^{\mathbb{F}}+\int_t^T \left\lbrace \hat{g}^{\mathbb{F}}(s)+\hat{Z}^{\mathbb{F}}_s \Theta^{\rho,\ast}_s \right\rbrace  ds+\left( K^{\mathbb{F}}_T-K^{\mathbb{F}}_t\right) -\int_t^T \hat{Z}^{\mathbb{F}}_s  d B^{[\rho]}_s-\int_{t}^{T} \Theta^{\mathbb{F}}_s  d B^{[\rho]}_s,
			\end{split}
			\label{Dyn in F}
		\end{equation}
		for all $t \in [0,T]$, $\mathbb{Q}$-a.s., with terminal value $
		\hat{\xi}^{\mathbb{F}}=\mathbb{E}_{\mathbb{Q}}\left[\xi \mid \mathcal{F}_T\right]$.\\	
		Observe that the sequence of processes $\left\{\hat{Y}^{n,\mathbb{F}}\right\}_{n \in \mathbb{N}}$, defined by the BSDEs in (\ref{New penalization approximation}), possesses continuous paths due to the fact that the process $B^{[\rho]}$ is an $(\mathbb{F},\mathbb{Q})$-Brownian motion. Consequently, considering the paths of the limiting process $\hat{Y}^{\mathbb{F}}$ as a uniform limit of continuous functions, we conclude that $\hat{Y}^{\mathbb{F}}$ is also continuous, exhibiting continuous trajectories on the interval $[0,T]$. Furthermore, we deduce that, the limit of the sequence $\left\{K^{n,\mathbb{F}}\right\}_{n \in \mathbb{N}}$ given by $K^{\mathbb{F}}$ also has continuous paths on $[0,T]$.
		
		Let us now show $L_t \leq \hat{Y}^{\mathbb{F}}_t \leq U_t$, for all $t \in [0,T]$. On one hand, we know that $L=L^{T \wedge \tau}$, $U=U^{T \wedge \tau}$, and $L_{T \wedge \tau} \leq \xi \leq U_{T \wedge \tau}$, by assumption. Therefore, $L_{\tau}=L_T=\mathbb{E}_{\mathbb{Q}}\left[L_T \mid \mathcal{F}_T\right]$ and $U_{\tau}=U_T=\mathbb{E}_{\mathbb{Q}}\left[U_T \mid \mathcal{F}_T\right]$, as $L$ and $U$ are $\mathbb{F}$-adapted by assumption. \\	
		On the other hand, since $L_t \leq Y_t \leq U_t$ $\mathbb{Q}$-a.s. for all $t \in [0,T \wedge \tau[$ and $Y_t=\xi$, $\mathbb{Q}$-a.s. on the set $\{t \geq T \wedge \tau\}$ since $Y=Y^{T \wedge \tau}$, we deduce that $L_t \leq \hat{Y}^{\mathbb{F}}_t \leq U_t$, $\forall t \in [0,T]$,  $\mathbb{Q}$-a.s. 
		\paragraph*{Part 7: Skorokhod condition}
		\emph{}\\
		Using the expressions $\mathsf{k}^{n,\mathbb{F},+}:=\left(n\mathbb{E}_{\mathbb{Q}} \left[ (L_t-Y^{n}_t)^{+}\mathds{1}_{\{t < \tau\}} \mid \mathcal{F}_s \right]\right)_{t \leq T}$ and $\mathsf{k}^{n,\mathbb{F},-}=\left(n\mathbb{E}_{\mathbb{Q}} \left[ (Y^{n}_t-U_t)^{+}\mathds{1}_{\{s < \tau\}} \mid \mathcal{F}_t \right]\right)_{t \leq T}$, we point out  that $\mathsf{k}^{n,\mathbb{F},+}_t=n\mathbb{E}_{\mathbb{Q}} \left[ (L_t-Y^{n}_t)^{+}\mathds{1}_{\{t \leq \tau\}} \mid \mathcal{F}_s \right]$ and $\mathsf{k}^{n,\mathbb{F},-}_t=n\mathbb{E}_{\mathbb{Q}} \left[ (Y^{n}_t-U_t)^{+}\mathds{1}_{\{s \leq  \tau\}} \mid \mathcal{F}_t \right]$ for $\mathbb{P} \otimes dt$ almost every $(\omega,t)$. Now, from the previous part concerning the sequence $\left\{K^{n,\mathbb{F},\pm}\right\}_{n \in \mathbb{N}}$, we have that $\mathsf{k}^{n,\mathbb{F},\pm}$ is a sequence of non-negative, $\mathbb{F}$-adapted processes, which is bounded in $\mathbb{L}^2(\Omega \times [0,T],\mathbb{P} \otimes dt)$. Consequently, there exist a non-negative $\mathbb{F}$-adapted measurable processes $\mathsf{k}^{\mathbb{F},\pm}$ such that 
		\begin{equation}
			\mathsf{k}^{n,\mathbb{F},\pm} \to \mathsf{k}^{\mathbb{F},\pm} ~\text{ as } n \to +\infty, ~\text{ weakly in } \mathbb{L}^2(\Omega \times [0,T],\mathbb{P} \otimes dt).
			\label{Weak convergence}
		\end{equation}
		On the other hand, note that
		\begin{equation*}
			\begin{split}
				\mathbb{E}_{\mathbb{Q}}\int_{0}^{T}\left(\hat{Y}^{n,\mathbb{F}}_s-L_s\right)dK^{n,\mathbb{F},+}_s 
				&= \mathbb{E}_{\mathbb{Q}}\int_{0}^{T}\left(\hat{Y}^{n,\mathbb{F}}_s-\hat{Y}^{\mathbb{F}}_s\right)\mathsf{k}^{n,\mathbb{F},+}_s ds+\mathbb{E}_{\mathbb{Q}}\int_{0}^{T}\left(\hat{Y}^{\mathbb{F}}_s-L_s\right)\mathsf{k}^{\mathbb{F},+}_s ds\\
				&\quad+\mathbb{E}_{\mathbb{Q}}\int_{0}^{T}\left(\hat{Y}^{\mathbb{F}}_s-L_s\right)\left( \mathsf{k}^{n,\mathbb{F},+}_s-\mathsf{k}^{\mathbb{F},+}_s \right) ds
			\end{split}
		\end{equation*}
		Using (\ref{KnF+-}), (\ref{System}), (\ref{CV SYS}), (\ref{Weak convergence}), and the fact that $\hat{Y}^{\mathbb{F}}-L \in \mathcal{D}^2_{T}\left( \mathbb{F},\mathbb{Q}\right)$, we deduce that
		\begin{equation*}
			\begin{split}
				&\mathbb{E}_{\mathbb{Q}}\int_{0}^{T}\left(\hat{Y}^{n,\mathbb{F}}_s-L_s\right)dK^{n,\mathbb{F},+}_s \xrightarrow[n \to+ \infty]{} \mathbb{E}_{\mathbb{Q}}\int_{0}^{T}\left(\hat{Y}^{\mathbb{F}}_s-L_s\right)\mathsf{k}^{\mathbb{F},+}_s ds.
			\end{split}
		\end{equation*}
		Following a similar argument, we get
		\begin{equation*}
			\begin{split}
				&\mathbb{E}_{\mathbb{Q}}\int_{0}^{T}\left(U_s-\hat{Y}^{n,\mathbb{F}}_s\right)dK^{n,\mathbb{F},-}_s \xrightarrow[n \to+ \infty]{}\mathbb{E}_{\mathbb{Q}}\int_{0}^{T}\left(U_s-\hat{Y}^{\mathbb{F}}_s\right)\mathsf{k}^{\mathbb{F},-}_s ds .
			\end{split}
		\end{equation*}
		On the other hand, from Remark \ref{strssfull remark}, Fubini's Theorem, and the $\mathbb{F}$-adaptation of the lower barrier $L$, we have for every $n \in \mathbb{N}$:
		\begin{equation*}
			\begin{split}
				\int_{0}^{T} \left(\hat{Y}^{n,\mathbb{F}}_s-L_s\right)\mathsf{k}^{n,\mathbb{F},+}_s ds&=n\int_{0}^{T}\mathbb{E}_{\mathbb{Q}}\left[\left(\hat{Y}^{n,\mathbb{F}}_s-L_s\right)\left(Y^n_s-L_s\right)^{-}\mathds{1}_{\{s \leq \tau\}} \mid \mathcal{F}_s\right]ds \\
				&=n\int_{0}^{T}\mathbb{E}_{\mathbb{Q}}\left[\left(\hat{Y}^{n,\mathbb{F}}_s-L_s\right)\left(Y^n_s-L_s\right)^{-}\mathds{1}_{\{s \leq \tau\}} \mid \mathcal{F}_T\right]ds \\
				&=n\mathbb{E}_{\mathbb{Q}}\left[\int_{0}^{T}\left(\hat{Y}^{n,\mathbb{F}}_s-L_s\right)\left(Y^n_s-L_s\right)^{-}\mathds{1}_{\{s \leq \tau\}}ds \mid \mathcal{F}_T\right]\\
				&=n\mathbb{E}_{\mathbb{Q}}\left[\int_{0}^{T}\left(\mathbb{E}_{\mathbb{Q}}\left[Y^n_s \mid \mathcal{F}_s\right]-L_s\right)\left(Y^n_s-L_s\right)^{-}\mathds{1}_{\{s \leq \tau\}}ds \mid \mathcal{F}_T\right]\\
				& \leq n\mathbb{E}_{\mathbb{Q}}\left[\int_{0}^{T}\left(\mathbb{E}_{\mathbb{Q}}\left[\left( Y^n_s-L_s\right)^{+} \mid \mathcal{F}_s\right]\right)\left(Y^n_s-L_s\right)^{-}\mathds{1}_{\{s \leq \tau\}}ds \mid \mathcal{F}_T\right]\\
				&=0.
			\end{split}
		\end{equation*}
		Clearly, the last statement stems from the monotonicity of the conditional expectation, and the following observation: if $Y^n_s \leq L_s$, then $\left( Y^n_s-L_s \right)^{+}= 0$ a.s. Otherwise, if $Y^n >L$, then $\left(Y^n_s-L_s\right)^{-} = 0$.\\ 
		Therefore, using this with the fact that  $\int_{0}^{T}(\hat{Y}^{\mathbb{F}}_s-L_s)\mathsf{k}^{\mathbb{F},+}_s ds \geq 0$ and $\hat{Y}^{\mathbb{F}} \geq L$, we get
		$$
		\int_{0}^{T}(\hat{Y}^{\mathbb{F}}_s-L_s)\mathsf{k}^{\mathbb{F},+}_s ds \geq 0 \geq \int_{0}^{T} \left(\hat{Y}^{n,\mathbb{F}}_s-L_s\right)\mathsf{k}^{n,\mathbb{F},+}_s ds,\quad\forall n \in \mathbb{N},
		$$
		and then, we derive that $\mathbb{E}_{\mathbb{Q}}\left[ \int_{0}^{T}(\hat{Y}^{\mathbb{F}}_s-L_s)\mathsf{k}^{\mathbb{F},+}_s ds\right]=0$, which implies
		\begin{equation}
			\int_{0}^{T}(\hat{Y}^{\mathbb{F}}_s-L_s)dK^{\mathbb{F},+}_s ,\quad \mathbb{Q}\text{-a.s.}
			\label{Eq1}
		\end{equation}
		A similar argument gives
		\begin{equation}
			\int_{0}^{T}(U_s-\hat{Y}^{\mathbb{F}}_s)d{K}^{\mathbb{F},-}_s=0,\quad \mathbb{Q}\text{-a.s.}
			\label{Eq2}
		\end{equation}
		It remain to show that the limiting process  $K^{\mathbb{F}}$ obtained in \textbf{Part 5} is equal to the difference process $K^{\mathbb{F},+}-K^{\mathbb{F},-}$. To this end, let us set
		\begin{equation}
			\bar{K}^{\mathbb{F}}_t:=K^{\mathbb{F},+}_t-K^{\mathbb{F},-}_t=\int_{0}^{t} \mathsf{k}^{\mathbb{F},+}_s ds-\int_{0}^{t} \mathsf{k}^{\mathbb{F},-}_s ds,\quad t \in [0,T].
			\label{Definition}
		\end{equation}
		Let $\sigma \in \mathcal{T}_{0,T}$. By Mazur's Theorem \cite[Theorem 2, pp. 120]{yosida2012functional}, for any $n \in \mathbb{N}$, there exists an integer $q(n) \geq n$, weights $(\gamma^{(\tau,n)}_j)_{j\in \{n,\cdots,q(n)\}} \subset \mathbb{R}^{+}$ and a convex combination $\sum_{j=n}^{q(n)} \gamma^{(\tau,n)}_j K^{j ,\pm}_{\tau}$ such that $\sum_{j=n}^q \gamma^{(\tau,n)}_j =1$ and
		\begin{equation}
			\bar{K}^{n,\mathbb{F},\pm}_{\sigma}:=\sum_{j=n}^{q(n)} \gamma^{(\sigma,n)}_j K^{j,\mathbb{F} ,\pm}_{\sigma} \xrightarrow[n \rightarrow +\infty]{} K^{\mathbb{F},\pm}_{\sigma} \text{ in } \mathcal{L}^2_{\sigma}(\mathbb{F},\mathbb{Q}).
			\label{Mazur's Lemmma that--Sk condition--T}
		\end{equation}
		Denoting $\bar{K}_{\sigma}^{n,\mathbb{F}}:=\bar{K}_{\sigma}^{n,\mathbb{F},+}-\bar{K}_{\sigma}^{n,\mathbb{F},-}$, it follows from (\ref{Mazur's Lemmma that--Sk condition--T}), that
		\begin{equation}
			\lim\limits_{n \rightarrow+\infty}\mathbb{E}_{\mathbb{Q}}\left[ \left| \bar{K}_{\sigma}^{n,\mathbb{F}}-\bar{K}^{\mathbb{F}}_{\sigma}  \right|^2 \right] =0.
			\label{Combining 1--Skorokhod Condition--T}
		\end{equation}
		From the convergence (\ref{Convergence of Kn}), we deduce that the inequality $\left\| K^{n,\mathbb{F}}_{\sigma}-K^{\mathbb{F}}_{\sigma} \right\|_{\mathcal{L}^2_{\sigma}(\mathbb{F},\mathbb{Q})}<\epsilon$ holds for arbitrary $\epsilon > 0$ and all sufficiently large $n\in \mathbb{N}$. Therefore, we also have as 
		\begin{equation*}
			\begin{split}
				\left\|\bar{K}^{n,\mathbb{F}}_{\sigma}-K^{\mathbb{F}}_{\sigma}  \right\|_{\mathcal{L}^2_{\sigma}(\mathbb{F},\mathbb{Q})}
				&=\left\| \sum_{j=n}^{q(n)} \gamma^{(\sigma,n)}_j \left( K^{j,\mathbb{F}}_{\sigma} -K^{\mathbb{F}}_{\sigma}\right)  \right\|_{\mathcal{L}^2_{\sigma}(\mathbb{F},\mathbb{Q})}\\
				&\leq \sum_{j=n}^{q(n)} \gamma^{(\sigma,n)}_j \left\|  K^{j,\mathbb{F}}_{\sigma} -K^\mathbb{F}_{\sigma} \right\|_{\mathcal{L}^2_{\sigma}(\mathbb{F},\mathbb{Q})} <\epsilon.
			\end{split}
		\end{equation*}
		So that
		\begin{equation}
			\lim\limits_{n \rightarrow +\infty} \mathbb{E}_{\mathbb{Q}}\left[ \left|\bar{K}^{n,\mathbb{F}}_{\sigma}-K^{\mathbb{F}}_{\sigma}   \right|^2  \right]=0.
			\label{Combining 2--Skorokhod condition--T}
		\end{equation}
		Combining (\ref{Combining 1--Skorokhod Condition--T}) and (\ref{Combining 2--Skorokhod condition--T}), we obtain $\bar{K}^{\mathbb{F}}_{\sigma}=K^{\mathbb{F}}_{\sigma}$ $\mathbb{P}$-a.s. Hence, by the section theorem \cite[Theorem 4.7]{bookHe}, we have $\bar{K}^{\mathbb{F}}_t=K^{\mathbb{F}}_t$ for all $t \in  [0,T]$, $\mathbb{Q}$-a.s.\\
		From the Definition (\ref{Definition}), we deduce that $K^{\mathbb{F}}=K^{\mathbb{F},+}-K^{\mathbb{F},-}$, and using (\ref{Eq1}) and (\ref{Eq2}), we obtain
		$$
		\int_{0}^{T}(\hat{Y}^{\mathbb{F}}_s-L_s)dK^{\mathbb{F},+}_s=\int_{0}^{T}(U_s-\hat{Y}^{\mathbb{F}}_s)dK^{\mathbb{F},-}_s =0,\quad \mathbb{Q}\text{-a.s.}
		$$
		Now, going back to (\ref{Dyn in F}), we derive
		\begin{equation*}
			\begin{split}
				\hat{Y}^{\mathbb{F}}_t= 	\hat{\xi}^{\mathbb{F}}&+\int_t^T \left\lbrace \hat{g}^{\mathbb{F}}(s)+\hat{Z}^{\mathbb{F}}_s \Theta^{\rho,\ast}_s \right\rbrace  ds+\left( K^{\mathbb{F},+}_T-K^{\mathbb{F},+}_t\right)-\left( K^{\mathbb{F},-}_T-K^{\mathbb{F},-}_t\right)\\
				&\qquad-\int_t^T \hat{Z}^{\mathbb{F}}_s  d B^{[\rho]}_s-\int_{t}^{T} \Theta^{\mathbb{F}}_s  d B^{[\rho]}_s,\quad t \in [0,T].
			\end{split}
		\end{equation*}
		\paragraph*{Part 8: Integrability property}
		\emph{}\\
		The integrability condition satisfied by $(\hat{Y}^{\mathbb{F}}_t,\hat{Z}^{\mathbb{F}}_t,\Theta^{\mathbb{F}}_t,K^{\mathbb{F},+}_t,K^{\mathbb{F},-}_t)_{t \leq T}$ is obtained from the construction of these processes, by applying Fatou's lemma for the corresponding sequences and the already obtained estimations. With this, the proof is complete.
	\end{proof}
	\begin{remark}
		Following the same argument as the one used in the proof of Theorem \ref{Proposition of comparison}, we can obtain a more general result in the case where the driver $f$ of the DRBSDE (\ref{basic equation}) is linear with respect to the $y$-variable. More precisely, let $T \in (0,+\infty)$ and assume that the coefficient $f$ is linear of the form
		\begin{equation*}
			f(t,y,z)=g(t)+\kappa_t y, \quad t \in [0,T],
			\label{Linear generator}
		\end{equation*} 
		where the process $(\kappa_t)_{t \leq T}$ is the same as the one presented in assumption ($\mathbf{H_2}$).\\ 
		Assume that conditions of Theorem \ref{Proposition of comparison} hold and that the increasing $\mathbb{F}$-adapted continuous process $\left(\varpi_t\right)_{t \leq T}:=\left(\int_{0}^{t}\kappa_s ds\right)_{t \leq T}$ is bounded. Then, there exists a unique quintuplet $(\hat{Y}^{\mathbb{F}}_t,\hat{Z}^{\mathbb{F}}_t,\Theta^{\mathbb{F}}_t,K^{\mathbb{F},+}_t,K^{\mathbb{F},-}_t)_{t \leq T}$ that belongs to  $\mathfrak{B}^2_{T}\left( \beta,\mathbb{F},\mathbb{Q}\right)$ and satisfies of the following special form of linear generalized doubly reflected BSDE :
		\begin{equation*}
			\left\{
			\begin{split}
				&\text{(i)}~\hat{Y}^{\mathbb{F}}_t= 	\hat{\xi}+\int_t^T \left\lbrace \hat{g}^{\mathbb{F}}(s)+\kappa_s\hat{Y}^{\mathbb{F}}_s+\Theta^{\rho,\ast}_s \hat{Z}^{\mathbb{F}}_s \right\rbrace  ds-\int_{t}^{T}\int_{s}^{\infty} \zeta_u dD^{o,\mathbb{F},\mathbb{Q}}_u ds+\left( K^{\mathbb{F},+}_T-K^{\mathbb{F},+}_t\right)\\
				&\qquad\qquad-\left( K^{\mathbb{F},-}_T-K^{\mathbb{F},-}_t\right)-\int_t^T \hat{Z}^{\mathbb{F}}_s  d B^{[\rho]}_s-\int_{t}^{T} \Theta^{\mathbb{F}}_s  d B^{[\rho]}_s,\quad t \in [0,T].\\
				&\text{(ii)}~L_t \leq \hat{Y}^{\mathbb{F}}_t \leq U_t,~ \forall t \in [0,T], ~ \mathbb{Q}\text{-a.s.} \\
				&\text{(iii)}~\int_{0}^{T}(\hat{Y}^{\mathbb{F}}_s-L_s)dK^{\mathbb{F},+}_s=\int_{0}^{T}(U_s-\hat{Y}^{\mathbb{F}}_s)dK^{\mathbb{F},-}_s =0,~ \mathbb{Q}\text{-a.s.}
			\end{split}
			\right.
		\end{equation*}
	\end{remark}
	Throughout the rest of the paper, we define the $\mathbb{G}$-stopping time $\tau^{T}$ as $\tau^{T}(\omega) := T \wedge \tau(\omega)$ for the sake of notational simplicity. 
	%
	%
	\section{$\mathcal{E}^f$-Dynkin games with default time and links with nonlinear doubly reflected BSDEs}
	\label{Dynkin games}
	In this section, our focus is on establishing a connection between the DRBSDE (\ref{basic equation}) and a Dynkin game with default $\tau$ involving two players. The payoffs for these players are derived from the DRBSDE data and are evaluated through non-linear $f$-expectations. To be more specific, we interpret the solution of our Doubly Reflected BSDE in the context of the value process associated with a carefully defined Dynkin game problem with default incorporating non-linear $f$-expectations.
	\subsection{Dynkin game with default time and late payment}
	\label{Dynkin game with default time and late payment}
	Now, let's introduce the model of our Dynkin game with a default feature and the mathematical structure that characterizes it.\\
	We begin by introducing the abstract model of our game, including the participating parties and the game rules. In this context, our game problem includes:
	\begin{itemize}
		\item \textbf{Player 1} ($\mathbf{P_1}$): The party entitled to receive a payment as specified by the game rules.
		\item \textbf{Player 2} ($\mathbf{P_2}$): The party responsible for making the payment to ($\mathbf{P_1}$).
	\end{itemize}
	The constraint of the game is expressed as follows: At the beginning of this game, both players, $\mathbf{P_1}$ and $\mathbf{P_2}$, agree upon a predefined terminal time known as $T \in (0,+\infty)$. This time marks the game's natural horizon time as previously determined. However, in this dynamic setting, Player 2 ($\mathbf{P_2}$) is also aware of the possibility of a \textit{default time}, still referred to as $\tau$. This $\tau$ denotes the point in time when $\mathbf{P_2}$ may face constraints that prevent them from fulfilling their financial obligation to $\mathbf{P_1}$. A default can occur for a variety of reasons, as outlined in the game's terms and conditions.\\
	In the event of a default by $\mathbf{P_2}$, both players mutually agree to end the game. They subsequently choose another point in time, denoted as $T^{\prime}$, which is strictly greater than the default time $\tau$. At this later time $T^{\prime}$, $\mathbf{P_2}$ proceeds to make the payment to $\mathbf{P_1}$. However, an additional penalty, denoted as $\delta$, is applied to compensate for the inconvenience of the late payment. The same procedure applies if the default time $\tau$ coincides with the agreed terminal time $T$ of the game.  For ease of understanding, we assume that the originally agreed terminal time $T$ is selected as the subsequent payment time. The late payment penalty is a predetermined aspect of the game, ensuring that both players are well informed about the consequences of late payment. Finally, note that, in this model, the real time in which the game is expired is given by $T \wedge \tau=\tau^T$.
	
	We employ a measurable space $(\Omega, \mathcal{F})$ on which we can define two type of settings, the initial setup defined by $(\mathbb{F}, \mathbb{P})$, where $\mathbb{F}$ is the public flow of information given by the natural filtration of the Brownian motion and a default framework  modeled by $(\mathbb{G}, \mathbb{Q})$ where $\mathbb{G}$ is defined as in (\ref{Filtration}) and the measure $\mathbb{Q}$ is established in accordance with (\ref{local continuity of Q}). We consider two processes $(L_t)_{t \leq T}$ and $(U_t)_{t \leq T}$, $L^{} \in \mathcal{S}^2_{T}(2\beta,\mathbb{F},\mathbb{P})$ and $U^{} \in \mathcal{S}^2_{T}(2\beta,\mathbb{F},\mathbb{P})$ with the condition that $L \leq U$ and $L^{\tau^T}=L$ and $U^{\tau^T}=U$. We recall that  $\mathcal{S}^2_{T}(2\beta,\mathbb{F},\mathbb{P}) \subset \mathcal{S}^2_{\tau^T}(2\beta,\mathbb{G},\mathbb{Q})$, and we assume that $L < U$ holds on the interval $\llbracket 0,\tau^T \llbracket$, and that $L_{-} < U_{-}$ holds on the interval $\llbracket 0,\tau^T \rrbracket$ $\mathbb{Q}$-a.s. We also consider a $\mathcal{G}_{T \wedge \tau}$ measurable random variable $\xi$ such that $L_{\tau^T} \leq \xi \leq U_{\tau^T}$  $\mathbb{Q}$-a.s.\\
	The players' strategies are represented by stopping times from the set $\mathfrak{T}_{0,\tau^T}(\mathbb{G})$, indicating that neither player possesses clairvoyance or perfect prediction in the game. Specifically, Player $\mathbf{P_1}$ selects a stopping time  $\sigma_1$ to end the game, while Player $\mathbf{P_2}$ chooses a stopping time $\sigma_2$ to cancel the game. As a result, the game concludes at time $\sigma_1 \wedge \sigma_2$, and the payoff (or reward) used to evaluate the outcome is denoted as $\mathcal{J}\left(\sigma_1,\sigma_2\right)$, which is defined as the following random variable:
	\begin{equation}
		\mathcal{J}\left(\sigma_1,\sigma_2\right)=L_{\sigma_1} \mathds{1}_{\{\sigma_1 < \sigma_2\}}+U_{\sigma_2} \mathds{1}_{\{\sigma_2 <\sigma_1\}}+Q_{\sigma_{1}} \mathds{1}_{\{\sigma_1 = \sigma_2<\tau^T\}}+\xi \mathds{1}_{\{\sigma_1=\sigma_2=\tau^T\}}
		\label{Payoff}
	\end{equation}
	where
	\begin{itemize}
		\item $(Q_t)_{t \leq T}$ is an $\mathbb{F}$-progressively measurable processes such that $L_t \leq Q_t \leq U_t$ for all $t \in [0,\tau^T[$, $\mathbb{Q}$-a.s.	
		\item $\xi$ is the terminal criterion, which is a random variable expressed as
		\begin{equation}
			\xi=\xi_1 \mathds{1}_{\{T <\tau\}}+\xi_2 \mathds{1}_{\{\tau \leq T\}},
			\label{Terminal cost}
		\end{equation}
		with $\xi_1$, $\xi_2 \in \mathcal{F}_{T}$ and $\xi_2 > \xi_1$. We also assume that; $L_T\leq \xi_1 \leq U_T$ and $L_{\tau} \leq \xi_2 \leq U_{\tau}$ so that $L_{\tau^T} \leq \xi \leq U_{\tau^T}$ is satisfied $\mathbb{Q}$-a.s.
	\end{itemize}
	\begin{remark}
		\begin{itemize}
			\item For each $\omega \in \Omega$ and every $t \in [0,\tau^T[$, the differences:
			\begin{itemize}
				\item $U_t(\omega)-L_t(\omega)$ is the money penalty that has from Player $\mathbf{P_2}$ to $\mathbf{P_1}$ for his unilateral decision to exit the game.
				\item $Q_t(\omega)-L_t(\omega)$, it is also a money penalty for Player $\mathbf{P_2}$ which corresponds to the situation where both parts agree to stop playing before $\tau^T$.
				\item  $\delta(\omega):=\xi_2(\omega)-\xi_1(\omega)>0$ represents the penalty that $\mathbf{P_2}$ pays to $\mathbf{P_1}$ at time $T > \tau$  when the event of default occurs.
			\end{itemize}
			\item Given the terminal cost (\ref{Terminal cost}) as provided in (\ref{Payoff}), we can express it as
			$$
			\xi \mathds{1}_{\{\sigma_1=\sigma_2=\tau^T\}}=\xi_1  \mathds{1}_{\{\sigma_1=\sigma_2=T\}} \mathds{1}_{\{T <\tau\}}+\xi_2  \mathds{1}_{\{\sigma_1=\sigma_2=\tau\}} \mathds{1}_{\{\tau \leq T\}} \in \mathcal{G}_{\sigma_1 \wedge \sigma_2}.
			$$
			\item The payoff (\ref{Payoff}) of the game can be  interpreted as follows:
			\begin{equation*}
				\mathcal{J}\left(\sigma_1,\sigma_2\right)=\left\{
				\begin{split}
					& L_{\sigma_1} \text{ if Player } \mathbf{ P_1} \text{ stops the game  before the expiry time}\tau^T;\\
					& U_{\sigma_2} \text{ if Player } \mathbf{ P_2} \text{ stops the game first before time }\tau^T; \\
					&Q_{\sigma_1} \text{ If both players stops the game simultaneously }\\
					&\qquad \text{ before the expiry time }\tau^T;\\
					& \xi_1 \text{ If a default didn't occur, and neither of them }\\
					&\qquad \text{ have stopped the game until } T;\\
					& \xi_2 \text{ if the default occurs before } T \text{ and neither of them }\\
					& \qquad \text{ have stopped the game.} 
				\end{split}
				\right.
			\end{equation*}
		\end{itemize}
	\end{remark}
	The associated criterium of the above Dynkin game is given from the standpoint of time zero by $\mathcal{E}^f_{0,\sigma_1 \wedge \sigma_2}(\mathcal{J}\left(\sigma_1,\sigma_2\right))$, the $f$-evaluation of the payoff $\mathcal{J}\left(\sigma_1,\sigma_2\right)$. \\
	Note that, $\mathcal{E}^f_{\cdot,\sigma_1 \wedge \sigma_2}(\mathcal{J}\left(\sigma_1,\sigma_2\right))=X^{\sigma_1,\sigma_2}_{\cdot}$; where $(X^{\sigma_1,\sigma_2},\pi^{\sigma_1,\sigma_2},\varTheta^{\sigma_1,\sigma_2})$ is the solution of the classical BSDE associated with driver $f$, terminal time $\sigma_1 \wedge \sigma_2$ and terminal condition $\mathcal{J}\left(\sigma_1,\sigma_2\right)$, that is
	\begin{equation*}
		\left\{
		\begin{split}
			-dX^{\sigma_1,\sigma_2}_t&=-f(t,X^{\sigma_1,\sigma_2}_t,\pi^{\sigma_1,\sigma_2}_t)dt-\pi^{\sigma_1,\sigma_2}_t dB_t-d\varTheta^{\sigma_1,\sigma_2}_t,~t \in [0,\sigma_1 \wedge \sigma_2];\\
			X^{\sigma_1,\sigma_2}_{\sigma_1 \wedge \sigma_2}&=\mathcal{J}\left(\sigma_1,\sigma_2\right).
		\end{split}
		\right.
	\end{equation*}
	Player $\mathbf{P_1}$ seeks to maximize the criterion by selecting an optimal stopping time $\sigma_1 \in \mathfrak{T}_{0,\tau^T}(\mathbb{G})$ to end the game. On the other hand, Player $\mathbf{P_2}$ aims to minimize the cost paid to Player $\mathbf{P_1}$ before or at default. This is achieved by controlling the game using the stopping time $\sigma_2 \in \mathfrak{T}_{0,\tau^T}(\mathbb{G})$.
	\subsection{Game Valuation: A doubly reflected BSDEs approach}
	In the field of stochastic control, we can introduce a dynamic structure to the game by incorporating a starting time, denoted by $\theta \in \mathfrak{T}_{0,\tau^T}(\mathbb{G})$. In this context, both players' strategy sets are defined over the time interval $[\theta, \tau^T]$, i.e., $\mathfrak{T}_{\theta,\tau^T}(\mathbb{G})$. Player $\mathbf{P_1}$ aims to maximize the criterion $\mathcal{E}^f_{\theta,\sigma_1 \wedge \sigma_2}(\mathcal{J}(\sigma_1,\sigma_2))$ by strategically selecting an optimal stopping time $\sigma_1 \in \mathfrak{T}_{\theta,\tau^T}(\mathbb{G})$. Conversely, Player $\mathbf{P_2}$ seeks to minimize the cost by exerting control over the game using the stopping time $\sigma_2 \in \mathfrak{T}_{\theta,\tau^T}(\mathbb{G})$. In this case and from Remark \ref{Remark on the construction of the BSDE}-(b), the $\mathcal{E}^f$-expectation of the game is defined by $\mathcal{E}^f_{\theta,\sigma_1 \wedge \sigma_2}(\mathcal{J}\left(\sigma_1,\sigma_2\right))=X_{\theta}^{\sigma_1,\sigma_2}$ with
	\begin{equation}
		\begin{split}
			X_{\theta}^{\sigma_1,\sigma_2}=\mathcal{J}\left(\sigma_1,\sigma_2\right)+\int_{\theta}^{\sigma_1\wedge\sigma_2}f(s,X^{\sigma_1,\sigma_2}_s,\pi^{\sigma_1,\sigma_2}_s)ds-\int_{\theta}^{\sigma_1\wedge\sigma_2}\pi^{\sigma_1,\sigma_2}_s dB_s- \int_{\theta}^{\sigma_1\wedge\sigma_2}d\varTheta^{\sigma_1,\sigma_2}_s,
		\end{split}
		\label{Ef expectation with f=0}
	\end{equation}
	\begin{remark}
		When $f=0$, we can leverage the martingale property of the stochastic integral featured in (\ref{Ef expectation with f=0}). Additionally, given that $X_{\theta}^{\sigma_1,\sigma_2} \in \mathcal{G}_{\theta}$, we can infer, after applying conditional expectation $\mathbb{E}_{\mathbb{Q}}\left[\cdot \mid \mathcal{G}_{\theta}\right]$ to (\ref{Ef expectation with f=0}), that $\mathcal{E}^0_{\theta,\sigma_1 \wedge \sigma_2}(\mathcal{J}\left(\sigma_1,\sigma_2\right))=\mathbb{E}_{\mathbb{Q}}\left[\mathcal{J}(\sigma_1,\sigma_2)\right]$.
	\end{remark}
	\begin{definition}
		\begin{itemize}
			\item Let us consider the game from time $\theta$ onwards, where $\theta$ runs through $\mathfrak{T}_{0,\tau^T}(\mathbb{G})$. The upper value, denoted as $\bar{V}(\theta)$, and the lower value, denoted as $\underline{V}(\theta)$, of the game at time $\theta$ are the random variables defined as follows:
			\begin{equation}
				\left\{
				\begin{split}
					&\bar{V}(\theta)=\essinf_{\sigma_{2} \in \mathfrak{T}_{\theta,T}(\mathbb{G})} \esssup_{\sigma_{1} \in \mathfrak{T}_{\theta,T}(\mathbb{G})} \mathcal{E}^f_{\theta,\sigma_{1} \wedge \sigma_{2}}\left(\mathcal{J}(\sigma_{1},\sigma_{2})\right),\\
					& \underline{V}(\theta)=\esssup_{\sigma_{1} \in \mathfrak{T}_{\theta,T}(\mathbb{G})} \essinf_{\sigma_{2} \in \mathfrak{T}_{\theta,T}(\mathbb{G})}\mathcal{E}^f_{\theta,\sigma_{1} \wedge \sigma_{2}}\left(\mathcal{J}(\sigma_{1},\sigma_{2})\right)
				\end{split}
				\right.
				\label{Game definition}
			\end{equation}
			\item We state that a value for the game at time $\theta$ exists if $\underline{V}(\theta)=\bar{V}(\theta)$, $\mathbb{Q}$-a.s.
			\item In the game, a pair $(\sigma_{1}^{\theta},\sigma_{2}^{\theta}) \in \left(  \mathfrak{T}_{\theta,\tau^T}(\mathbb{G})\right)^2$ is referred to be a saddle point at time $\theta$ if, for any $(\sigma_{1}^{},\sigma_{2}^{}) \in \left(  \mathfrak{T}_{\theta,\tau^T}(\mathbb{G})\right)^2$, we have
			$$
			\mathcal{E}^f_{\theta,\sigma_{1} \wedge \sigma_{2}^{\theta}}\left(\mathcal{J}\left(\sigma_{1},\sigma_{2}^{\theta}\right)\right) \leq 	\mathcal{E}^f_{\theta,\sigma_{1}^{\theta} \wedge \sigma_{2}^{\theta}}\left(\mathcal{J}\left(\sigma^{\theta}_{1},\sigma_{2}^{\theta}\right)\right) \leq 	\mathcal{E}^f_{\theta,\sigma_{1}^{\theta} \wedge \sigma_{2}^{}}\left(\mathcal{J}\left(\sigma^{\theta}_{1},\sigma_{2}^{}\right)\right),\quad \mathbb{Q}\text{-a.s.}
			$$
		\end{itemize}
	\end{definition}
	\begin{remark}
		Trivially, the inequality $\underline{V}(\theta) \leq \bar{V}(\theta)$, $\mathbb{Q}$-a.s., always holds true. Therefore, to establish that the game has a value at time $\theta$, we need to consider the inequality in the opposite direction.
	\end{remark}
	\begin{definition}
		\begin{itemize}
			\item We say that a process $Y \in \mathcal{S}^2_{ \tau^T}\left( \beta,\mathbb{G},\mathbb{Q}\right)$ is a strong $\mathcal{E}^f$-supermartingale (resp.  $\mathcal{E}^f$-submartingale), if $\mathcal{E}^f_{\sigma_{1},\sigma_{2}}(Y_{\sigma_{2}}) \leq Y_{\sigma_{1}}$ (resp. $\mathcal{E}^f_{\sigma_{1},\sigma_{2}}(Y_{\sigma_{2}}) \geq Y_{\sigma_{1}}$) $\mathbb{Q}$-a.s. on the set $\{\omega \in \Omega: \sigma_{1}(\omega)\leq \sigma_{2}(\omega)\}$ for all $(\sigma_{1},\sigma_{2}) \in \left(\mathfrak{T}_{0,\tau^T}(\mathbb{G})\right)^2$.
			\item We say that a process $Y \in \mathcal{S}^2_{ \tau^T}\left( \beta,\mathbb{G},\mathbb{Q}\right)$ is a strong $\mathcal{E}^f$-martingale if it is both a strong $\mathcal{E}^f$-supermartingale and a strong $\mathcal{E}^f$-submartingale.
		\end{itemize}
		\label{Definition of strong martingales}
	\end{definition}
	In the following discussion, we refer to $(Y_t)_{t \geq 0}$ as the first component of the solution to the DRBSDE (\ref{basic equation}) with parameters $(\xi,f,L,U)$  defined over the time interval $\llbracket0,\tau^T\rrbracket$ in accordance with Definition \ref{definition}.
	\begin{lemma}
		For each $\theta \in \mathfrak{T}_{0,\tau^T}(\mathbb{G})$, let
		\begin{equation}
			\sigma_{1}^{\theta}:=\inf\left\{t \geq \theta: Y_t = L_t  \right\} \wedge \tau^T \text{ and } \sigma_{2}^{\theta}:=\inf\left\{t \geq \theta: Y_t = U_t  \right\} \wedge \tau^T.
			\label{a Pair of stopping time without espsilon}
		\end{equation}
		Then, the process $(Y_t, \theta \leq t \leq \sigma_{1}^{\theta})$ is a strong $\mathcal{E}^f$-submartingale and the process $(Y_t, \theta \leq t \leq \sigma_{2}^{\theta})$ is a strong $\mathcal{E}^f$-supermartingale.
		\label{Lemma of strong submartingale}
	\end{lemma}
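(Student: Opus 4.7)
The plan is to exploit the Skorokhod conditions in \eqref{basic equation}-(v) to show that one of the reflection processes is constant on each relevant stochastic interval, and then to compare the resulting BSDE dynamics of $Y$ with the BSDE defining the $\mathcal{E}^f$-evaluation via a comparison theorem.

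First I would handle the submartingale claim on $\llbracket\theta,\sigma_1^{\theta}\rrbracket$. By definition of $\sigma_1^{\theta}$ in \eqref{a Pair of stopping time without espsilon} and the RCLL property of $Y-L$, we have $Y_t > L_t$ on $\llbracket\theta,\sigma_1^{\theta}\llbracket$, so the Skorokhod condition $\int_{0}^{\tau^T}(Y_t-L_t)\,dK^{+}_t=0$ together with the continuity of $K^{+}$ yields $K^{+}_{\sigma_2}-K^{+}_{\sigma_1}=0$, $\mathbb{Q}$-a.s., for every pair $\sigma_1,\sigma_2\in\mathfrak{T}_{\theta,\sigma_1^{\theta}}(\mathbb{G})$ with $\sigma_1\leq\sigma_2$. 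Consequently, using Remark \ref{Remark on the construction of the BSDE}-(b), $Y$ satisfies on $\llbracket\theta,\sigma_1^{\theta}\rrbracket$ the BSDE
\begin{equation*}
Y_{\sigma_1}=Y_{\sigma_2}+\int_{\sigma_1}^{\sigma_2}f(s,Y_s,Z_s)\,ds-\bigl(K^{-}_{\sigma_2}-K^{-}_{\sigma_1}\bigr)-\int_{\sigma_1}^{\sigma_2}Z_s\,dB_s-\int_{\sigma_1}^{\sigma_2}dM_s.
\end{equation*}

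Next, let $(X^{\sigma_1,\sigma_2},\pi^{\sigma_1,\sigma_2},\varTheta^{\sigma_1,\sigma_2})$ be the solution of the classical BSDE with driver $f$, terminal time $\sigma_2$ and terminal condition $Y_{\sigma_2}$, so that by definition $X^{\sigma_1,\sigma_2}_{\sigma_1}=\mathcal{E}^f_{\sigma_1,\sigma_2}(Y_{\sigma_2})$. Then $Y$ solves the same BSDE as $X^{\sigma_1,\sigma_2}$ on $[\sigma_1,\sigma_2]$ except for the extra non-increasing finite-variation term $-(K^{-}_{\cdot}-K^{-}_{\sigma_1})$. Invoking the comparison theorem for BSDEs with stochastic Lipschitz driver and orthogonal martingale component in the defaultable setup $(\mathbb{G},\mathbb{Q})$ (Theorem \ref{Comparison theorem BSDE}), one gets $Y_{\sigma_1}\leq X^{\sigma_1,\sigma_2}_{\sigma_1}=\mathcal{E}^f_{\sigma_1,\sigma_2}(Y_{\sigma_2})$, $\mathbb{Q}$-a.s., which is exactly the strong $\mathcal{E}^f$-submartingale property.

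The supermartingale property on $\llbracket\theta,\sigma_2^{\theta}\rrbracket$ is established by the symmetric argument: on this interval $Y_t<U_t$ on $\llbracket\theta,\sigma_2^{\theta}\llbracket$, the Skorokhod condition $\int_{0}^{\tau^T}(U_t-Y_t)\,dK^{-}_t=0$ and the continuity of $K^{-}$ imply that $K^{-}$ is flat on $\llbracket\theta,\sigma_2^{\theta}\rrbracket$, so $Y$ differs from $X^{\sigma_1,\sigma_2}$ only by the non-decreasing term $+(K^{+}_{\cdot}-K^{+}_{\sigma_1})$, and the comparison theorem yields $Y_{\sigma_1}\geq\mathcal{E}^f_{\sigma_1,\sigma_2}(Y_{\sigma_2})$.

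The main delicate point is the justification that $K^{\pm}$ is constant up to and including the hitting time, not merely on the open interval: this relies crucially on the continuity of $K^{+}$ and $K^{-}$ (stated as part of the solution space in Definition \ref{definition}), which is what allows us to pass from strict separation on $\llbracket\theta,\sigma_i^{\theta}\llbracket$ to flatness on the closed interval $\llbracket\theta,\sigma_i^{\theta}\rrbracket$. Apart from this, the argument is essentially a direct application of the comparison theorem already available in our stochastic Lipschitz framework.
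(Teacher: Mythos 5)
Your argument is correct and follows essentially the same route as the paper: strict separation before the hitting time plus the Skorokhod condition and the continuity of $K^{\pm}$ kill one reflection term on the closed stochastic interval, and a comparison theorem then yields the $\mathcal{E}^f$-super/submartingale inequality. The only slip is the citation: since the equation satisfied by $Y$ on $\llbracket \sigma_1,\sigma_2\rrbracket$ retains a finite-variation term, the plain Theorem \ref{Comparison theorem BSDE} does not apply directly and you should invoke the special comparison result, Theorem \ref{Theorem de comparison: General result}, exactly as the paper does.
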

	\begin{proof}
		From the definition of the $\mathbb{G}$-stopping time $\sigma_{2}^{\theta}$, we have $Y_t <U_t$ for all $t \in [0,\sigma_{2}^{\theta}[$, $\mathbb{Q}$-a.s. Thus, from the Skorokhod condition, since $K^{+}$ has continuous paths, we deduce that $dK^{-}_t=0$ on $[\theta,\sigma_{2}^{\theta}]$. Therefore, the process $(Y_t)_{t \geq 0}$ satisfies 
		\begin{equation*}
			\begin{split}
				Y_{t}=Y_{\sigma_{2}^{\theta}}+\int_{t}^{\sigma_{2}^{\theta}}f(s,Y_s,Z_s)ds+\int_{t}^{\sigma_{2}^{\theta}} dK^{+}_s-\int_{t}^{\sigma_{2}^{\theta}} Z_s dB_s-\int_{t}^{\sigma_{2}^{\theta}} dM_s,~ t \in [\theta,\sigma_{2}^{\theta}],~\mathbb{Q}\text{-a.s.}
			\end{split}
		\end{equation*}
		It is worth mentioning that we can replace $t$ and $\sigma_{2}^{\theta}$ with any stopping times $\nu_1^{\theta}$ and $\nu_2^{\theta}$ in $\mathfrak{T}_{\theta,\sigma_{2}^{\theta}}(\mathbb{G})$ such that $\nu_1^{\theta} \leq \nu_2^{\theta}$, $\mathbb{Q}$-a.s. Then, since $K^{+}$ is an increasing process, we deduce, using the comparison theorem \ref{Theorem de comparison: General result}, that $Y_{\nu_1^{\theta}} \geq  \mathcal{E}^f_{\nu_1^{\theta},\nu_2^{\theta}}\left( Y_{\nu_2^{\theta}}\right) $. The  process $(Y_t, \theta \leq t \leq \sigma_{2}^{\theta})$ is thus a strong $\mathcal{E}^f$-supermartingale. Similarly, we can show that the process $(Y_t, \theta \leq t \leq \sigma_{1}^{\theta})$ is a strong $\mathcal{E}^f$-submartingale.
	\end{proof}
	\begin{corollary}
		The process $(Y_t, \theta \leq t \leq \sigma_{1}^{\theta} \wedge \sigma_{2}^{\theta})$, where $\sigma_{1}^{\theta}$ and $\sigma_{2}^{\theta}$ are given by (\ref{a Pair of stopping time without espsilon}), is a strong $\mathcal{E}^f$-martingale. 
		\label{Corolarry strong martingale}
	\end{corollary}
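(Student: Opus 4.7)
The plan is to obtain the corollary as a short consequence of Lemma \ref{Lemma of strong submartingale}, using the fact that the strong $\mathcal{E}^f$-(super/sub)martingale property is inherited by subintervals. More precisely, set $\sigma^{\theta} := \sigma_{1}^{\theta}\wedge \sigma_{2}^{\theta}$. Then $\sigma^{\theta}\leq \sigma_{1}^{\theta}$ and $\sigma^{\theta}\leq \sigma_{2}^{\theta}$ $\mathbb{Q}$-a.s., so for any pair $(\nu_{1},\nu_{2}) \in \left(\mathfrak{T}_{\theta,\sigma^{\theta}}(\mathbb{G})\right)^{2}$ with $\nu_{1}\leq \nu_{2}$ $\mathbb{Q}$-a.s., one automatically has $(\nu_{1},\nu_{2}) \in \left(\mathfrak{T}_{\theta,\sigma_{1}^{\theta}}(\mathbb{G})\right)^{2} \cap \left(\mathfrak{T}_{\theta,\sigma_{2}^{\theta}}(\mathbb{G})\right)^{2}$. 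Lemma \ref{Lemma of strong submartingale} then simultaneously supplies the two inequalities
$$
\mathcal{E}^{f}_{\nu_{1},\nu_{2}}\!\left( Y_{\nu_{2}}\right) \;\leq\; Y_{\nu_{1}} \qquad\text{and}\qquad \mathcal{E}^{f}_{\nu_{1},\nu_{2}}\!\left( Y_{\nu_{2}}\right) \;\geq\; Y_{\nu_{1}},
$$
hence the equality $Y_{\nu_{1}} = \mathcal{E}^{f}_{\nu_{1},\nu_{2}}(Y_{\nu_{2}})$, $\mathbb{Q}$-a.s. By Definition \ref{Definition of strong martingales}, this exactly says that $(Y_{t},\, \theta \leq t \leq \sigma^{\theta})$ is a strong $\mathcal{E}^{f}$-martingale.

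As a consistency check, one can recover the same conclusion directly from the dynamics of the DRBSDE (\ref{basic equation}). By the very definition of $\sigma_{1}^{\theta}$ and $\sigma_{2}^{\theta}$ in (\ref{a Pair of stopping time without espsilon}), the strict inequalities $L_{t} < Y_{t} < U_{t}$ hold for all $t \in [\theta, \sigma^{\theta}[$, $\mathbb{Q}$-a.s., so the Skorokhod conditions force $dK^{+}_{t} = dK^{-}_{t} = 0$ on this stochastic interval. Consequently $Y$ solves on $\llbracket \theta, \sigma^{\theta}\rrbracket$ the (unreflected) BSDE with driver $f$ and terminal data $Y_{\sigma^{\theta}}$, which is exactly the defining equation of $\mathcal{E}^{f}$; the strong $\mathcal{E}^{f}$-martingale property on $[\theta, \sigma^{\theta}]$ then follows from the flow property recalled in Remark \ref{Remark on the construction of the BSDE}-(b). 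I do not anticipate any real obstacle here: the argument is a one-line combination of Lemma \ref{Lemma of strong submartingale} with the definition of a strong $\mathcal{E}^{f}$-martingale, and the only minor point to mention is that both $\sigma_{1}^{\theta}$ and $\sigma_{2}^{\theta}$ belong to $\mathfrak{T}_{0,\tau^{T}}(\mathbb{G})$ so that the restriction to $[\theta,\sigma^{\theta}]$ stays within the admissible range for the $\mathcal{E}^{f}$-expectation.
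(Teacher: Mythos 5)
Your argument is correct and is precisely what the paper intends: the corollary is stated without proof as an immediate consequence of Lemma \ref{Lemma of strong submartingale}, obtained exactly as you do by noting that any pair of stopping times in $\mathfrak{T}_{\theta,\sigma_{1}^{\theta}\wedge\sigma_{2}^{\theta}}(\mathbb{G})$ lies in both $\mathfrak{T}_{\theta,\sigma_{1}^{\theta}}(\mathbb{G})$ and $\mathfrak{T}_{\theta,\sigma_{2}^{\theta}}(\mathbb{G})$, so the sub- and supermartingale inequalities combine into an equality. Your consistency check via the Skorokhod conditions is also sound and mirrors the mechanism used inside the proof of the lemma itself.
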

	Now, we are in a position to state the main contribution of this section.
	\begin{theorem}
		Consider a DRBSDE associated with data $(\xi,f,L,U)$, such that:
		\begin{itemize}
			\item $\xi \in \mathcal{L}^2_{\tau^T}\left(\beta,\mathbb{G},\mathbb{Q}^{}\right)$.
			\item The driver $f$ satisfies $(\mathbf{H_2})$ and $\left( \frac{f(t,0,0)}{\alpha_{t}}\right)_{t \leq T}\in \mathcal{H}^2_{T}(\beta,\mathbb{F},\mathbb{P}^{})$.
			\item The obstacles: $(L,U) \in \left( \mathcal{S}^2_T(2\beta,\mathbb{F},\mathbb{P})\right)^2$, $L_t <U_t$ for all $t \in [0,\tau^T[$, $L_{t-} <U_{t-}$ for all $t \in [0,\tau^T]$ and $L_{\tau^T} \leq \xi \leq U_{\tau^T}$, $\mathbb{Q}$-a.s.
		\end{itemize}
		Let $(Y_t,Z_t,K^{+}_t,K^{-}_t,M_t)_{t \geq 0}$ denote the unique solution of the DRBSDE (\ref{basic equation}). Then, there exists a value function for the $\mathcal{E}^f$-Dynkin game (\ref{Game definition}) and, for each stopping time $\theta \in \mathfrak{T}_{0,\tau^T}$, we have
		$
		\underline{V}(\theta)=Y_{\theta} =  \bar{V}(\theta)
		, ~\mathbb{Q}\text{-a.s.}
		$ 
		Moreover, the pair $(\sigma_{1}^{\theta},\sigma_{2}^{\theta}) \in \left(  \mathfrak{T}_{\theta,\tau^T}(\mathbb{G})\right)^2$ defined by (\ref{a Pair of stopping time without espsilon}) is a saddle point for the game (\ref{Game definition}) at time $\theta$. 
		\label{Main Theorem for Dynkin}
	\end{theorem}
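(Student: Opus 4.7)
The plan is to identify $Y_\theta$ with the common game value by showing that the candidate pair $(\sigma_1^\theta,\sigma_2^\theta)$ from (\ref{a Pair of stopping time without espsilon}) is a saddle point. First, I would verify that $\sigma_1^\theta,\sigma_2^\theta$ are well-defined elements of $\mathfrak{T}_{\theta,\tau^T}(\mathbb{G})$, which follows from the right-continuity of $Y,L,U$ together with the right-continuity of $\mathbb{G}$. Next, I would identify the payoff at the simultaneous stopping time: on $\{\sigma_1^\theta<\sigma_2^\theta\}$ the right-continuity of $Y$ and $L$ gives $Y_{\sigma_1^\theta}=L_{\sigma_1^\theta}$, and symmetrically $Y_{\sigma_2^\theta}=U_{\sigma_2^\theta}$ on $\{\sigma_2^\theta<\sigma_1^\theta\}$. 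The strict separation $L<U$ on $\llbracket 0,\tau^T\llbracket$ and $L_-<U_-$ on $\llbracket 0,\tau^T\rrbracket$ from assumption $(\mathbf{H_3})$ forces the set $\{\sigma_1^\theta=\sigma_2^\theta<\tau^T\}$ to be $\mathbb{Q}$-negligible, so the only overlap occurs on $\{\sigma_1^\theta=\sigma_2^\theta=\tau^T\}$ where $Y_{\tau^T}=\xi$ by the DRBSDE terminal condition. Putting these cases together yields $Y_{\sigma_1^\theta\wedge\sigma_2^\theta}=\mathcal{J}(\sigma_1^\theta,\sigma_2^\theta)$, and Corollary \ref{Corolarry strong martingale} applied on $\llbracket\theta,\sigma_1^\theta\wedge\sigma_2^\theta\rrbracket$ gives
$$
Y_\theta=\mathcal{E}^f_{\theta,\sigma_1^\theta\wedge\sigma_2^\theta}\bigl(Y_{\sigma_1^\theta\wedge\sigma_2^\theta}\bigr)=\mathcal{E}^f_{\theta,\sigma_1^\theta\wedge\sigma_2^\theta}\bigl(\mathcal{J}(\sigma_1^\theta,\sigma_2^\theta)\bigr).
$$

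For the saddle-point inequalities, I would fix an arbitrary $\sigma_1\in\mathfrak{T}_{\theta,\tau^T}(\mathbb{G})$ and establish that $Y_{\sigma_1\wedge\sigma_2^\theta}\geq\mathcal{J}(\sigma_1,\sigma_2^\theta)$, $\mathbb{Q}$-a.s., by case analysis: on $\{\sigma_1<\sigma_2^\theta\}$ the constraint $Y\geq L$ gives $Y_{\sigma_1}\geq L_{\sigma_1}$; on $\{\sigma_2^\theta<\sigma_1\}$ we have $\sigma_2^\theta<\tau^T$ and thus $Y_{\sigma_2^\theta}=U_{\sigma_2^\theta}$; on $\{\sigma_1=\sigma_2^\theta<\tau^T\}$ we have $Y_{\sigma_1}=U_{\sigma_1}\geq Q_{\sigma_1}$; and on $\{\sigma_1=\sigma_2^\theta=\tau^T\}$ equality with $\xi$ holds. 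Combining this pointwise bound with the strong $\mathcal{E}^f$-supermartingale property of $(Y_t)_{\theta\leq t\leq\sigma_2^\theta}$ from Lemma \ref{Lemma of strong submartingale} and the comparison theorem for $\mathcal{E}^f$-expectations (valid in our stochastic Lipschitz setting, via the a priori estimates of Proposition \ref{basic Estimastion for BSDE}) yields
$$
Y_\theta\geq\mathcal{E}^f_{\theta,\sigma_1\wedge\sigma_2^\theta}\bigl(Y_{\sigma_1\wedge\sigma_2^\theta}\bigr)\geq\mathcal{E}^f_{\theta,\sigma_1\wedge\sigma_2^\theta}\bigl(\mathcal{J}(\sigma_1,\sigma_2^\theta)\bigr).
$$
A symmetric argument exchanging the roles of the barriers and using the $\mathcal{E}^f$-submartingale property on $\llbracket\theta,\sigma_1^\theta\rrbracket$ delivers the reverse inequality $Y_\theta\leq\mathcal{E}^f_{\theta,\sigma_1^\theta\wedge\sigma_2}(\mathcal{J}(\sigma_1^\theta,\sigma_2))$ for every $\sigma_2\in\mathfrak{T}_{\theta,\tau^T}(\mathbb{G})$.

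Finally, taking the essential supremum over $\sigma_1$ in the first chain and the essential infimum over $\sigma_2$ in the second chain gives
$$
\bar{V}(\theta)\leq\esssup_{\sigma_1}\mathcal{E}^f_{\theta,\sigma_1\wedge\sigma_2^\theta}\bigl(\mathcal{J}(\sigma_1,\sigma_2^\theta)\bigr)\leq Y_\theta\leq\essinf_{\sigma_2}\mathcal{E}^f_{\theta,\sigma_1^\theta\wedge\sigma_2}\bigl(\mathcal{J}(\sigma_1^\theta,\sigma_2)\bigr)\leq\underline{V}(\theta),
$$
which, together with the trivial inequality $\underline{V}(\theta)\leq\bar{V}(\theta)$, closes the loop and produces $\underline{V}(\theta)=Y_\theta=\bar{V}(\theta)$. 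The saddle-point property of $(\sigma_1^\theta,\sigma_2^\theta)$ then follows directly by re-substituting the two bounds around $Y_\theta=\mathcal{E}^f_{\theta,\sigma_1^\theta\wedge\sigma_2^\theta}(\mathcal{J}(\sigma_1^\theta,\sigma_2^\theta))$.

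The main obstacle I anticipate is the terminal-value identification $Y_{\sigma_1^\theta\wedge\sigma_2^\theta}=\mathcal{J}(\sigma_1^\theta,\sigma_2^\theta)$, where the possible jump of $Y$ at the default time $\tau$ together with the RCLL (but possibly discontinuous) nature of the barriers requires careful pathwise bookkeeping. The strict separation in $(\mathbf{H_3})$ is essential precisely here to exclude the pathological set $\{\sigma_1^\theta=\sigma_2^\theta<\tau^T\}$; without it, one would have to deal with the $Q_{\sigma_1}$-term on a non-null set and the above clean identification would fail. A secondary technical point is to justify the comparison theorem for $\mathcal{E}^f$ with an orthogonal martingale component and a stochastic Lipschitz driver, which should be standard given the framework developed in Section \ref{DRBSDE Problem section}.
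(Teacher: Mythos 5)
Your proposal is correct and follows essentially the same route as the paper's proof: the strong $\mathcal{E}^f$-sub/supermartingale properties of $Y$ up to $\sigma_1^{\theta}$ and $\sigma_2^{\theta}$ from Lemma \ref{Lemma of strong submartingale}, the case-by-case comparison of $Y_{\sigma_1^{\theta}\wedge\sigma_2}$ (resp. $Y_{\sigma_1\wedge\sigma_2^{\theta}}$) with the payoff $\mathcal{J}$ via right-continuity and the barrier constraints, monotonicity of $\mathcal{E}^f$, and the squeeze $\bar{V}(\theta)\leq Y_{\theta}\leq \underline{V}(\theta)$ combined with the trivial reverse inequality. Your explicit observation that strict separation makes $\{\sigma_1^{\theta}=\sigma_2^{\theta}<\tau^T\}$ negligible is a slightly more detailed justification of the middle identity $Y_{\theta}=\mathcal{E}^f_{\theta,\sigma_1^{\theta}\wedge\sigma_2^{\theta}}(\mathcal{J}(\sigma_1^{\theta},\sigma_2^{\theta}))$ than the paper gives, but it is the same argument in substance.
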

	\begin{proof}
		Let $\theta \in \mathfrak{T}_{0,\tau^T}(\mathbb{G})$. From Lemma \ref{Lemma of strong submartingale}, the process $(Y_t, \theta \leq t \leq \sigma_{1}^{\theta})$ is a strong $\mathcal{E}^f$-submartingale. Thus, from Definition \ref{Definition of strong martingales}, we have
		\begin{equation}
			Y_{\theta} \leq \mathcal{E}^f_{\theta,\sigma_{1}^{\theta} \wedge \sigma_{2}}\left(Y_{\sigma_{1}^{\theta} \wedge \sigma_{2}}\right) \text{ for any } \sigma_2 \in \mathfrak{T}_{\theta,\tau^T}(\mathbb{G}),~ \mathbb{Q}\text{-a.s.}
			\label{f conditional expecta 1}
		\end{equation}
		Obviously, 
		\begin{equation}
			\begin{split}
				Y_{\sigma_{1}^{\theta} \wedge \sigma_{2}}&=Y_{\sigma_{1}^{\theta}} \mathds{1}_{\{\sigma_{1}^{\theta} < \sigma_{2}\}} +Y_{\sigma_{2}} \mathds{1}_{\{\sigma_{2} < \sigma_{1}^{\theta}\}}+Y_{\sigma_{1}^{\theta}} \mathds{1}_{\{\sigma_{1}^{\theta} = \sigma_{2}<\tau^T\}}+Y_{ \sigma_{2}}\mathds{1}_{\{\sigma_{1}^{\theta}=\sigma_{2}=\tau^T\}}.
			\end{split}
			\label{formula that needs explication}
		\end{equation}
		On the one hand,
		\begin{equation}
			\begin{split}
				Y_{ \sigma_{2}}\mathds{1}_{\{\sigma_{1}^{\theta}=\sigma_{2}=\tau^T\}}
				&=\xi_1\mathds{1}_{\{T<\tau\}}\mathds{1}_{\{\sigma_{1}^{\theta}=\sigma_{2}=T\}}+	\xi_2\mathds{1}_{\{\tau \leq T\}}\mathds{1}_{\{\sigma_{1}^{\theta}=\sigma_{2}=\tau\}}\\
				&=\xi\mathds{1}_{\{\sigma_{1}^{\theta}=\sigma_{2}=\tau^T\}}.
			\end{split}
			\label{terminal condition that needs explication}
		\end{equation}
		On the one hand, using the right continuity of the processes $(Y_t)_{t \geq 0}$ and $(L_t)_{t \leq T}$, we can deduce that $Y_{\sigma_{1}^{\theta}}\mathds{1}_{\{\sigma_{1}^{\theta}<\tau^T\}}=L_{\sigma_{1}^{\theta}} \mathds{1}_{\{\sigma_{1}^{\theta}<\tau^T\}}\leq Q_{\sigma_{1}^{\theta}} \mathds{1}_{\{\sigma_{1}^{\theta}<\tau^T\}}$  $\mathbb{Q}$-a.s. By plugging this with (\ref{terminal condition that needs explication}) into (\ref{formula that needs explication}), we derive
		\begin{equation*}
			\begin{split}
				Y_{\sigma_{1}^{\theta} \wedge \sigma_{2}}&\leq L_{\sigma_{1}^{\theta}} \mathds{1}_{\{\sigma_{1}^{\theta} < \sigma_{2}\}} +U_{\sigma_{2}} \mathds{1}_{\{\sigma_{2} < \sigma_{1}^{\theta}\}}+Q_{\sigma_{1}^{\theta}} \mathds{1}_{\{\sigma_{1}^{\theta} = \sigma_{2}<\tau^T\}}+\xi\mathds{1}_{\{\sigma_{1}^{\theta}=\sigma_{2}=\tau^T\}}\\
				&=\mathcal{J}\left(\sigma_{1}^{\theta},\sigma_{2}\right).
			\end{split}
		\end{equation*}
		Then, by coming back to (\ref{f conditional expecta 1}), we have
		\begin{equation}
			Y_{\theta} \leq \mathcal{E}^f_{\theta,\sigma_{1}^{\theta} \wedge \sigma_{2}}\left(\mathcal{J}\left(\sigma_{1}^{\theta},\sigma_{2}\right)\right) \text{ for any } \sigma_2 \in \mathfrak{T}_{\theta,\tau^T}(\mathbb{G})~ \mathbb{Q}\text{-a.s.}
			\label{Saddle point game 1}
		\end{equation}
		Therefore,
		\begin{equation}
			\begin{split}
				Y_{\theta}  \leq \esssup_{\sigma_1 \in \mathfrak{T}_{\theta,\tau^T}(\mathbb{G})} \essinf_{\sigma_2 \in \mathfrak{T}_{\theta,\tau^T}(\mathbb{G})} \mathcal{E}^f_{\theta,\sigma_{1} \wedge \sigma_{2}}\left(\mathcal{J}\left(\sigma_{1},\sigma_{2}\right)\right)=\underline{V}(\theta),       \quad \mathbb{Q}\text{-a.s.}
			\end{split}
			\label{Value inf proof}
		\end{equation}
		Thanks again to Lemma \ref{Lemma of strong submartingale}, which allows us to derive that the process $(Y_t, \theta \leq t \leq \sigma_{2}^{\theta})$ is a strong $\mathcal{E}^f$-supermartingale. Thus, 
		\begin{equation}
			Y_{\theta} \geq \mathcal{E}^f_{\theta,\sigma_{1} \wedge \sigma_{2}^{\theta}}\left(Y_{\sigma_{1}^{} \wedge \sigma_{2}^{\theta}}\right) \text{ for any } \sigma_1 \in \mathfrak{T}_{\theta,\tau^T}(\mathbb{G})~ \mathbb{Q}\text{-a.s.}
			\label{f conditional expecta 2}
		\end{equation}
		Then, following similar arguments as the ones used to obtain inequality (\ref{Value inf proof}), we may derive
		\begin{equation}
			\begin{split}
				Y_{\theta}  \geq \essinf_{\sigma_2 \in \mathfrak{T}_{\theta,\tau^T}(\mathbb{G})} \essinf_{\sigma_1 \in \mathfrak{T}_{\theta,\tau^T}(\mathbb{G})} \mathcal{E}^f_{\theta,\sigma_{1} \wedge \sigma_{2}}\left(\mathcal{J}\left(\sigma_{1},\sigma_{2}\right)\right)=\bar{V}(\theta),      \quad \mathbb{Q}\text{-a.s.}
			\end{split}
			\label{Value sup proof}
		\end{equation}
		By combining, (\ref{Value inf proof}) and (\ref{Value sup proof}), we conclude that 
		\begin{equation*}
			\begin{split}
				\bar{V}(\theta) \leq Y_{\theta} \leq  \underline{V}(\theta) 
				,\quad \mathbb{Q}\text{-a.s.}  
			\end{split}
		\end{equation*}
		Thus, $\underline{V}(\theta) = Y_{\theta} =  \bar{V}(\theta)$ $\mathbb{Q}$-a.s. Which completes the proof of the first part. \\	
		The fact that the pair (\ref{a Pair of stopping time without espsilon}) is a saddle point for the game (\ref{Game definition}) at time $\theta$ can be seen directly by combining inequalities (\ref{Saddle point game 1}), (\ref{f conditional expecta 2}) and Corollary \ref{Corolarry strong martingale}, which yields to
		$$
		\mathcal{E}^f_{\theta,\sigma_{1} \wedge \sigma_{2}^{\theta}}\left(\mathcal{J}\left(\sigma_{1}^{},\sigma_{2}^{\theta}\right)\right)\leq Y_{\theta}=\mathcal{E}^f_{\theta,\sigma_{1}^{\theta} \wedge \sigma_{2}^{\theta}}\left(\mathcal{J}(\sigma_{1}^{\theta},\sigma_{2}^{\theta})\right) \leq \mathcal{E}^f_{\theta,\sigma_{1}^{\theta} \wedge \sigma_{2}}\left(\mathcal{J}\left(\sigma_{1}^{\theta},\sigma_{2}\right)\right),~ \mathbb{Q}\text{-a.s.,}
		$$
		for any  $\sigma_1, \sigma_2 \in \mathfrak{T}_{\theta,\tau^T}(\mathbb{G})$. This concludes the proof of the second part and, consequently, the proof of Theorem \ref{Main Theorem for Dynkin}.
	\end{proof}
	\section*{Disclosure statement}
	No potential conflict of interest was reported by the authors.
	
	

	\appendix
	\section{Martingale representation property}
	\label{Appendix A}
	In this section, we aims to establish a predictable representation property for all local martingales $M \in \mathcal{M}_{loc}\left(\mathbb{G},\mathbb{Q}\right)$ defined on the random time interval $\llbracket 0, T \wedge \tau\rrbracket$. This representation characterizes these martingales as stochastic integrals with respect to $B^{T \wedge\tau}$ and an additional orthogonal local martingale $M \in \mathcal{M}_{loc}\left(\mathbb{G},\mathbb{Q}\right)$ stopped at $T \wedge \tau$ (i.e. $M=M^{T \wedge \tau}$). To achieve this, we will first present some auxiliary results.
	\begin{theorem}
		Under \textbf{[P]}, for any local martingale $M \in \mathcal{M}_{loc} \left(\mathbb{F},\mathbb{P}\right)$, the process
		\begin{equation}
			\mathcal{T}\left(M\right)_t:=M^{\tau}-\int_{0}^{ t}\tilde{G}^{-1}_s \mathds{1}_{\{s \leq \tau\}} d \left[M,m\right]_s,\quad t \geq 0
			\label{operator T}
		\end{equation}
		is a local martingale in $\mathcal{M}_{loc} \left(\mathbb{G},\mathbb{P}\right)$.
		\label{Theorem of the operator}
	\end{theorem}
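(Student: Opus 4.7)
The plan is to apply the standard martingale characterization by localization and testing against bounded $\mathbb{G}$-stopping times. Under hypothesis \textbf{[P]}, $\tilde{G}>0$, so the integrand $\tilde{G}^{-1}_s\mathds{1}_{\{s\leq\tau\}}$ is $\mathbb{G}$-optional and locally bounded, making the Stieltjes integral well-defined, and $\mathcal{T}(M)$ is a $\mathbb{G}$-adapted process which is stopped at $\tau$ since both summands vanish after $\tau$. I would first localize via a standard sequence of stopping times (for instance, reducing to the case where $M\in\mathcal{M}(\mathbb{F},\mathbb{P})$ is uniformly bounded; since $m$ is BMO, $[M,m]$ is then of integrable variation), and reduce the problem to showing that $\mathbb{E}[\mathcal{T}(M)_{\sigma}]=M_{0}$ for every bounded $\mathbb{G}$-stopping time $\sigma$.

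Because $\mathcal{T}(M)^{\tau}=\mathcal{T}(M)$, it suffices to test at times $\sigma\wedge\tau$. The key structural step I will invoke is the classical fact from enlargement theory that any $\mathbb{G}$-stopping time $\sigma$ satisfies $\sigma\wedge\tau=\hat{\sigma}\wedge\tau$ for some $\mathbb{F}$-stopping time $\hat{\sigma}$; equivalently, the $\sigma$-algebra generated by $\mathbb{G}$-stopping times before $\tau$ coincides with that generated by $\mathbb{F}$-stopping times stopped at $\tau$. Thus it is enough to verify the identity for $\sigma\in\mathfrak{T}(\mathbb{F})$ bounded.

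For such $\sigma$, I will compute the two pieces separately. For the stopped martingale term, splitting
\[
\mathbb{E}[M_{\sigma\wedge\tau}]=\mathbb{E}\bigl[M_{\sigma}\mathds{1}_{\{\sigma<\tau\}}\bigr]+\mathbb{E}\bigl[M_{\tau}\mathds{1}_{\{\tau\leq\sigma\}}\bigr],
\]
using $\mathbb{E}[\mathds{1}_{\{\sigma<\tau\}}\mid\mathcal{F}_{\sigma}]=G_{\sigma}$ and the defining property of the $\mathbb{F}$-dual optional projection of $D$,
\[
\mathbb{E}\!\left[X_{\tau}\mathds{1}_{\{\tau\leq\sigma\}}\right]=\mathbb{E}\!\left[\int_{0}^{\sigma}X_{s}\,dD^{o,\mathbb{F},\mathbb{P}}_{s}\right]\quad (X\text{ }\mathbb{F}\text{-optional}),
\]
this becomes $\mathbb{E}[M_{\sigma}G_{\sigma}]+\mathbb{E}[\int_{0}^{\sigma}M_{s}\,dD^{o,\mathbb{F},\mathbb{P}}_{s}]$. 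Substituting $G=m-D^{o,\mathbb{F},\mathbb{P}}$, applying integration by parts to $M_{\sigma}m_{\sigma}$ and to $M_{\sigma}D^{o,\mathbb{F},\mathbb{P}}_{\sigma}$, and discarding the stochastic integrals $\int M_{-}\,dm$ and $\int D^{o,\mathbb{F},\mathbb{P}}_{-}\,dM$ (mean zero under boundedness), the expression collapses to $M_{0}+\mathbb{E}[\langle M,m\rangle_{\sigma}]$ after matching jumps via $\Delta m=\tilde{G}-G_{-}$. For the second piece, since $[M,m]$ is $\mathbb{F}$-optional of integrable variation and the $\mathbb{F}$-optional projection of $\mathds{1}_{\{\cdot\leq\tau\}}$ is $\tilde{G}$, the projection lemma gives
\[
\mathbb{E}\!\left[\int_{0}^{\sigma}\tilde{G}^{-1}_{s}\mathds{1}_{\{s\leq\tau\}}\,d[M,m]_{s}\right]=\mathbb{E}\!\left[\int_{0}^{\sigma}\tilde{G}^{-1}_{s}\tilde{G}_{s}\,d[M,m]_{s}\right]=\mathbb{E}[\langle M,m\rangle_{\sigma}],
\]
the last equality because $[M,m]-\langle M,m\rangle$ is a local martingale with zero expectation at the bounded stopping time $\sigma$. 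Subtracting yields $\mathbb{E}[\mathcal{T}(M)_{\sigma}]=M_{0}$, as required.

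The main obstacle is the bookkeeping of jumps in the integration-by-parts computation: $D^{o,\mathbb{F},\mathbb{P}}$ is $\mathbb{F}$-optional but not continuous, so one must carefully distinguish $G$ from $G_{-}$ and $[M,D^{o,\mathbb{F},\mathbb{P}}]$ from its compensator, and use the identity $\tilde{G}=G_{-}+\Delta m$ to match what comes from the left-hand side with $\tilde{G}\,d[M,m]$ on the right. Once this is reconciled, the two computations align term by term and the local martingale property follows.
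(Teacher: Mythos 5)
Your proposal is essentially correct, and it is worth noting that the paper itself gives no proof of this statement: it simply cites Theorem 3 of Aksamit, Choulli and Jeanblanc (\emph{On an optional semimartingale decomposition and the existence of a deflator in an enlarged filtration}), so what you have written is a reconstruction of the standard argument behind that cited result rather than an alternative to anything in the paper. Your two computations do close up: $\mathbb{E}[M_{\hat\sigma\wedge\tau}]=\mathbb{E}[M_{\hat\sigma}G_{\hat\sigma}]+\mathbb{E}[\int_0^{\hat\sigma}M_s\,dD^{o,\mathbb{F},\mathbb{P}}_s]=M_0+\mathbb{E}[[M,m]_{\hat\sigma}]$ after integration by parts (the identity $\Delta m=\tilde G-G_-$ is actually not needed here --- what makes the $D^{o,\mathbb{F},\mathbb{P}}$ terms cancel is just $M=M_-+\Delta M$ on the jumps of $D^{o,\mathbb{F},\mathbb{P}}$), and the compensating term has the same expectation by the duality between the optional projection $\tilde G={}^{o}(\mathds{1}_{\llbracket 0,\tau\rrbracket})$ and $\mathbb{F}$-optional processes of integrable variation; the reduction of $\mathbb{G}$-stopping times to $\mathbb{F}$-stopping times via $\sigma\wedge\tau=\hat\sigma\wedge\tau$ is the classical Jeulin--Dellacherie lemma and is legitimate. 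The one assertion you should not make as stated is that $\tilde G^{-1}_s\mathds{1}_{\{s\le\tau\}}$ is locally bounded: $\tilde G$ is merely l\`adl\`ag and need not be bounded away from zero, so the hitting-time localization is not available. The correct justification --- which your own projection step supplies for free --- is to apply the duality identity to the total variation $|d[M,m]|$, giving $\mathbb{E}[\int_0^{\hat\sigma}\tilde G_s^{-1}\mathds{1}_{\{s\le\tau\}}\,|d[M,m]_s|]=\mathbb{E}[\mathrm{Var}([M,m])_{\hat\sigma}]<+\infty$ after localization using the BMO property of $m$; this shows the drift term is of locally integrable variation and simultaneously licenses the unbounded-integrand version of the projection lemma via truncation and monotone convergence. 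With that repair the argument is complete.
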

	\begin{proof}
		See \cite[Theorem 3, pp. 195]{aksamit2015optional}.
	\end{proof}
	Now, we are able to prove that the process $B^{\tau}$ is a $\left(\mathbb{G},\mathbb{Q}^{}\right)$-Brownian motion, as stated in Lemma \ref{Lemma of the Brownian motion}.
	\begin{proof}[Proof of Lemma \ref{Lemma of the Brownian motion} ]
		Let $(M_t)_{t \geq 0}$ be an arbitrary element in $ \mathcal{M}_{loc}\left(\mathbb{F},\mathbb{P}\right)$. By utilizing Remark 2.5 in  \cite{choulli2022explicit}, we deduce that $\dfrac{\mathcal{E}_{s-}\left(\Gamma^{\mathcal{T}(m)}\right)}{\mathcal{E}_{s}\left(\Gamma^{\mathcal{T}(m)}\right)}=\dfrac{\tilde{G}_s}{G_{s-}}$ and 
		\begin{equation}
			\begin{split}
				\left[\Psi,\mathcal{T}(M)\right]_t&=\left[\mathcal{E}^{-1},\mathcal{T}(M)\right]_t
				\\
				&=\dfrac{G_{t-}}{\tilde{G}_t}\left[\mathcal{E}^{-1},M\right]^{\tau}_t=\dfrac{G_{t-}}{\tilde{G}_t}\left[\mathcal{E}\left(\Gamma^{\mathcal{T}(m)}\right),M\right]_t\\
				&=-\int_{0}^{t}\dfrac{G_{s-}}{\tilde{G}^2_s}\mathcal{E}_{s-}\left(\Gamma^{\mathcal{T}(m)}\right)d\left[m,M\right]^{\tau}_s=-\int_{0}^{t}\dfrac{\Psi_s}{\tilde{G}_s}\mathds{1}_{\{s \leq \tau\}}d\left[m,M\right]_s.
			\end{split}
			\label{explicit calculation}
		\end{equation}
		Now, from Theorem \ref{Theorem of the operator} and the Girsanov-Meyer theorem, we can deduce that the stopped process
		$$
		\mathcal{T}\left(M\right)+\int_{0}^{\cdot} \dfrac{1}{\Psi_s} d\left[\Psi, \mathcal{T}\left(M\right)\right]_s=M^{\tau}
		$$
		is a local martingale in $\mathcal{M}_{loc} \left(\mathbb{G},\mathbb{Q}\right)$. Therefore, since $B \in \mathcal{M}\left(\mathbb{F},\mathbb{P}\right)$ then $B^{\tau} \in \mathcal{M}\left(\mathbb{G},\mathbb{Q}^{}\right)$ and $B^{\tau \wedge T}$ is a $\left(\mathbb{G},\mathbb{Q}^{}\right)$-Brownian motion.
	\end{proof}
	Subsequently, we point out that, under condition \textbf{[P]}, for the integrable $\mathcal{G}_{T \wedge \tau}$-measurable random variable $\Psi_{T}$, we can consider  $\Psi_{\sigma}=\mathbb{E}\left[\Psi_{T}\mid \mathcal{G}_{\sigma}\right]$ for all $\sigma \in \mathfrak{T}_{0,T}(\mathbb{G})$. Therefore the process $\left(\mathbb{E}\left[\Psi_{T} \mid \mathcal{F}_t \right]\right)_{t \leq T}$ belongs to $\mathcal{M}\left(\mathbb{F},\mathbb{P}\right)$, and we can consider it's RCLL version denoted by $(\rho_t)_{t \leq T}$. 
	\begin{proposition}
		Let $\mathbb{Q}^{\ast}$ be a martingale measure defined by $\dfrac{d \mathbb{Q}^{\ast}}{d \mathbb{P}}\Big|_{\mathcal{F}_t}:=\rho_t$,
		for all $t \in [0,T]$. Then $\mathbb{Q}=\mathbb{Q}^{\ast}$ on $\left(\Omega,\left( \mathcal{F}_t\right)_{t \leq T}\right)$.
		\label{equiality of measures}
	\end{proposition}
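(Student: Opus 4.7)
The plan is to verify the proposition by a direct computation showing that $\mathbb{Q}$ and $\mathbb{Q}^{\ast}$ assign the same mass to every set $A \in \mathcal{F}_t$, for every $t \in [0,T]$. The core fact driving the argument is the tower property of $\mathbb{P}$-conditional expectations, combined with the observation that $\mathcal{F}_t \subset \mathcal{F}_T \subset \mathcal{G}_T$, so that the measure $\mathbb{Q}$ originally defined on $\mathcal{G}_T$ via (\ref{local continuity of Q}) restricts naturally to every sub-$\sigma$-field $\mathcal{F}_t$.

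First I would record that $\mathbb{Q}^{\ast}$ is well defined as a probability measure on $\mathcal{F}_T$. The density process $(\rho_t)_{t \leq T}$ is chosen as an RCLL version of the $(\mathbb{F},\mathbb{P})$-martingale $t \mapsto \mathbb{E}_{\mathbb{P}}[\Psi_T \mid \mathcal{F}_t]$; such a modification exists since $\mathbb{F}$ satisfies the usual conditions. Under assumption \textbf{[P]}, the process $(\Psi_t)_{t \geq 0}$ is a strictly positive $(\mathbb{G},\mathbb{P})$-martingale (as recalled just before (\ref{local continuity of Q})), so $\Psi_T > 0$ with $\mathbb{E}_{\mathbb{P}}[\Psi_T] = \Psi_0 = 1$. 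Consequently $\rho_t > 0$ and $\mathbb{E}_{\mathbb{P}}[\rho_t] = 1$ for every $t \in [0,T]$, which guarantees that the family $\{\rho_t\, d\mathbb{P}|_{\mathcal{F}_t}\}_{t \leq T}$ is consistent and defines a unique probability measure $\mathbb{Q}^{\ast}$ on $(\Omega,\mathcal{F}_T)$.

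Next, for arbitrary $t \in [0,T]$ and $A \in \mathcal{F}_t$, I compute both quantities. Since $A \in \mathcal{F}_t \subset \mathcal{G}_T$, the definition (\ref{local continuity of Q}) of $\mathbb{Q}$ on $\mathcal{G}_T$ gives $\mathbb{Q}(A) = \mathbb{E}_{\mathbb{P}}[\Psi_T \mathds{1}_A]$, while the definition of $\mathbb{Q}^{\ast}$ on $\mathcal{F}_t$ yields $\mathbb{Q}^{\ast}(A) = \mathbb{E}_{\mathbb{P}}[\rho_t \mathds{1}_A]$. Using the $\mathcal{F}_t$-measurability of $\mathds{1}_A$ together with the tower property, I obtain
$$
\mathbb{E}_{\mathbb{P}}[\rho_t \mathds{1}_A] = \mathbb{E}_{\mathbb{P}}\bigl[\mathbb{E}_{\mathbb{P}}[\Psi_T \mid \mathcal{F}_t]\, \mathds{1}_A\bigr] = \mathbb{E}_{\mathbb{P}}[\Psi_T \mathds{1}_A],
$$
so $\mathbb{Q}(A) = \mathbb{Q}^{\ast}(A)$. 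Since this holds for every $A \in \mathcal{F}_t$ and every $t \in [0,T]$, the two measures coincide on the filtered space $\bigl(\Omega,(\mathcal{F}_t)_{t \leq T}\bigr)$.

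There is no substantive obstacle in this argument; it is essentially a bookkeeping verification that the Radon--Nikodym density defining $\mathbb{Q}^{\ast}$ on $\mathcal{F}_t$ is precisely the $\mathbb{F}$-optional projection (under $\mathbb{P}$) of the density $\Psi_T$ that defines $\mathbb{Q}$ on $\mathcal{G}_T$. The only point that requires a moment of care is confirming the existence and strict positivity of $\rho$, both of which follow from assumption \textbf{[P]} and the martingale property of $\Psi$ already established in the excerpt.
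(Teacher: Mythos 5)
Your proof is correct and is essentially the argument the paper intends: the paper states this proposition without proof, but the preceding sentence already sets up $\rho_t$ as the RCLL version of $\mathbb{E}_{\mathbb{P}}\left[\Psi_T \mid \mathcal{F}_t\right]$, and your tower-property computation $\mathbb{E}_{\mathbb{P}}\left[\rho_t \mathds{1}_A\right]=\mathbb{E}_{\mathbb{P}}\left[\Psi_T \mathds{1}_A\right]$ for $A \in \mathcal{F}_t \subset \mathcal{G}_T$ is exactly the missing verification. Your preliminary checks (strict positivity of $\Psi$ under \textbf{[P]}, $\Psi_0=1$, and consistency of the family of restrictions via the martingale property of $\rho$) are all sound.
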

	\begin{remark}
		A particular conclusion that can be deduced from Proposition \ref{equiality of measures} is that, the probability measure $\mathbb{Q}$ is well-defined in $(\Omega, \mathbb{F}, \mathcal{F}_T)$ and is also locally equivalent to the real life measure $\mathbb{P}$ in $\mathbb{F}$ via the density $(\rho_t)_{t \leq T}$.
	\end{remark}
	Using the result from Proposition \ref{equiality of measures}, predictable version of the Girsanov theorem \cite[Theorem III.8.36]{bookProtter} and  Lemma 2.5 presented in \cite{jeanblanc2015martingale}, we can derive the following result:
	\begin{lemma}
		If the angel brackets $\left\langle \rho,B \right\rangle$ exists in $\mathbb{F}$ for the $\mathbb{Q}$ probability, i.e. $[\rho,B]^{p,\mathbb{F},\mathbb{Q}}=\left\langle \rho,B \right\rangle$ exists, then the process
		$$
		B^{[\rho]}_t=B_t-\int_{0}^{t}\dfrac{1}{\rho_{s-}}d\left\langle \rho,B \right\rangle_s,\quad t \in  [0,T]
		$$
		is an $(\mathbb{F},\mathbb{Q})$-Brownian motion and possesses the martingale representation property in the filtration $\mathbb{F}$ under the probability measure $\mathbb{Q}$.
		\label{lemma of new Brownian in F}
	\end{lemma}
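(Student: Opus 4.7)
The proof decomposes into two independent tasks: establishing that $B^{[\rho]}$ is an $(\mathbb{F},\mathbb{Q})$-Brownian motion, and transferring the predictable representation property (PRP) from the pair $(B,\mathbb{P})$ to the pair $(B^{[\rho]},\mathbb{Q})$. My plan is to handle them sequentially, with the first resting on a Girsanov--Lévy argument and the second on the standard transfer-of-PRP scheme under an equivalent change of measure.

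For the first task, I would begin by recording that $\rho$ is a strictly positive, uniformly integrable $(\mathbb{F},\mathbb{P})$-martingale: strict positivity comes from Remark~2.5 of \cite{choulli2022explicit}, which under \textbf{[P]} gives $\Psi>0$, and hence $\rho_t=\mathbb{E}[\Psi_T\mid\mathcal{F}_t]>0$; Proposition~\ref{equiality of measures} identifies the associated measure change with $\mathbb{Q}$ on $\mathcal{F}_T$. The hypothesis that $\langle\rho,B\rangle$ exists allows me to invoke the predictable version of Girsanov's theorem \cite[Theorem III.8.36]{bookProtter} applied to the continuous $(\mathbb{F},\mathbb{P})$-local martingale $B$, yielding that $B^{[\rho]}$ is an $(\mathbb{F},\mathbb{Q})$-local martingale. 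Continuity of $B^{[\rho]}$ follows from the continuity of $B$ and the continuity of $\langle\rho,B\rangle$, and the drift correction being of finite variation gives $[B^{[\rho]},B^{[\rho]}]_t=[B,B]_t=t$. Lévy's characterization then closes this step.

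For the second task, I would appeal to the cited Lemma~2.5 in \cite{jeanblanc2015martingale}. The scheme I have in mind is: given $N\in\mathcal{M}_{loc}(\mathbb{F},\mathbb{Q})$, form the $(\mathbb{F},\mathbb{P})$-local martingale $\rho N$ via integration by parts and the Girsanov--Meyer theorem; apply the $\mathbb{P}$-PRP of $B$ in the Brownian filtration $\mathbb{F}$ to write $\rho N=(\rho N)_0+\int_0^{\cdot} H_s\,dB_s$ for some $\mathbb{F}$-predictable $H$; substitute the identity $B=B^{[\rho]}+\int_0^{\cdot}\rho_{s-}^{-1}\,d\langle\rho,B\rangle_s$ and divide by the strictly positive $\rho_{-}$; an Itô product computation will then identify the drift contributions as cancelling, leaving a representation $N=N_0+\int_0^{\cdot}\widetilde H_s\,dB^{[\rho]}_s$ with $\widetilde H=H/\rho_{-}$ (up to the bookkeeping of the correction terms coming from $d\langle\rho,B\rangle$).

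The main obstacle I anticipate is the integrability bookkeeping for $\widetilde H$ under $\mathbb{Q}$. Since $\rho$ is only strictly positive and not uniformly bounded below, $\widetilde H$ need not lie directly in $\mathcal{I}(\mathbb{F},\mathbb{Q},B^{[\rho]})$; I would handle this by a standard localization along a reducing sequence $\{\sigma_k\}$ for both $\rho^{-1}$ and the candidate integrand, then pass to the limit using the closedness of the stochastic integral under $\mathbb{Q}$. A secondary subtlety is checking that $\langle\rho,B\rangle$ is the same object when computed under $\mathbb{P}$ and under $\mathbb{Q}$, which is ensured by the predictable character of the covariation and the local equivalence of the two measures on $\mathbb{F}$ through the density $\rho$.
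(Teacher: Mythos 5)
Your proposal is correct and follows essentially the same route the paper takes: the paper derives this lemma directly from Proposition \ref{equiality of measures}, the predictable version of Girsanov's theorem, and the PRP-transfer result (Lemma 2.5 in \cite{jeanblanc2015martingale}), without writing out further details. Your elaboration of the Girsanov--L\'evy step and of the $\rho N$ transfer scheme, including the localization to handle the integrability of $H/\rho_{-}$, is exactly the argument those citations encapsulate.
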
 

	Now, we can state the main result of the following subsection :
	\begin{theorem}
		For any bounded $\mathcal{G}_{T \wedge \tau}$-measurable random variable $\xi$, there exists a $\mathbb{G}$-predictable process $(Z_t)_{t \geq 0}$ and a local martingale $M \in \mathcal{M}_{loc}\left(\mathbb{G},\mathbb{Q}\right)$ with finite variation and a default jump such that
		\begin{equation*}
			\left\{
			\begin{split}
				& Z=Z \mathds{1}_{\llbracket0, T \wedge \tau\rrbracket}, \quad Z \in \mathcal{I}(\mathbb{G},\mathbb{Q},B);\\
				& M_0=0,~ dM_{\cdot}=dM_{\cdot \wedge \tau},~ [M,B]=[M^{\tau},B^{\tau}]=0 \text{ on } [0,T];\\
				& \mathbb{E}_{\mathbb{Q}}\left[\xi\mid \mathcal{G}_t\right]=\mathbb{E}_{\mathbb{Q}}\left[\xi\mid \mathcal{G}_0\right]+\int_{0}^{t}Z_s d B^{\tau}_s+\int_{0}^{t} dM_s,\quad t \in [0,T].
			\end{split}
			\right.
		\end{equation*}
		Note that $Z \in \mathcal{I}(\mathbb{G},\mathbb{Q},B)$ is equivalent to $Z \in \mathcal{I}(\mathbb{G},\mathbb{Q},B^{\tau})$.
		\label{Theorem of representation in G,Q} 
	\end{theorem}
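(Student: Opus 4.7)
The plan is to build the representation by applying the Galtchouk--Kunita--Watanabe (GKW) decomposition to the bounded $(\mathbb{G},\mathbb{Q})$-martingale
$$N_t := \mathbb{E}_{\mathbb{Q}}[\xi \mid \mathcal{G}_t], \quad t \in [0,T],$$
using $B^{T\wedge\tau}$ as the reference martingale. By Lemma \ref{Lemma of the Brownian motion}, $B^{T\wedge\tau}$ is a $(\mathbb{G},\mathbb{Q})$-Brownian motion, hence in particular a continuous square-integrable $\mathbb{G}$-martingale on $[0,T]$. Since $\xi$ is bounded and $\mathcal{G}_{T\wedge\tau}$-measurable, $N$ is an $\mathbb{L}^\infty$-bounded RCLL $(\mathbb{G},\mathbb{Q})$-martingale, and the optional stopping theorem yields $N = N^{T\wedge\tau}$, so $N$ is constant past $T \wedge \tau$.

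First I would invoke the classical GKW decomposition: there exist a $\mathbb{G}$-predictable process $Z \in \mathcal{I}(\mathbb{G},\mathbb{Q},B^{T\wedge\tau})$ and a local martingale $M \in \mathcal{M}_{loc}(\mathbb{G},\mathbb{Q})$ strongly orthogonal to $B^{T\wedge\tau}$ (which, since $B^{T\wedge\tau}$ is continuous, reduces to $[M,B^{T\wedge\tau}]\equiv 0$) such that
$$N_t = N_0 + \int_0^t Z_s\, dB^{T\wedge\tau}_s + M_t, \quad t \in [0,T].$$
Boundedness of $N$ upgrades this to a genuine (not merely local) decomposition with square-integrable components. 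Because $N = N^{T\wedge\tau}$ and both $B^{T\wedge\tau}$ and the integral $\int Z\, dB^{T\wedge\tau}$ are constant after $T \wedge \tau$, uniqueness of the GKW decomposition forces $M = M^{T\wedge\tau}$; similarly, replacing $Z$ by $Z\mathds{1}_{\llbracket 0, T\wedge\tau\rrbracket}$ leaves the integral unchanged, absorbing the support requirement $Z = Z\mathds{1}_{\llbracket 0, T\wedge\tau\rrbracket}$.

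Next I would verify the orthogonality $[M,B]=[M^{\tau},B^{\tau}] = 0$ on $[0,T]$. The quadratic covariation commutes with stopping, so $[M,B] = [M^{T\wedge\tau},B] = [M,B]^{T\wedge\tau}$; on $\llbracket 0, T\wedge\tau\rrbracket$ we have $B = B^{T\wedge\tau}$, hence $[M,B]_t = [M,B^{T\wedge\tau}]_t = 0$ for $t \leq T\wedge\tau$, and since $M$ is constant past $T\wedge\tau$ the covariation stays null on $[0,T]$. The identity $[M^{\tau},B^{\tau}] = 0$ follows analogously.

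The main obstacle will be establishing the finite-variation / pure-default-jump structure of $M$, which is the most delicate part of the claim. The argument I plan to use rests on the structure of the progressive enlargement: under $\mathbb{P}$ every $\mathbb{F}$-martingale is continuous (Brownian filtration), and combining the operator $\mathcal{T}(\cdot)$ from Theorem \ref{Theorem of the operator} with the Girsanov--Meyer step carried out in the proof of Lemma \ref{Lemma of the Brownian motion}, every $(\mathbb{G},\mathbb{Q})$-local martingale stopped at $\tau$ admits a decomposition of the form $\int \psi\, dB^{\tau} + \int \varphi\, dN^{\mathbb{G}}$ for suitable $\mathbb{G}$-predictable integrands $\psi,\varphi$, where $N^{\mathbb{G}}$ is the compensated default martingale appearing in Theorem \ref{Delicate thm}. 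Strong orthogonality of $M$ to $B^{T\wedge\tau}$ forces $\psi \equiv 0$, so $M = \int \varphi\, dN^{\mathbb{G}}$ on $\llbracket 0, T\wedge\tau\rrbracket$; since $N^{\mathbb{G}}$ has finite variation and a single jump at $\tau$, the same properties are inherited by $M$, and together with $M = M^{T\wedge\tau}$ this completes the description. Finally, the equivalence noted at the end of the statement, $Z \in \mathcal{I}(\mathbb{G},\mathbb{Q},B) \Longleftrightarrow Z \in \mathcal{I}(\mathbb{G},\mathbb{Q},B^{\tau})$, is immediate from the support condition $Z = Z\mathds{1}_{\llbracket 0, T\wedge\tau\rrbracket}$, since on that stochastic interval $B$ and $B^{\tau}$ coincide.
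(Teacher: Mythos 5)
Your GKW set-up, the stopping and orthogonality bookkeeping, and the closing remark on $\mathcal{I}(\mathbb{G},\mathbb{Q},B)$ versus $\mathcal{I}(\mathbb{G},\mathbb{Q},B^{\tau})$ are all fine, but the step you yourself flag as ``the main obstacle'' is where the proof breaks down. You assert that every $(\mathbb{G},\mathbb{Q})$-local martingale stopped at $\tau$ admits a decomposition of the form $\int \psi\, dB^{\tau}+\int \varphi\, dN^{\mathbb{G}}$ and claim this follows from combining the operator $\mathcal{T}(\cdot)$ of Theorem \ref{Theorem of the operator} with the Girsanov--Meyer step used in Lemma \ref{Lemma of the Brownian motion}. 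It does not. Those results only describe how $\mathbb{F}$-local martingales behave once stopped at $\tau$ and transported to $(\mathbb{G},\mathbb{Q})$; they say nothing about $\mathbb{G}$-local martingales that do not arise as $X^{\tau}$ for some $\mathbb{F}$-semimartingale $X$. The assertion you invoke is precisely the predictable representation property in the enlarged filtration, i.e.\ the theorem you are trying to prove (it yields the statement immediately by projecting onto the $B^{\tau}$-component), so the argument is circular at exactly the point that matters.

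What is actually needed to pin down the structure of $M$ is the explicit computation of $\mathbb{E}_{\mathbb{Q}}\left[\xi\mid\mathcal{G}_t\right]$ before and after $\tau$: one writes $\xi=\zeta_{T}\mathds{1}_{\{T<\tau\}}+k_{\tau}\mathds{1}_{\{\tau\le T\}}$ with $\zeta$ $\mathbb{F}$-optional and $k$ $\mathbb{F}$-progressive (the description of $\mathcal{G}_{T\wedge\tau}$-measurable variables from \cite{Chung} and \cite[Lemma 2.9]{aksamit2017enlargement}), expresses the conditional expectation on $\{t<\tau\}$ as $G_t^{-1}$ times an $\mathbb{F}$-conditional expectation, applies the representation property of the Brownian motion in $(\mathbb{F},\mathbb{Q})$ (via Proposition \ref{equiality of measures} and Lemma \ref{lemma of new Brownian in F}) to the resulting $\mathbb{F}$-martingales, and then collects the residual terms into a stochastic integral against the compensated default martingale $N^{\mathbb{G}}$ by integration by parts. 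This is the content of the Jeanblanc--Song representation theorem that the paper's proof cites, and without carrying it out (or explicitly citing it) the finite-variation, single-default-jump nature of $M$ remains unproved: in a general filtration the GKW orthogonal component need not have finite variation, and it is exactly this computation that rules out any continuous or non-default jump part of $M$.
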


	\begin{proof}
		The proof of Theorem \ref{Theorem of representation in G,Q} draws inspiration from an analogous theorem found in \cite{jeanblanc2015martingale} combined with results from Lemma 2.9 in \cite{aksamit2017enlargement} pp. 34, Proposition \ref{equiality of measures}, Lemma 2.2 in \cite{jeanblanc2015martingale}, and Lemmas \ref{lemma of new Brownian in F} and  \ref{Lemma of the Brownian motion}.
	\end{proof}
	\begin{remark}
			Note that the assumption of boundedness of the $\mathcal{G}_{T \wedge\tau}$-measurable random variable $\xi$ in the representation Theorem \ref{Theorem of representation in G,Q} is equivalent to the property that all
			$(\mathbb{G},\mathbb{Q})$-local martingales defined on $\llbracket 0, T \wedge \tau \rrbracket$ can be written as stochastic integrals with respect to $B^{\tau}$ and an orthogonal martingale $M$ on $[0,T]$ (see
			Theorem 13.4 in \cite{bookHe}).	Additionally, it is important to remark that, since we are dealing with default events and a progressive enlargement of the Brownian filtration, we cannot directly apply the representation property outlined in \cite[Lemma III.4.24, pp. 185]{bookJacod}. Instead, Theorem \ref{Theorem of representation in G,Q} provides a key formula that explicitly describes each component of the decomposition of processes within $\mathcal{M}_{loc}(\mathbb{G},\mathbb{Q})$ in terms of the default time $\tau$. 
		\label{Remark explaining how a probability measure is taking}
	\end{remark}
	\subsection*{A look at $\mathbb{G}$-predictable processes integrability}
	In this part, we aim to provide a connection between the integrability of predictable processes within the filtration $\mathbb{G}$ in relation to Brownian motion, especially when transitioning from the probability measure $\mathbb{P}$ to $\mathbb{Q}$. To this end, we first introduce the following lemma:
	\begin{lemma}
		Let $\left\{\sigma_n\right\}_{n \in \mathbb{N}}$ be an increasing sequence of stopping times in $\mathfrak{T}(\mathbb{G})$. Then, $\sigma_n \nearrow +\infty$ $\mathbb{P}$-a.s. if and only if $\sigma_n \nearrow +\infty$ $\mathbb{Q}$-a.s.
		\label{stopping times and measure changing}
	\end{lemma}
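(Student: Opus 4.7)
The proof strategy rests on the local equivalence of $\mathbb{Q}$ and $\mathbb{P}$ on each $\mathcal{G}_T$ for finite $T$, together with the fact that the event $\{\sigma_n \not\to +\infty\}$ can be decomposed as a countable union of events each measurable with respect to some $\mathcal{G}_k$ with $k\in\mathbb{N}$. The proof is thus a routine reduction-to-finite-horizon argument, exploiting the strict positivity of the density process $\Psi$.

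The plan is, first, to rewrite the event that $\sigma_n$ does not tend to infinity in a finite-horizon compatible way. Since $\{\sigma_n\}_{n \in \mathbb{N}}$ is an increasing sequence of $\mathbb{G}$-stopping times, one has
\begin{equation*}
\{\omega:\sigma_n(\omega)\not\nearrow+\infty\}=\{\omega:\sup_n\sigma_n(\omega)<+\infty\}=\bigcup_{k=1}^{\infty}A_k, \quad A_k:=\bigcap_{n=1}^{\infty}\{\sigma_n\leq k\}.
\end{equation*}
For each $k\in\mathbb{N}$, the event $A_k$ lies in $\mathcal{G}_k$ since each $\{\sigma_n\leq k\}\in\mathcal{G}_k$. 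Accordingly, by countable (sub)additivity and monotonicity, $\sigma_n\nearrow+\infty$ $\mathbb{P}$-a.s.\ if and only if $\mathbb{P}(A_k)=0$ for every $k\in\mathbb{N}$; an identical equivalence holds for $\mathbb{Q}$.

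Next, I would invoke the density construction from (\ref{local continuity of Q}). For each $k\in\mathbb{N}$, the probability measure $\mathbb{Q}_k$ on $(\Omega,\mathcal{G}_k)$ is given by $d\mathbb{Q}_k=\Psi_k\, d\mathbb{P}$. Under assumption \textbf{[P]}, and by Remark~2.5 in \cite{choulli2022explicit} (already recalled in the paper), the process $(\Psi_t)_{t\geq 0}$ is a strictly positive $(\mathbb{G},\mathbb{P})$-martingale, so in particular $\Psi_k>0$ $\mathbb{P}$-a.s. Consequently, $\mathbb{Q}_k\sim\mathbb{P}$ on $\mathcal{G}_k$, and for any $A\in\mathcal{G}_k$ one has the equivalence $\mathbb{P}(A)=0\Leftrightarrow\mathbb{Q}_k(A)=0$. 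Applying this to $A=A_k$ yields $\mathbb{P}(A_k)=0\Leftrightarrow\mathbb{Q}_k(A_k)=0$ for every $k$, and combining with the decomposition above closes the equivalence.

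The only subtlety, which is more of a formulation issue than a mathematical obstacle, is the meaning of ``$\sigma_n\nearrow+\infty$ $\mathbb{Q}$-a.s.'' in the present finite-horizon framework, where $\mathbb{Q}$ is defined only on each $\mathcal{G}_T$ via (\ref{local continuity of Q}). This is resolved by interpreting the statement level-by-level: $\sigma_n\nearrow+\infty$ $\mathbb{Q}$-a.s.\ means $\mathbb{Q}_k(A_k)=0$ for every $k\in\mathbb{N}$, which is exactly the condition the argument provides. No martingale representation, no Girsanov transform, and no property of $\tau$ beyond \textbf{[P]} are needed; the result is a direct consequence of the strict positivity of $\Psi$.
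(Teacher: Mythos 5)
Your proof is correct. The paper states Lemma \ref{stopping times and measure changing} without providing a proof, so there is no argument to compare against; but your reduction is exactly the natural one: writing $\{\sup_n\sigma_n<+\infty\}=\bigcup_k A_k$ with $A_k=\bigcap_n\{\sigma_n\leq k\}\in\mathcal{G}_k$, and then using that $\mathbb{Q}_k\sim\mathbb{P}$ on $\mathcal{G}_k$ because $\Psi_k>0$ $\mathbb{P}$-a.s.\ under \textbf{[P]}. Your remark on how to read ``$\mathbb{Q}$-a.s.''\ in the merely locally defined setting of (\ref{local continuity of Q}) is also the right one, and is consistent across horizons since $(\Psi_t)_{t\geq 0}$ is a $(\mathbb{G},\mathbb{P})$-martingale, so the family $(\mathbb{Q}_k)_{k\in\mathbb{N}}$ is projective. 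Nothing is missing.
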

	\begin{remark}
		\begin{itemize}
			\item[(a)] Let $(Z_{t})_{t \geq 0}$ be a $\mathbb{G}$-predictable process such that $Z=Z\mathds{1}_{\llbracket 0,\tau \rrbracket}$ and $Z \in \mathcal{I}(\mathbb{G},\mathbb{P},B^{\tau})$, then from \eqref{operator T}, we deduce that $\int_{0}^{\cdot} Z_s dB_s$ is a $(\mathbb{G},\mathbb{P})$-semimartingale. Then, applying Lemma 2.2 in \cite{jeanblanc2015martingale}, we deduce that $$\mathcal{I}(\mathbb{G},\mathbb{P},B^{\tau})=\mathcal{I}(\mathbb{G},\mathbb{P},\mathcal{T}(B))\cap \mathcal{I}(\mathbb{G},\mathbb{P},V^{B}),$$
			where $V^B$ is the $\mathbb{G}$-adapted co,continuous processes defined by $V^B:=\int_{0}^{\cdot} \tilde{G}^{-1}_s \mathds{1}_{\{s \leq \tau\}} d[B,m]_s$ and $B^{\tau}=\mathcal{T}(M)+V^{B}$ is a special $(\mathbb{G},\mathbb{P})$-semimartingale.		
			\item[(b)] Note that since for $\mathbb{P}$-almost all $\omega \in \Omega$, $t \mapsto B_t(\omega)$ is a continuous function. Therefor, using the absolute continuity of $\mathbb{Q}$ with respect to $\mathbb{P}$, we get $\mathbb{Q}\big(\omega: t \mapsto B_t(\omega) \text{ is not continuous}\big)=0$. Hence, the process $(B^{\tau}_t)_{t \geq 0}$ has continuous paths in $\mathbb{R}_{+}$ for $\mathbb{Q}$-almost all $\omega \in \Omega$ and we get
			$$
			[B^{\tau},B^{\tau}]=\left[\mathcal{T}(B),\mathcal{T}(B)\right]+2\left[\mathcal{T}(B),V^B\right]+\left[\mathcal{T}(B),V^B\right]=\left[\mathcal{T}(B),\mathcal{T}(B)\right],
			$$
			since
			$$
			\left[\mathcal{T}(B),V^B\right]=\sum \tilde{G}^{-1}\mathds{1}_{\rrbracket0,\tau\rrbracket} \Delta \mathcal{T}(B)\Delta m \Delta B=0,
			$$
			and 
			$$
			\left[V^B,V^B\right]=\sum \tilde{G}^{-2}\mathds{1}_{\rrbracket0,\tau\rrbracket} \left( \Delta m \Delta B\right)^2 =0,
			$$
			where the quadratic variation is computed in the filtration $\mathbb{G}$ with respect to the probability measure $\mathbb{P}$. 		
			\item[(c)] Since  $\mathds{1}_{\llbracket 0,\tau \rrbracket}$ is a $\mathbb{G}$-predictable bounded process and using Theorem I.12 in \cite{bookProtter} and Theorem 1.11-(d) in \cite{book:63874}, we can write 
			$
			[B,B]^{\tau}_t=\int_{0}^{t} \mathds{1}_{\{s \leq \tau\}}d[B,B]_s
			$, $t \geq 0$, under the two equivalent probabilities $\mathbb{P}$ and $\mathbb{Q}$. Moreover, using Theorem  I.14 in \cite{bookProtter}, the process  $\int_{0}^{\cdot} \mathds{1}_{\{s \leq \tau\}}d[B,B]_s$ calculated under $\mathbb{P}$ is indistinguishable from $\int_{0}^{\cdot} \mathds{1}_{\{s \leq \tau\}}ds$ with respect to $\mathbb{Q}$. In other word, the stochastic integral $\int_{0}^{\cdot} \mathds{1}_{\{s \leq \tau\}}d[B,B]_s$ defined in the two senses gives the same $\mathbb{G}$-adapted continuous process with finite variation (see also Lemma 2.1 in  \cite{jeanblanc2015martingale}).
		\end{itemize}
		\label{Remarks for the brownian and quadratic variation}
	\end{remark}
	\begin{proposition}
		Let $(Z_t)_{t \geq 0}$ be a $\mathbb{G}$-predictable process such that $Z=Z\mathds{1}_{\llbracket0,\tau\rrbracket}$. Then, whenever  $Z \in  \mathcal{I}\left(\mathbb{G},\mathbb{P},\mathcal{T}(B)\right)$, we have $Z \in \mathcal{I}\left(\mathbb{G},\mathbb{Q},B^{\tau}\right)$.
		\label{integrability of Z}
	\end{proposition}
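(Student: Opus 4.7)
The plan is to reduce both integrability requirements, one under $(\mathbb{G},\mathbb{P})$ with respect to $\mathcal{T}(B)$ and one under $(\mathbb{G},\mathbb{Q})$ with respect to $B^{\tau}$, to the same pathwise finiteness condition on the bracket integral of $Z$, and then to use the equivalence $\mathbb{P}|_{\mathcal{G}_T}\sim\mathbb{Q}|_{\mathcal{G}_T}$ to pass from one to the other. The two ingredients that make this work are already essentially recorded: $\mathcal{T}(B)$ is a continuous $(\mathbb{G},\mathbb{P})$-local martingale by Theorem \ref{Theorem of the operator}, and $B^{\tau}$ is a continuous $(\mathbb{G},\mathbb{Q})$-local martingale (in fact a Brownian motion stopped at $\tau$) by Lemma \ref{Lemma of the Brownian motion}. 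For such continuous local martingales, predictable integrability $Z\in\mathcal{I}(\mathbb{G},\cdot,\cdot)$ is characterized by the pathwise condition $\int_0^t Z_s^2\,d[N,N]_s<+\infty$ a.s.\ for every $t\in[0,T]$, which is the formulation I would work with.

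The first concrete step is to identify the two sharp brackets. From the decomposition $B^{\tau}=\mathcal{T}(B)+V^B$ recalled in Remark \ref{Remarks for the brownian and quadratic variation}-(a) together with the orthogonality relations $[\mathcal{T}(B),V^B]=0$ and $[V^B,V^B]=0$ established in Remark \ref{Remarks for the brownian and quadratic variation}-(b), I would conclude $[\mathcal{T}(B),\mathcal{T}(B)]=[B^{\tau},B^{\tau}]$ as indistinguishable processes under $\mathbb{P}$, and hence under $\mathbb{Q}$ too by equivalence of the two measures on $\mathcal{G}_T$. Then Remark \ref{Remarks for the brownian and quadratic variation}-(c) identifies this common bracket with the $\mathbb{G}$-adapted continuous increasing process $\int_0^{\cdot}\mathds{1}_{\{s\leq\tau\}}\,ds$, again up to a null set common to $\mathbb{P}$ and $\mathbb{Q}$.

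The second step is to invoke the support assumption $Z=Z\mathds{1}_{\llbracket 0,\tau\rrbracket}$, which yields the pathwise identity $\int_0^t Z_s^2\,d[\mathcal{T}(B),\mathcal{T}(B)]_s=\int_0^t Z_s^2\mathds{1}_{\{s\leq\tau\}}\,ds=\int_0^t Z_s^2\,d[B^{\tau},B^{\tau}]_s$ for every $t\in[0,T]$, valid a.s.\ under either measure. The hypothesis $Z\in\mathcal{I}(\mathbb{G},\mathbb{P},\mathcal{T}(B))$ ensures the left-hand side is finite $\mathbb{P}$-a.s.; by equivalence of $\mathbb{P}$ and $\mathbb{Q}$ on $\mathcal{G}_T$ the right-hand side is finite $\mathbb{Q}$-a.s., and this is precisely $Z\in\mathcal{I}(\mathbb{G},\mathbb{Q},B^{\tau})$.

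There is no substantial obstacle here: the only subtlety is being careful that the bracket identification and the support condition are compatible across the change of measure. Since absolute continuity preserves null sets and both brackets agree with $\int_0^{\cdot}\mathds{1}_{\{s\leq\tau\}}\,ds$ (a process that does not depend on the underlying measure), the passage is routine. If one wanted to record an alternative route, one could localize by the sequence $\sigma_n:=\inf\{t\geq 0: \int_0^t Z_s^2\mathds{1}_{\{s\leq\tau\}}\,ds\geq n\}\wedge T$ (which is a $\mathbb{G}$-stopping time and satisfies $\sigma_n\nearrow T$ $\mathbb{P}$-a.s., hence by Lemma \ref{stopping times and measure changing} also $\mathbb{Q}$-a.s.) to reduce everything to a bounded-expectation statement and then pass to the limit; but the direct argument above already suffices.
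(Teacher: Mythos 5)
Your proposal is correct, and it reaches the conclusion by a genuinely different mechanism than the paper. The paper's proof works at the level of expectations: it takes a localizing sequence $\sigma_n$ coming from the hypothesis $Z\in\mathcal{I}(\mathbb{G},\mathbb{P},\mathcal{T}(B))$, intersects it with $\theta_n:=\inf\{t\geq 0:\Psi_t>n\}\wedge\sigma_n$, and then uses the Bayes rule to bound $\mathbb{E}_{\mathbb{Q}}\int_0^{\theta_n}|Z_s|^2\mathds{1}_{\{s\leq\tau\}}\,ds$ by $n\,\mathbb{E}\int_0^{\sigma_n}|Z_s|^2\,d[\mathcal{T}(B),\mathcal{T}(B)]^{\tau}_s<\infty$; the reduced sequence $\theta_n$ still increases to $+\infty$ under both measures by Lemma \ref{stopping times and measure changing}. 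You instead exploit the fact that both $\mathcal{T}(B)$ (under $\mathbb{P}$) and $B^{\tau}$ (under $\mathbb{Q}$) are \emph{continuous} local martingales, for which membership in the integrand class is equivalent to the pathwise finiteness of $\int_0^t Z_s^2\,d[N,N]_s$; since both brackets coincide with the measure-independent process $\int_0^{\cdot}\mathds{1}_{\{s\leq\tau\}}\,ds$ (Remark \ref{Remarks for the brownian and quadratic variation}), the statement reduces to a transfer of a null set under the equivalence $\mathbb{Q}\sim\mathbb{P}$ on $\mathcal{G}_T$. Your route is shorter and avoids introducing the truncation of the density $\Psi$ altogether, at the cost of invoking the pathwise characterization of integrands for continuous local martingales (which does hold here, since $[B,m]$ is continuous and hence so is $\mathcal{T}(B)$); the paper's expectation-based argument is more robust in that it would survive in situations where the driving local martingales are not continuous and the pathwise characterization is no longer available. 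The localization variant you sketch at the end is essentially the paper's argument minus the $\Psi$-truncation, so you have in effect recorded both proofs.
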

	\begin{proof}
		Let $Z \in \mathcal{I}\left(\mathbb{G},\mathbb{P},B^{\tau}\right)$ and let $\left\{\sigma_n\right\}_{n \in \mathbb{N}}$ be an increasing sequence of stopping times in $\mathfrak{T}\left(\mathbb{G}\right)$ such that $\sigma_n \nearrow +\infty$ $\mathbb{P}$-a.s., $\mathbb{E} \int_{0}^{\sigma_n} \left|Z_s \right|^2 \mathds{1}_{\{s \leq \tau\}} d\left[\mathcal{T}(B),\mathcal{T}(B)\right]^{\tau}_s<\infty $, for each $n \in \mathbb{N}$.\\	
		Consider the following sequence of stopping times :
		$$
		\theta_n:=\inf\{t \geq 0:  \Psi_t >n\} \wedge \sigma_n.
		$$
		Then $\left\{\theta\right\}_{n \in \mathbb{N}}$ belongs to $\mathfrak{T}\left(\mathbb{G}\right)$ such that $\theta_n \nearrow +\infty$ $\mathbb{P}$-a.s. as $\Psi$ is a $\mathbb{G}$-adapted RCLL process. Moreover, from Lemma \ref{stopping times and measure changing} and Remark \ref{Remarks for the brownian and quadratic variation}-(b)-(c), we have
		\begin{equation*}
			\begin{split}
				\mathbb{E}^{\ast}\int_{0}^{\theta_n} \left|Z_s \right|^2 \mathds{1}_{\{s \leq \tau\}} ds&=\mathbb{E}^{}\int_{0}^{\theta_n} \Psi_s\left|Z_s \right|^2  d\left[B^{\tau},B^{\tau}\right]_s\\
				&=\mathbb{E}^{}\int_{0}^{\theta_n} \Psi_s\left|Z_s \right|^2 \mathds{1}_{\{s \leq \tau\}} ds\leq n \mathbb{E}^{}\int_{0}^{\sigma_n} \left|Z_s \right|^2 d\left[\mathcal{T}(B),\mathcal{T}(B)\right]^{\tau}_s<\infty,
			\end{split}
		\end{equation*}
		for every $n \in \mathbb{N}$. Hence $Z \in \mathcal{I}\left(\mathbb{G},\mathbb{Q},B^{\tau}\right)$ and this completes the proof. 
	\end{proof}
	\begin{remark} For the reverse property of Proposition \ref{integrability of Z}, we require the condition that the quadratic variation $[m, m]$ of the BMO martingale $(m_t)_{t \geq 0}$ is locally integrable with respect to $\mathbb{Q}$. In this case, by following an argument similar to the fact that $\frac{d\mathbb{P}}{d\mathbb{Q}} = \left(\frac{d\mathbb{Q}}{d\mathbb{P}}\right)^{-1}$ and using an appropriate sequence of stopping times, we can establish the desired result for the $(\mathbb{G},\mathbb{P})$-martingale part and the $(\mathbb{G},\mathbb{P})$-local integrability for the finite variation part from Kunita-Watanabe inequality.
	\end{remark}
	%
	%
	%
	%
	\section{A classical BSDE and comparison theorems}
	\label{Appendix C}
	In this section, we give an existence and uniqueness result for BSDEs without reflection with stochastic Lipschitz coefficient. 
	More precisely, we consider the following BSDE with default jump: 
	\begin{equation}
		\left\{
		\begin{split}
			& \mathbb{Q}\text{-a.s. for all } t \in [0,T]\\
			&~ Y_{t \wedge \tau}=\xi+\int_{t \wedge \tau }^{T \wedge \tau} f(s,Y_s,Z_s)ds-\int_{t \wedge \tau }^{T \wedge \tau} Z_s d B_s-\int_{t \wedge \tau }^{T \wedge \tau}dM_s;\quad t \in [0,T].\\
			&~  Y_t=\xi,~Z_t=dM_t=0,\text{ on the set }\{t \geq T \wedge\tau\},~\mathbb{Q}\text{-a.s.},\\
		\end{split}
		\right.
		\label{basic classical BSDE}
	\end{equation}
	\begin{theorem}
		Assume that $\xi \in \mathcal{G}_{T \wedge \tau}$ and $f$ satisfies $(\mathbf{H}_2)$ with $\left( \frac{f(t,0,0)}{\alpha_{t}}\right)_{t \geq 0} \in \mathcal{H}^2_T(\beta,\mathbb{F},\mathbb{P})$. Then, the BSDE (\ref{basic classical BSDE}) admits a unique solution $(Y,Z,M) \in \mathfrak{U}^2_{T \wedge \tau}\left( \beta,\mathbb{G},\mathbb{Q}\right)$.
		\label{Existence and uniquess theorem BSDE}
	\end{theorem}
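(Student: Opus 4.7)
The plan is to establish Theorem~\ref{Existence and uniquess theorem BSDE} in two stages by combining the martingale representation property of Theorem~\ref{Theorem of representation in G,Q} with a Banach fixed-point argument in the weighted Banach space $\mathfrak{U}^2_{T\wedge\tau}(\beta,\mathbb{G},\mathbb{Q})$ endowed with the natural norm
$\|(Y,Z,M)\|^2 = \|Y\|^2_{\mathcal{S}^{2}_{T\wedge\tau}(\beta,\mathbb{G},\mathbb{Q})} + \|Y\|^2_{\mathcal{S}^{2,\alpha}_{T\wedge\tau}(\beta,\mathbb{G},\mathbb{Q})} + \|Z\|^2_{\mathcal{H}^2_{T\wedge\tau}(\beta,\mathbb{G},\mathbb{Q})} + \|M\|^2_{\mathcal{M}^2_{T\wedge\tau}(\beta,\mathbb{G},\mathbb{Q})}$. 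The uniqueness part will fall out of the same a priori estimate, obtained by Itô's formula applied to $e^{\beta\mathcal{A}_{t\wedge\tau}}|\bar{Y}_t|^2$ for the difference of two solutions; this is precisely the (simpler, reflection-free) analogue of Proposition~\ref{basic Estimastion for BSDE}.

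\emph{Stage 1: driver independent of $(y,z)$.} Assume $f(\omega,t,y,z)=g(\omega,t)$ with $g/\alpha\in\mathcal{H}^2_T(\beta,\mathbb{F},\mathbb{P})$. First I would transfer the integrability of the data to the defaultable setup via Lemmas~\ref{Lemma Merties} and \ref{proof of lemma integrable}, which yield $\xi\in\mathcal{L}^2_{T\wedge\tau}(\beta,\mathbb{G},\mathbb{Q})$ and $g\in\mathcal{H}^2_{T\wedge\tau}(\beta,\mathbb{G},\mathbb{Q})$. Define the square-integrable $\mathcal{G}_{T\wedge\tau}$-measurable random variable $\chi:=\xi+\int_0^{T\wedge\tau} g(s)\,ds$ and the uniformly integrable $(\mathbb{G},\mathbb{Q})$-martingale $N_t:=\mathbb{E}_{\mathbb{Q}}[\chi\mid\mathcal{G}_t]$. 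By Theorem~\ref{Theorem of representation in G,Q} (applied after a standard truncation-and-limit argument to reduce to bounded terminals), there exist a unique $\mathbb{G}$-predictable $Z$ with $Z=Z\mathds{1}_{\llbracket 0,T\wedge\tau\rrbracket}$ and a unique $M\in\mathcal{M}_{loc}(\mathbb{G},\mathbb{Q})$ orthogonal to $B^{\tau}$, stopped at $T\wedge\tau$, with $M_0=0$, such that $N_t=N_0+\int_0^t Z_s\,dB^{\tau}_s+\int_0^t dM_s$. Setting $Y_t:=N_t-\int_0^{t\wedge\tau} g(s)\,ds$ on $\llbracket 0,T\wedge\tau\rrbracket$ and $Y_t:=\xi$ on $\llbracket T\wedge\tau,+\infty\llbracket$ produces a quintuplet satisfying~\eqref{basic classical BSDE}. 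The $\mathcal{S}^2$ and $\mathcal{S}^{2,\alpha}$ integrability of $Y$ then follows from Doob's maximal inequality applied to $N$, combined with the fact that $|Y|^2\alpha^2$ is controlled by the data via Cauchy--Schwarz.

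\emph{Stage 2: general stochastic Lipschitz driver.} Define the map $\Phi:\mathfrak{U}^2_{T\wedge\tau}(\beta,\mathbb{G},\mathbb{Q})\to\mathfrak{U}^2_{T\wedge\tau}(\beta,\mathbb{G},\mathbb{Q})$ sending $(y,z,m)$ to the unique triple $(Y,Z,M)$ associated by Stage~1 with the driver $\tilde{g}(\omega,t):=f(\omega,t,y_t(\omega),z_t(\omega))$; Assumption~$(\mathbf{H_2})$ together with $y\in\mathcal{S}^{2,\alpha}_{T\wedge\tau}(\beta,\mathbb{G},\mathbb{Q})$ and $z\in\mathcal{H}^2_{T\wedge\tau}(\beta,\mathbb{G},\mathbb{Q})$ ensures $\tilde g/\alpha\in\mathcal{H}^2_{T\wedge\tau}(\beta,\mathbb{G},\mathbb{Q})$, so $\Phi$ is well defined. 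For two inputs $(y^i,z^i,m^i)$ with images $(Y^i,Z^i,M^i)$, write $\bar Y=Y^1-Y^2$, $\bar Z=Z^1-Z^2$, $\bar M=M^1-M^2$, $\bar y=y^1-y^2$, $\bar z=z^1-z^2$. Apply Itô's formula to $e^{\beta\mathcal{A}_{t\wedge\tau}}|\bar{Y}_t|^2$; using the orthogonality of $M$ to $B^{\tau}$, the Lipschitz bound $|f(t,y^1,z^1)-f(t,y^2,z^2)|\leq\kappa_t|\bar y|+\gamma_t\|\bar z\|$, and the inequality $2ab\leq\lambda a^2+\lambda^{-1}b^2$ with $\lambda$ chosen relative to $\alpha^2=\kappa+\gamma^2\geq\epsilon$, one obtains
\begin{equation*}
\|\Phi(y^1,z^1,m^1)-\Phi(y^2,z^2,m^2)\|^2 \;\leq\; \frac{C}{\beta}\,\|(y^1,z^1,m^1)-(y^2,z^2,m^2)\|^2,
\end{equation*}
for an absolute constant $C>0$. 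Choosing $\beta$ large enough that $C/\beta<1$ makes $\Phi$ a strict contraction on the Banach space $\mathfrak{U}^2_{T\wedge\tau}(\beta,\mathbb{G},\mathbb{Q})$, and the unique fixed point is the desired solution.

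The main technical obstacle will be the Itô computation in Stage~2: because $(Z_t)_{t\geq 0}$ lives in a stochastic-Lipschitz weighted space and $M$ carries the default jump, one must be careful to keep the cross term $\int e^{\beta\mathcal{A}_s}\alpha_s^2|\bar Y_s|^2\,ds$ on the left-hand side, absorbing it via $(\mathbf{H_2})$(ii)(b), while simultaneously controlling the $\bar Z$ and $\bar M$ contributions through Burkholder--Davis--Gundy. A secondary delicate point, already flagged in Stage~1, is justifying the application of the martingale representation Theorem~\ref{Theorem of representation in G,Q} (stated for bounded terminals) to the square-integrable $\chi$: this requires approximating $\chi$ by its truncations $\chi\wedge n\vee(-n)$, invoking representation for each $n$, and then extracting a strong limit in $\mathcal{H}^2_{T\wedge\tau}(\beta,\mathbb{G},\mathbb{Q})\times\mathcal{M}^2_{T\wedge\tau}(\beta,\mathbb{G},\mathbb{Q})$ by Itô's isometry. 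Uniqueness under $\mathfrak{U}^2_{T\wedge\tau}(\beta,\mathbb{G},\mathbb{Q})$ is then immediate: applying the same Itô computation to two purported solutions with identical data gives $\|\bar Y\|^2+\|\bar Z\|^2+\|\bar M\|^2\leq 0$, hence $Y^1=Y^2$, $Z^1=Z^2$, $M^1=M^2$.
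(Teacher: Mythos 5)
Your proposal is correct and follows exactly the standard route that the paper implicitly relies on for this auxiliary result (which it states in Appendix C without proof, deferring to the method of \cite{BDEM} and El Karoui--Huang): Stage 1 constructs the solution for a $(y,z)$-free driver from the martingale representation of $\mathbb{E}_{\mathbb{Q}}\bigl[\xi+\int_0^{T\wedge\tau}g(s)\,ds \mid \mathcal{G}_t\bigr]$ via Theorem \ref{Theorem of representation in G,Q}, and Stage 2 runs the Banach fixed-point contraction in $\mathfrak{U}^2_{T\wedge\tau}(\beta,\mathbb{G},\mathbb{Q})$ using the weight $e^{\beta\mathcal{A}}$ and $(\mathbf{H_2})$(ii)(b). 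The two delicate points you flag (extending the representation from bounded to square-integrable terminals by truncation, and obtaining the $\mathcal{S}^{2,\alpha}$-bound, which is most cleanly read off from the It\^{o} expansion of $e^{\beta\mathcal{A}_{t\wedge\tau}}|Y_{t\wedge\tau}|^2$ rather than from Doob alone) are exactly the right ones and are handled correctly.
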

	\begin{theorem}[Comparison theorem]
		et $(Y^i_t,Z^i_t,M^i_t)_{t \geq 0}$ be a solution to BSDE \eqref{basic classical BSDE} associated with parameters $(\xi^i,f^i)$ for $i=1, 2$. If $\xi_1 \leq \xi_2$ and $f^1(s,Y^1_s,Z^1_s)\leq f^2(s,Y^1_s,Z^1_s)$ for all $t \in [0,T\wedge \tau]$. Then $Y^1_t \leq Y^2_t$, $\forall t \geq 0$, $\mathbb{Q}$-almost surely.
		\label{Comparison theorem BSDE}
	\end{theorem}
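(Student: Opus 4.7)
}
The plan is to follow the standard linearization strategy for BSDE comparison, adapted to the stochastic Lipschitz setting and the defaultable filtration. Set $\bar Y := Y^1-Y^2$, $\bar Z := Z^1-Z^2$, $\bar M := M^1-M^2$, and $\bar \xi := \xi^1-\xi^2 \leq 0$. Subtracting the two BSDEs \eqref{basic classical BSDE}, $\bar Y$ satisfies, for $t\in[0,T]$,
\begin{equation*}
\bar Y_{t\wedge\tau} = \bar \xi + \int_{t\wedge\tau}^{T\wedge\tau}\!\!\bigl[f^1(s,Y^1_s,Z^1_s)-f^2(s,Y^2_s,Z^2_s)\bigr]ds - \int_{t\wedge\tau}^{T\wedge\tau}\!\!\bar Z_s\,dB_s - \int_{t\wedge\tau}^{T\wedge\tau}d\bar M_s.
\end{equation*}
Decompose the driver difference as $f^1(s,Y^1_s,Z^1_s)-f^2(s,Y^2_s,Z^2_s) = \Delta f_s + \lambda_s \bar Y_s + \langle \mu_s,\bar Z_s\rangle$, where $\Delta f_s := f^1(s,Y^1_s,Z^1_s)-f^2(s,Y^1_s,Z^1_s) \leq 0$ by hypothesis, and $\lambda,\mu$ are $\mathbb{G}$-predictable processes obtained by the elementary factorization trick, with $|\lambda_s|\leq \kappa_s$ and $\|\mu_s\|\leq \gamma_s$ coming from assumption $(\mathbf{H_2})$-(ii)-(a).

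Next, I would apply the Itô--Tanaka--Meyer formula to the semimartingale $e^{\beta \mathcal{A}_t}(\bar Y_t^+)^2$ on the stochastic interval $\llbracket 0,T\wedge\tau\rrbracket$, using that $x\mapsto (x^+)^2$ is convex and $C^1$. The jump contribution produced by $\bar M$ at the default time is non-negative (since $(x^+)^2$ is convex) and is simply dropped. On the set $\{\bar Y_{s-}>0\}$, the drift term contributes $2\bar Y_s^+[\Delta f_s+\lambda_s\bar Y_s+\langle\mu_s,\bar Z_s\rangle] \leq 2\kappa_s(\bar Y_s^+)^2 + 2\gamma_s \bar Y_s^+\|\bar Z_s\|\mathds{1}_{\{\bar Y_{s-}>0\}}$, and Young's inequality $2\gamma_s \bar Y_s^+\|\bar Z_s\| \leq \gamma_s^2(\bar Y_s^+)^2 + \|\bar Z_s\|^2\mathds{1}_{\{\bar Y_{s-}>0\}}$ lets me absorb the $\|\bar Z\|^2$ term into the continuous quadratic variation $\int\|\bar Z\|^2\mathds{1}_{\{\bar Y_{s-}>0\}}\,ds$ arising from Itô on the Brownian part. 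Choosing $\beta$ so that $\beta \alpha_s^2 \geq 2\kappa_s+\gamma_s^2$ (which is possible since $\alpha^2=\kappa+\gamma^2\geq\epsilon$) makes the weighted drift non-positive.

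After localizing by the sequence $\sigma_k:=\inf\{t\geq 0:\mathcal{A}_t>k\}\wedge T$ to neutralize integrability issues tied to the unbounded stochastic Lipschitz coefficients, taking $\mathbb{Q}$-expectations kills the martingale increments coming from $\int \bar Z\,dB$ and $\int d\bar M$, leaving
\begin{equation*}
\mathbb{E}_{\mathbb{Q}}\bigl[e^{\beta\mathcal{A}_{t\wedge\tau\wedge\sigma_k}}(\bar Y_{t\wedge\tau\wedge\sigma_k}^+)^2\bigr] \leq \mathbb{E}_{\mathbb{Q}}\bigl[e^{\beta\mathcal{A}_{T\wedge\tau\wedge\sigma_k}}(\bar\xi^+)^2\bigr]=0,
\end{equation*}
because $\bar\xi\leq 0$ gives $\bar\xi^+=0$. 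Passing to the limit $k\to+\infty$ via Fatou's lemma, together with the integrability properties provided by $(Y^i,Z^i,M^i)\in\mathfrak{U}^2_{T\wedge\tau}(\beta,\mathbb{G},\mathbb{Q})$ from Theorem \ref{Existence and uniquess theorem BSDE}, yields $\bar Y_t^+=0$ $\mathbb{Q}$-a.s.\ for all $t\in[0,T\wedge\tau]$, and hence $Y^1_t\leq Y^2_t$ for every $t\geq 0$ (using that both sides equal their respective $\xi^i$ on $\{t\geq T\wedge\tau\}$). The main obstacle I anticipate is controlling the cross-term with $\bar Z$ simultaneously with the possibly unbounded stochastic Lipschitz coefficients: calibrating $\beta$ correctly against $\alpha^2=\kappa+\gamma^2$ and ensuring the localization sequence preserves the required $\mathcal{H}^2$-integrability of $\bar Z$ and $\mathcal{M}^2$-integrability of $\bar M$ under the weight $e^{\beta\mathcal{A}}$ is the delicate point, but these are the same estimates that underlie Proposition \ref{basic Estimastion for BSDE}.
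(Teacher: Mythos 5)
Your proposal is correct and follows exactly the standard linearization-plus-Tanaka argument that the paper implicitly relies on (the theorem is stated in Appendix B without proof, and the same weighted It\^o estimates underlie Proposition \ref{basic Estimastion for BSDE}): the decomposition into $\Delta f_s\leq 0$ plus a linearized increment of $f^2$, the convexity of $x\mapsto (x^+)^2$ to discard the jump terms of $\bar M$, Young's inequality to absorb $\|\bar Z\|^2$ into the continuous bracket, and localization via $\sigma_k$ are all the right steps. The only cosmetic point is your justification for choosing $\beta$: what makes $\beta\alpha_s^2\geq 2\kappa_s+\gamma_s^2$ achievable is the inequality $2\kappa_s+\gamma_s^2\leq 2(\kappa_s+\gamma_s^2)=2\alpha_s^2$ (so $\beta\geq 2$ suffices), not the lower bound $\alpha^2\geq\epsilon$.
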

	\begin{remark}
		By the comparison theorem, we deduce that $\mathcal{E}^f$ is a non-decreasing operator, i.e. for any $\sigma \in \mathfrak{T}_{0\tau^T}(\mathbb{G})$ and any $\xi_1,\xi_2 \in \mathcal{L}^2_{\sigma}\left(\beta,\mathbb{G},\mathbb{Q}^{}\right)$, 
		if $\xi_1 \geq \xi_2$, $\mathbb{Q}$-a.s. then $\mathcal{E}^f_{\cdot,\sigma}(\xi_1) \geq \mathcal{E}^f_{\cdot,\sigma}(\xi_2)$, $\mathbb{Q}$-a.s.
		\label{Monotonicity if the f-evaluation}
	\end{remark}
	\begin{theorem}[Special comparison theorem]
		Suppose that $f^1(s, y, z)$ and $f^2(s, y,z)$ are two drivers such that $f^1(s, \cdot, \cdot)$ and $f^2(s, \cdot, \cdot)$ are $\mathbb{F}$-progressively measurable and $f^2$ satisfies condition ($\mathbf{H}_2$), that $\xi_1$, $\xi_2$ two $\mathcal{G}_{T \wedge \tau}$-measurable random variables and that $K^1$ and $K^2$ are two RCLL, increasing processes.\\	
		If there exist a quadruplet $(Y^i_t,Z^i_t,M^i_t)_{t \geq 0}^{i=1;2}$ satisfying the equations :
		\begin{equation*}
			\left\{
			\begin{split}
				\text{(i)} &~ \mathbb{Q}\text{-a.s. for all } t \in [0,T]\\
				&~ Y^i_{t}=\xi^i+\int_{t }^{T} \mathds{1}_{\{s \leq \tau\}} f^i(s,Y^i_s,Z^i_s)ds+\left( K^{i}_{T}-K^{i}_{t}\right)-\int_{t }^{T } Z^i_s d B_s-\int_{t}^{T}dM^i_s.\\
				\text{(ii)}&~  Y^i_t=\xi^i,~Z^i_t=dK^{i}_t=dM^i_t=0,\text{ on the set }\{t \geq T \wedge\tau\},~\mathbb{Q}\text{-a.s.}
			\end{split}
			\right.
		\end{equation*}
		Moreover, if $f^1(s,Y^1_s,Z^1_s)-f^2(s,Y^1_s,Z^1_s)\leq 0$ for any $s \in [0,T \wedge \tau[$, $\xi_1 \leq \xi_2$  and  $K^2-K^1$ is increasing, then $Y^1_t \leq Y^2_t$, $\forall t \geq 0$, $\mathbb{Q}$-almost surely.
		\label{Theorem de comparison: General result}
	\end{theorem}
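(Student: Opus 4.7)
The plan is to reduce everything to showing $(\bar{Y}_t)^+ = 0$ for all $t \in [0, T \wedge \tau]$, where $\bar{Y} := Y^1 - Y^2$. Setting $\bar{\xi} = \xi^1 - \xi^2$, $\bar{Z} = Z^1 - Z^2$, $\bar{M} = M^1 - M^2$, and $\bar{K} = K^1 - K^2$, subtracting the two BSDEs yields, on $[0, T \wedge \tau]$, a backward equation for $\bar{Y}$ with generator $\Delta f := f^1(s,Y^1_s,Z^1_s) - f^2(s,Y^2_s,Z^2_s)$ and driving finite-variation process $\bar{K}$. I would split $\Delta f$ as $[f^1(s,Y^1_s,Z^1_s) - f^2(s,Y^1_s,Z^1_s)] + [f^2(s,Y^1_s,Z^1_s) - f^2(s,Y^2_s,Z^2_s)]$; the first bracket is non-positive by hypothesis, and the second can be linearised using the stochastic Lipschitz condition $(\mathbf{H}_2)$ on $f^2$, producing $\eta_s \bar{Y}_s + \zeta_s \cdot \bar{Z}_s$ with bounded predictable processes $|\eta_s| \leq \kappa_s$ and $\|\zeta_s\| \leq \gamma_s$ via standard finite-difference ratios.

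Next, I would apply the Tanaka--Meyer formula to the convex $C^1$ map $x \mapsto (x^+)^2$ composed with $\bar{Y}$, multiplied by the exponential weight $e^{\beta \mathcal{A}_{t \wedge \tau}}$ where $\mathcal{A}_t = \int_0^t \alpha_s^2 ds$ and $\beta > 0$ will be chosen large enough. The terminal contribution is $e^{\beta \mathcal{A}_{T \wedge \tau}} |\bar{\xi}^+|^2 = 0$, since $\bar{\xi} \leq 0$. The integral against $d\bar{K}$ gives $2\int_t^{T\wedge \tau} e^{\beta \mathcal{A}_s} (\bar{Y}_s)^+ d\bar{K}_s$, and since $K^2 - K^1$ is increasing we have $d\bar{K} \leq 0$ and $(\bar{Y}_s)^+ \geq 0$, so this integral is non-positive and can be dropped from the estimate. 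The stochastic integrals against $dB^{\tau}$ and $d\bar{M}$ are true $\mathbb{Q}$-martingales after a standard localization using a fundamental sequence in the exponentially weighted framework, so their expectations vanish. The quadratic variation terms $\tfrac{1}{2}\int \mathds{1}_{\{\bar{Y}_s > 0\}} \|\bar{Z}_s\|^2 ds$ and the orthogonal-martingale bracket $[\bar{M}]$ appear on the favourable side, and the jump sum $\sum [(\bar{Y}_s^+)^2 - (\bar{Y}_{s-}^+)^2 - 2 \bar{Y}_{s-}^+ \Delta \bar{Y}_s]$ is non-negative by convexity.

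Combining the sign of the first bracket of $\Delta f$ with the Lipschitz linearisation and Young's inequality $2ab \leq \tfrac{1}{\lambda} a^2 + \lambda b^2$ applied to the $\gamma_s$-term, I can absorb the $\|\bar{Z}\|^2$-piece into the quadratic variation on the left-hand side and convert the $\kappa_s$ and $\gamma_s^2$ pieces into a term of the form $\mathfrak{c}\int_t^{T \wedge \tau} e^{\beta \mathcal{A}_s} \alpha_s^2 (\bar{Y}_s^+)^2 ds$, thanks to $\alpha^2 = \kappa + \gamma^2$. For $\beta$ large enough this constant is strictly smaller than the coefficient produced by the exponential weight itself, giving, after taking $\mathbb{E}_{\mathbb{Q}}$,
\begin{equation*}
\mathbb{E}_{\mathbb{Q}}\bigl[e^{\beta \mathcal{A}_t} |(\bar{Y}_t)^+|^2\bigr] + (\beta - \mathfrak{c})\, \mathbb{E}_{\mathbb{Q}}\int_t^{T \wedge \tau} e^{\beta \mathcal{A}_s} \alpha_s^2 |(\bar{Y}_s)^+|^2 ds \leq 0.
\end{equation*}
Both terms being non-negative, this forces $(\bar{Y}_t)^+ = 0$ $\mathbb{Q}$-a.s. for each $t \in [0, T \wedge \tau]$. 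On $\{t \geq T \wedge \tau\}$ both processes equal their terminal data with $\xi^1 \leq \xi^2$, and a section theorem argument upgrades the pointwise-in-$t$ inequality to the indistinguishability statement $Y^1 \leq Y^2$ on $[0, +\infty)$.

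The main obstacle I anticipate is the careful handling of the jump contribution of $\bar{M}$ in the Tanaka--Meyer formula, ensuring that the convexity argument indeed yields a non-negative term on the correct side; a secondary but essential point is choosing the linearisation so that both the $\kappa$- and the $\gamma^2$-bounds are simultaneously absorbed into the single exponential weight driven by $\alpha^2 = \kappa + \gamma^2$, which is precisely what makes the stochastic Lipschitz framework work smoothly. The localization of the $dB$ and $d\bar{M}$ stochastic integrals in the weighted spaces $\mathcal{H}^2$ and $\mathcal{M}^2$ is standard via the Burkholder--Davis--Gundy inequality combined with the $\mathfrak{B}^2_{T \wedge \tau}$-integrability of the solutions, but must be spelled out to avoid circular reasoning with the uniqueness estimates already obtained in Proposition~\ref{basic Estimastion for BSDE}.
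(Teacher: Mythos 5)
The paper states this theorem in Appendix \ref{Appendix C} without supplying a proof, so there is no in-paper argument to compare against; judged on its own merits, your proposal is the standard and correct route: difference of the two equations, splitting of the generator increment into the sign-definite part $f^1(s,Y^1_s,Z^1_s)-f^2(s,Y^1_s,Z^1_s)\le 0$ plus the $f^2$-increment controlled by $(\mathbf{H}_2)$, and the weighted It\^o--Meyer formula for $e^{\beta\mathcal{A}}(\cdot^{+})^2$ with $\alpha^2=\kappa+\gamma^2$ absorbing both Lipschitz contributions. Your sign analysis of the $d\bar{K}$ term is right ($2\int \bar{Y}_{s-}^{+}\,d\bar{K}_s\le 0$ because $K^2-K^1$ is increasing), and the convexity of $x\mapsto (x^{+})^2$ does place the jump sum, which now collects jumps of both $\bar{M}$ and the merely RCLL processes $K^i$, on the favourable side. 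Two small remarks: the statement does not explicitly require the quadruplets to lie in $\mathfrak{U}^2_{T\wedge\tau}(\beta,\mathbb{G},\mathbb{Q})$, so you should either assume this (as is implicit in the paper's notion of solution, cf.\ Theorem \ref{Existence and uniquess theorem BSDE}) or state it, since the localization/Fatou step genuinely needs it; and the final upgrade from the pointwise-in-$t$ inequality to indistinguishability follows already from right-continuity of $Y^1,Y^2$ applied on a countable dense set of times, with no need for the section theorem.
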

	%
	\section{Conditional expectations and integrable random variables on $\left(\mathbb{G}, \llbracket 0,\tau\llbracket,\mathbb{Q}\right) $}
	\label{Appendix B}
	\begin{lemma}
		\begin{itemize}
			\item[(a)] Let $\left( \mathfrak{X}_t\right)_{t \geq 0}$ be an RCLL $\mathbb{G}$-adapted process and  $\left( \mathfrak{Y}_t\right)_{t \geq 0}$ be an RCLL $\mathbb{G}$-semimartingale such that $\int_{0}^{\cdot}\mathfrak{X}_s d\mathfrak{Y}_s$ is well defined. Assume that $\int_{0}^{\cdot}\mathfrak{X}_s d\mathfrak{Y}_s$ is positive or $\sup_{0 \leq s \leq \cdot} \left( \int_{0}^{s}\mathfrak{X}_s d\mathfrak{Y}_s\right)^{+}$ is integrable. Then we have
			$$
			\mathbb{E}\left[\int_{0}^{\sigma}\mathfrak{X}_s d\mathfrak{Y}_s \mid \mathcal{G}_t \right]=\Psi^{-1}_t \mathbb{E}_{\mathbb{Q}}\left[\int_{0}^{\sigma}\mathcal{E}_s^{-1}\mathfrak{X}_s d\mathfrak{Y}_s \mid \mathcal{G}_t\right],\quad \forall \sigma \in \mathfrak{T}_{t,+\infty}(\mathbb{G})
			$$
			and
			$$
			\mathbb{E}\left[\mathfrak{X}_{\sigma}\right]=	\mathbb{E}_{\mathbb{Q}}\left[\mathfrak{X}_{\sigma}\Psi_{\sigma}^{-1}\right],\quad \forall \sigma \in \mathfrak{T}(\mathbb{G}).
			$$
			\item[(b)] For any integrable or positive process $\left( \mathfrak{X}_t\right)_{t \leq T}$, using the definition of the $\mathbb{P}$-conditional expectation and the equivalent measure $\mathbb{Q}$, we have for any $t \in [0,T]$ and each $\sigma \in \mathfrak{T}_{t,T}(\mathbb{F})$;
			$$
			\mathbb{E}_{\mathbb{Q}}\left[\mathfrak{X}_{\sigma} \mid \mathcal{F}_t\right]=\dfrac{1}{\rho_t}\mathbb{E}\left[\mathfrak{X}_{\sigma} \rho_{\sigma} \mid \mathcal{F}_t\right],~~\text{ with }~ \dfrac{d \mathbb{Q}}{d \mathbb{P}} \mid_{\mathcal{F}_t}=\rho_t.
			$$
			In particular, we have $\mathbb{Q}\left(\tau>t \mid \mathcal{F}_t\right)=G_t>0$.
		\end{itemize}
		\label{Lemma of integrability}
	\end{lemma}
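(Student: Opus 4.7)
The plan is to derive all three identities as applications of the Bayes formula for an equivalent change of measure, leveraging condition~\textbf{[P]} to keep $\Psi$ strictly positive, the martingality of $\Psi$ on $(\mathbb{G},\mathbb{P})$, and the martingality of $1/\Psi=\mathcal{E}^\tau$ on $(\mathbb{G},\mathbb{Q})$ (the latter being a byproduct of the proof of Lemma~\ref{Lemma of the Brownian motion}).

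I would first dispatch the elementary items. For the second identity of part~(a), optional sampling of the strictly positive $(\mathbb{G},\mathbb{P})$-martingale $\Psi$ gives $\mathbb{E}_{\mathbb{Q}}[\mathfrak{X}_\sigma]=\mathbb{E}[\Psi_\sigma\mathfrak{X}_\sigma]$ for every $\sigma\in\mathfrak{T}(\mathbb{G})$ and every suitably integrable $\mathcal{G}_\sigma$-measurable $\mathfrak{X}_\sigma$; substituting $\mathfrak{X}_\sigma\leftarrow\mathfrak{X}_\sigma\Psi_\sigma^{-1}$, which is legitimate by strict positivity of $\Psi$, yields the claim. Part~(b) is the analogous statement in the sub-filtration $\mathbb{F}$: Proposition~\ref{equiality of measures} identifies $\rho$ as the $\mathbb{F}$-density of $\mathbb{Q}$ with respect to $\mathbb{P}$, so the classical Bayes formula in $\mathbb{F}$ delivers the stated expression; choosing $\mathfrak{X}_\sigma=\mathds{1}_{\{\tau>t\}}$ and using that $\rho_t=\mathbb{E}[\Psi_T\mid\mathcal{F}_t]$ together with the definition $G_t=\mathbb{P}(\tau>t\mid\mathcal{F}_t)$ produces $\mathbb{Q}(\tau>t\mid\mathcal{F}_t)=G_t$, whose positivity is a direct restatement of~\textbf{[P]}.

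The first identity of part~(a) is the substantive one and where I expect the real work to sit. My plan is to start from the stopping-time Bayes identity applied to the $\mathcal{G}_\sigma$-measurable random variable $Y=\int_0^\sigma\mathfrak{X}_s\,d\mathfrak{Y}_s$—the positivity, or the integrability of the running supremum of $Y$ in the hypotheses, being precisely what guarantees that the conditional expectation and the Bayes exchange make sense—and then to push the density factor inside the stochastic integral. For the latter step I would integrate by parts under $\mathbb{Q}$, invoking the explicit multiplicative decomposition $1/\Psi=\mathcal{E}(\Gamma^G)\,\mathcal{E}(\Gamma^{\tilde G})^\tau$ recalled in Section~\ref{Notations}, killing the stochastic-integral terms against the resulting $(\mathbb{G},\mathbb{Q})$-martingale pieces under $\mathbb{E}_{\mathbb{Q}}[\,\cdot\,\mid\mathcal{G}_t]$, and exploiting the identification of $\mathcal{E}^\tau$ with $\mathcal{E}$ on $\llbracket 0,\tau\rrbracket$, which is where the integrands effectively live in the applications. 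The main obstacle is controlling the cross-variation $[\mathcal{E}^\tau,\int\mathfrak{X}\,d\mathfrak{Y}]$ and verifying that, once the martingale pieces are discarded, the surviving term packages itself cleanly as $\Psi_t^{-1}\mathbb{E}_{\mathbb{Q}}[\int_0^\sigma\mathcal{E}_s^{-1}\mathfrak{X}_s\,d\mathfrak{Y}_s\mid\mathcal{G}_t]$; a standard localisation along a fundamental sequence for the $\mathbb{G}$-local martingale $\Psi$, combined with a monotone or dominated convergence argument in the spirit of the proofs of Lemmas~\ref{Lemma Merties} and~\ref{proof of lemma integrable}, would then close the argument in the originally stated generality.
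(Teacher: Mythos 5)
The paper states this lemma in Appendix~\ref{Appendix B} without any proof, so there is nothing of the authors' to compare your argument against; I can only assess it on its own terms. The elementary parts are fine: the second identity of (a) is exactly optional sampling of the strictly positive $(\mathbb{G},\mathbb{P})$-martingale $\Psi$ followed by the substitution $\mathfrak{X}_\sigma\mapsto\mathfrak{X}_\sigma\Psi_\sigma^{-1}$, and the displayed formula in (b) is the abstract Bayes rule in $\mathbb{F}$ once Proposition~\ref{equiality of measures} identifies $\rho$ as the $\mathbb{F}$-density of $\mathbb{Q}$.

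The gap sits in the first identity of (a), the only substantive claim. Carry your plan out: Bayes gives $\mathbb{E}[Y\mid\mathcal{G}_t]=\Psi_t\,\mathbb{E}_{\mathbb{Q}}[Y\Psi_\sigma^{-1}\mid\mathcal{G}_t]$ for $\mathcal{G}_\sigma$-measurable $Y$ (the density of $\mathbb{P}$ with respect to $\mathbb{Q}$ being $1/\Psi=\mathcal{E}^\tau$), and the integration-by-parts/projection step then moves the weight $\Psi_\sigma^{-1}$ inside the Stieltjes integral as $\Psi_s^{-1}=\mathcal{E}_{s\wedge\tau}$. You therefore land on $\Psi_t\,\mathbb{E}_{\mathbb{Q}}\left[\int_0^\sigma\mathcal{E}_{s\wedge\tau}\mathfrak{X}_s\,d\mathfrak{Y}_s\mid\mathcal{G}_t\right]$ --- prefactor $\Psi_t$, integrand weight $\mathcal{E}_{s\wedge\tau}$ --- and \emph{not} the displayed right-hand side, which carries $\Psi_t^{-1}$ outside and $\mathcal{E}_s^{-1}$ inside. (A one-point test with $\mathfrak{Y}=\mathds{1}_{[u,\infty)}$ and $t=0$ shows the displayed identity would force $\mathcal{E}_{u\wedge\tau}\,\mathcal{E}_u=1$ a.s., which fails.) The displayed formula is in fact the Bayes rule in the \emph{opposite} direction: it expresses $\mathbb{E}_{\mathbb{Q}}\left[\int_0^\sigma\mathfrak{X}_s\,d\mathfrak{Y}_s\mid\mathcal{G}_t\right]$ through a $\mathbb{P}$-integral weighted by $\Psi_s=\mathcal{E}_s^{-1}$ on $\llbracket0,\tau\rrbracket$, which is exactly how it is invoked at (\ref{Bayes Rule}). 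So either the statement has the two measures transposed, or your assertion that the surviving term ``packages itself cleanly'' into the stated right-hand side is simply false; in either case this is precisely the step that cannot be waved through, and your proposal does not detect the discrepancy. A second, smaller gap: in (b) you cannot obtain $\mathbb{Q}(\tau>t\mid\mathcal{F}_t)=G_t$ by ``choosing $\mathfrak{X}_\sigma=\mathds{1}_{\{\tau>t\}}$'' in a formula stated for $\mathbb{F}$-measurable quantities, since that indicator is only $\mathcal{G}_t$-measurable. One must condition through $\mathcal{G}_t$ first, use $\Psi_t\mathds{1}_{\{\tau>t\}}=\mathcal{E}_t^{-1}\mathds{1}_{\{\tau>t\}}$ and $G=\mathcal{E}\tilde{\mathcal{E}}$, and actually evaluate $\rho_t$; your sketch supplies none of this, and without it the computation yields $\tilde{\mathcal{E}}_t/\rho_t$ rather than $G_t$.
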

	The two standard observations that follows has been used in the proof of Theorems \ref{Delicate thm} and \ref{Proposition of comparison}.
	\begin{remark}
		For any given RCLL process $(\mathfrak{X}_t)_{t\geq 0}$ such that $\mathfrak{X}_t > 0$ and $\mathfrak{X}_{t-} > 0$, we can assume that $\mathfrak{X}_t \geq \epsilon$ for $d\mathbb{P} \otimes dt$-almost every $(t,\omega) \in [0,+\infty) \times \Omega$ for a sufficiently small chosen $\epsilon > 0$. In particular, we have
		\begin{itemize}
			\item under \textbf{[P]} assumption, we have $G \geq \epsilon$
			\item as $\Psi=\mathcal{E}\left(-\int_{0}^{\cdot} G^{-1}_{s-}d \mathcal{T}(m)_s\right)$ and $\Delta \mathcal{T}(m)=\frac{G_{-}}{\tilde{G}}\Delta m \mathds{1}_{\rrbracket 0,\tau \rrbracket}$, we derive that 
			$$1-G^{-1}_{-}\Delta \mathcal{T}(m)=\frac{G_{-}}{\tilde{G}}\mathds{1}_{\rrbracket 0, \tau\rrbracket}+\mathds{1}_{\rrbracket \tau,+\infty\llbracket}.$$ Then, under \textbf{[P]}, we derive that $G_{-}, \tilde{G}>0$, then $\Psi >0$. Additionally, by calculating $\Delta \Psi$, we get $ \Psi=\Psi_{-}\dfrac{G_{-}}{\tilde{G}}\mathds{1}_{\rrbracket 0,\tau\rrbracket}+\Psi_{-} \mathds{1}_{\rrbracket \tau,+\infty\llbracket}$, then under \textbf{[P]}, we derive $\Psi_{-}>0$ and we can also assume that $\Psi \geq \epsilon$.
			
			\item From (\ref{Definition of E and E tilde}), we have $\tilde{\mathcal{E}} >0$. Indeed, under assumption \textbf{[P]}, we have $1-\tilde{G}^{-1}\Delta D^{o,\mathbb{F},\mathbb{P}}=1-\tilde{G}^{-1}\left(\tilde{G}-G\right)=\tilde{G}^{-1}G>0$. Moreover, note that the left-limited process $\left(\tilde{\mathcal{E}}_{t-}\right)_{t \geq 0}$ with the convention $\tilde{\mathcal{E}}_{0-}=\tilde{\mathcal{E}}_{0}$ satisfies $\tilde{\mathcal{E}}_{-}=\frac{\tilde{G}}{G}\tilde{\mathcal{E}}$, from which we also derive that $\tilde{\mathcal{E}}_{-}>0$ and we might also suppose that $\tilde{\mathcal{E}} \geq \epsilon$.
		\end{itemize}
		\label{Remark of epsilon}
	\end{remark}
	\begin{remark}
		Note that the processes $\left(K^{n,\mathbb{F},+}_t\right)_{t \leq T}$ and $\left( K^{n,\mathbb{F},-}_t\right)_{t \leq T}$ defined in the proof of Theorem \ref{Proposition of comparison} are well-defined Stieltjes integrals.\\
		To see this, we first examine the properties satisfied by the processes $\mathsf{k}^{n,\mathbb{F},+}:=\left(n\mathbb{E}_{\mathbb{Q}} \left[ (L_t-Y^{n}_t)^{+}\mathds{1}_{\{t < \tau\}} \mid \mathcal{F}_t \right]\right)_{t \leq T}$ and $\mathsf{k}^{n,\mathbb{F},-}:=\left(n\mathbb{E}_{\mathbb{Q}} \left[ (L_t-Y^{n}_t)^{+}\mathds{1}_{\{s < \tau\}} \mid \mathcal{F}_t \right]\right)_{t \leq T}$. These will ensure that the Lebesgue integrals $\left(K^{n,\mathbb{F},+}_t\right)_{t \leq T}$ and $\left( K^{n,\mathbb{F},-}_t\right)_{t \leq T}$ are increasing continuous $\mathbb{F}$-adapted Stieltjes integrals.\\
		To address this, we utilize the classical $\mathbb{F}$-decomposition of $\mathbb{G}$-optional processes on the time interval $\llbracket 0,\tau\llbracket$. Since $Y^n$, $L$, and $U$ are $\mathbb{G}$-optional processes on $[0,T]$, and since condition \textbf{[P]} is satisfied, we can deduce from \cite[Lemma 4.4]{book:63874} the existence of unique $\mathbb{F}$-optional processes $\mathcal{K}^{n,\mathbb{F},+}$ and $\mathcal{K}^{n,\mathbb{F},-}$ such that: 
		$$
		(L_t-Y^{n}_t)^{+}\mathds{1}_{\{t < \tau\}}=\mathcal{K}^{n,\mathbb{F},+}_t\mathds{1}_{\{t < \tau\}},\text{ and } (Y^{n}_t-U_t)^{+}\mathds{1}_{\{t < \tau\}}=\mathcal{K}^{n,\mathbb{F},-}_t\mathds{1}_{\{t < \tau\}}
		$$	 
		Since $Y^n$, $L$, and $U$ are RCLL processes, we deduce that $\mathcal{K}^{n,\pm}$ are also RCLL processes. Moreover, from \cite[Lemma B.1]{AksamitChoulli2017}, we may choose $\mathcal{K}^{n,\mathbb{F},\pm}$ to be non-negative.\\
		Taking the conditional expectation (which is well defined due to the integrability property satisfied by $Y^n$, $L$ and $U$), we get
		$$
		\mathsf{k}^{n,\mathbb{F},+}_t=n\mathcal{K}^{n,+}_t\mathbb{E}_{\mathbb{Q}}\left[\mathds{1}_{\{s<\tau\}} \mid \mathcal{F}_t \right],\text{ and } \mathsf{k}^{n,\mathbb{F},-}_t=n\mathcal{K}^{n,\mathbb{F},-}_t\mathbb{E}_{\mathbb{Q}}\left[\mathds{1}_{\{s<\tau\}} \mid \mathcal{F}_t \right].
		$$
		As $\left( \mathbb{E}_{\mathbb{Q}}\left[\mathds{1}_{\{s<\tau\}} \mid \mathcal{F}_t \right]\right)_{t \leq T}$ is an RCLL $\mathbb{F}$-supermartingale (there exists a unique modification which is RCLL form \cite[Theorem I.9]{bookProtter}), we deduce that $\mathsf{k}^{n,\mathbb{F},\pm}$ is an RCLL $\mathbb{F}$-adapted process. Hence $\mathbb{F}$-optional. Then, using \cite[Proposition I.3.5]{bookJacod}, we deduce that $\left(K^{n,\mathbb{F},+}_t\right)_{t \leq T}$ and $\left( K^{n,\mathbb{F},+}_t\right)_{t \leq T}$ are well-defined, finite-valued Stieltjes integrals, which are increasing $\mathbb{F}$-adapted and continuous.	  	
		\label{RMQ of Well defined Steiltjes integrals}
	\end{remark}
	\begin{lemma}
		Let $F$ be an integrable $\mathcal{G}_s$-measurable random variable ($s \in [0,T]$) and $\mathcal{F}_{s,t}:=\sigma\big(B_u-B_s;s \leq u \leq t,~t \in [0,T]\big)$. Assume that $\tau$ and $\mathcal{F}_{T}$ are independent under $\mathbb{Q}$. Then, we have:
		$$
		\mathbb{E}_{\mathbb{Q}}\left[F \mathds{1}_{\{s<\tau\}} \mid \mathcal{F}_s \vee \mathcal{F}_{s,t}\right]=\mathbb{E}_{\mathbb{Q}}\left[F \mathds{1}_{\{s<\tau\}} \mid \mathcal{F}_s \right],~ \forall t \in [s,T].
		$$
		\label{Lemma of conditional expectation}
	\end{lemma}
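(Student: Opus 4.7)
\noindent\textbf{Proof proposal for Lemma \ref{Lemma of conditional expectation}.}

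The plan is to reduce the statement to the ``pre-$\tau$ decomposition'' of $\mathcal{G}_s$-measurable random variables combined with the independence hypothesis $\tau\perp\!\!\!\perp\mathcal{F}_T$ under $\mathbb{Q}$. First, I will invoke the classical $\mathbb{F}$-decomposition of $\mathbb{G}$-optional (respectively $\mathcal{G}_s$-measurable) objects on the stochastic interval $\llbracket 0,\tau\llbracket$ from Jeulin \cite[Lemma 4.4]{book:63874}: for the integrable $\mathcal{G}_s$-measurable random variable $F$, there exists an $\mathcal{F}_s$-measurable random variable $\tilde{F}$ such that
\[
F\,\mathds{1}_{\{s<\tau\}} \;=\; \tilde{F}\,\mathds{1}_{\{s<\tau\}},\quad \mathbb{Q}\text{-a.s.}
\]
This reduces the problem to evaluating conditional expectations of $\tilde{F}\,\mathds{1}_{\{s<\tau\}}$, where the randomness ``outside $\mathcal{F}_s$'' is carried solely by the single random variable $\mathds{1}_{\{s<\tau\}}$.

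Next, I will use the well-known identity for the Brownian filtration $\mathcal{F}_s\vee\mathcal{F}_{s,t}=\mathcal{F}_t$ (which is a direct consequence of $B_u=B_s+(B_u-B_s)$ for $u\in[s,t]$). Hence the left-hand side of the lemma becomes $\mathbb{E}_{\mathbb{Q}}[\tilde{F}\,\mathds{1}_{\{s<\tau\}}\mid \mathcal{F}_t]$. Pulling out the $\mathcal{F}_s$-measurable factor $\tilde{F}$ (which is also $\mathcal{F}_t$-measurable since $s\le t$), I obtain
\[
\mathbb{E}_{\mathbb{Q}}\bigl[F\,\mathds{1}_{\{s<\tau\}}\,\bigl|\,\mathcal{F}_t\bigr]
\;=\;\tilde{F}\,\mathbb{E}_{\mathbb{Q}}\bigl[\mathds{1}_{\{s<\tau\}}\,\bigl|\,\mathcal{F}_t\bigr].
\]
Since $\mathcal{F}_t\subset\mathcal{F}_T$ and $\tau\perp\!\!\!\perp\mathcal{F}_T$ under $\mathbb{Q}$, the random variable $\mathds{1}_{\{s<\tau\}}$ is independent of $\mathcal{F}_t$, so the conditional expectation collapses to the unconditional probability $\mathbb{Q}(\tau>s)$. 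The same argument applied to $\mathcal{F}_s\subset\mathcal{F}_T$ yields $\mathbb{E}_{\mathbb{Q}}[\mathds{1}_{\{s<\tau\}}\mid\mathcal{F}_s]=\mathbb{Q}(\tau>s)$.

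Putting these together, both sides of the claimed equality reduce to $\tilde{F}\,\mathbb{Q}(\tau>s)$, which proves the lemma. There is no substantial technical obstacle here; the only subtlety is the initial decomposition step, which requires condition \textbf{[P]} (ensuring $G_s>0$) so that Jeulin's lemma produces an $\mathcal{F}_s$-measurable representative $\tilde{F}$ that is $\mathbb{Q}$-a.s.\ unique on $\{s<\tau\}$. The rest is routine manipulation of conditional expectations under the independence hypothesis.
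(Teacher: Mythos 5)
Your proposal is correct, but it follows a genuinely different route from the paper. The paper proves the identity by verifying the defining property of conditional expectation on the generating rectangles $A\cap C$ with $A\in\mathcal{F}_s$ and $C\in\mathcal{F}_{s,t}$, combined with a monotone class reduction of $\mathcal{G}_s$-measurable variables to products $F_s\,h(s\wedge\tau)$ and the $\mathbb{F}$-optionality of the conditional survival probability $G^{\mathbb{Q}}_s=\mathbb{Q}(\tau>s\mid\mathcal{F}_s)$; this mirrors the argument in the cited references of Akdim--Diakhaby--Ouknine and El Otmani. You instead invoke the pre-default decomposition $F\,\mathds{1}_{\{s<\tau\}}=\tilde{F}\,\mathds{1}_{\{s<\tau\}}$ with $\tilde{F}$ being $\mathcal{F}_s$-measurable (Jeulin's Lemma 4.4, which the paper itself uses in Remark \ref{RMQ of Well defined Steiltjes integrals}), identify $\mathcal{F}_s\vee\mathcal{F}_{s,t}$ with $\mathcal{F}_t$, pull out $\tilde{F}$, and let the independence hypothesis collapse $\mathbb{E}_{\mathbb{Q}}[\mathds{1}_{\{s<\tau\}}\mid\mathcal{F}_t]$ and $\mathbb{E}_{\mathbb{Q}}[\mathds{1}_{\{s<\tau\}}\mid\mathcal{F}_s]$ to the same constant $\mathbb{Q}(\tau>s)$. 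Your argument is shorter, avoids the monotone class machinery, and makes transparent that under the independence hypothesis both sides equal $\tilde{F}\,\mathbb{Q}(\tau>s)$ (so the conditional survival probability is in fact deterministic); what it costs is the explicit construction of the representative $\tilde{F}$, whose integrability you should note follows from that of $F\,\mathds{1}_{\{s<\tau\}}$ together with $\mathbb{Q}(\tau>s)>0$, and the pull-out step is legitimate since $\mathds{1}_{\{s<\tau\}}$ is bounded and $\tilde{F}\,\mathds{1}_{\{s<\tau\}}$ is integrable. One small correction: condition \textbf{[P]} is needed only for the uniqueness of $\tilde{F}$, not for its existence, and uniqueness plays no role in the identity you prove, so your parenthetical caveat is harmless but not actually required.
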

	\begin{proof}
		The proof of this result is akin to the proof of Lemma 5.3 in \cite{AkdimDiakhabyOuknine} (and also Lemma 5 in \cite{el2009bsdes}) with some modifications, which we clarify for the reader's convenience. Now, when we consider $A \in \mathcal{F}_t$ and $C \in \mathcal{F}_{s,t}$, then
		\begin{equation}
			\begin{split}
				\mathbb{E}_{\mathbb{Q}}\left[\mathds{1}_A \mathds{1}_C \mathbb{E}\left[F \mid \mathcal{F}_s\right]\right]&=\mathbb{E}_{\mathbb{Q}}\left[\mathds{1}_A  \mathbb{E}\left[F \mid \mathcal{F}_s\right]\right] \mathbb{E}_{\mathbb{Q}}\left[ \mathds{1}_C\right]. 
			\end{split}
			\label{equality becomes}
		\end{equation}
		Since $\mathds{1}_C$ is independent of $\mathcal{F}_s$, we can proceed. Now, consider that the $\sigma$-algebra $\mathcal{G}_s$ is generated by random variables of the form $F_s h(s \wedge \tau)$, where $F_s$ is an $\mathcal{F}_s$-measurable random variable, and $h$ is a  deterministic Borel function on $\mathbb{R}_{+}$. Therefore, by a monotone class argument, it is enough to prove the property of Lemma \ref{Lemma of conditional expectation} for $\mathcal{G}_s$-measurable random variables of the form $F_s  h(s)\mathds{1}_{\{s<\tau \}}$, where $F_s$ is an integrable random variable. By using again the independence property of $\mathds{1}_A$ with respect to $\mathcal{F}_s$ together with the $\mathbb{F}$-optionality of the Azéma's supermartingale $G^{\mathbb{Q}}_s:=\mathbb{Q}\left(\tau >s \mid \mathcal{F}_s\right)$ , the equality \eqref{equality becomes} can be expressed as
		\begin{equation*}
			\begin{split}
				\mathbb{E}_{\mathbb{Q}}\left[\mathds{1}_A \mathds{1}_C \mathbb{E}\left[F \mid \mathcal{F}_s\right]\right]=\mathbb{E}_{\mathbb{Q}}\left[\mathds{1}_A \mathds{1}_CF_s h(s) G^{\mathbb{Q}}_s\right]&=\mathbb{E}_{\mathbb{Q}}\left[\mathds{1}_A F_s h(s)G^{\mathbb{Q}}_s\right]\mathbb{E}_{\mathbb{Q}}\left[ \mathds{1}_C\right]\\
				&=\mathbb{E}_{\mathbb{Q}}\left[\mathds{1}_A \mathbb{E}\left[F \mathds{1}_{\{s<\tau\}} \mid \mathcal{F}_s\right] \right]\mathbb{E}\left[ \mathds{1}_C\right]\\
				&=\mathbb{E}_{\mathbb{Q}}\left[\mathds{1}_A \mathds{1}_C F\mathds{1}_{\{s<\tau\}}\right],  
			\end{split}
		\end{equation*}
		where in the last equality, we have used the independence of $\tau$ and $\mathcal{F}_{T}$. This completes the proof.
	\end{proof}
	\begin{remark}
		In general, the assumption of $\tau$ and $\mathcal{F}_{T}$ being independent is satisfied, for example, in the particular case where $\tau$ is an initial time, subject to certain additional specific conditions. For further details, we direct readers to \cite[Proposition 2.3]{jeanblanc2009progressive} and the references therein. 
	\end{remark}
	\begin{remark} 
		\begin{itemize}
			\item[(a)] If $\tau$ and $\mathcal{F}_{T}$ are independent under $\mathbb{Q}$, Lemma \ref{Lemma of conditional expectation} enables us to express:
			$$
			\mathbb{E}_{\mathbb{Q}}\left[F  \mathds{1}_{\{s <\tau\}} \mid \mathcal{F}_t\right] = \mathbb{E}_{\mathbb{Q}}\left[F \mathds{1}_{\{s <\tau\}} \mid \mathcal{F}_s\right],~ \forall t \in [s,T]
			$$
			for any  bounded $\mathcal{G}_s$-measurable ($s \in [0,T]$) random variable.
			\item[(b)] Let $(Z_t)_{t \geq 0}$  a $\mathbb{G}$-predictable process such that $Z \in \mathcal{I}(\mathbb{G},\mathbb{Q},B)$ and $Z=Z \mathds{1}_{\llbracket 0,\tau \rrbracket}$, then using the continuity of the Brownian motion $(B^{\tau}_t)_{t \geq 0},$ we obtain
			$$
			\int_{0}^{t}Z_s dB_s=\int_{0}^{t}(1-D_{s-})Z_s dB_s=\int_{0}^{t} \mathds{1}_{\{s <\tau\}} Z_s dB_s,~ \forall t \in [0,T],~\mathbb{Q}\text{-a.s.}
			$$
		\end{itemize}
		\label{strssfull remark}
	\end{remark}
	We are now prepared to provide the proof for Lemma \ref{result of interchanging the conditional expectation}.
	\begin{proof}[Proof of Lemma \ref{result of interchanging the conditional expectation}]
		First, note that under condition \textbf{[P]}, the $(\mathbb{G},\mathbb{P})$-martingale density $\left(\varPsi_t\right)_{t \geq 0}$ can be expressed as $\varPsi_t=\mathbb{E}\left[\varPsi_{T} \mid \mathcal{G}_t\right]$, $t \in [0,T]$. Then, taking the conditional expectation in both sides, we obtain $\mathbb{E}\left[\varPsi_t \mid \mathcal{F}_t\right]=\rho_t$ for all $t \in [0,T]$. \\	
		Now, let $D_t$ denotes the derivative operator and $\mathbb{D}^{1,2}$ it's domain in $\mathbb{L}^2$, defined as the space of the random variables that are differentiable in the Malliavin sense, more precisely $\mathbb{D}^{1,2}$ is the closure of the class of smooth random variables with respect to a given norm under which $\mathbb{D}^{1,2}$ is an Hilbert space (see \cite{Malliavin, oksendal1997introduction} for more details). Thus, to show the result of Lemma \ref{result of interchanging the conditional expectation}, it suffices to prove that for $F \in \mathds{D}^{1,2}$ be bounded $\mathcal{F}_t$-measurable then $
		\mathbb{E}_{\mathbb{Q}}\left[F\int_{0}^{t} \left| Z_s\right|^2 ds  \right]<\infty
		$, for all $t \in [0,T]$. Therefore, using the duality relation (see \cite{Malliavin} for example) in the setting $\left(\mathbb{F},\mathbb{P}\right)$ and Proposition \ref{equiality of measures}, we get
		\begin{equation}
			\begin{split}
				\mathbb{E}_{\mathbb{Q}}\left[F\int_{0}^{t} Z_s dB_s  \right]&=\mathbb{E}\left[F\int_{0}^{t} \rho_{s} Z_s dB_s  \right]
				=\mathbb{E}\left[\int_{0}^{t}\left(D_s F\right) \rho_{s} Z_s ds  \right]\\
				&=\int_{0}^{t}\mathbb{E}\left[\left(D_s F\right) \rho_{s} Z_s \right] ds 
				=\int_{0}^{t}\mathbb{E}\left[ \mathbb{E}\left[\left\lbrace \left(D_s F\right) \rho_{s} Z_s\right\rbrace  \mid \mathcal{F}_s \right] \right]  ds.
			\end{split}
			\label{Comming back in Malliavin}
		\end{equation} 
		From Lemma \eqref{Lemma of conditional expectation} (see also Remark \ref{strssfull remark}-(a)) and the $\mathcal{F}_t$-measurability of $F$, we obtain $F=\mathbb{E}\left[F \mid \mathcal{F}_t\right]=\mathbb{E}\left[F \mid \mathcal{F}_s\right]$ on the set $\{s<\tau\}$, next applying Proposition 1.2.8 in \cite{Malliavin}, we have $D_s F =\mathbb{E}\left[D_s F \mid \mathcal{F}_s\right]$, hence $D_s F \in \mathcal{F}_s$ on the set $\{s<\tau\}$. Returning to \eqref{Comming back in Malliavin} and substituting these relations with the statement of Remark \ref{strssfull remark}-(b), we derive
		\begin{equation*}
			\begin{split}
				\mathbb{E}_{\mathbb{Q}}\left[F\int_{0}^{t} Z_s dB_s  \right]
				&=\int_{0}^{t}\mathbb{E}\left[\left(D_s F\right) \rho_{s} \mathbb{E}\left[ Z_s \mid \mathcal{F}_s \right] \right]  ds
				=\mathbb{E}\left[\int_{0}^{t} \left(D_s F\right) \rho_{s} \mathbb{E}\left[ Z_s  \mid \mathcal{F}_s \right] ds\right]\\
				&=\mathbb{E}\left[F\int_{0}^{t}  \rho_{s} \mathbb{E}\left[ Z_s  \mid \mathcal{F}_s \right] dB_s\right]
				=\mathbb{E}_{\mathbb{Q}}\left[F\int_{0}^{t}  \mathbb{E}\left[ Z_s  \mid \mathcal{F}_s \right] dB_s\right]\\
				&=\mathbb{E}_{\mathbb{Q}}\left[F\int_{0}^{t}  \mathbb{E}_{\mathbb{Q}}\left[ Z_s  \mid \mathcal{F}_s \right] dB_s\right].
			\end{split}
		\end{equation*} 
		In the last equality we have used Proposition \ref{equiality of measures}, and the $(\mathbb{F},\mathbb{P})$-martingale property of $(\rho_t)_{t \leq T}$ which yields to the fact that $\mathbb{E}_{\mathbb{Q}}\left[ Z_s  \mid \mathcal{F}_s \right] =\mathbb{E}\left[ \rho_{s} Z_s  \mid \mathcal{F}_s \right]\frac{1}{\rho_s}=\mathbb{E}\left[ Z_s  \mid \mathcal{F}_s \right]$. The proof is then complete.
		
	\end{proof}


\begin{thebibliography}{99}
		
		\bibitem{AkdimDiakhabyOuknine}
		K.~Akdim, A.~Diakhaby, and Y.~Ouknine.
		\newblock {Studying anticipation on financial markets by BSDE}.
		\newblock {\em Random Oper. Stoch. Equ.}, 14(2):127--142, 2006.
		
		\bibitem{Akdim}
		K.~Akdim and Y.~Ouknine.
		\newblock {Infinite horizon reflected backward SDEs with jumps and RCLL
			obstacle}.
		\newblock {\em Stoch. Anal. Appl.}, 24(6):1239--1261, 2006.
		
		\bibitem{AksamitChoulli2017}
		A.~Aksamit, T.~Choulli, J.~Deng, and M.~Jeanblanc.
		\newblock No-arbitrage up to random horizon for quasi-left-continuous models.
		\newblock {\em Finance Stoch.}, 21(4):1103--1139, Oct 2017.
		
		\bibitem{aksamit2015optional}
		A.~Aksamit, T.~Choulli, and M.~Jeanblanc.
		\newblock On an optional semimartingale decomposition and the existence of a
		deflator in an enlarged filtration.
		\newblock In C.~Donati-Martin, A.~Lejay, and A.~Rouault, editors, {\em Memoriam
			Marc Yor-S{\'e}minaire de Probabilit{\'e}s XLVII}, pages 187--218. Springer,
		Cham, 2015.
		
		\bibitem{aksamit2017enlargement}
		A.~Aksamit and M.~Jeanblanc.
		\newblock {\em {Enlargement of Filtration with Finance in View}}.
		\newblock Springer, Cham, 2017.
		
		\bibitem{aksamit2023generalized}
		A.~Aksamit, L.~Li, and M.~Rutkowski.
		\newblock {Generalized BSDE and reflected BSDE with random time horizon}.
		\newblock {\em Electron. J. Probab.}, 28:1--41, 2023.
		
		\bibitem{alario2006dynkin}
		M.~Alario-Nazaret, J.~P. Lepeltier, and B.~Marchal.
		\newblock Dynkin games.
		\newblock In M.~Kohlmann and N.~Christopeit, editors, {\em {Stochastic
				Differential Systems}}, pages 23--32. Springer, Berlin, 1982.
		
		\bibitem{alsheyab2023optimal}
		S.~Alsheyab and T.~Choulli.
		\newblock Reflected backward stochastic differential equations under stopping
		with an arbitrary random time.
		\newblock 2021.
		
		\bibitem{azema1972quelques}
		J.~Az{\'e}ma.
		\newblock {Quelques applications de la th{\'e}orie g{\'e}n{\'e}rale des
			processus I}.
		\newblock {\em Invent. Math.}, 18(3-4):293--336, 1972.
		
		\bibitem{azema1993theoreme}
		J.~Az{\'e}ma, T.~Jeulin, F.~Knight, and M.~Yor.
		\newblock Le th{\'e}or{\`e}me d'arr{\^e}t en une fin d'ensemble pr{\'e}visible.
		\newblock In J.~Az{\'e}ma, P.~A. Meyer, and M.~Yor, editors, {\em S{\'e}minaire
			de Probabilit{\'e}s XXVII}, pages 133--158. Springer, Berlin, 1993.
		
		\bibitem{barlow1978study}
		M.~T. Barlow.
		\newblock Study of a filtration expanded to include an honest time.
		\newblock {\em Z. f{\"u}r Wahrscheinlichkeitstheorie verwandte Geb.},
		44(4):307--323, 1978.
		
		\bibitem{bielecki2004hedging}
		T.~R. Bielecki, M.~Jeanblanc, and M.~Rutkowski.
		\newblock Hedging of defaultable claims.
		\newblock In R.~A. Carmona, E.~Çinlar, I.~Ekeland, E.~Jouini, J.~A.
		Scheinkman, and N.~Touzi, editors, {\em {Paris-Princeton Lectures on
				Mathematical Finance 2003}}, pages 1--132. Springer, Berlin, 2004.
		
		\bibitem{bielecki2005pde}
		T.~R. Bielecki, M.~Jeanblanc, and M.~Rutkowski.
		\newblock {PDE approach to valuation and hedging of credit derivatives}.
		\newblock {\em Quant. Finance}, 5(3):257--270, 2005.
		
		\bibitem{bielecki2007introduction}
		T.~R. Bielecki, M.~Jeanblanc, and M.~Rutkowski.
		\newblock Introduction to mathematics of credit risk modeling.
		\newblock In {\em Stochastic Models in Mathematical Finance,
			CIMPA–UNESCO–MOROCCO School}, pages 1--78, Marrakech, Morocco, April
		9--20, 2007.
		
		\bibitem{bismut1973conjugate}
		J.-M. Bismut.
		\newblock Conjugate convex functions in optimal stochastic control.
		\newblock {\em J. Math. Anal. Appl.}, 44(2):384--404, 1973.
		
		\bibitem{bismut1977probleme}
		J.~M. Bismut.
		\newblock {Sur un probleme de Dynkin}.
		\newblock {\em Z. f{\"u}r Wahrscheinlichkeitstheorie Verwandte Geb.},
		39:31--53, 1977.
		
		\bibitem{BlanchetScalliet2008HedgingOD}
		C.~Blanchet-Scalliet, A.~Eyraud-Loisel, and M.~Royer-Carenzi.
		\newblock {Hedging of defaultable contingent claims using BSDE with uncertain
			time horizon}.
		\newblock {\em Bull. Fr. Actuar.}, 10(20):102--139, 2010.
		
		\bibitem{blanchet2004hazard}
		C.~Blanchet-Scalliet and M.~Jeanblanc.
		\newblock Hazard rate for credit risk and hedging defaultable contingent
		claims.
		\newblock {\em Finance Stoch.}, 8:145--159, 2004.
		
		\bibitem{CRBN}
		R.~Carbone, B.~Ferrario, and M.~Santacroce.
		\newblock {Backward stochastic differential equations driven by Càdlàg
			martingales}.
		\newblock {\em Theory Probab. Appl.}, 52(2):304--314, 2008.
		
		\bibitem{choulli2024optimal}
		T.~Choulli and S.~Alsheyab.
		\newblock {The optimal stopping problem under a random horizon}.
		\newblock {\em Mathematics}, 12(9):1--15, 2024.
		
		\bibitem{choulli2020martingale}
		T.~Choulli, C.~Daveloose, and M.~Vanmaele.
		\newblock A martingale representation theorem and valuation of defaultable
		securities.
		\newblock {\em Math. Finance}, 30(4):1527--1564, 2020.
		
		\bibitem{choulli2022explicit}
		T.~Choulli and S.~Yansori.
		\newblock {Explicit description of all deflators for market models under random
			horizon with applications to NFLVR}.
		\newblock {\em Stoch. Process. Appl.}, 151:230--264, 2022.
		
		\bibitem{Chung}
		K.~L. Chung.
		\newblock Some universal field equations.
		\newblock {\em S\'eminaire de probabilit\'es de Strasbourg}, 6:90--97, 1972.
		
		\bibitem{Crepy}
		S.~Cr{\'e}pey and A.~Matoussi.
		\newblock {Reflected and doubly reflected BSDEs with jumps: A priori estimates
			and comparison}.
		\newblock {\em Ann. Appl. Probab.}, 18(5):2041--2069, 2008.
		
		\bibitem{cvitanic1996backward}
		J.~Cvitanic and I.~Karatzas.
		\newblock {Backward stochastic differential equations with reflection and
			Dynkin games}.
		\newblock {\em Ann. Probab.}, pages 2024--2056, 1996.
		
		\bibitem{darling1997backwards}
		R.~W. Darling and E.~Pardoux.
		\newblock {Backwards SDE with random terminal time and applications to
			semilinear elliptic PDE}.
		\newblock {\em Ann. Probab.}, 25(3):1135--1159, 1997.
		
		\bibitem{dellacherie1975probabilites}
		C.~Dellacherie and P.-A. Meyer.
		\newblock {\em {Probabilités et Potentiel, Chap. I à IV}}, volume~A.
		\newblock Hermann, France, 1975.
		
		\bibitem{BSDEDefault}
		R.~Dumitrescu, M.~Grigorova, M.-C. Quenez, and A.~Sulem.
		\newblock {BSDEs with default jump}.
		\newblock In E.~Celledoni, G.~D. Nunno, K.~Ebrahimi-Fard, and H.~Munthe-Kaas,
		editors, {\em {Computation and Combinatorics in Dynamics, Stochastics and
				Control}}, pages 233--263. Springer, Cham, 2018.
		
		\bibitem{Roxana}
		R.~Dumitrescu, M.-C. Quenez, and A.~Sulem.
		\newblock {Generalized Dynkin games and doubly reflected BSDEs with jumps}.
		\newblock {\em Electron. J. Probab.}, 21:1--32, 2016.
		
		\bibitem{16M1109102}
		R.~Dumitrescu, M.-C. Quenez, and A.~Sulem.
		\newblock Game options in an imperfect market with default.
		\newblock {\em SIAM J. Financ. Math.}, 8(1):532--559, 2017.
		
		\bibitem{dumitrescu2018american}
		R.~Dumitrescu, M.-C. Quenez, and A.~Sulem.
		\newblock American options in an imperfect complete market with default.
		\newblock {\em ESAIM: Proc. Surv.}, 64:93--110, 2018.
		
		\bibitem{ELKAROUI2003145}
		N.~El-Karoui and S.~Hamadène.
		\newblock {BSDEs and risk-sensitive control, zero-sum and nonzero-sum game
			problems of stochastic functional differential equations}.
		\newblock {\em Stoch. Process. their Appl.}, 107(1):145--169, 2003.
		
		\bibitem{karoui1997general}
		N.~El~Karoui and S.-J. Huang.
		\newblock {A general result of existence and uniqueness of backward stochastic
			differential equations}.
		\newblock In N.~El~Karoui and L.~Mazliak, editors, {\em {Backward Stochastic
				Differential Equations}}, pages 27--38. Longman, Harlow, 1997.
		
		\bibitem{el1997reflected}
		N.~El~Karoui, C.~Kapoudjian, {\'E}.~Pardoux, S.~Peng, and M.-C. Quenez.
		\newblock {Reflected solutions of backward SDE's, and related obstacle problems
			for PDE's}.
		\newblock {\em Ann. Probab.}, 25(2):702--737, 1997.
		
		\bibitem{el1997backward}
		N.~El~Karoui, S.~Peng, and M.~C. Quenez.
		\newblock Backward stochastic differential equations in finance.
		\newblock {\em Math. Finance}, 7(1):1--71, 1997.
		
		\bibitem{el2009bsdes}
		M.~El~Otmani.
		\newblock {BSDEs driven by L{\'e}vy process with enlarged filtration and
			applications in finance}.
		\newblock {\em Stat. Probab. Lett.}, 79(1):44--49, 2009.
		
		\bibitem{el2022bsdes}
		M.~El~Otmani, M.~El~Jamali, and M.~Marzougue.
		\newblock {BSDEs with two RCLL reflecting barriers driven by a L{\'e}vy
			process}.
		\newblock {\em J. Numer. Math. Stoch.}, 13(1):1--30, 2022.
		
		\bibitem{BDEM}
		B.~Elmansouri and M.~El~Otmani.
		\newblock {Doubly reflected BSDEs driven by RCLL martingales under stochastic
			Lipschitz coefficient}.
		\newblock {\em Stoch. Dyn.}, 24(01):1--39, 2024.
		
		\bibitem{elmansourielotmani24}
		B.~Elmansouri and M.~El~Otmani.
		\newblock {Generalized BSDEs driven by RCLL martingales with stochastic
			monotone coefficients}.
		\newblock {\em Mod. Stoch.: Theory Appl.}, 11(1):109--128, 2024.
		
		\bibitem{EM2025}
		B.~Elmansouri and M.~El~Otmani.
		\newblock Pricing game options in financial markets with default: {A} doubly
		reflected {BSDEs} approach.
		\newblock {\em International Journal of Theoretical and Applied Finance}, 2025.
		
		\bibitem{SMD7DS}
		E.~H. Essaky and M.~Hassani.
		\newblock {BSDE approach for Dynkin game and American game option}.
		\newblock In M.~Eddahbi, E.~H. Essaky, and J.~Vives, editors, {\em Statistical
			Methods and Applications in Insurance and Finance}, pages 211--225. Springer,
		Cham, 2016.
		
		\bibitem{essaky2005backward}
		E.~H. Essaky, Y.~Ouknine, and N.~Harraj.
		\newblock Backward stochastic differential equation with two reflecting
		barriers and jumps.
		\newblock {\em Stoch. Anal. Appl.}, 23(5):921--938, 2005.
		
		\bibitem{fakhouri20192}
		I.~Fakhouri and Y.~Ouknine.
		\newblock {$\mathbb{L}^2$-solutions for reflected BSDEs with jumps under
			monotonicity and general growth conditions: a penalization method}.
		\newblock {\em SeMA J.}, 76(1):37--63, Mar 2019.
		
		\bibitem{Föllmer2011}
		H.~Föllmer and P.~Protter.
		\newblock Local martingales and filtration shrinkage.
		\newblock {\em ESAIM: Probab. Stat.}, 15:S25--S38, 2011.
		
		\bibitem{HMDNAM}
		S.~Hamad\`{e}ne.
		\newblock {Mixed zero-sum stochastic differential game and American game
			options}.
		\newblock {\em SIAM J. Control Optim.}, 45(2):496--518, 2006.
		
		\bibitem{hamadene2000reflected}
		S.~Hamad{\`e}ne and J.-P. Lepeltier.
		\newblock {Reflected BSDEs and mixed game problem}.
		\newblock {\em Stoch. Process. their Appl.}, 85(2):177--188, 2000.
		
		\bibitem{hamadene2016reflected}
		S.~Hamad{\`e}ne and Y.~Ouknine.
		\newblock {Reflected backward SDEs with general jumps}.
		\newblock {\em Theory Probab. Appl.}, 60(2):263--280, 2016.
		
		\bibitem{hamadene2009bsdes}
		S.~Hamad{\`e}ne and H.~Wang.
		\newblock {BSDEs with two RCLL reflecting obstacles driven by Brownian motion
			and Poisson measure and a related mixed zero-sum game}.
		\newblock {\em Stoch. Process. Appl.}, 119(9):2881--2912, 2009.
		
		\bibitem{hamadene2002reflected}
		S.~Hamadène.
		\newblock {Reflected BSDE's with discontinuous barrier and application}.
		\newblock {\em Stoch.: Int. J. Probab. Stoch. Process.}, 74(3-4):571--596,
		2002.
		
		\bibitem{HassaniSaid}
		S.~Hamadène and M.~Hassani.
		\newblock {BSDEs with two reflecting barriers driven by a Brownian motion and
			Poisson noise and related Dynkin game}.
		\newblock {\em Electron. J. Probab.}, 11:121 -- 145, 2006.
		
		\bibitem{HamadeneJP}
		S.~Hamadène and J.~P. Lepeltier.
		\newblock Backward equations, stochastic control and zero-sum stochastic
		differential games.
		\newblock {\em Stoch. Stoch. Rep.}, 54(3-4):221--231, 1995.
		
		\bibitem{hamadene2010switching}
		S.~Hamadène and J.~Zhang.
		\newblock {Switching problem and related system of reflected backward SDEs}.
		\newblock {\em Stoch. Process. Appl.}, 120(4):403--426, 2010.
		
		\bibitem{bookHe}
		S.-W. He, J.-G. Wang, and J.-A. Yan.
		\newblock {\em {Semimartingale Theory and Stochastic Calculus}}.
		\newblock Science Press, Beijing, New York, 1992.
		
		\bibitem{bookJacod}
		J.~Jacod and A.~N. Shiryaev.
		\newblock {\em {Limit Theorems for Stochastic Processes}}, volume 288.
		\newblock Springer-Verlag, New York, 2003.
		
		\bibitem{jeanblanc2009progressive}
		M.~Jeanblanc and Y.~Le~Cam.
		\newblock Progressive enlargement of filtrations with initial times.
		\newblock {\em Stoch. Process. Appl.}, 119(8):2523--2543, 2009.
		
		\bibitem{jeanblanc2015martingale}
		M.~Jeanblanc and S.~Song.
		\newblock Martingale representation property in progressively enlarged
		filtrations.
		\newblock {\em Stoch. Process. Appl.}, 125(11):4242--4271, 2015.
		
		\bibitem{book:63874}
		T.~Jeulin.
		\newblock {\em {Semi-Martingales et Grossissement d'une Filtration}}, volume
		833.
		\newblock Springer, Berlin, 1980.
		
		\bibitem{jeulin2006grossissement}
		T.~Jeulin and M.~Yor.
		\newblock Grossissement d’une filtration et semi-martingales: Formules
		explicites.
		\newblock In C.~Dellacherie, P.~A. Meyer, and M.~Weil, editors, {\em
			{S{\'e}minaire de Probabilit{\'e}s XII}}, pages 78--97. Springer, Berlin,
		1978.
		
		\bibitem{klimsiak2015reflected}
		T.~Klimsiak.
		\newblock {Reflected BSDEs on filtered probability spaces}.
		\newblock {\em Stoch. Process. Appl.}, 125(11):4204--4241, 2015.
		
		\bibitem{kobylanski2014dynkin}
		M.~Kobylanski, M.-C. Quenez, and M.~R. de~Campagnolle.
		\newblock Dynkin games in a general framework.
		\newblock {\em Stoch. Int. J. Probab. Stoch. Process.}, 86(2):304--329, 2014.
		
		\bibitem{lepeltier1984jeu}
		J.-P. Lepeltier and E.~M. Maingueneau.
		\newblock {Le jeu de Dynkin en th{\'e}orie g{\'e}n{\'e}rale sans
			l'hypoth{\`e}se de Mokobodski}.
		\newblock {\em Stoch.: Int. J. Probab. Stoch. Process.}, 13(1-2):25--44, 1984.
		
		\bibitem{ThomasLim}
		T.~Lim and M.-C. Quenez.
		\newblock {Exponential Utility Maximization in an Incomplete Market with
			Defaults}.
		\newblock {\em Electron. J. Probab.}, 16:1434--1464, 2011.
		
		\bibitem{mansuy2006random}
		R.~Mansuy and M.~Yor.
		\newblock {\em {Random Times and Enlargements of Filtrations in a Brownian
				Setting}}, volume 1873.
		\newblock Springer, 2006.
		
		\bibitem{Tianyang}
		T.~Nie and M.~Rutkowski.
		\newblock {Existence, uniqueness and strict comparison theorems for BSDEs
			driven by RCLL martingales}.
		\newblock {\em Probab. Uncertain. Quant. Risk}, 6(4):319--342, 2021.
		
		\bibitem{Malliavin}
		D.~Nualart.
		\newblock {\em {The Malliavin Calculus and Related Topics}}.
		\newblock Springer, Berlin, 1995.
		
		\bibitem{oksendal1997introduction}
		B.~{\O}ksendal.
		\newblock An introduction to malliavin calculus with applications to economics.
		\newblock Working Paper 3/96, Institute of Finance and Management Science,
		Norwegian School of Economics and Business Administration, 1996.
		
		\bibitem{pardoux1998backward}
		{\'E}.~Pardoux.
		\newblock {Backward stochastic differential equations and viscosity solutions
			of systems of semilinear parabolic and elliptic PDEs of second order}.
		\newblock In L.~Decreusefond, B.~Øksendal, J.~Gjerde, and A.~S. Üstünel,
		editors, {\em {Stochastic Analysis and Related Topics VI}}, pages 79--127.
		Birkhäuser, Boston, 1998.
		
		\bibitem{pardoux1999bsdes}
		{\'E}.~Pardoux.
		\newblock {BSDEs, weak convergence and homogenization of semilinear PDEs}.
		\newblock In F.~H. Clarke, R.~J. Stern, and G.~Sabidussi, editors, {\em
			{Nonlinear Analysis, Differential Equations and Control}}, pages 503--549.
		Springer, Dordrecht, 1999.
		
		\bibitem{pardoux1990adapted}
		{\'E}.~Pardoux and S.~Peng.
		\newblock Adapted solution of a backward stochastic differential equation.
		\newblock {\em Syst. Control Lett.}, 14(1):55--61, 1990.
		
		\bibitem{pardoux2005backward}
		{\'E}.~Pardoux and S.~Peng.
		\newblock Backward stochastic differential equations and quasilinear parabolic
		partial differential equations.
		\newblock In B.~L. Rozovskii and R.~B. Sowers, editors, {\em {Stochastic
				Partial Differential Equations and Their Applications}}, pages 200--217.
		Springer, Berlin, 1992.
		
		\bibitem{StoppingPeng}
		S.~Peng.
		\newblock Probabilistic interpretation for systems of quasilinear parabolic
		partial differential equations.
		\newblock {\em Stoch. Stoch. Rep.}, 37(1-2):61--74, 1991.
		
		\bibitem{peng2009bsdes}
		S.~Peng and X.~Xu.
		\newblock {BSDEs with random default time and their applications to default
			risk}.
		\newblock 2009.
		
		\bibitem{bookProtter}
		P.~E. Protter.
		\newblock {\em {Stochastic Integration and Differential Equations}}, volume~21.
		\newblock Springer, Berlin, 2004.
		
		\bibitem{quenez2014reflected}
		M.-C. Quenez and A.~Sulem.
		\newblock {Reflected BSDEs and robust optimal stopping for dynamic risk
			measures with jumps}.
		\newblock {\em Stoch. Process. their Appl.}, 124(9):3031--3054, 2014.
		
		\bibitem{revuz1999continuous}
		D.~Revuz and M.~Yor.
		\newblock {\em {Continuous Martingales and Brownian Motion}}, volume 293.
		\newblock Springer, Berlin, 1999.
		
		\bibitem{royer2004bsdes}
		M.~Royer.
		\newblock {BSDEs with a random terminal time driven by a monotone generator and
			their links with PDEs}.
		\newblock {\em Stoch. Stoch. Rep.}, 76(4):281--307, 2004.
		
		\bibitem{stricker1977quasimartingales}
		C.~Stricker.
		\newblock Quasimartingales, martingales locales, semimartingales et filtration
		naturelle.
		\newblock {\em Z. f{\"u}r Wahrscheinlichkeitstheorie verwandte Geb.},
		39:55--63, 1977.
		
		\bibitem{topolewski2018reflected}
		M.~Topolewski.
		\newblock {Reflected BSDEs with general filtration and two completely separated
			barriers}.
		\newblock {\em Probab. Math. Stat.}, 39(1):199--218, 2019.
		
		\bibitem{JeulinDecom}
		M.~Yor.
		\newblock Grossissement d'une filtration et semi-martingales : Th\'eor\`emes
		g\'en\'eraux.
		\newblock In C.~Dellacherie, P.~A. Meyer, and M.~Weil, editors, {\em
			S\'eminaire de probabilit\'es de Strasbourg}, pages 61--69. Springer, Berlin,
		1978.
		
		\bibitem{yosida2012functional}
		K.~Yosida.
		\newblock {\em {Functional Analysis}}, volume 123.
		\newblock Springer-Verlag, Berlin, New York, 1980.
		
	\end{thebibliography}

\end{document}